\newtheorem{proposition}{Proposition}[section]
\newtheorem{theorem}[proposition]{Theorem}
\newtheorem{lemma}[proposition]{Lemma}
\newtheorem{prop}[proposition]{Proposition}
\newtheorem{cor}[proposition]{Corollary}
\newtheorem{conj}[proposition]{Conjecture}
\theoremstyle{definition}
\newtheorem{example}[proposition]{Example}
\theoremstyle{remark}
\newtheorem{remark}[proposition]{Remark}
\numberwithin{equation}{section}
\definecolor{darkblue}{cmyk}{1,0.3,0,0.1}  
\newcounter{margincounter}
\newcommand{\newword}[1]{\textbf{\emph{#1}}}
\newcommand{\integers}{\mathbb Z}
\newcommand{\reals}{\mathbb R}
\newcommand{\from}{\leftarrow}
\newcommand{\ep}{\varepsilon}
\newcommand{\cl}{\operatorname{cl}}
\newcommand{\Seed}{\operatorname{Seed}}
\newcommand{\Camb}{\operatorname{Camb}}
\newcommand{\sgn}{\operatorname{sgn}}
\renewcommand{\int}{\operatorname{int}}
\newcommand{\cov}{\mathrm{cov}}
\newcommand{\covered}{{\,\,<\!\!\!\!\cdot\,\,\,}}
\newcommand{\set}[1]{{\lbrace #1 \rbrace}}
\newcommand{\pidown}{\pi_\downarrow}
\newcommand{\piup}{\pi^\uparrow}
\newcommand{\br}[1]{{\langle #1 \rangle}}
\newcommand{\e}{{\mathbf e}}
\newcommand{\g}{{\mathbf g}}
\renewcommand{\d}{{\mathbf d}}
\renewcommand{\c}{{\mathbf c}}
\renewcommand{\b}{{\mathbf b}}
\newcommand{\A}{{\mathcal A}}
\newcommand{\C}{{\mathcal C}}
\newcommand{\tB}{{\widetilde{B}}}
\newcommand{\hG}{{\hat{G}}}
\newcommand{\hC}{{\hat{C}}}
\newcommand{\F}{{\mathcal F}}
\newcommand{\X}{{\mathcal X}}
\newcommand{\join}{\vee}
\renewcommand{\Join}{\bigvee}
\newcommand{\Meet}{\bigwedge}
\newcommand{\ck}{^{\vee\!}}
\renewcommand{\th}{^\mathrm{th}}
\newcommand{\FF}{\mathbb{F}}
\newcommand{\ZZ}{\mathbb{Z}}
\newcommand{\QQ}{\mathbb{Q}}
\newcommand{\RR}{\mathbb{R}}
\newcommand{\PP}{\mathbb{P}}
\newcommand{\Cone}{\mathrm{Cone}}
\newcommand{\Ex}{\mathrm{Ex}}
\newcommand{\Trop}{\mathrm{Trop}}
\DeclareMathOperator{\Span}{Span}
\DeclareMathOperator{\inv}{inv}
\DeclareMathOperator{\Cart}{Cart}
\newcommand{\twomatrix}[2]{\begin{psmallmatrix} #1 \\ #2 \end{psmallmatrix}}
\begin{document}

\title{Combinatorial frameworks for cluster algebras}
\author{Nathan Reading and David E Speyer}
\address[Nathan Reading]{\,Department of Mathematics, North Carolina State University, Raleigh, NC, 27695 USA}
\email{reading@math.ncsu.edu}
\address[David E Speyer]{Department of Mathematics, University of Michigan, Ann Arbor, MI, 48109 USA}
\email{speyer@umich.edu}
\thanks{Nathan Reading was partially supported by NSA grant H98230-09-1-0056, by Simons Foundation grant \#209288, and by NSF grant DMS-1101568.   David E Speyer was supported in part by a Clay Research Fellowship}

\begin{abstract}
We develop a general approach to finding combinatorial models for cluster algebras.
The approach is to construct a labeled graph called a \newword{framework}.
When a framework is constructed with certain properties, the result is a model incorporating information about exchange matrices, principal coefficients, $\g$-vectors, and $\g$-vector fans.
The idea behind frameworks arises from Cambrian combinatorics and sortable elements, and in this paper, we use sortable elements to construct a framework for any cluster algebra with an acyclic initial exchange matrix.
This Cambrian framework yields a model of the entire exchange graph when the cluster algebra is of finite type.
Outside of finite type, the Cambrian framework models only part of the exchange graph.
In a forthcoming paper, we extend the Cambrian construction to produce a complete framework for a cluster algebra whose associated Cartan matrix is of affine type.
\end{abstract}

\maketitle


\setcounter{tocdepth}{2}
\tableofcontents

\section{Introduction}\label{intro sec} 
Cluster algebras were introduced in~\cite{ca1} as a tool for studying total positivity and canonical bases in semisimple algebraic groups.
They have since appeared in various fields, including Teichm\"{u}ller theory, Poisson geometry, quiver representations, Lie theory, algebraic geometry and algebraic combinatorics. 
Among the key open problems surrounding cluster algebras are various structural conjectures found in \cite{FZ-CDM,ca4}. 
Many of these conjectures are proved in important special cases (e.g. finite type or skew-symmetric \cite{CK2,Demonet,QP2,ca2,ca4,Fu-Keller,IIKNS,Nagao,Pla1,Pla2,YZ}), but the general (infinite type skew-symmetrizable) cases remain mostly open.
See Section~\ref{conj sec} for statements of many of these conjectures and remarks on their status.

This paper continues a series of papers aimed at creating combinatorial models for cluster algebras within the context of Coxeter groups and root systems, using in particular the machinery of sortable elements and Cambrian (semi)lattices.
Cambrian lattices were introduced in~\cite{cambrian} as certain lattice quotients (or alternately sublattices) of the weak order on a finite Coxeter group.
Conjectures in that paper, later proved in \cite{HLT,sortable,sort_camb,camb_fan}, established the relevance of Cambrian lattices to Coxeter-Catalan combinatorics.
In particular, Cambrian lattices are closely related to generalized associahedra, which serve as combinatorial models for cluster algebras of finite type \cite{ga,ca2}. 
(See e.g.~\cite{Armstrong,rsga} for an introduction to Coxeter-Catalan combinatorics.)
In the process of proving the conjectures, sortable elements were defined in \cite{sortable} and shown in \cite{sort_camb} to provide a direct combinatorial realization of Cambrian lattices.
Finally, in~\cite{typefree}, the combinatorial theory surrounding sortable elements was further developed and extended to arbitrary Coxeter groups (from the special case of finite Coxeter groups).

In this paper, we construct Cambrian combinatorial models for cluster algebras.
The insights gained from our first direct constructions of Cambrian models led to a general blueprint for building combinatorial/algebraic models.
Accordingly, we begin the paper by defining the notion of a \newword{framework} for an exchange matrix $B$.
The matrix $B$ defines a Cartan companion $\Cart(B)$ and thus a root system.
The matrix $B$ also defines a bilinear form $\omega$ on the root space.
In essence, a framework is a graph $G$ with two labelings:  each pair $(v,e)$ consisting of a vertex contained in an edge is given a \newword{label} $C(v,e)$ and a \newword{co-label} $C\ck(v,e)$.  
These labels are vectors in the root space that are related by a positive scaling.
They should be thought of as a root and its corresponding co-root, and indeed, in important special cases, this is the case.
The labeling/co-labeling must satisfy certain conditions;  when it does, a detailed combinatorial model of the cluster algebra can be extracted from the framework.

The following theorem summarizes how to recover combinatorial properties of a cluster algebra from a framework for it. The precise details are Theorems~\ref{framework exchange} and~\ref{framework principal}.
\begin{theorem} \label{framework summary} 
Let $B$ be a skew-symmetrizable integer matrix. 
Let $\mathcal{A}$ be the cluster algebra whose initial seed $t_0$ has exchange matrix $B$ and principal coefficients.
Let $(G, C, C\ck)$ be a framework for $B$.
There is a map from vertices $v$ of $G$ to seeds $t$ such that
\begin{itemize}
\item There is a base vertex $v_b$ mapping to $t_0$.
\item Edges in $G$ correspond to mutations of seeds.
\item The exchange matrix of $t$ has entries $[\omega(C\ck(v,e),C(v,f))]_{e,f\ni v}$.
\item The $\c$-vectors of cluster variables in $t$ are the simple root coordinates of the vectors $C(v,e)$.
\item The $\g$-vectors of cluster variables in $t$, with respect to the seed $t_0$, are the fundamental-weight coordinates of the basis of the weight space that is dual to the basis $\set{C\ck(v,e)}_{e \ni v}$
\end{itemize}
\end{theorem}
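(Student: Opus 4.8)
The plan is to build the vertex-to-seed map by transporting the initial seed $t_0$ along paths in $G$, and then to verify each bullet by induction on distance from the base vertex. Since the labels $C(v,e)$ and co-labels $C\ck(v,e)$ are roots and (positively rescaled) coroots attached to each incident pair, I expect the framework axioms to encode exactly the statement that, across an edge $e$ joining $v$ to a neighbor $v'$, the family of labels transforms by the piecewise-linear rule that governs $c$-vector mutation. So the first step is to set $v_b\mapsto t_0$ (first bullet) and to prove that following the edge out of $v$ in ``direction'' $e$ lands on a vertex whose transported seed is the single mutation $\mu_k(t)$ in the corresponding direction $k$ (second bullet). The only delicate point here is well-definedness: that any two paths in $G$ from $v_b$ to a common vertex yield the same seed. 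I would deduce this from the acyclicity of $B$ together with the connectivity and local-confluence conditions built into the definition of a framework, reducing global consistency to agreement around the rank-two faces.

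With the map in hand, the exchange-matrix and coefficient statements become computations along edges. For the third bullet I would expand $\omega(C\ck(v,e),C(v,f))$ and compare with the skew-symmetrizable mutation recursion for $B$; the form $\omega$ is constructed from $B$ precisely so that pairing a coroot label against a root label returns a matrix entry, so the inductive step amounts to checking that the label transformation across an edge is compatible with matrix mutation. For the fourth bullet I would observe that, under principal coefficients, the columns of the bottom half of the extended exchange matrix are the $c$-vectors, that at $t_0$ these are the standard basis and agree with the simple-root coordinates of $C(v_b,e)$, and that both sides mutate by the same rule; the identification then propagates to every seed by the induction already used for the second bullet.

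The fifth bullet, concerning $\g$-vectors, is where I expect the genuine work to lie, and I would attack it through tropical duality. Having identified the $c$-vectors with the labels $C(v,e)$ and the co-labels $C\ck(v,e)$ with their positive coroot rescalings (the symmetrizer), I note that the basis of the weight space dual to $\set{C\ck(v,e)}_{e\ni v}$, written in fundamental-weight coordinates, is by definition the inverse transpose of the matrix of co-labels in simple-coroot coordinates. Up to the symmetrizer this is exactly the inverse transpose of the $C$-matrix of the seed, which is the Nakanishi--Zelevinsky relation between the $C$-matrix and the $G$-matrix; hence the dual basis is read off as the $\g$-vectors. The crux, and the main obstacle, is that this sharp inverse-transpose duality requires sign-coherence of the $c$-vectors at every seed. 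I would therefore aim to extract sign-coherence from the framework axioms themselves --- this is the step where the geometry of the labels, namely that each is a genuine root (hence with coordinates of a single sign) together with the positivity of the root-to-coroot scaling, must do the essential work --- rather than importing it as an external black box.
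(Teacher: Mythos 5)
Your plan for the first four bullets is essentially the paper's: define $b^v_{ef}=\omega(C\ck(v,e),C(v,f))$, transport the initial seed along edges, and verify compatibility with matrix mutation by direct computation from the Transition and Co-transition conditions (this is exactly Lemmas~\ref{Bs work} and~\ref{tilde Bs work}). The genuine gap is your treatment of well-definedness. You propose to prove that any two paths in $G$ from $v_b$ to a common vertex produce the same seed by ``reducing global consistency to agreement around the rank-two faces.'' Local agreement around rank-two cycles is indeed true (it is Lemma~\ref{lem:rank2}, and already requires the rank-two periodicity results of \cite{ca1}), but the reduction of global consistency to rank-two cycles requires that $\pi_1(G)$ be generated by rank-two cycles --- this is precisely the extra global hypothesis the paper calls \emph{simply connected} (Section~\ref{simply sec}), and it does not follow from the framework axioms, which are purely local; acyclicity of $B$ does not help. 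The property you are trying to establish is what the paper calls \emph{ampleness}, which it explicitly describes as difficult and proves only under additional hypotheses (Propositions~\ref{simply ample} and~\ref{conj imply ample}, or the descending property). The paper sidesteps the issue entirely: $\Seed$ is defined on the universal cover $\hG$, which is a tree, so no path-independence question arises, and the precise forms of the summary theorem (Theorems~\ref{framework exchange} and~\ref{framework principal}) are stated for $\hG$, not $G$. As written, your induction cannot be completed for a framework whose graph has cycles.

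Your route to the fifth bullet also differs from the paper's, and carries its own risk. The paper proves the $\g$-vector statement directly, by induction along edges: Proposition~\ref{dual adjacent} shows adjacent dual bases $R(v)$ and $R(v')$ share $n-1$ vectors, and a pairing computation with the Co-transition condition shows that the vector produced by the recursion~\eqref{gRecurrence} is exactly $R(v',e)$. You instead import the Nakanishi--Zelevinsky inverse-transpose duality. This is not circular in an absolute sense, but it inverts the paper's logic --- the paper \emph{derives} that duality as Corollary~\ref{NZ cor} --- and it forfeits the paper's stated aim of proving such statements without relying on external results. More concretely, NZ's theorem is conditional on sign-coherence of $c$-vectors at the seeds involved, and the framework supplies sign-coherence (via the Sign condition and your fourth bullet) only at seeds in the image of $\Seed$; if the framework is not complete, you must check that NZ's inductive argument can be run using only those seeds, which you do not address. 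One small correction: sign-coherence is not ``extracted from the labels being genuine roots'' --- in a general framework the labels need not be roots (that is the Root condition, special to reflection frameworks); sign-coherence is literally the Sign condition.
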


The framework may also contain information about denominator vectors.
For acyclic $B$, we define an explicit linear map with an easily computed inverse and conjecture (as Conjecture~\ref{nu conj}) that the map relates denominator vectors to $\g$-vectors for cluster variables outside the initial seed.
The conjecture is easily verified when $B$ is $2\times 2$, and we prove the conjecture when $\Cart(B)$ is of finite type. 

A framework is \newword{complete} \label{complete} if $G$ is a regular graph of the correct degree, and a complete framework models the entire exchange graph of the cluster algebra.
We define some other conditions on frameworks in Section~\ref{global sec}, including the notions of an \newword{exact} framework and a \newword{well-connected polyhedral} framework.
When the framework is complete and exact, the graph $G$ is isomorphic to the exchange graph.
When the framework is also polyhedral and well-connected, it defines a fan that coincides with the fan of $\g$-vectors. 
The existence of a complete, exact and/or well-connected polyhedral framework for $B$ implies several of the conjectures from \cite{FZ-CDM,ca4}.
See Theorem~\ref{complete conj} and Corollaries~\ref{exact conj} and~\ref{polyhedral conj} for details, and see Section~\ref{sec:defns} for a table of our definitions.

Frameworks are not only a convenient way to create models of cluster algebras, but in fact they are, conjecturally, the \emph{only} way to create models.
This idea is made precise in Theorem~\ref{T model} and Corollary~\ref{identity model}, which state that, assuming some conjectures from \cite{ca4}, every cluster algebra defines a framework.  
The theorem and corollary, together with the recipes in Theorem~\ref{framework summary} for reading off combinatorial information from a framework imply that every model for principal coefficients and/or $\g$-vectors and/or (conjecturally) denominator vectors \emph{is} a framework.

Comparing the method of determining $\g$-vectors from a framework to the method of determining principal coefficients, we arrive (Corollary~\ref{NZ cor}) at an insight that also recently appeared as the first assertion of \cite[Theorem~1.2]{NZ}:  
Assuming a certain conjecture from~\cite{ca4}, the matrix whose rows are the $\g$-vectors associated to $B$ is the inverse of the matrix whose columns are the $\c$-vectors associated to $-B^T$.

The main thrust of the project, however, is to give direct proofs of the structural conjectures by constructing explicit frameworks based on sortable elements and Cambrian lattices, without relying on results from other approaches to cluster algebras.
In a sense we can do this quite generally:
For every acyclic $B$, we construct a \newword{Cambrian framework} and prove that the framework is exact, polyhedral, and well-connected.
The underlying graph is the Cambrian (semi)lattice, so that the vertices of the graph are the sortable elements.
The labels are certain roots that can be read off combinatorially from the sortable elements, and the co-labels are the associated co-roots.
In the Cambrian framework, the $\g$-vectors can also be read off combinatorially without having to compute a dual basis.
(See Theorem~\ref{camb nu g}.)
Conjecturally, the denominator vectors can also be read off combinatorially, and this conjecture is already proven in the case where $\Cart(B)$ is of finite type.

When $\Cart(B)$ is of finite type, the Cambrian framework is complete, and thus defines a combinatorial and polyhedral model for the entire exchange graph.
Furthermore, the existence and properties of the Cambrian framework, when $\Cart(B)$ is finite type, imply \cite[Conjecture~4.7]{ca4} as well as Conjecture~\ref{nu conj}, described above. 
The Cambrian framework also provides new proofs of many of the conjectures from \cite{FZ-CDM,ca4} when $\Cart(B)$ is of finite type.
See Theorem~\ref{finite type conj} for details.

When $\Cart(B)$ is of infinite type, the Cambrian framework is not complete.
The incompleteness of the Cambrian framework is the result of a fundamental obstacle:  every cone in the Cambrian fan intersects the Tits cone of the Coxeter group associated to $B$.
But cones in the (conjectural) fan defined by $\g$-vectors do not all intersect the Tits cone.
In a forthcoming paper, the authors construct a complete framework for exchange matrices $B$ such that $\Cart(B)$ is of affine type and use this framework to prove many of the conjectures from \cite{FZ-CDM,ca4} for such $B$. 
In another paper \cite{ST}, the second author and Hugh Thomas construct a complete reflection framework for an arbitrary acyclic exchange matrix $B$.
(See Theorem~\ref{ST frame}.)
Their framework is constructed from the representation theory of quivers.

We conclude this introduction with two remarks about the key features of frameworks.
As indicated above, the labels on a framework are essentially the 
\mbox{$\c$-vectors}.
That is, the labels are the columns of the bottom halves of principal-coefficients extended exchange matrices.
Thus, passing from vertex to vertex, the labels need to change in a way that amounts to matrix mutation.
The mutation relation is given in \cite[Equation~(5.9)]{ca4}, assuming one of the conjectures from \cite{ca4}.
The relation depends, of course, on the top halves of the extended exchange matrices, i.e.\ the exchange matrices.
What is missing from \cite[Equation~(5.9)]{ca4} is the insight that the exchange matrices themselves are determined from the labels, as described  in Theorem~\ref{framework summary}.
Thus \cite[Equation~(5.9)]{ca4} is replaced by a self-contained mutation rule in terms only of the labels.
This rule is called the \newword{Transition condition}, and is the most important feature of the notion of a framework.
In many important cases, all of the labels are real roots, the co-labels are the corresponding co-roots, and the Transition condition can be replaced by the \newword{Reflection condition}.
This condition says when $u$ and $v$ are connected by an edge and $t$ is the reflection associated to $C(v,e)$, each label on $u$ is either identical to a label on $v$, or is obtained from a label on $v$ by the action of $t$.
The Reflection condition was discovered in connection with combinatorial/polyhedral investigations of Cambrian fans.

Another technical feature of frameworks that should not be overlooked is the use of root and co-root lattices (and thus weight and co-weight lattices) to handle the possible absence of skew-symmetry in $B$.
Simply by placing vectors in the correct lattice (i.e.\ deciding whether to write the prefix ``co''), the difficulties caused by asymmetric $B$ completely disappear from the general theory of frameworks.
This is exactly analogous to the purpose of roots and co-roots, etc.\ in handling asymmetric Cartan matrices.
Given the fact that a Cambrian model exists for every acyclic $B$, based on the Cartan companion $\Cart(B)$, this analogy is hardly accidental.

\bigskip 

\noindent
\textbf{Acknowledgments.}
We thank Bernhard Keller for helpful comments on an earlier version.
We thank Salvatore Stella for sharing important insights.
We thank the referees for their careful reading and for helpful expository and mathematical suggestions.  

\section{Frameworks and reflection frameworks}\label{frame sec}
In this section, we define the general notion of a framework and a special kind of framework called a reflection framework.
This section contains a great number of definitions. We collect our definitions (from this section and others) in a table in Section~\ref{sec:defns}.

\subsection{Frameworks}\label{frame subsec}
The exchange matrix $B=[b_{ij}]$ associated to a cluster algebra is a skew-symmetrizable integer matrix, with rows and columns indexed by a set~$I$, with $|I|=n$.
That means that there exists a positive real-valued function~$\delta$ on $I$ such that $\delta(i) b_{ij}=-\delta(j) b_{ji}$ for all $i,j\in I$.
Let $\Cart(B)$ be the matrix with diagonal entries $2$ and off-diagonal entries $a_{ij}=-|b_{ij}|$. \label{defn:Cart}
Then $\Cart(B)$ is a symmetrizable generalized Cartan matrix in the sense of \cite{Kac} (see also \cite[Section~2.2]{typefree}), called the \newword{Cartan companion} of $B$.
In particular, $\delta(i) a_{ij}=\delta(j) a_{ji}$ for all $i$, $j\in I$.

Let $V$ be a real vector space of dimension $n$ with a basis $\Pi=\set{\alpha_i:i\in I}$ and let $V^*$ be the dual vector space.
The set $\Pi$ is called the set of \newword{simple roots}. 
The canonical pairing between $x\in V^*$ and $y\in V$ is written $\br{x,y}$. 
We set $\alpha_i\ck= \delta(i)^{-1} \alpha_i$.
The vectors $\alpha_i\ck$ are called the \newword{simple co-roots} and the set of simple co-roots is written $\Pi\ck$.
We write $D$ for the \newword{fundamental domain}, $\bigcap_{i \in I} \set{x\in V^*: \br{x,\alpha_i}\ge 0}$. 

The exchange matrix $B$ defines a bilinear form $\omega$ by setting $\omega(\alpha_{i}\ck,\alpha_{j})=b_{ij}$. 
The form $\omega$ is skew-symmetric, because 
\[\omega(\alpha_{i}\ck,\alpha_{j})=b_{ij}=-\frac{\delta(j)}{\delta(i)}b_{ji}=-\frac{\delta(j)}{\delta(i)}\omega(\alpha_{j}\ck,\alpha_i)=-\omega(\alpha_{j},\alpha_i\ck).\]

A \newword{quasi-graph} \label{quasi-graph} is a collection of vertices, full edges, and half-edges.
The full edges are edges in the usual graph-theoretical sense, so that each edge is incident to two vertices.
Each half-edge should be thought of as dangling from a vertex, and thus not connecting that vertex to any other.
(In the usual graph-theoretic terminology, a quasi-graph is a hypergraph with edges of size $1$ or $2$.)
We will usually drop the adjective ``full'' in referring to full edges.
Half-edges should not be confused with ``self-edges'' or ``loops,'' i.e.\ edges that connect a vertex to itself.
In this paper, all quasi-graphs will be \newword{simple}, meaning that no two full edges connect the same pair of vertices, and that every full edge connects two distinct vertices.
The \newword{degree} of a vertex in a simple quasi-graph is the total number of edges (including half-edges) incident to that vertex, and the quasi-graph is \newword{regular of degree $n$} if every vertex has degree~$n$.
A quasi-graph $G$ is \newword{connected} if the graph obtained from $G$ by ignoring half-edges is connected in the usual sense.

An \newword{incident pair} \label{incident pair} in a quasi-graph $G$ is a pair $(v,e)$ where $v$ is a vertex contained in an edge $e$.
For each vertex $v$, let $I(v)$ denote the set of edges $e$ such that $(v,e)$ is an incident pair.
A \newword{framework for $B$} will be a triple $(G,C,C\ck)$, where $G$ is a connected quasi-graph that is regular of degree $n$ (where $B$ is $n\times n$) and each of $C$ and $C\ck$ is a labeling of each incident pair in $G$ by a vector in $V$, satisfying certain conditions given below.
The \newword{label} \label{label} on $(v,e)$ will be written $C(v,e)$, and the notation $C(v)$ will stand for the set $\set{C(v,e):e\in I(v)}$.
The \newword{co-label} on $(v,e)$ will be written $C\ck(v,e)$, and the notation $C\ck(v)$ will stand for the set $\set{C\ck(v,e):e\in I(v)}$.\\

\noindent 
\textbf{Co-label condition:}  \label{Co-label condition} For each incident pair $(v,e)$, the co-label $C\ck(v,e)$ is a positive scalar multiple of the label $C(v,e)$.\\

We will see below that in an important special case, each label $C(v,e)$ will be a real root, and each co-label $C\ck(v,e)$ will be the associated co-root $(C(v,e))\ck$.
In general the label $C(v,e)$ need not be a real root, so there is no available notion of a co-root associated to $C(v,e)$.
Despite the fact that $C\ck(v,e)$ may not be a co-root in any meaningful sense, when $\beta=C(v,e)$, we will write $\beta\ck$ for $C\ck(v,e)$.\\

\noindent 
\textbf{Sign condition:} \label{Sign condition} For each incident pair $(v,e)$:
\begin{enumerate}
\item The label $C(v,e)$ is not the zero vector; and
\item Either $C(v,e)$ or $-C(v,e)$ is in the nonnegative span of the simple roots.\\
\end{enumerate}

Assuming the Sign condition, each label has a well-defined \newword{sign} $\sgn(C(v,e))\in\set{\pm 1}$, namely $\sgn(C(v,e))=1$ if $C(v,e)$ is in the nonnegative span of the simple roots, or $\sgn(C(v,e))=-1$ if $-C(v,e)$ is in the nonnegative span of the simple roots. 
Assuming the Co-label condition, the analogous definition for co-labels yields $\sgn(C\ck(v,e))=\sgn(C(v,e))$.  
\\

\noindent
\textbf{Base condition:}  \label{Base condition}
There exists a vertex $v_b$ such that $C(v_b)$ is the set of simple roots and $C\ck(v_b)$ is the set of simple co-roots.\\

The Base condition lets us identify the indexing set $I$ with $I(v_b)$ by identifying $e\in I(v_b)$ with the index $i\in I$ such that $C(v_b,e)=\alpha_i$.

We use the notations $[x]_{+} =\max(x,0)$ and $[x]_{-} = \min(x,0)$.   
\\  

\noindent
\textbf{Transition condition:} \label{Transition condition}
Suppose $v$ and $v'$ are distinct vertices incident to the same edge $e$.
Then $C(v,e)=-C(v',e)$.  
Furthermore, if $\beta=C(v,e)$,  $\beta\ck=C\ck(v,e)$,  and $\gamma\in C(v)\setminus\set{\beta}$, then $\gamma+[\sgn(\beta)\omega(\beta\ck,\gamma)]_+\,\beta$ is in $C(v')$.\\

\noindent
\textbf{Co-transition condition:} \label{Co-transition condition}
Suppose $v$ and $v'$ are distinct vertices incident to the same edge $e$.
Then $C\ck(v,e)=-C\ck(v',e)$.  
Furthermore, if $\beta\ck=C\ck(v,e)$,  $\beta=C(v,e)$,  and $\gamma\ck\in C\ck(v)\setminus\set{\beta\ck}$, then $\gamma\ck+[-\sgn(\beta\ck)\omega(\gamma\ck,\beta)]_+\,\beta\ck$ is in $C\ck(v')$.\\

The triple $(G,C,C\ck)$ will be called a \newword{framework for $B$} \label{framework} if $G$ is connected and if the triple satisfies the 
Co-label,
Sign,
Base, 
Transition,
and
Co-transition
conditions.

\begin{example}\label{B2 example}
We now construct a framework for the non-skew-symmetric exchange matrix $B=\begin{psmallmatrix*}[r]0&2\\-1&0\end{psmallmatrix*}$. 
This is skew-symmetrizable with $\delta(2)=2\delta(1)$.
The Cartan companion of $B$ is $\Cart(B)=\begin{psmallmatrix*}[r]2&-2\\-1&2\end{psmallmatrix*}$.
The skew-symmetric form $\omega$ is given by $\omega(\alpha_1\ck,\alpha_2)=2$ and $\omega(\alpha_2\ck,\alpha_1)=-1$.
A framework for $B$ is shown in Figure~\ref{B2 frame fig}. 
(In Section~\ref{camb frame sec}, we will see that this is a Cambrian framework.)  
\newsavebox{\smlmat}
\savebox{\smlmat}{$\begin{psmallmatrix*}[r]0&2\\-1&0\end{psmallmatrix*}$}
\begin{figure}
\begin{picture}(0,0)(-72,-75)
\put(12,-68){\small$\alpha_2$}
\put(-15,-71){\small$\alpha_1$}
\put(-50,-60){\small$-\alpha_1$}
\put(-47,-44){\small$2\alpha_1+\alpha_2$}
\put(-60,-10){\small$-2\alpha_1-\alpha_2$}
\put(-60,7){\small$\alpha_1+\alpha_2$}
\put(-47,41){\small$-\alpha_1-\alpha_2$}
\put(-45,58){\small$\alpha_2$}
\put(-24,69){\small$-\alpha_2$}
\put(47,-8){\small$-\alpha_2$}
\put(55,5){\small$\alpha_1$}
\put(12,63){\small$-\alpha_1$}
\end{picture}
\includegraphics{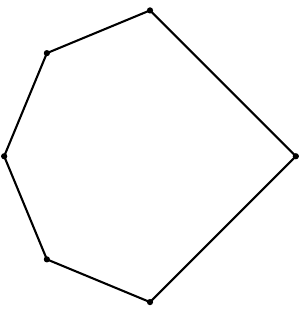}
\qquad
\begin{picture}(0,0)(-72,-75)
\put(12,-68){\small$\alpha\ck_2$}
\put(-19,-73){\small$\alpha\ck_1$}
\put(-55,-62){\small$-\alpha\ck_1$}
\put(-47,-44){\small$\alpha\ck_1+\alpha\ck_2$}
\put(-60,-10){\small$-\alpha\ck_1-\alpha\ck_2$}
\put(-60,7){\small$\alpha\ck_1+2\alpha\ck_2$}
\put(-47,41){\small$-\alpha\ck_1-2\alpha\ck_2$}
\put(-45,58){\small$\alpha\ck_2$}
\put(-24,69){\small$-\alpha\ck_2$}
\put(46,-8){\small$-\alpha\ck_2$}
\put(51,5){\small$\alpha\ck_1$}
\put(12,63){\small$-\alpha\ck_1$}
\end{picture}
\includegraphics{frameB2.pdf}
\caption[A framework]{A framework for the exchange matrix $B=$\usebox{\smlmat}}
\label{B2 frame fig}
\end{figure}
The figure shows two copies of the same graph, with labels on one and co-labels on the other.
The label on an incident pair $(v,e)$ is shown near $e$, closer to $v$ than to the other vertex of $e$.
The vertex at the bottom is $v_b$.
\end{example}

\begin{remark}\label{skew sym remark}
When $B$ is skew-symmetric, rather than merely skew-symmetrizable, each simple co-root equals the corresponding simple root.
By a simple inductive argument, a framework $(G,C,C\ck)$ for $B$ must have $C\ck(v,e)=C(v,e)$ for every incident pair $(v,e)$.
Thus, for skew-symmetric $B$, we may as well define a framework to be a pair $(G,C)$ satisfying the Sign condition, the Base condition (ignoring the requirement about $C\ck$), and the Transition condition (replacing $\beta\ck$ by $\beta$).
\end{remark}

\begin{remark}\label{co-label implications}
One might wonder whether the Transition and Co-transition conditions together imply the Co-label condition.  
Without some further condition, they do not, because of the issue mentioned below, in the paragraph between the statement and proof of Proposition~\ref{cotrans restate}.
\end{remark}

We now establish some first properties of frameworks.

\begin{prop}\label{basis}
Suppose $(G,C,C\ck)$ is a framework for $B$ and let $v$ be a vertex of $G$.
Then the label set $C(v)$ is a basis for the root lattice and $C\ck(v)$ is a basis for the co-root lattice.
\end{prop}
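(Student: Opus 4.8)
The plan is to induct on the distance from the base vertex $v_b$ within the graph obtained from $G$ by discarding half-edges. Because $G$ is connected, every vertex $v$ is joined to $v_b$ by a path of full edges, so it suffices to verify the statement at $v_b$ and to show that it is preserved along each full edge. I will carry the two assertions---that $C(v)$ is a basis of the root lattice and that $C\ck(v)$ is a basis of the co-root lattice---through the induction \emph{simultaneously}, since each inductive step of one uses the hypothesis for the other. At $v_b$ both assertions are immediate from the Base condition, since $C(v_b)=\Pi$ and $C\ck(v_b)=\Pi\ck$ are by definition the standard lattice bases.

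For the inductive step, let $e$ be a full edge joining $v$ to $v'$, set $\beta=C(v,e)$, and assume the statement holds at $v$. The Transition condition describes $C(v')$ completely: the label on the shared edge becomes $-\beta$, and each remaining label $\gamma\in C(v)\setminus\set{\beta}$ is replaced by $\gamma+[\sgn(\beta)\,\omega(\beta\ck,\gamma)]_+\,\beta$. Writing $C(v)=\set{\beta,\gamma_1,\dots,\gamma_{n-1}}$ and abbreviating $c_j=[\sgn(\beta)\,\omega(\beta\ck,\gamma_j)]_+$, the vectors exhibited in $C(v')$ are $-\beta$ together with the $\gamma_j+c_j\beta$. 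Expressed in the ordered basis $(\beta,\gamma_1,\dots,\gamma_{n-1})$ of the root lattice, these are the columns of the upper-triangular integer matrix
\[
\begin{pmatrix} -1 & c_1 & \cdots & c_{n-1} \\ 0 & 1 & & \\ \vdots & & \ddots & \\ 0 & & & 1 \end{pmatrix},
\]
whose determinant is $-1$. Hence the transformation lies in $\GL_n(\integers)$ and carries the basis $C(v)$ to $n$ vectors that again form a $\integers$-basis of the root lattice; in particular they are pairwise distinct. Since $v'$ has degree $n$, the set $C(v')$ has at most $n$ elements, so these $n$ distinct vectors are exactly $C(v')$, and $C(v')$ is a basis. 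The Co-transition condition has the identical shape, with $c_j$ replaced by $[-\sgn(\beta\ck)\,\omega(\gamma_j\ck,\beta)]_+$, so the same determinant $-1$ computation shows $C\ck(v')$ is a basis of the co-root lattice.

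The one point forcing the two inductions to run together is integrality. The matrix above is unimodular only if its off-diagonal entries $c_j$ are integers, that is, only if $\omega(\beta\ck,\gamma_j)\in\integers$. Since $\omega(\alpha_i\ck,\alpha_j)=b_{ij}\in\integers$, bilinearity gives $\omega(\beta\ck,\gamma_j)\in\integers$ precisely when $\beta\ck$ lies in the co-root lattice and $\gamma_j$ lies in the root lattice---and these are exactly the two inductive hypotheses at $v$. The symmetric remark applies to the co-root step, where one needs $\gamma_j\ck$ in the co-root lattice and $\beta$ in the root lattice. I expect this interdependence, together with the bookkeeping that the $n$ transformed vectors are distinct and therefore exhaust the degree-$n$ vertex $v'$, to be the only genuinely delicate part; the determinant computation itself is routine.
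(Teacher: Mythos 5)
Your proof is correct and follows essentially the same route as the paper's: induction along a path from $v_b$, with the Transition and Co-transition conditions interpreted as unimodular (determinant $-1$) integer changes of basis. The differences are ones of care rather than method: you make explicit the integrality point (that $\omega(\beta\ck,\gamma_j)\in\integers$ forces the two inductions for $C$ and $C\ck$ to run simultaneously) and the bookkeeping that the $n$ distinct transformed vectors exhaust $C(v')$, both of which the paper's shorter proof leaves implicit.
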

\begin{proof}
By the Base condition, the proposition holds for $v=v_b$.
If $v\neq v_b$, then since $G$ is connected, there is a finite path from $v$ to $v_b$.
The Transition condition says that at each step in the path, the set $C(\cdot)$ changes by a sequence of Gauss-Jordan operations that alter a label by adding an integer multiple of another label. 
Thus each step preserves the property of being a basis for the root lattice.
The assertion for $C(v)$ follows by an easy induction, and the assertion for $C\ck(v)$ follows by the analogous proof.
\end{proof}

Proposition~\ref{basis} also allows us to define \newword{mutations} of edges.
The use of the term ``mutation'' in this context is inspired by, and will be compatible with, the use of the term in connection with cluster algebras.
Let $e$ be a full edge connecting $v$ to $v'$.
We will define a function $\mu_e$ from $I(v)$ to $I(v')$.
Define $\mu_e(e)$ to be~$e$.
If $f\in I(v)\setminus\set{e}$, then for $\beta=C(v,e)$ and $\gamma=C(v,f)$, define $\mu_e(f)$ to be the edge $f'\in I(v')$ such that $C(v',f')=\gamma+[\sgn(\beta)\omega(\beta\ck,\gamma)]_+\,\beta$.
The notion of mutations of edge sets allows us to make the Transition condition slightly more specific, by identifying roots in $C(v)$ with roots in $C(v')$ in terms of edge mutations.
\\

\noindent
\textbf{Transition condition, strengthened:} \label{Transition condition, strengthened}
Suppose $v$ and $v'$ are distinct vertices incident to the same edge $e$.
Then $C(v,e)=-C(v',e)$.
Furthermore, if $f\in I(v)\setminus\set{e}$, then 
\[C(v',\mu_e(f))=C(v,f)+[\sgn(C(v,e))\omega(C\ck(v,e),C(v,f))]_+\,C(v,e).\]
The following proposition is immediate by the definition of $\mu_e$.
\begin{prop}\label{trans restate}
A framework for $B$ satisfies the strengthened Transition condition.
\end{prop}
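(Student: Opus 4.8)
The plan is to recognize that the strengthened Transition condition is, up to notation, a verbatim repackaging of the (unstrengthened) Transition condition together with the bookkeeping supplied by $\mu_e$. Consequently the proof reduces to two things: matching the relabeled formula against the definition of $\mu_e$, and confirming that $\mu_e$ is genuinely a well-defined function $I(v)\to I(v')$. Since Proposition~\ref{trans restate} is asserted to be immediate, I expect no substantive difficulty; the only point deserving care is the well-definedness of $\mu_e$, which I treat last.

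First I would dispose of the opening assertion $C(v,e)=-C(v',e)$, which is literally the first sentence of the Transition condition and needs no argument. For the second assertion, fix $f\in I(v)\setminus\set{e}$ and set $\beta=C(v,e)$ and $\gamma=C(v,f)$, so that $\beta\ck=C\ck(v,e)$ by the standing convention for the co-label notation. By the very definition of $\mu_e$, the edge $\mu_e(f)$ is the element $f'\in I(v')$ with $C(v',f')=\gamma+[\sgn(\beta)\omega(\beta\ck,\gamma)]_+\,\beta$. Substituting $\beta=C(v,e)$, $\gamma=C(v,f)$, and $\beta\ck=C\ck(v,e)$ back into this expression reproduces exactly the displayed equation in the strengthened condition, with $\sgn(\beta)=\sgn(C(v,e))$ and $\omega(\beta\ck,\gamma)=\omega(C\ck(v,e),C(v,f))$. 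Thus once $\mu_e$ is in hand the formula is a tautology.

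The sole place where \emph{immediate} conceals a short argument is the claim that $\mu_e$ is a well-defined function. Existence of an edge realizing the prescribed label is precisely what the Transition condition guarantees, since it asserts that $\gamma+[\sgn(\beta)\omega(\beta\ck,\gamma)]_+\,\beta$ belongs to the set $C(v')$. For uniqueness I would invoke Proposition~\ref{basis}: the label set $C(v')$ is a basis of the root lattice, so its members are linearly independent and in particular pairwise distinct. Because $G$ is regular of degree $n$, both $I(v)$ and $I(v')$ have $n$ elements, and the map $e'\mapsto C(v',e')$ on $I(v')$ is therefore injective. Hence the edge $f'$ with the prescribed label is unique, $\mu_e$ is well-defined, and the strengthened Transition condition holds. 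This uniqueness step is the only real obstacle, and it is entirely dispatched by the basis property already established.
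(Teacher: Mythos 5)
Your proposal is correct and follows essentially the same route as the paper: the paper declares the proposition ``immediate by the definition of $\mu_e$,'' where that definition is prefaced by the remark that Proposition~\ref{basis} is what makes $\mu_e$ well defined (existence of the target edge from the Transition condition, uniqueness from the basis property of $C(v')$). You have merely made explicit the well-definedness argument that the paper leaves implicit, which is exactly the right reading.
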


We strengthen the Co-transition condition similarly, as follows:\\

\noindent
\textbf{Co-transition condition, strengthened:} \label{Co-transition condition, strengthened}
Suppose $v$ and $v'$ are distinct vertices incident to the same edge $e$.
Then $C\ck(v,e)=-C\ck(v',e)$.
Furthermore, if $f\in I(v)\setminus\set{e}$, then 
\[C\ck(v',\mu_e(f))=C\ck(v,f)+[-\sgn(C\ck(v,e))\omega(C\ck(v,f),C(v,e))]_+\,C\ck(v,e).\]

\begin{prop}\label{cotrans restate}
A framework for $B$ satisfies the strengthened Co-transition condition.
\end{prop}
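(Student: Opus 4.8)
The plan is to exploit the fact that the edge mutation $\mu_e$ is defined purely through the labels $C$, so that the strengthened Transition condition (Proposition~\ref{trans restate}) is literally a restatement of the definition of $\mu_e$, whereas the strengthened Co-transition condition is a genuine assertion linking the co-labels $C\ck$ to this label-defined bijection. The first sentence, $C\ck(v,e)=-C\ck(v',e)$, is already the first sentence of the ordinary Co-transition condition, so all the work lies in the ``furthermore'' clause: given $f\in I(v)\setminus\set{e}$, I must show that the co-label transition vector sits on the edge $\mu_e(f)$ rather than on some other edge of $I(v')$.

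I would fix $f\in I(v)\setminus\set{e}$ and write $\beta=C(v,e)$, $\gamma=C(v,f)$, $\beta\ck=C\ck(v,e)$, $\gamma\ck=C\ck(v,f)$, and $f'=\mu_e(f)$. By the ordinary Co-transition condition, the vector
\[w\ck=\gamma\ck+[-\sgn(\beta\ck)\,\omega(\gamma\ck,\beta)]_+\,\beta\ck\]
lies in $C\ck(v')$, so $w\ck=C\ck(v',g')$ for some $g'\in I(v')$; the goal is then to prove $g'=f'$.

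The key step is a short computation. By the Co-label condition there are positive scalars with $\beta\ck=c_\beta\beta$ and $\gamma\ck=c_\gamma\gamma$, and $\sgn(\beta\ck)=\sgn(\beta)$ by definition. Writing $s=\sgn(\beta)$ and using bilinearity, the scaling identity $[c\,y]_+=c\,[y]_+$ for $c>0$, and skew-symmetry $\omega(\gamma,\beta)=-\omega(\beta,\gamma)$, both the label and the co-label transition coefficients collapse to a common multiple of $[s\,\omega(\beta,\gamma)]_+$. Concretely, the strengthened Transition condition gives $C(v',f')=\gamma+c_\beta[s\,\omega(\beta,\gamma)]_+\,\beta$, while $w\ck=c_\gamma\bigl(\gamma+c_\beta[s\,\omega(\beta,\gamma)]_+\,\beta\bigr)=c_\gamma\,C(v',f')$. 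Thus $w\ck$ is a positive scalar multiple of the label $C(v',f')$.

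Finally I would pin down the edge. Since $w\ck=C\ck(v',g')$, the Co-label condition makes $C(v',g')$ a positive multiple of $w\ck$, hence a positive multiple of $C(v',f')$; but $C(v')$ is a basis for the root lattice by Proposition~\ref{basis}, so distinct incident edges carry linearly independent labels, forcing $g'=f'$. Therefore $C\ck(v',\mu_e(f))=w\ck$, which is exactly the strengthened Co-transition formula. I expect the main obstacle to be precisely this bridge from labels to co-labels: the ordinary Co-transition condition only certifies that $w\ck$ belongs to the set $C\ck(v')$, and matching it to the label-defined index $\mu_e(f)$ rests on combining the positive-scaling compatibility of the Co-label condition with the skew-symmetry of $\omega$, which is exactly what renders the two transition formulas proportional.
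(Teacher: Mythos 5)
Your proof is correct and follows essentially the same route as the paper: both use the Co-label condition and the skew-symmetry/bilinearity of $\omega$ to show the co-label transition vector is a positive multiple of the label transition vector $C(v',\mu_e(f))$, then invoke Proposition~\ref{basis} to pin down the edge. The only (cosmetic) difference is that you apply the basis property to the label set $C(v')$ to identify $g'=f'$, whereas the paper applies it to the co-label set $C\ck(v')$ to conclude that only one multiple of the transition vector can occur there.
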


Proposition~\ref{cotrans restate} is not immediate like Proposition~\ref{trans restate}, because, \emph{a priori}, we do not know that an edge mutation operation defined in terms of the Co-transition condition would agree with $\mu_e$.
The Co-label condition allows us to avoid this difficulty.

\begin{proof}
Suppose $v$ and $v'$ are distinct vertices incident to the same edge $e$.
The assertion that $C\ck(v,e)=-C\ck(v',e)$ is part of the (unstrengthened) Co-transition condition.
Furthermore, if $f\in I(v)\setminus\set{e}$, then the Co-label condition and the strengthened Transition condition imply that 
$C\ck(v',\mu_e(f))$ is a positive scalar multiple of 
\begin{equation}\label{is}
C(v,f)+[\sgn(C(v,e))\omega(C\ck(v,e),C(v,f))]_+\,C(v,e).
\end{equation}
The (unstrengthened) co-Transition condition says that $C\ck(v')$ contains the vector
\begin{equation}\label{contains}
C\ck(v,f)+[-\sgn(C\ck(v,e))\omega(C\ck(v,f),C(v,e))]_+\,C\ck(v,e).
\end{equation}
Writing $C\ck(v,e)=aC(v,e)$ and $C\ck(v,f)=bC(v,f)$, we rewrite \eqref{is} as
\begin{equation}\label{is rewrite}
C(v,f)+[\sgn(C(v,e))\omega(aC(v,e),C(v,f))]_+\,C(v,e).
\end{equation}
and rewrite \eqref{contains} as
\begin{equation}\label{contains rewrite}
bC(v,f)+[-\sgn(C(v,e))\omega(bC(v,f),C(v,e))]_+\,aC(v,e).
\end{equation}
Using the antisymmetry and linearity of $\omega$, we see that \eqref{contains rewrite} is $b$ times \eqref{is rewrite}.
Proposition~\ref{basis} implies that only one multiple of \eqref{is} is in $C\ck(v')$, so we conclude that \eqref{contains} is $C\ck(v',\mu_e(f))$.
\end{proof}

In light of Propositions~\ref{trans restate} and~\ref{cotrans restate}, we will tacitly use the strengthened forms of the Transition and Co-transition conditions when needed.
The point is that the original statements of the conditions are easier to state and potentially easier to check, but that the strengthened statements provide more precise control for use in arguments.

\begin{remark}\label{ratios}
The proof of Proposition~\ref{cotrans restate} also establishes the following fact:
The ratio between $C\ck(v',\mu_e(f))$ and $C(v',\mu_e(f))$ equals the ratio between $C\ck(v,f)$ and $C(v,f)$.
Thus the co-label-to-label ratios appearing throughout a framework are exactly the ratios between simple co-roots and simple roots.
\end{remark}

The dual bases to the co-label sets $C\ck(v)$ will be of great importance.
Given a framework $(G,C,C\ck)$, and a vertex $v$ of $G$, let $R(v)$ be the basis of $V^*$ that is dual to the basis $C\ck(v)$ of $V$.  
We emphasize that $R(v)$ is dual to $C\ck(v)$, not to $C(v)$.
More specifically, for each $e\in I(v)$, let $R(v,e)$ be the basis vector in $R(v)$ that is dual to $C\ck(v,e)$.

\begin{prop}\label{dual adjacent}
Let $(G,C,C\ck)$ be a framework for $B$ and let $v$ and $v'$ be adjacent vertices of $G$.
Then $R(v)\cap R(v')$ contains exactly $n-1$ vectors.
Specifically, if $e$ is the edge connecting $v$ to $v'$ and $f\in I(v)\setminus\set{e}$, then $R(v,f)=R(v',\mu_e(f))$.
Also, $R(v,e)$ and $R(v', e)$ lie on opposite sides of the hyperplane spanned by $R(v) \cap R(v')$.
\end{prop}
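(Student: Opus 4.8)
The plan is to exploit the relationship between dual bases and the strengthened Transition and Co-transition conditions, which describe precisely how the label and co-label sets change across an edge. The central observation is that the co-label sets $C\ck(v)$ and $C\ck(v')$ differ in a controlled, Gauss–Jordan-like manner, so their dual bases $R(v)$ and $R(v')$ should agree on all but one vector. First I would set up notation: let $e$ connect $v$ to $v'$, and recall from the strengthened Co-transition condition that $C\ck(v',e)=-C\ck(v,e)$ and, for each $f\in I(v)\setminus\set{e}$,
\[C\ck(v',\mu_e(f))=C\ck(v,f)+[-\sgn(C\ck(v,e))\omega(C\ck(v,f),C(v,e))]_+\,C\ck(v,e).\]
So every vector of $C\ck(v')$ other than $C\ck(v',e)$ is obtained from the corresponding vector of $C\ck(v)$ by adding a multiple of $C\ck(v,e)$, while the $e$-vector is simply negated.

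The key step is to verify the duality claims directly against the defining pairings $\br{R(v,g),C\ck(v,h)}=\delta_{g,h}$. To prove $R(v,f)=R(v',\mu_e(f))$ for $f\neq e$, I would check that the functional $R(v,f)$ pairs correctly with every element of the basis $C\ck(v')$. For a vector $C\ck(v',\mu_e(f'))$ with $f'\neq e$, the displayed formula expresses it as $C\ck(v,f')$ plus a multiple of $C\ck(v,e)$; since $R(v,f)$ is dual to $C\ck(v)$ and $f\neq e$, it annihilates $C\ck(v,e)$, so the pairing reduces to $\br{R(v,f),C\ck(v,f')}=\delta_{f,f'}$, exactly as required. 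For the vector $C\ck(v',e)=-C\ck(v,e)$, duality gives $\br{R(v,f),-C\ck(v,e)}=0$ since $f\neq e$. Hence $R(v,f)$ is dual to $C\ck(v')$ in the slot $\mu_e(f)$ and orthogonal to the rest, so $R(v,f)=R(v',\mu_e(f))$. This establishes that $R(v)\cap R(v')$ contains the $n-1$ vectors $\set{R(v,f):f\neq e}$, and Proposition~\ref{basis} (which guarantees $C\ck(v)$ and $C\ck(v')$ are bases, hence $R(v)$ and $R(v')$ are genuine dual bases) ensures these are well defined.

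It remains to show $R(v)\cap R(v')$ contains \emph{exactly} $n-1$ vectors, i.e.\ that $R(v,e)\neq R(v',e)$, and to pin down the sign. The hyperplane $H$ spanned by $R(v)\cap R(v')$ is the common zero set of the functionals; equivalently, writing $W=\Span\set{C\ck(v,f):f\neq e}=\Span\set{C\ck(v',\mu_e(f)):f\neq e}$, the hyperplane $H$ consists of functionals annihilating $W$ — no wait, more usefully, $H=\set{x:\br{x,C\ck(v,e)}\text{'s complementary condition}}$; concretely, a functional lies in $H$ iff it is a combination of the shared $R(v,f)$, which is the set of functionals vanishing on $C\ck(v,e)$ (since each $R(v,f)$ with $f\neq e$ annihilates $C\ck(v,e)$ and they span an $(n-1)$-dimensional space). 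Both $R(v,e)$ and $R(v',e)$ pair to a nonzero value with the relevant co-label, so neither lies on $H$. To compare sides I would compute $\br{R(v,e),C\ck(v,e)}=1$ and $\br{R(v',e),C\ck(v,e)}$: since $R(v',e)$ is dual to $C\ck(v')$ and $C\ck(v',e)=-C\ck(v,e)$, we get $\br{R(v',e),C\ck(v,e)}=-\br{R(v',e),C\ck(v',e)}=-1$. The opposite signs show $R(v,e)$ and $R(v',e)$ lie strictly on opposite sides of $H$; in particular they are distinct, giving exactly $n-1$ shared vectors.

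I expect the main obstacle to be bookkeeping around the hyperplane characterization: one must argue cleanly that the $(n-1)$ shared functionals span exactly the functionals vanishing on $C\ck(v,e)$, so that $H$ is genuinely described by the single linear condition $\br{\,\cdot\,,C\ck(v,e)}=0$. This is where having both bases be true dual bases (via Proposition~\ref{basis}) is essential, and care is needed because $C\ck(v,e)$ need not be a simple co-root. Once $H$ is identified as $\set{x:\br{x,C\ck(v,e)}=0}$, the sign comparison via the two pairings $+1$ and $-1$ is immediate and finishes the proof.
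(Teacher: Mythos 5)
Your proposal is correct and takes essentially the same approach as the paper's proof: you verify that $R(v,f)$ satisfies the defining duality pairings against the basis $C\ck(v')$ (using that the Co-transition condition changes co-labels only by multiples of $C\ck(v,e)$), identify the hyperplane spanned by $R(v)\cap R(v')$ as $C\ck(v,e)^\perp$, and conclude from $\br{R(v,e),C\ck(v,e)}=1$ versus $\br{R(v',e),C\ck(v,e)}=-1$ that the two remaining vectors lie on opposite sides. The only difference is cosmetic: the paper treats the slot $p=e$ uniformly with the others (since $-C\ck(v,e)$ is also a multiple of $C\ck(v,e)$), whereas you handle it separately.
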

\begin{proof}
Let $f\in I(v)\setminus\set{e}$.
We know $\br{R(v,f),C\ck(v,p)}=\delta_{f,p}$ (Kronecker delta) for $p\in I(v)$.
The Co-transition condition says, in particular, that $C\ck(v,p)$ and $C\ck(v',\mu_e(p))$ differ by a multiple of $C\ck(v,e)$.
Thus $\br{R(v,f),C\ck(v',\mu_e(p))}$ equals $\br{R(v,f),C\ck(v,p)}=\delta_{f,p}$ for $p\in I(v)$.
Now $R(v,f)=R(v',\mu_e(f))$, because $R(v,f)$ satisfies the conditions that define $R(v',\mu_e(f))$.

Finally, by the definition of a dual basis,  the $(n-1)$-plane spanned by $R(v) \cap R(v')$ is $C\ck(v,e)^{\perp}$. 
Since, by the definition of a dual basis, $\langle C\ck(v,e), R(v,e) \rangle =1$ and $\langle C\ck(v,e), R(v',e) \rangle = \langle -C\ck(v',e), R(v',e) \rangle =-1$, we see that $R(v,e)$ and $R(v', e)$ are on opposite sides of this plane.
\end{proof}

We define $\Cone(v)$ to be the simplicial cone in $V^*$ spanned by the $R(v,e)$, as $e$ ranges over the neighbors of $v$, so $\Cone(v)=\bigcap_{e\in I(v)}\set{x\in V^*:\br{x,C\ck(v,e)}\ge 0}$. 
Proposition~\ref{dual adjacent} has the following corollary. 
\begin{cor} \label{cor:BdyFacet}
Let $(G,C,C\ck)$ be a framework for $B$ and let $v$ and $v'$ be adjacent vertices of $G$.
Then the cones $\Cone(v)$ and $\Cone(v')$ intersect in a common facet.
\end{cor}

We conclude with the following observation:
\begin{prop}\label{-B^T}
If $(G,C,C\ck)$ is a framework for $B$ then $(G,C\ck,C)$ is a framework for $-B^T$.
\end{prop}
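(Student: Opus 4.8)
The plan is to observe that replacing $B$ by $-B^T$ interchanges the roles of roots and co-roots, and that under this interchange the Transition and Co-transition conditions swap with each other while every other condition is already symmetric in $C$ and $C\ck$. First I would pin down the root-space data attached to $B'=-B^T$, writing $b'_{ij}=-b_{ji}$. A one-line computation from $\delta(i)b_{ij}=-\delta(j)b_{ji}$ shows that $B'$ is skew-symmetrizable with symmetrizer $\delta'(i)=\delta(i)^{-1}$, so that the natural root-space data for $B'$ live in the same space $V$: its simple roots are the old simple co-roots $\alpha_i\ck$ and its simple co-roots are the old simple roots $\alpha_i$ (indeed $\delta'(i)^{-1}\alpha_i\ck=\alpha_i$). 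The crucial point is that the bilinear form $\omega'$ attached to $B'$ equals $\omega$: since $\omega'(\alpha_i,\alpha_j\ck)=b'_{ij}=-b_{ji}$ while the antisymmetry of $\omega$ gives $\omega(\alpha_i,\alpha_j\ck)=-\omega(\alpha_j\ck,\alpha_i)=-b_{ji}$, the two forms agree on a spanning family of pairs and hence coincide.

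With this dictionary the three ``static'' conditions are immediate. The Co-label condition is visibly symmetric, since $C\ck(v,e)$ being a positive multiple of $C(v,e)$ is the same as $C(v,e)$ being a positive multiple of $C\ck(v,e)$. Because the new simple roots $\alpha_i\ck$ are positive multiples of the $\alpha_i$, the nonnegative spans coincide, so the Sign condition for the new label $C\ck(v,e)$ follows from the Sign condition for $C(v,e)$ together with the Co-label condition; in particular $\sgn(C\ck(v,e))=\sgn(C(v,e))$, which is already recorded in the definition. The Base condition holds because $C\ck(v_b)=\Pi\ck$ is exactly the set of new simple roots and $C(v_b)=\Pi$ is exactly the set of new simple co-roots.

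The heart of the proof is to show that the Transition condition for $(G,C\ck,C)$, computed with $\omega'=\omega$, is word-for-word the Co-transition condition for $(G,C,C\ck)$, and conversely. The opposite-sign clauses match trivially, as $C\ck(v,e)=-C\ck(v',e)$ is part of the original Co-transition condition and $C(v,e)=-C(v',e)$ is part of the original Transition condition. For the additive clause I would take a new label $\beta'=C\ck(v,e)$, whose new co-label is $\beta'\ck=C(v,e)$, together with another new label $\gamma'=C\ck(v,f)$, and compare the coefficient $[\sgn(\beta')\omega(C(v,e),C\ck(v,f))]_+$ demanded by the new Transition condition against the coefficient $[-\sgn(C\ck(v,e))\omega(C\ck(v,f),C(v,e))]_+$ supplied by the original Co-transition condition. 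Using $\sgn(C\ck(v,e))=\sgn(C(v,e))$ and the single sign flip $\omega(C(v,e),C\ck(v,f))=-\omega(C\ck(v,f),C(v,e))$, these coefficients are equal, so the required vector lies in $C\ck(v')$. The mirror-image computation identifies the new Co-transition condition with the original Transition condition.

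The only place demanding care — and hence the main obstacle — is the bookkeeping in this last step: after the swap one must keep straight which vector now plays the role of label and which of co-label, and track both the $\sgn$ factor and the lone sign change coming from the antisymmetry of $\omega$, so that the two $[\,\cdot\,]_+$ coefficients are seen to agree exactly. Everything else is inherited for free: $G$ is the same connected quasi-graph, regular of degree $n$, so once the Transition and Co-transition conditions have been matched the triple $(G,C\ck,C)$ satisfies all five conditions and is a framework for $-B^T$.
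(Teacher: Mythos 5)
Your proof is correct and follows essentially the same route as the paper: the paper's own argument is precisely the observation that passing from $B$ to $-B^T$ swaps the roles of simple roots and simple co-roots and swaps the Transition and Co-transition conditions, after which the proposition is immediate from the definitions. Your write-up merely supplies the details the paper leaves implicit (the symmetrizer $\delta^{-1}$, the identification $\omega'=\omega$, and the matching of the $[\,\cdot\,]_+$ coefficients via antisymmetry), so there is nothing to add.
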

In the notation $-B^T$, the superscript $T$ denotes transpose.
Thus moving from $B$ to $-B^T$ should be thought of as ``transposing the absolute values but not the signs.''
The Cartan companion $\Cart(B)$ of $B$ defines the simple roots $\Pi$ and simple co-roots $\Pi\ck$.
The exchange matrix $-B^T$ has Cartan companion $\Cart(-B^T)=\Cart(B)^T$, which defines simple roots $\Pi\ck$ and simple co-roots $\Pi$.
Proposition~\ref{-B^T} is immediate from the definition, once we switch the roles of simple roots and simple co-roots, and switch the roles of the Transition condition and Co-transition condition.

\subsection{Reflection frameworks}\label{ref frame subsec}
We now define a special kind of framework called a reflection framework, in which all labels are roots and co-roots, and the Transition condition can be rephrased in terms of the action of reflections in the Coxeter group.

The \newword{Euler form} \label{Euler form} $E$ associated to $B$ is defined on the bases of simple roots and co-roots as follows: 
\[E(\alpha\ck_{i},\alpha_{j})=\left\lbrace\begin{array}{ll}
[b_{ij}]_{-} &\mbox{if } i\neq j,\mbox{ or}\\
1&\mbox{if }i=j.
\end{array}\right.\]

Recall that the Cartan companion $\Cart(B)$ of $B$ is the matrix $[a_{ij}]$ with diagonal entries $a_{ii}=2$ and off-diagonal entries $a_{ij}=-|b_{ij}|$, where $B=[b_{ij}]_{i,j\in I}$.
Define a bilinear form $K$ on $V$ by $K(\alpha\ck_i, \alpha_j)=a_{ij}$.
The form $K$ is symmetric.
(The proof is essentially identical to the proof that $\omega$ is anti-symmetric.)

\begin{prop}\label{sym antisym}
The form $\omega$ is given by $\omega(\beta,\gamma)=E(\beta,\gamma)-E(\gamma,\beta)$, and the form $K$ is given by $K(\beta,\gamma)=E(\beta,\gamma)+E(\gamma,\beta)$.
\end{prop}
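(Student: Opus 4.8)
The plan is to exploit the fact that $\omega$, $K$, and $E$ are all bilinear forms on $V\times V$, and that such a form is completely determined by its values on the pairs $(\alpha_i\ck,\alpha_j)$, since $\Pi\ck$ is a basis of $V$ (for the first slot) and $\Pi$ is a basis of $V$ (for the second slot). So it will suffice to verify one convenient relation among the three forms on these defining basis pairs and then extend by bilinearity.

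The key identity I would establish first is $E=\tfrac{1}{2}(K+\omega)$ as bilinear forms. To check this on the defining pairs I split into two cases. For $i\neq j$, the right-hand side is $\tfrac{1}{2}\big(K(\alpha_i\ck,\alpha_j)+\omega(\alpha_i\ck,\alpha_j)\big)=\tfrac{1}{2}(a_{ij}+b_{ij})=\tfrac{1}{2}(b_{ij}-|b_{ij}|)$, using $a_{ij}=-|b_{ij}|$; by the elementary identity $\min(b,0)=\tfrac{1}{2}(b-|b|)$ this equals $[b_{ij}]_-=E(\alpha_i\ck,\alpha_j)$. For $i=j$ the right-hand side is $\tfrac{1}{2}(2+0)=1=E(\alpha_i\ck,\alpha_i)$, using $a_{ii}=2$ and $b_{ii}=0$. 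Hence $E$ and $\tfrac{1}{2}(K+\omega)$ agree on all basis pairs and are therefore equal as forms.

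With this decomposition in hand, the two claimed formulas fall out formally from the symmetry of $K$ and the antisymmetry of $\omega$, both of which are already available ($\omega$ was shown to be skew-symmetric above, and $K$ is symmetric by the same argument). Antisymmetrizing $E$ gives $E(\beta,\gamma)-E(\gamma,\beta)=\tfrac{1}{2}\big[(K+\omega)(\beta,\gamma)-(K+\omega)(\gamma,\beta)\big]$; the symmetric part $K$ cancels and the skew part $\omega$ doubles, yielding $\omega(\beta,\gamma)$. Symmetrizing gives $E(\beta,\gamma)+E(\gamma,\beta)=\tfrac{1}{2}\big[(K+\omega)(\beta,\gamma)+(K+\omega)(\gamma,\beta)\big]$; now $\omega$ cancels and $K$ doubles, yielding $K(\beta,\gamma)$.

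There is no genuine obstacle here; the whole content is the identity $\min(b,0)=\tfrac{1}{2}(b-|b|)$ together with the observation that $\Pi\ck$ and $\Pi$ are both bases. The only point deserving slight care is the reduction: one must confirm that agreement on the mixed pairs $(\alpha_i\ck,\alpha_j)$ really forces equality of bilinear forms, i.e.\ that different bases may be used in the two slots. A more computational alternative avoids the symmetrize/antisymmetrize step and instead evaluates $E(\alpha_j,\alpha_i\ck)$ directly, rewriting $\alpha_j=\delta(j)\alpha_j\ck$ and $\alpha_i\ck=\delta(i)^{-1}\alpha_i$ and invoking $\delta(i)b_{ij}=-\delta(j)b_{ji}$; this also works but is messier in its sign bookkeeping, which is why I prefer routing the argument through $E=\tfrac{1}{2}(K+\omega)$.
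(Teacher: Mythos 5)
Your proof is correct, and it takes a genuinely different route from the paper's. The paper verifies both identities directly on the basis pairs $(\alpha_i\ck,\alpha_j)$: to handle the swapped term $E(\alpha_j,\alpha_i\ck)$ it rewrites $\alpha_j=\delta(j)\alpha_j\ck$ and $\alpha_i\ck=\delta(i)^{-1}\alpha_i$ and then invokes $\delta(i)b_{ij}=-\delta(j)b_{ji}$ to convert $-\tfrac{\delta(j)}{\delta(i)}\min(b_{ji},0)$ into $\max(b_{ij},0)$ --- in other words, the paper's proof is precisely the ``messier computational alternative'' you describe in your last paragraph. Your route instead isolates the single identity $E=\tfrac12(K+\omega)$, whose verification on basis pairs needs no swapped arguments and no $\delta$-bookkeeping at all (only the elementary identity $[b]_-=\tfrac12(b-|b|)$ and $b_{ii}=0$), and then derives both claims formally by symmetrizing and antisymmetrizing, using the symmetry of $K$ and antisymmetry of $\omega$ established just before the proposition. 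What your approach buys is conceptual clarity: it exhibits $K$ and $\omega$ as exactly twice the symmetric and antisymmetric parts of $E$, and it concentrates all use of skew-symmetrizability into the already-proven (anti)symmetry statements rather than redoing that bookkeeping inline. What the paper's approach buys is self-containedness: it never invokes the symmetry of $K$, so the proposition stands on the definitions alone. Your one flagged worry --- whether agreement on pairs $(\alpha_i\ck,\alpha_j)$ with \emph{different} bases in the two slots forces equality of bilinear forms --- is not an issue: a bilinear form on $V\times V$ is determined by its values on $\mathcal{B}_1\times\mathcal{B}_2$ for any pair of bases $\mathcal{B}_1,\mathcal{B}_2$ of $V$, and both $\Pi\ck$ and $\Pi$ are bases; the paper's own proof implicitly relies on the same reduction.
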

\begin{proof}
We will check these identities for $\beta=\alpha_i\ck$ and $\gamma=\alpha_j$.
If $i=j$, then the identities are easy, so suppose $i\neq j$.
Then $E(\alpha_i\ck,\alpha_j)-E(\alpha_j,\alpha_i\ck)$ equals
\[E(\alpha_i\ck,\alpha_j)-\frac{\delta(j)}{\delta(i)}E(\alpha_j\ck,\alpha_i)
=\min(b_{ij},0)-\frac{\delta(j)}{\delta(i)}\min(b_{ji},0),\]
which equals $\min(b_{ij},0)+\max(b_{ij},0)=b_{ij}$ because $\delta(i) b_{ij}=-\delta(j) b_{ji}$.
This is the first identity.

Similarly, $E(\alpha_i\ck,\alpha_j)+E(\alpha_j,\alpha_i\ck)=\min(b_{ij},0)-\max(b_{ij},0)=-|b_{ij}|=a_{ij}$.
\end{proof}

The following is an immediate corollary of Proposition~\ref{sym antisym}.
\begin{cor}\label{E 0}
If $\beta,\gamma\in V$ then $|\omega(\beta,\gamma)|=|K(\beta,\gamma)|$ if and only if either $E(\beta,\gamma)=0$ or $E(\gamma,\beta)=0$ or both.
\end{cor}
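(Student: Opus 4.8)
The plan is to reduce the claim to an elementary fact about real numbers by invoking Proposition~\ref{sym antisym}. For fixed $\beta,\gamma\in V$, set $a=E(\beta,\gamma)$ and $b=E(\gamma,\beta)$. Proposition~\ref{sym antisym} then says exactly that $\omega(\beta,\gamma)=a-b$ and $K(\beta,\gamma)=a+b$, so the proposed identity $|\omega(\beta,\gamma)|=|K(\beta,\gamma)|$ is literally the statement $|a-b|=|a+b|$, while the hypothesis ``$E(\beta,\gamma)=0$ or $E(\gamma,\beta)=0$'' is the statement ``$a=0$ or $b=0$.'' Thus the corollary is equivalent to the assertion that, for real numbers $a$ and $b$, one has $|a-b|=|a+b|$ if and only if $ab=0$.

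To finish, I would argue this real-number equivalence by squaring. Since both $|a-b|$ and $|a+b|$ are nonnegative, the equation $|a-b|=|a+b|$ holds if and only if $(a-b)^2=(a+b)^2$; expanding both sides gives $a^2-2ab+b^2=a^2+2ab+b^2$, which simplifies to $4ab=0$, i.e.\ $ab=0$, and a product of reals vanishes precisely when one of the factors does. There is essentially no obstacle in this argument; the only point worth recording is that passing to squares is a genuine equivalence rather than merely an implication because absolute values are nonnegative, so no spurious sign cases arise. Hence the corollary follows immediately from Proposition~\ref{sym antisym}.
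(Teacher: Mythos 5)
Your proof is correct and takes essentially the same approach as the paper: the paper states Corollary~\ref{E 0} as an immediate consequence of Proposition~\ref{sym antisym}, which is exactly the reduction you perform. Your squaring argument simply spells out the elementary fact $|a-b|=|a+b|\Leftrightarrow ab=0$ that the paper leaves implicit.
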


The Cartan matrix $\Cart(B)=[a_{ij}]$ defines a Coxeter group $W$ generated by $S=\set{s_i:i\in I}$ whose defining relations $(s_is_j)^{m(i,j)}$ are given by 
\[m(i,j)=\left\lbrace\begin{array}{ll}
2&\mbox{if }a_{ij}\cdot a_{ji}=0,\\
3&\mbox{if }a_{ij}\cdot a_{ji}=1,\\
4&\mbox{if }a_{ij}\cdot a_{ji}=2,\\
6&\mbox{if }a_{ij}\cdot a_{ji}=3,\mbox{ or}\\
\infty&\mbox{if }a_{ij}\cdot a_{ji}\ge 4.
\end{array}\right.\]
The Cartan matrix also defines an action of $W$ on $V$.
The action of a generator $s_i\in S$ on a simple root $\alpha_j$ is $s_i(\alpha_j)=\alpha_j-a_{ij} \alpha_i$ and the action on a simple co-root $\alpha\ck_j$ is $s_i(\alpha\ck_j)=\alpha\ck_j-a_{ji} \alpha\ck_i$.
The action of $W$ preserves the form $K$.

A \newword{real root} $\beta$ is a vector in the orbit, under the action of the Coxeter group $W$, of some simple root.
\newword{Real co-roots} are defined similarly. 
The \newword{real root system} $\Phi$ associated to $\Cart(B)$ is the set of all real roots.
A root is \newword{positive} if it is in the nonnegative linear span of the simple roots.
Otherwise, it is in the nonpositive linear span of the simple roots and is called \newword{negative}.
The \newword{reflections} in $W$ are the elements conjugate to elements of $S$.
There is a bijection $t\to\beta_t$ from reflections to positive roots and a bijection $t\to\beta\ck_t$ from reflections to positive co-roots such that $t$ acts on $V$ by sending $x\in V$ to $tx=x-K(\beta\ck_t, x) \beta_t$.
The action of $W$ on $V$ commutes with passing from roots to co-roots:
That is, $w(\beta\ck)=(w\beta)\ck$ for any $w\in W$ and any root $\beta$.

The set of  \newword{almost positive roots} is the union of the set of positive roots and the set of negative simple roots. 
Similarly, the almost positive co-roots are those co-roots that are either positive or negative simple. 
If $V$ is two dimensional, then the almost positive roots are vectors in a two dimensional vector space, no two of which are positive multiples of each other, so we may consider them to be cyclically ordered as they wind around the origin.  
This concept and ordering will return later in the paper.

Let $G$ be a connected quasi-graph and let $C$ be a labeling of each incident pair in $G$ by a vector in $V$.
We define some additional conditions on $(G,C)$.\\

\noindent
\textbf{Root condition:}   \label{Root condition}
Each label $C(v,e)$ is a real root with respect to the Cartan matrix $\Cart(B)$.\\

A pair $(G,C)$ satisfying the Root condition automatically satisfies the Sign condition.
Suppose $v$ is a vertex of $G$.
Define $C_+(v)$ to be the set of positive roots in $C(v)$ and define $C_-(v)$ to be the set of negative roots in $C(v)$.
Let $\Gamma(v)$ be the directed graph whose vertex set is $C(v)$, with an edge $\beta \to \beta'$ if $E(\beta, \beta') \neq 0$. 
\\

\noindent
\textbf{Euler conditions:} \label{Euler conditions}
Suppose $v$ is a vertex of $G$ and let $e$ and $f$ be distinct edges incident to $v$. 
Write $\beta=C(v,e)$ and $\gamma=C(v,f)$.
Then
\begin{enumerate}
\item[(E0) ]  At least one of $E(\beta,\gamma)$ and $E(\gamma,\beta)$ is zero.
\item[(E1) ] If $\beta\in C_+(v)$ and $\gamma\in C_-(v)$ then $E(\beta,\gamma)=0$. 
\item[(E2) ] If $\sgn(\beta)=\sgn(\gamma)$ then $E(\beta,\gamma)\le0$. 
\item[(E3) ] The graph $\Gamma(v)$ is acyclic.
\end{enumerate}
Condition (E0) follows immediately from condition (E3), but we list it separately for convenience.\\

\noindent
\textbf{Reflection condition:}  \label{Reflection condition}
Suppose $v$ and $v'$ are distinct vertices incident to the same edge $e$.
If $\beta=C(v,e)=\pm\beta_t$ for some reflection $t$ and $\gamma\in C(v)$, then $C(v')$ contains the root 
\[\gamma'=\left\lbrace\begin{array}{ll}
t\gamma&\mbox{if }\omega(\beta_t\ck,\gamma)\ge 0,\mbox{ or}\\
\gamma&\mbox{if }\omega(\beta_t\ck,\gamma)<0.
\end{array}\right.\]

Applying the Reflection condition with $\gamma=\beta$, we see that $-\beta\in C(v')$.
Condition (E0) implies, in light of Proposition~\ref{sym antisym}, that $\omega(\beta_t\ck,\gamma)$ and $K(\beta_t\ck,\gamma)$ agree in absolute value.
Thus except in the case $\gamma=\beta$, we can replace the ``$<$'' sign by ``$\le$'' in the Reflection condition.
Since $\gamma$ and $t\gamma$ differ by a multiple of $\beta$, and since $\omega$ is antisymmetric, the Reflection condition is symmetric in $v$ and $v'$.
In particular, for any edge, it is enough to check the condition in one direction.
Just as we strengthened the Transition condition, we can strengthen the Reflection condition, by defining $\mu_e$ so that the strengthened condition holds.

The pair $(G,C)$ is a \newword{reflection framework} if it satisfies the Base condition (ignoring the assertion about the labeling $C\ck$), the Root condition, the Reflection condition and the Euler conditions.

If a reflection framework exists for $B$, then in particular $B$ is acyclic in the usual sense for exchange matrices:  
Namely, the directed graph on $I$, with directed edges $i\to j$ if and only if $b_{ij}<0$,  is acyclic.
The acyclicity follows from (E3) because the directed graph on $I$ is isomorphic to $\Gamma(v_b)$ by the Base condition.

The following proposition relates reflection frameworks to frameworks.
\begin{prop}\label{ref implies frame}
Suppose $(G,C)$ is a reflection framework and let $C\ck$ be the labeling of incident pairs of $G$ such that $C\ck(v,e)$ is the co-root associated to $C(v,e)$.
Then $(G,C,C\ck)$ is a framework.
\end{prop}

\begin{proof}  
The Co-label condition holds by the definition of $C\ck$, the Sign condition holds by the Root condition, and the Base condition holds by hypothesis and by the definition of $C\ck$.
We need to verify the Transition and Co-Transition conditions.

Suppose $v$ is a vertex connected, by an edge $e$, to another vertex $v'$.
By the Root condition, $C(v,e)$ is a root $\beta=\pm\beta_t$.
By the anti-symmetry of $\omega$ we see that $\omega(\beta_t\ck,\beta)=0$, so the Reflection condition says, in particular, that $C(v',e)=t\beta=-\beta$, so the first condition of the Transition condition holds.
Furthermore, take $f$ to be an edge, distinct from $e$, in $I(v)$.
Write $\gamma$ for $C(v,f)$ and $\gamma'$ for $C(v',\mu_e(f))$.
By the (strengthened) Reflection condition, $\gamma'=t\gamma=\gamma-K(\beta\ck,\gamma) \beta$ if $\omega(\beta_t\ck,\gamma)\ge 0$ or $\gamma'=\gamma$ if $\omega(\beta_t\ck,\gamma)<0$.
The (strengthened) Transition condition is the assertion that $\gamma'=\gamma+[\omega(\beta_t\ck,\gamma)]_+\,\beta$,  
so we have established this condition when $\omega(\beta_t\ck,\gamma)<0$, and it remains to establish the following:
If $\omega(\beta_t\ck,\gamma)\ge 0$ then $\omega(\beta_t\ck,\gamma)=-K(\beta\ck,\gamma)$.
Suppose $\omega(\beta_t\ck,\gamma)\ge 0$ and consider the four cases given by the signs of $\beta$ and $\gamma$.

If $\beta$ is positive, then $\beta=\beta_t$, so $E(\beta\ck,\gamma)-E(\gamma,\beta\ck)=\omega(\beta_t\ck,\gamma)\ge 0$.
If $\gamma$ is also positive, then Condition (E2) says that $E(\beta\ck,\gamma)\le 0$ and $E(\gamma,\beta\ck)\le 0$, and by condition (E0) we conclude that $E(\beta\ck,\gamma)=0$.
If $\gamma$ is negative, then condition (E1) says that $E(\beta\ck,\gamma)=0$.
In either case, $\omega(\beta_t\ck,\gamma)=\omega(\beta\ck,\gamma)=-E(\gamma,\beta\ck)$ and $K(\beta\ck,\gamma)=E(\gamma,\beta\ck)$.

If $\beta$ is negative, then $\beta=-\beta_t$, so $E(\beta\ck,\gamma)-E(\gamma,\beta\ck)=-\omega(\beta_t\ck,\gamma)\le 0$.
If $\gamma$ is also negative, then again, $E(\beta\ck,\gamma)\le 0$ and $E(\gamma,\beta\ck)\le 0$, so this time $E(\gamma,\beta\ck)=0$.
If $\gamma$ is positive, then $E(\gamma,\beta\ck)=0$ by condition (E1), so $\omega(\beta_t\ck,\gamma)=-\omega(\beta\ck,\gamma)=-E(\beta\ck,\gamma)$ and $K(\beta\ck,\gamma)=E(\beta\ck,\gamma)$.

The reflection condition implies that $(\gamma')\ck=t(\gamma\ck)=\gamma\ck-K(\beta\ck,\gamma\ck)\beta=\gamma\ck-K(\gamma\ck,\beta)\beta\ck$ if $\omega(\beta_t\ck,\gamma)\ge 0$ or $(\gamma')\ck=\gamma\ck$ if $\omega(\beta_t\ck,\gamma)<0$.
The sign of $\omega(\beta_t\ck,\gamma)$ agrees with the sign of $\omega(\gamma\ck,\beta_t)$, so the Co-transition condition follows by a similar argument.
\end{proof}

In fact, the proof above of Proposition~\ref{ref implies frame} establishes that, when the Root condition and the Euler conditions hold, the Transition condition, the Reflection condition, and the Co-transition conditions are all equivalent.
The proof still goes through under a weakening of the Euler conditions:  We only need conditions (E0)--(E2), and only in the case where at least one of the edges $e$ and $f$ is a full edge.

\begin{example}\label{B2 ref example}
The framework described in Example~\ref{B2 example} is a reflection framework.
\end{example}

We pointed out above that a reflection framework does not exist for $B$ if $B$ is not acyclic.
Acyclicity of $B$ turns out to be the only factor determining the existence of a complete reflection framework.
The following is \cite[Theorem~6.5]{ST}.
\begin{theorem}\label{ST frame}
Given any acyclic exchange matrix $B$, a complete reflection framework exists for $B$. 
\end{theorem}

\begin{remark}\label{ST remark}
Since this paper and \cite{ST} cite each other, we want to explicitly dispel any worries about circular reasoning.
This paper quotes only \cite[Theorem~6.5]{ST} (our Theorem~\ref{ST frame}) and mentions some corollaries to that theorem.
The paper \cite{ST} cites this paper only for definitions and to draw further conclusions from \cite[Theorem~6.5]{ST}, but not to prove \cite[Theorem~6.5]{ST}.
\end{remark}

\section{Cluster algebras and frameworks}\label{frame clus sec}
In this section, we review background material on cluster algebras, show how frameworks are combinatorial models for cluster algebras, and establish the properties of these models.

\subsection{Cluster algebras}\label{ca background sec}
As before, let $I$ be a finite indexing set with $|I|=n$ and let $B$ be a skew-symmetrizable integer matrix, with rows and columns indexed by $I$.
Let $\PP$ be a semifield (an abelian multiplicative group with a second commutative, associative operation $\oplus$ such that the group multiplication distributes over $\oplus$). 
Let $Y=(y_i:i\in I)$ be a tuple of elements of $\PP$.
Let $\FF$ be the field of rational functions in $n$ indeterminates with coefficients in $\QQ\PP$.
Let $X=(x_i:i\in I)$ be algebraically independent elements of $\FF$ with $\FF = \QQ \PP(x_i)_{i \in I}$.

Let $T$ be the $n$-regular tree.
We will continue the graph notation from Section~\ref{frame sec}.
For each pair $v,v'$ of vertices of $T$, connected by the edge $e$, let $\mu_e$ be a bijection from the set $I(v)$ of edges incident to $v$ to the set $I(v')$.
This defines two maps called $\mu_e$ for each edge $e$.
We will let the context distinguish the two, and we require that $\mu_e:I(v')\to I(v)$ is the inverse of $\mu_e:I(v)\to I(v')$.

Distinguish a vertex $v_b$ of $T$ and identify $I$ with the set $I(v_b)$ of edges incident to $v_b$.
The data of $(B,Y,X)$ constitute the \newword{initial seed} of the cluster algebra that we define below.
The matrix $B$ is the \newword{exchange matrix} in the seed, the elements $y_i$ are the \newword{coefficients} in the seed and the tuple $X=(x_i:i\in I)$ is the \newword{cluster}, with the individual elements $x_i$ called \newword{cluster variables}.
More generally, a \newword{seed} is any triple consisting of an exchange matrix with rows and columns indexed by $I'$ (for $|I'|=n$), a tuple $(y'_i:i\in I')$ of coefficients and a cluster $(x'_i:i\in I')$ of algebraically independent elements of $\FF$. 

We define $(B^{v_b},Y^{v_b},X^{v_b})=(B,Y,X)$ and further overload the notation $\mu_e$ to define \newword{seed mutations}.
These seed mutations inductively associate a seed to each vertex of $T$.
The indexing set for the seed at $v$ is the set $I(v)$ of edges incident to $v$.
Let $B^v=[b^v_{pq}]_{p,q\in I(v)}$ be the exchange matrix associated to $v$, let $Y^v=(y^v_{p}:p\in I(v))$ be the coefficients, and let $X^v=(x^v_p:p\in I(v))$ be the cluster.\\

\noindent
\textbf{Matrix mutation.} 
Let $e$ be an edge connecting $v$ to $v'$ and define $B^{v'}=\mu_e(B^v)$ by setting
\begin{equation}
b^{v'}_{\mu_e(p)\mu_e(q)} = 
\begin{cases} 
- b^v_{pq} & \mbox{if $p=e$ or $q=e$} \\
b^v_{pq} + \sgn(b^v_{pe})[b^v_{pe}b^v_{eq}]_{+}& \mbox{otherwise.} 
\end{cases}
\label{BMatrixRecurrence}
\end{equation}

\noindent
\textbf{Coefficient mutation.}
Let $e$ be an edge connecting $v$ to $v'$ and define $Y^{v'}=\mu_e(Y^v)$ by setting
\begin{equation}
y^{v'}_{\mu_e(p)}=
\begin{cases}
(y^v_e)^{-1} &\mbox{if }p=e\\
y^v_p(y^v_e)^{[b^v_{ep}]_+}(y^v_e\oplus1)^{-b^v_{ep}} &\mbox{if }p\in I(v)\setminus\set{e}
\end{cases}
\label{YRecurrence}
\end{equation}

\noindent
\textbf{Cluster mutation.}
Let $e$ be an edge connecting $v$ to $v'$ and define $X^{v'}=\mu_e(X^v)$ by setting  
\begin{equation}
x^{v'}_{\mu_e(q)} = 
\begin{cases}
\frac{1}{(y^v_e\oplus 1)x^v_e} \left( y^v_e \prod_p (x^v_p)^{[b^v_{pe}]_+} + \prod_p (x^v_p)^{[-b^v_{pe}]_+} \right) &\mbox{if }q=e\\
x^v_q & \mbox{if }q\neq e.
\label{ClusterRecurrence}
\end{cases}
\end{equation}
In both products above, $p$ ranges over the set $I(v)$.\\

\noindent
\textbf{Seed mutation.}
Let $e$ be an edge $v$ to $v'$ and define $(B^{v'},Y^{v'},X^{v'})$ to be $\mu_e(B^v,Y^v,X^v)=(\mu_e(B^v),\mu_e(Y^v),\mu_e(X^v))$.
Seed mutation is involutive.  
That is, $\mu_e(\mu_e(B^v,Y^v,X^v))=(B^v,Y^v,X^v)$.\\

The \newword{cluster algebra} $\A(B,Y,X)$ is the subalgebra of $\FF$ generated by all of the cluster variables $x_p^v$ where $v$ ranges over all vertices of $T$ and $p$ ranges over $I(v)$.
The cluster algebra is determined by any of its seeds $(B^v,Y^v,X^v)$.
Up to isomorphism, it is determined by any of the pairs $(B^v,Y^v)$.

Two vertices $v,v'\in T$ define \newword{equivalent seeds} if there is a bijection $\lambda:I(v)\to I(v')$ such that, first $b^{v'}_{\lambda(e)\lambda(f)}=b^v_{ef}$, second $y^{v'}_{\lambda(e)}=y^v_e$, and third $x^{v'}_{\lambda(e)}=x^v_e$ for all $e,f\in I(v)$.
When such a $\lambda$ exists, it is unique, and furthermore, by seed mutation it induces a bijection from the neighbors of $v$ to the neighbors of $v'$ such that each neighbor of~$v$ defines a seed that is equivalent to the seed at the corresponding neighbor of $v'$.
Thus we can define a quotient of $T$ by identifying two vertices $v$ and $v'$ if they define equivalent seeds, and identifying edges of $v$ with edges of $v'$ according to the map $\lambda$.
This quotient is the \newword{exchange graph} $\Ex(B,Y,X)$ associated to $(B,Y,X)$.
The mutation maps $\mu_e$ are compatible with this quotient.
Thus we can think of $\mu_e$ as a map, for each pair $v,v'$ of vertices of the exchange graph connected by the edge $e$, from the set $I(v)$ of edges incident to $v$ in the exchange graph to the set $I(v')$ of edges incident to $v'$ in the exchange graph.
In all of the notation defined above, we can safely replace the $n$-regular tree $T$ by the exchange graph $\Ex(B,Y,X)$.

The first surprising theorem \cite[Theorem~3.1]{ca1} about cluster algebras is the following result, known as the Laurent phenomenon: 
\begin{theorem}\label{Laurent thm}
For any vertex $v$ in $\Ex(B,Y,X)$, the cluster algebra $\A(B,Y,X)$ is contained in the ring $\ZZ\PP[(x_i^v)^{\pm1}:i\in I(v)]$.
In other words, if $x$ is any cluster variable in any cluster, then $x$ can be uniquely written as
\begin{equation}
x = \frac{N(x_i^v:i\in I(v))}{\prod_{i\in I(v)}(x_i^v)^{d_i}}, \label{Laurent}
\end{equation}
where the $d_i$ are integers and $N$ is a polynomial in the variables $(x_i^v:i\in I(v))$ with coefficients in $\integers\PP$ which is not divisible by any of the variables $x_i^v$.
\end{theorem}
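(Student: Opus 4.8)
The plan is to reduce the statement to a claim about individual cluster variables and prove that claim by induction on distance in the tree $T$, with the Fomin--Zelevinsky \emph{Caterpillar Lemma} supplying the one genuinely delicate step. First I would note that $\A(B,Y,X)$ is generated as an algebra by the cluster variables $x^w_p$, so it is enough to show that every such variable lies in the Laurent ring $\Lambda_v:=\integers\PP[(x^v_i)^{\pm 1}:i\in I(v)]$, and that its (necessarily unique) expansion has numerator coprime to each $x^v_i$. Fix the reference vertex $v$ and induct on the tree distance $d(v,w)$. The base case $d=0$ is trivial. For $d=1$, the cluster mutation rule \eqref{ClusterRecurrence} replaces a single variable $x^v_e$ by $(x^v_e)^{-1}$ times the binomial $y^v_e\prod_p (x^v_p)^{[b^v_{pe}]_+}+\prod_p (x^v_p)^{[-b^v_{pe}]_+}$; neither monomial involves $x^v_e$, and since $[b^v_{pe}]_+$ and $[-b^v_{pe}]_+$ are never both positive, no other $x^v_p$ divides the binomial, so the expansion lies in $\Lambda_v$ with numerator coprime to the whole cluster.

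The difficulty is the inductive step. A variable at distance $d$ arises by mutating, along an edge $e'$, the cluster at a vertex $w'$ of distance $d-1$: the new variable equals $(x^{w'}_{e'})^{-1}$ times a binomial in the variables $x^{w'}_p$. By induction each $x^{w'}_p$ lies in $\Lambda_v$, but $x^{w'}_{e'}$ is in general a Laurent polynomial with a nontrivial numerator, and dividing by it need not return an element of $\Lambda_v$. There is no a priori guarantee that this denominator collapses back to a monomial in the $x^v_i$, and this is precisely where a naive one-step induction breaks. The remedy is the Caterpillar Lemma: one fits the mutation path from $v$ to $w$ into a ``caterpillar'' whose spine is that path and whose legs carry the remaining cluster variables, and then analyzes a short subsequence of mutations --- mutate in a direction $e$, then in a direction $f\neq e$, then back in $e$ --- proving directly that the intermediate factors of $x^v_e$ cancel. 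Through the binomial exchange relations, this cancellation is equivalent to a coprimality statement between the two exchange polynomials attached to $e$ and $f$.

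I expect this coprimality/non-divisibility bookkeeping to be the main obstacle. The underlying engine is that $\Lambda_v$ is a unique factorization domain, so one can track the exact power of each irreducible $x^v_i$ dividing a given element and show that the binomials produced by \eqref{ClusterRecurrence} are not divisible by the variables that would obstruct Laurentness. To make the induction self-sustaining, the hypothesis must be carried in its strong form --- not merely that $x^w_p\in\Lambda_v$, but that the numerator $N$ of \eqref{Laurent} is coprime to every $x^v_i$ --- so that the coprimality needed at each stage is available from the previous one. Once the three-mutation cancellation and this strengthened hypothesis are in place, the induction on $d(v,w)$ closes and yields both the Laurent expansion \eqref{Laurent} and its uniqueness.
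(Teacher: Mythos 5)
This theorem is not proved in the paper at all: it is quoted as \cite[Theorem~3.1]{ca1} (Fomin--Zelevinsky, \emph{Cluster algebras I}), so there is no in-paper proof to compare against. Your proposal is, in substance, a reconstruction of the original Fomin--Zelevinsky argument: reduction to cluster variables, induction on distance in the $n$-regular tree, the caterpillar structure around the mutation path, the three-step mutation $e,f,e$ cancellation, and coprimality of the exchange binomials tracked in the unique factorization domain of Laurent polynomials, with the induction hypothesis carried in the strengthened form that the numerator is coprime to every initial cluster variable. That is the correct (and canonical) route, and your identification of where the naive induction fails is exactly right; the only caveat is that what you call the ``one genuinely delicate step'' --- the Caterpillar Lemma's cancellation and the coprimality bookkeeping --- is the entire mathematical content of the theorem, and your write-up invokes its conclusion rather than establishing it, so as it stands the proposal is an accurate roadmap of the standard proof rather than a self-contained argument.
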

The vector $\d^v(x)=\sum_{i\in I}d_i\alpha_i$ in the root lattice is the \newword{denominator vector} of $x$ with respect to the vertex $v$.
Usually, the denominator vector is defined to be the integer vector $(d_i:i\in I(v))$, which can be recovered from $\d^v(x)$ by taking simple root coordinates.
We will only consider denominator vectors with respect to the vertex $v_b$, so we will use the abbreviation $\d(x)$ for $\d^{v_b}(x)$.

The most important instances of cluster algebras are the cluster algebras of geometric type.
Let $J$ be an indexing set, disjoint from $I$, with $|J|=m$ and let $(x_j:j\in J)$ be independent variables.
Let $\PP=\Trop(x_j:j\in J)$ be the \newword{tropical semifield} generated by $(x_j:j\in J)$.
This is the free abelian group generated by $(x_j:j\in J)$, written multiplicatively, with an addition operation $\oplus$ defined by 
\[\prod_{j\in J}x_j^{a_j}\oplus\prod_{j\in J}x_j^{b_j}=\prod_{j\in J}x_j^{\min(a_j,b_j)}.\]

Let $\tB=[b_{ij}]$ be an integer matrix with rows indexed by the disjoint union $I\uplus J$ and columns indexed by $I$ such that $B$ is the $n\times n$ matrix consisting of the rows of $\tB$ indexed by $I$.  
Such a matrix is called an \newword{extended exchange matrix}.
The rows of $\tB$ indexed by $J$ specify a tuple $(y_i:i\in I)$ of elements of $\PP$ by setting $y_i=\prod_{j\in J}x_j^{b_{ji}}$, for each $i\in I$.
Thus a pair $(\tB,X)$ encodes a seed.
Following the construction from above, we associate a seed $(\tB^v,X^v)$ to each vertex.
The cluster algebra generated in this way is called a \newword{cluster algebra of geometric type}.
The matrix $\tB^v$ has rows indexed by $I(v)\cup J$ and columns indexed by $I(v)$.
If $v$ and $v'$ are connected by an edge $e$, the relationship between the \emph{extended} exchange matrices $\tB^v$ and $\tB^{v'}$ is given by the matrix mutation relation (\ref{BMatrixRecurrence}), where $p$ is in $I(v)\cup J$ rather than $I(v)$.
In particular, coefficient mutation does not need to be treated separately, but coefficients associated to a vertex $v$ can still be recovered as $y^v_i=\prod_{j\in J}x_j^{b^v_{ji}}$.
Mutation of clusters can also be written more simply.\\

\noindent
Let $e$ be an edge $v$ to $v'$ and define $X^{v'}=\mu_e(X^v)$ by setting 
\begin{equation}
x^{v'}_{\mu_e(q)} = 
\begin{cases}
\frac{1}{x^v_q} \left(\prod_p (x^v_p)^{[b^v_{pq}]_+} + \prod_p (x^v_p)^{[-b^v_{pq}]_+} \right) &\mbox{if }q=e\\
x^v_q & \mbox{if }q\neq e.
\label{ClusterRecurrenceGeom}
\end{cases}
\end{equation}  
In both products, $p$ now ranges over the set $I(v)\cup J$ rather than the set $I(v)$.

Of primary importance among cluster algebras of geometric type are the cluster algebras with \newword{principal coefficients}. 
In this case we take $J$ to be (a disjoint copy of) $I$, so that the initial extended exchange matrix $\tB$ is a $2n\times n$ matrix with $B$ in the top $n$ rows.
The bottom $n$ rows are taken to be the $n\times n$ identity matrix.  
In general, we write $\tB^v$ as $\twomatrix{B^v}{H^v}$ where $B^v$ is the exchange matrix associated to $v$ as before and $H^v$ is a matrix with rows indexed by $I$ and columns indexed by $I(v)$. 
Let $\A_\bullet(B)$ and $\Ex_\bullet(B)$ be the cluster algebra $\A(B,Y,X)$ and exchange graph $Ex(B,Y,X)$ where $Y$ are principal coefficients.
These depend on $X$ only up to isomorphism.

Recall from Section~\ref{frame subsec} that we associate to $B$ a Cartan matrix $\Cart(B)$, simple roots $\Pi$ and simple co-roots $\Pi\ck$.
The \newword{fundamental weights} are the vectors in the basis of $V^*$ that is dual to the basis $\Pi\ck$ for $V$.
Since the indexing set $I$ indexes rows and columns of $B$, it also indexes rows and columns of $\Cart(B)$, and thus indexes $\Pi$ and $\Pi\ck$.
We write $\rho_i$ for the fundamental weight that is dual to $\alpha\ck_i$.
The \newword{weight lattice} is the lattice generated by the fundamental weights.

In a cluster algebra with principal coefficients, the \newword{$\g$-vector} $\g(x)$ of a cluster variable $x$ is a recursively-defined vector in the weight lattice.
We will write $\g_e^v$ for $\g(x^v_e)$.
For each $x^{v_b}_i=x_i$ in the initial cluster $X$, the $\g$-vector of $x_i$ is $\rho_i$.
For other cluster variables, the $\g$-vector is defined by the following recursion.\\

\noindent
\textbf{$\g$-vector mutation.}  
Let $e$ be an edge connecting $v$ to $v'$. 
The $\g$-vectors of the cluster $X^{v'}$ are given by  
\begin{equation}
\g^{v'}_{\mu_e(q)} = 
\begin{cases}
-\g^v_q + \sum_{p\in I(v)} [-b^v_{pq}]_+\,\g^v_p - \sum_{i\in I}  [-b^v_{iq}]_+\,\b_i &\mbox{if }q=e\\
\g^v_q & \mbox{if }q\neq e.
\label{gRecurrence}
\end{cases}
\end{equation}
Here, $\b_i$ is the vector in $V^*$ whose fundamental-weight coordinates are given by the $i\th$ column of the initial exchange matrix $B$.

In a cluster algebra with principal coefficients, the \newword{$\c$-vectors} at the seed $v$ are the vectors whose simple-root coordinates are given by the columns of $H^v$.
Specifically, write $\c_e^v$ for the column of $H^v$ indexed by $e\in I(v)$.
It is important to remember that $\c_e^v$ is not ``the $\c$-vector associated to the cluster variable $x_e^v$'' because this notion is not well-defined:
When $x_e^v=x_{e'}^{v'}$ for vertices $v\neq v'$ and edges $e\in I(v)$ and $e'\in I(v')$, typically $\c_e^v\neq \c_{e'}^{v'}$.

\begin{remark}\label{gvec weight or int vec}
Usually (including in the introduction to this paper) the $\g$-vector is defined as an integer vector rather than a vector in the weight lattice.
The integer vector can be recovered by taking fundamental-weight coordinates.
Similarly, the $\c$-vectors are usually defined as integer vectors (the columns of $H_v$) rather than as vectors in the root lattice, but the integer vector can be recovered by taking simple-root coordinates.
\end{remark}
\begin{remark}
Taking the recursive formula above as a definition, it is not immediately clear that the $\g$ vector is well-defined, but the definition in \cite[Sections~6--7]{ca4} does not have this problem.
The recursive formulation above is the alternate form \cite[Equation~(6.13)]{ca4} of \cite[Proposition~6.6]{ca4}, rewritten to define a vector in the weight lattice.
\end{remark}

The motivating questions of this paper are how to compute the exchange graph, the denominator vectors, the $\g$-vectors, the matrices $B^v$ and the $\c$-vectors.
As we will see in this paper, \emph{the $\c$-vectors should be considered the most fundamental objects}.

\subsection{Polyhedral geometry}\label{poly sec} 
In discussing cluster algebras and frameworks, it will be useful to use the language of polyhedral cones and fans.
We briefly review some background material.
A \newword{closed polyhedral cone} is a subset $F$ of $V^*$ that is of the form $\bigcap \{ x \in V^* : \br{x, \beta_i} \geq 0 \}$ for a finite list of vectors $\beta_1$, \ldots, $\beta_k$ in $V^*$.
Equivalently, a closed polyhedral cone is the nonnegative linear span of a finite set of vectors in $V^*$.
In this paper, the term \newword{cone} will be used as a shorthand for ``closed polyhedral cone.''
The cone is called \newword{simplicial} if $\beta_1$, \ldots, $\beta_k$ can be chosen so as to form a basis for $V$, or equivalently, if it is the nonnegative linear span of a basis for $V^*$.
One can similarly define cones and fans in $V$, but we will only consider them in $V^*$.

If $F$ is a cone, then a subset $G$ of $F$ is called a \newword{face} of $F$ if there is some $\lambda\in V$ (thought of as a linear functional on $V^*$) that is nonnegative on $F$ with $F\cap\ker\lambda=G$.
Note that $F$ is a face of itself.  
(Take the zero linear functional.)
A \newword{facet} of $F$ is a face $G$ of $F$ with $\dim(G)=\dim(F)-1$.
The \newword{relative interior} of a cone $F$ is the set of points of $F$ not in any proper face of $F$.
Topologically, the relative interior is the interior of $F$ as a subset of $\Span_{\RR}(F)$.

We'll say that cones $F_1$ and $F_2$ \newword{meet nicely} if $F_1 \cap F_2$ is a face of both $F_1$ and $F_2$.
A collection $\F$ of cones in $V^*$ is called a \newword{fan} if
\begin{enumerate}
\item For any cone $F$ in $\F$, and any face $G$ of $F$, the cone $G$ is also in $\F$.
\item Any two cones $F_1$ and $F_2$ in $\F$ meet nicely.
\end{enumerate}
See~\cite[Chapter V]{Ewald} for more on fans.

We will need some well-known, easy facts from polyhedral geometry:

\begin{prop}[Chapter II of \cite{Ewald}]
A cone has finitely many faces, each of which is itself a cone.
\end{prop}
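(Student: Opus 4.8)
The plan is to reduce every face to a combinatorial gadget---a subset of the finite defining inequality list---so that finiteness becomes the observation that a finite set has finitely many subsets. Fix a presentation $F=\bigcap_{i=1}^k H_i$, where $H_i=\set{x\in V^*:\br{x,\beta_i}\ge 0}$, and for $S\subseteq\set{1,\dots,k}$ write $F_S=F\cap\bigcap_{i\in S}\set{x\in V^*:\br{x,\beta_i}=0}$. I would first record the easy half of the statement: each $F_S$ is again a cone, since the equality $\br{x,\beta_i}=0$ is the conjunction of $\br{x,\beta_i}\ge 0$ and $\br{x,-\beta_i}\ge 0$, so $F_S$ is cut out by the finite list $\beta_1,\dots,\beta_k$ together with $\set{-\beta_i:i\in S}$, matching the definition of a cone. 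Thus the whole content is to show that every face of $F$ equals some $F_S$.

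To produce the subset $S$ from a face $G$, I would pick a point $x_0$ in the relative interior $\relint(G)$ and let $S_0=\set{i:\br{x_0,\beta_i}=0}$ be the set of inequalities tight at $x_0$. The claim to prove is $G=F_{S_0}$, and I would argue the two inclusions by perturbing $x_0$ inside $F$. For $G\subseteq F_{S_0}$: given $x\in G$, the relative-interior property lets me take a small step from $x_0$ away from $x$, i.e.\ $(1+\ep)x_0-\ep x\in G\subseteq F$ for small $\ep>0$; evaluating $\beta_i$ for $i\in S_0$ and using $\br{x,\beta_i}\ge 0$ forces $\br{x,\beta_i}=0$. For $F_{S_0}\subseteq G$: given $x\in F$ with $\br{x,\beta_i}=0$ for all $i\in S_0$, I would move along the line through $x_0$ and $x$ in both directions; the constraints in $S_0$ stay tight while those outside $S_0$ are strict at $x_0$, so a short segment on both sides of $x_0$ lies in $F$, and the nonnegativity on $F$ of the functional $\lambda$ defining $G$ (which vanishes at $x_0\in G$) forces $\br{x,\lambda}=0$, that is, $x\in G$.

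Once $G=F_{S_0}$ is established, finiteness is immediate: the assignment sending a face $G$ to $S_0$ lands in the power set of $\set{1,\dots,k}$, so there are at most $2^k$ faces, and each is a cone by the first paragraph. I expect the genuine obstacle to be the inclusion $F_{S_0}\subseteq G$: it is the only step that really uses the defining functional $\lambda$ of the face rather than just the inequality data, and it requires the two-sided line argument together with the fact that the non-tight constraints remain strict on a neighborhood of $x_0$. The remaining ingredients---that $\relint(G)$ is nonempty and that the perturbations stay in $F$---are routine, but I would state the relative-interior facts carefully, since they carry the argument.
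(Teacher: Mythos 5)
Your proof is correct. Note that the paper does not prove this proposition at all---it is quoted from Chapter~II of Ewald's book---so there is no in-paper argument to compare against; what you give is the standard ``active constraint'' proof that such references contain, and its steps all check out: the prolongation property of relative interior points yields $G\subseteq F_{S_0}$, the two-sided line argument together with nonnegativity of $\lambda$ on $F$ yields $F_{S_0}\subseteq G$, injectivity of $G\mapsto S_0$ bounds the number of faces by $2^k$, and rewriting each equality as a pair of inequalities shows every $F_S$ is itself a cone. One caveat you should make explicit: your final step ``$\br{x,\lambda}=0$, that is, $x\in G$'' uses that $G$ is \emph{exactly} the vanishing locus of $\lambda$ on $F$, i.e.\ $G=\set{x\in F:\br{x,\lambda}=0}$. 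The paper's wording of the definition of a face asks only that $\lambda$ be nonnegative on $F$ and zero on $G$; read literally, this would make every subset of a face again a face, and the proposition would then be false (infinitely many faces, most of them not cones). So you are right to work with the standard definition, but since your argument silently sharpens the definition as the paper states it, that reading should be flagged when the proof is written up.
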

\begin{prop}[Proposition~2.3 of~\cite{Ziegler}]
If $F$ is a cone, $G$ is a face of $F$, and $H$ is a face of $G$, then $H$ is a face of $F$.
\end{prop}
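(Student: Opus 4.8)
The plan is to use the standard ``positive combination of supporting functionals'' argument. Unwinding the definition of a face, I would choose a functional $\lambda$ that is nonnegative on $F$ and whose zero set on $F$ is exactly $G$, i.e.\ $G=\set{x\in F:\br{x,\lambda}=0}$, and similarly a functional $\mu$ nonnegative on $G$ with $H=\set{x\in G:\br{x,\mu}=0}$. The natural candidate for a functional exhibiting $H$ as a face of $F$ is $\nu=\mu+c\lambda$ for a sufficiently large real constant $c>0$. The whole argument then reduces to choosing $c$ and verifying that $\nu$ is nonnegative on all of $F$ and that its zero set on $F$ is precisely $H$.

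To choose $c$, I would invoke the fact recorded earlier in this section that $F$ is the nonnegative span of a finite set of generators $r_1,\dots,r_m$. I would split these according to the sign of $\br{r_j,\lambda}$, which is always $\ge 0$ since $\lambda$ is nonnegative on $F$. For a generator with $\br{r_j,\lambda}=0$ we have $r_j\in G$, so $\br{r_j,\mu}\ge 0$ and hence $\br{r_j,\nu}=\br{r_j,\mu}\ge 0$ regardless of $c$. For a generator with $\br{r_j,\lambda}>0$, the quantity $\br{r_j,\nu}=\br{r_j,\mu}+c\br{r_j,\lambda}$ is strictly positive once $c$ exceeds $-\br{r_j,\mu}/\br{r_j,\lambda}$; because there are only finitely many generators, a single $c$ works for all of them at once. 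This uniform choice is the one place where finiteness (Minkowski--Weyl) is essential, and it is the main obstacle: without a finite generating set there need be no $c$ making $\nu$ nonnegative on all of $F$. With such a $c$ fixed, nonnegativity of cone-coordinates gives $\nu\ge 0$ on $F$, with the extra feature that $\br{r_j,\nu}>0$ for every generator lying outside $G$.

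It then remains to identify $F\cap\set{x:\br{x,\nu}=0}$ with $H$. The inclusion $H\subseteq F\cap\set{x:\br{x,\nu}=0}$ is immediate, since both $\lambda$ and $\mu$ vanish on $H$. For the reverse inclusion I would write a point $x\in F$ with $\br{x,\nu}=0$ as a nonnegative combination $x=\sum_j t_j r_j$ of the generators; since each $\br{r_j,\nu}\ge 0$, every active generator (those with $t_j>0$) must satisfy $\br{r_j,\nu}=0$, and by the strict positivity established above such a generator has $\br{r_j,\lambda}=0$, hence lies in $G$ and in fact has $\br{r_j,\mu}=0$, so it lies in $H$. As $H$ is the intersection of the cone $G$ with the linear subspace $\set{x:\br{x,\mu}=0}$, it is itself a cone and thus closed under nonnegative combinations, so $x\in H$. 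Combined with $\nu\ge 0$ on $F$, this exhibits $\nu$ as a functional witnessing $H$ as a face of $F$, completing the proof.
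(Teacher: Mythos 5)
Your proof is correct, and it is essentially the standard argument for this fact: the paper itself gives no proof, citing Ziegler's Proposition~2.3, and the argument there is exactly this trick of witnessing $H$ as a face of $F$ via the functional $\mu+c\lambda$, with the finite generating set (Minkowski--Weyl) supplying a uniform $c$. Your verification of both inclusions, including the observation that every active generator of a point in $F\cap\nu^\perp$ must land in $H$, is complete and sound.
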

\begin{prop}[Proposition~2.3 of~\cite{Ziegler}]
If $F$ is a cone, then any two faces of $F$ meet nicely.
\end{prop}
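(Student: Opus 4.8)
The plan is to realize the intersection of the two faces as the zero-locus of a single defining functional, using the elementary fact that a sum of functionals that are each nonnegative on $F$ vanishes at a point exactly when both summands do. First I would fix two faces $G_1$ and $G_2$ of $F$ and write each in the form guaranteed by the definition: there are functionals $\lambda_1,\lambda_2\in V$ with $\br{x,\lambda_i}\ge 0$ for all $x\in F$ and $G_i=\set{x\in F:\br{x,\lambda_i}=0}$. (I read ``face'' as \emph{exactly} the zero-set of such a functional, not merely a subset on which it vanishes, since that is the only reading under which the notion is well defined.) By the preceding proposition each of $G_1$ and $G_2$ is itself a cone, so it is meaningful to speak of their faces, and the goal reduces to showing that $G_1\cap G_2$ is a face of $G_1$ and, symmetrically, of $G_2$.

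The key step is to produce the witnessing functional for $G_1\cap G_2$ as a face of $G_1$ by simply restricting $\lambda_2$. Since $G_1\subseteq F$ and $\br{x,\lambda_2}\ge 0$ holds on all of $F$, the functional $\lambda_2$ is nonnegative on $G_1$. Moreover, because $G_1\subseteq F$, a point $x\in G_1$ satisfies $\br{x,\lambda_2}=0$ precisely when $x\in G_2$; hence $\set{x\in G_1:\br{x,\lambda_2}=0}=G_1\cap G_2$. Applying the definition of a face with ambient cone $G_1$ in place of $F$ then exhibits $G_1\cap G_2$ as a face of $G_1$. Interchanging the two indices, with $\lambda_1$ in place of $\lambda_2$, shows $G_1\cap G_2$ is a face of $G_2$, which is exactly the statement that $G_1$ and $G_2$ meet nicely.

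I expect the only genuine subtlety --- a mild one --- to be the bookkeeping about which cone is ambient when the definition of a face is invoked: the same set $G_1\cap G_2$ must be certified as a face of two \emph{different} cones, and the witnessing functional differs in the two cases ($\lambda_2$ for $G_1$, $\lambda_1$ for $G_2$). A secondary point worth recording is that $G_1\cap G_2$ is also a face of $F$ itself, which follows from the sum trick: with $\lambda=\lambda_1+\lambda_2$, nonnegativity of each summand on $F$ forces $\br{x,\lambda}=0$ exactly when $\br{x,\lambda_1}=\br{x,\lambda_2}=0$, so $\set{x\in F:\br{x,\lambda}=0}=G_1\cap G_2$. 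Everything else is immediate from the definitions, and beyond the observation that faces of cones are cones, no appeal to the earlier propositions is required.
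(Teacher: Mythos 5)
Your proof is correct. Note first that the paper itself gives no proof of this statement: it is listed among ``well-known, easy facts from polyhedral geometry'' and cited to Ziegler, so there is no internal argument to compare against. Your argument is complete and is the standard direct verification. Your reading of the definition of face is the right one: the paper's literal wording (``nonnegative on $F$ and $0$ on $G$'') would make every subset of $F$ a face via $\lambda=0$, so the intended meaning must be $G=\set{x\in F:\br{x,\lambda}=0}$, exactly as you take it. With that reading, restricting $\lambda_2$ to the cone $G_1$ certifies $G_1\cap G_2$ as a face of $G_1$ (nonnegativity is inherited from $F\supseteq G_1$, and the zero-locus of $\lambda_2$ inside $G_1$ is precisely $G_1\cap G_2$), and the symmetric argument with $\lambda_1$ handles $G_2$; the only outside input is that faces of a cone are themselves cones, which you correctly draw from the preceding proposition so that ``face of $G_1$'' is meaningful. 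For comparison, the route implicit in the citation to Ziegler's Proposition~2.3 is slightly different: there one first shows $G_1\cap G_2$ is a face of the ambient cone $F$ (your sum trick with $\lambda_1+\lambda_2$), and then invokes the correspondence that the faces of $G_1$ are exactly the faces of $F$ contained in $G_1$. Your restriction argument bypasses that correspondence entirely and is, if anything, more self-contained; what the Ziegler route buys is the stronger package of facts (faces of faces are faces, intersections of faces are faces) that the paper also quotes separately.
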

\begin{prop}[Lemma~14 of~\cite{ranktests}] \label{prop:FanLemma}
Suppose $F$ and $G$ are cones that meet nicely. Let $F'$ be a face of $F$ and let $G'$ be a face of $G$.
Then $F'$ and $G'$ meet nicely. 
\end{prop}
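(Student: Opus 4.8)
The plan is to derive the result entirely from the two preceding facts---that any two faces of a single cone meet nicely, and that the face relation is transitive---together with the definition of meeting nicely and the elementary set-theoretic observations that $F'\subseteq F$ and $G'\subseteq G$. The strategy is to first upgrade the hypothesis by showing that $F'$ and $G$ meet nicely, and then to rerun the identical argument one level down to obtain the conclusion for $F'$ and $G'$.

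First I would record that, since $F$ and $G$ meet nicely, $F\cap G$ is a face of $F$. Then $F'$ and $F\cap G$ are two faces of the single cone $F$, so they meet nicely; because $F'\subseteq F$, their intersection is $F'\cap(F\cap G)=F'\cap G$, and this set is a face of $F'$ and a face of $F\cap G$. Since $F\cap G$ is itself a face of $G$, transitivity of the face relation shows that $F'\cap G$ is a face of $G$. Thus $F'\cap G$ is a face of both $F'$ and $G$, that is, $F'$ and $G$ meet nicely.

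Next I would run the symmetric argument with $G$ and its face $G'$ in place of $F$ and $F'$. Now $F'\cap G$ is a face of $G$ and $G'$ is a face of $G$, so these are two faces of the single cone $G$ and hence meet nicely; because $G'\subseteq G$, their intersection is $G'\cap(F'\cap G)=F'\cap G'$, which is a face of $G'$ and a face of $F'\cap G$. Finally, since $F'\cap G$ is a face of $F'$ (established in the previous step), transitivity gives that $F'\cap G'$ is a face of $F'$. Therefore $F'\cap G'$ is a face of both $F'$ and $G'$, which is precisely the assertion that $F'$ and $G'$ meet nicely.

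I do not expect a genuine obstacle here: the proof is a purely formal manipulation of the face relation, with no polyhedral geometry beyond the three cited propositions. The only points requiring care are the two set-theoretic identities $F'\cap(F\cap G)=F'\cap G$ and $G'\cap(F'\cap G)=F'\cap G'$, both immediate from the containments $F'\subseteq F$ and $G'\subseteq G$, and the bookkeeping of which ambient cone each application of ``meet nicely'' and each use of transitivity refers to.
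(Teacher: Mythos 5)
Your proof is correct, but there is nothing internal to compare it against: the paper does not prove this proposition at all, importing it wholesale as Lemma~14 of \cite{ranktests}. What you have supplied is a self-contained derivation from exactly the two facts the paper quotes immediately beforehand in Section~\ref{poly sec} --- transitivity of the face relation and the fact that any two faces of a single cone meet nicely --- together with the definition of meeting nicely. The bootstrap is sound: since $F\cap G$ is a face of $F$, the cones $F'$ and $F\cap G$ are two faces of $F$ and hence meet nicely; the identity $F'\cap(F\cap G)=F'\cap G$ (from $F'\subseteq F$) shows this intersection is a face of $F'$ and of $F\cap G$, and transitivity through $F\cap G$ (itself a face of $G$) shows it is a face of $G$, so $F'$ and $G$ meet nicely. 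Rerunning the argument inside $G$ with the two faces $F'\cap G$ and $G'$, and again finishing with transitivity (this time through $F'\cap G$ as a face of $F'$), yields that $F'\cap G'$ is a face of both $F'$ and $G'$. Each invocation is legitimate because faces of cones are again cones, which is the first proposition quoted in that section. The practical difference between your route and the paper's is simply that yours makes the statement self-contained, eliminating the dependence on the external reference; it also shows the lemma is a purely formal consequence of the face calculus, with no additional polyhedral input needed.
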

Proposition~\ref{prop:FanLemma} immediately implies the following lemma, which simplifies the process of checking that a set of cones is a fan.

\begin{lemma} \label{MaxCheckFan}
Let $\C$ be a collection of cones. 
Suppose that every pair of cones in $\C$ meet nicely. 
Let $\F$ be the collection of all faces of cones in $\C$. 
Then $\F$ is a fan.
\end{lemma}

\subsection{Some conjectures about cluster algebras}\label{conj sec}
In this section, we review some conjectures from \cite{FZ-CDM,ca4} and make a few new conjectures that are suggested by the results of this paper.

The following conjecture is~\cite[Conjecture~4.14(3)]{FZ-CDM}.
Informally, the conjecture says that, if two cluster variables are equal to each other, then they are equal for an obvious reason.
\begin{conj}\label{vertex conj}
For any cluster variable $x$, the seeds whose clusters contain $x$ induce a connected subgraph of the exchange graph.
\end{conj}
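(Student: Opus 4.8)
The plan is to transport the conjecture into the polyhedral language of frameworks and then reduce it to a connectivity statement about a fan. Since the general conjecture is out of reach, I would prove it under the hypothesis that $B$ admits a complete, exact, well-connected polyhedral framework $(G,C,C\ck)$ — the setting in which, by the global results of Section~\ref{global sec} (cf.\ Theorem~\ref{complete conj}), the map of Theorem~\ref{framework summary} identifies $G$ with the exchange graph $\Ex_0(B)$, and the cones $\Cone(v)$ form a fan $\F$ whose dual graph (maximal cones, adjacency given by sharing a facet) is exactly $G$; that last identification is the content of Corollary~\ref{cor:BdyFacet}. By Theorem~\ref{framework summary}, the $\g$-vector of the cluster variable $x^v_e$ is encoded by the ray $R(v,e)$ of $\Cone(v)$. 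Thus, granting the cluster-algebraic input that a cluster variable is determined by its $\g$-vector, the set of seeds containing a fixed cluster variable $x$ is identified with the set of maximal cones of $\F$ having a fixed extreme ray $r=\g(x)$.

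First I would record the key mutation-invariance supplied by Proposition~\ref{dual adjacent}: mutating a vertex $v$ along an edge $f$ fixes every ray $R(v,e)$ with $e\neq f$ and moves only $R(v,f)$ across the facet spanned by $R(v)\cap R(v')$. Consequently, if $x$ is the cluster variable with $\g$-vector $r$ and $v$ carries $r$ as the ray $R(v,e_0)$, then every mutation of $v$ \emph{except} the one at $e_0$ produces a neighboring seed that again contains $x$. Hence the seeds containing $x$ are closed under all but one mutation at each such vertex, and two of them are adjacent in the induced subgraph precisely when the corresponding maximal cones share a facet that still contains $r$. This reduces the conjecture to the purely polyhedral assertion that the maximal cones of $\F$ containing the ray $r$ are connected under codimension-one adjacency.

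To prove that assertion I would pass to the star fan of $r$: project $V^*$ modulo the line $\RR r$ and let $\overline{\F}$ be the image of those cones of $\F$ that contain $r$. Using Lemma~\ref{MaxCheckFan} together with the basic facts of Section~\ref{poly sec}, one checks that $\overline{\F}$ is again a fan, that its maximal cones are exactly the images of the maximal cones of $\F$ through $r$, and that facet-adjacency is preserved by the projection; connectivity of the induced subgraph is then equivalent to connectivity of the dual graph of $\overline{\F}$. The hard part lies exactly here: for a general fan the dual graph of the maximal cones need not be connected, so this is where completeness (automatic in finite type) or the well-connectedness hypothesis must be invoked, these conditions being designed precisely to force the dual graph of $\F$, and of each of its star fans, to be connected. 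I expect the bulk of the work to be in verifying that the well-connected property is inherited by star fans, and in securing the single genuinely cluster-algebraic ingredient — that distinct cluster variables have distinct $\g$-vectors — without which the translation in the first paragraph would break down.
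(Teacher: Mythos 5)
Your overall route---hypothesize a complete, exact, well-connected polyhedral framework, identify seeds with maximal cones and cluster variables with rays via Theorem~\ref{framework summary}, and reduce the conjecture to connectivity of the set of maximal cones containing the ray $\g(x)$---is the same as the paper's (Theorem~\ref{well-connected g} and Corollary~\ref{polyhedral conj}). But there is a genuine gap in how you set up the translation. Your identification of ``seeds containing $x$'' with ``cones having extreme ray $\g(x)$'' is bidirectional, and the direction ``$R(v,e)=\g(x)$ implies $x\in X^v$'' requires exactly the input you flag at the end: that distinct cluster variables have distinct $\g$-vectors. In the framework setting this is not an available hypothesis; it is part of what is being proved (it sits inside Conjectures~\ref{g fan conj} and~\ref{strong g fan conj}), and its proof \emph{is} the same path-tracking argument you are trying to run. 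So as structured your proposal is either circular or must import injectivity from external results such as~\cite{YZ}, which defeats the purpose of an independent framework-based proof. The paper avoids this input entirely: it uses only the forward direction (if $x\in X^u$ and $x\in X^v$, then $\g(x)=R(u,e)=R(v,f)$ is a ray of both cones), applies well-connectedness to get a path $u=v_0,\dots,v_k=v$ all of whose cones contain that ray, and then observes that along the path the \emph{actual variable} $x$, not just its $\g$-vector, persists: at each step the common ray cannot be the exchanged ray, since Proposition~\ref{dual adjacent} puts $R(v_i,e_i)$ and $R(v_{i+1},e_i)$ on opposite sides of the shared facet, and cluster mutation fixes every cluster variable except the exchanged one. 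This is precisely your own second-paragraph observation; pushed inductively along the path, it makes the injectivity assumption unnecessary.

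A secondary point, a misreading rather than an error: the ``purely polyhedral assertion'' you plan to prove by passing to star fans and verifying inheritance of well-connectedness is not something to be derived---it is, verbatim, the definition of a well-connected framework (take $F$ to be the ray $r$). No star-fan construction is needed, and completeness plays no role in that step. The genuinely hard work, which your hypotheses conceal, is proving that a complete, exact, well-connected polyhedral framework exists at all; in the paper that is the content of the descending/Cambrian machinery (Theorems~\ref{descending good} and~\ref{camb frame}), not of the deduction you sketch.
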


We will prove the following stronger conjecture for some matrices $B$.
\begin{conj}\label{face conj}
For any set $\X$ of cluster variables, the seeds whose clusters contain $\X$ as a subset induce a connected subgraph of the exchange graph.
\end{conj}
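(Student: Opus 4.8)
The plan is to prove Conjecture~\ref{face conj} for those $B$ admitting a framework $(G,C,C\ck)$ that is complete, exact, and well-connected polyhedral (Section~\ref{global sec}) --- in particular, via the Cambrian framework, for $B$ of finite Cartan type. The entire argument is a translation of the statement into the language of the fan $\F$ whose maximal cones are the simplicial cones $\Cone(v)$, followed by an appeal to a connectivity property of that fan. First I would record the dictionary supplied by the framework. Completeness identifies $G$ with the exchange graph $\Ex_0(B)$; exactness guarantees that the $\g$-vector of a cluster variable $x$ is a genuine invariant of $x$, independent of the seed realizing it, and that distinct cluster variables have distinct $\g$-vectors; and Theorem~\ref{framework summary} identifies this $\g$-vector, read in $V^*$, with the ray $R(v,e)$ dual to $C\ck(v,e)$. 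Thus each cluster variable $x$ determines a ray $R_x\in V^*$, and by Proposition~\ref{dual adjacent} the extreme rays of $\Cone(v)$ are exactly the $R(v,e)$; hence $x$ lies in the cluster at $v$ if and only if $R_x$ is an extreme ray of $\Cone(v)$.

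With this dictionary I would reduce the conjecture to a purely polyhedral statement. Fix a set $\X$ of cluster variables. If $\X$ is contained in no cluster, the induced subgraph is empty and hence vacuously connected, so assume the rays $\set{R_x:x\in\X}$ all occur as extreme rays of some $\Cone(v)$. Because $\Cone(v)$ is simplicial, these rays are linearly independent and span a common face $G_\X$ of $\F$. A seed contains $\X$ exactly when every $R_x$ is an extreme ray of the corresponding maximal cone, i.e.\ exactly when that cone lies in the star $\Star(G_\X)$. Moreover, by Corollary~\ref{cor:BdyFacet} two vertices are adjacent in $G$ precisely when their cones meet in a common facet, and for two cones of $\Star(G_\X)$ that facet necessarily contains $G_\X$. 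So the subgraph induced by the seeds containing $\X$ is isomorphic to the facet-adjacency graph of $\Star(G_\X)$, and the conjecture becomes the assertion that $\Star(G_\X)$ is connected under facet-adjacency.

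To establish that connectivity I would pass to the link. Quotienting $V^*$ by $\Span_\RR(G_\X)$ carries the cones of $\F$ that contain $G_\X$ to a fan $\bar\F$ in the quotient space, with maximal cones in bijection with $\Star(G_\X)$ and with facet-adjacency preserved; so it suffices to show the maximal cones of $\bar\F$ are connected under facet-adjacency. When $\F$ is complete --- as in finite Cartan type, where the $\g$-vector fan is the normal fan of the generalized associahedron --- the quotient $\bar\F$ is again complete, and connectivity is standard: join generic interior points of two maximal cones by a generic path avoiding all codimension-two cones and read off the sequence of facet crossings. For the general (incomplete) case this is exactly what the \emph{well-connected} hypothesis is designed to supply, so I would invoke it here.

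The step I expect to be the genuine obstacle is not the polyhedral combinatorics but the two inputs packaged into exactness: that the $\g$-vector is well-defined on cluster variables, and that $x\mapsto R_x$ is injective. Without injectivity the reverse implication ``$R_x$ an extreme ray of $\Cone(v)$ $\Rightarrow$ $x$ in the cluster at $v$'' fails and the identification with $\Star(G_\X)$ collapses; securing these is precisely where the conjectures of \cite{ca4} and the full strength of completeness and exactness enter. A secondary subtlety is that, for $B$ outside finite type, connectivity of $\Star(G_\X)$ is real content rather than a formality, since the link of a face in an incomplete fan can a priori be disconnected --- which is why the result is stated only ``for some $B$.'' Finally, taking $\X$ a singleton recovers Conjecture~\ref{vertex conj}, so the same argument proves that as well.
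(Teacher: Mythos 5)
Your overall strategy --- identify the seeds containing $\X$ with the maximal cones of the star of the face $G_\X$ spanned by the corresponding $\g$-vectors, then obtain connectivity of that star from well-connectedness --- is the same skeleton as the paper's proof (Theorem~\ref{well-connected g} and Corollary~\ref{polyhedral conj}, routed through Conjecture~\ref{strong g fan conj}). But there is a genuine gap at exactly the step you flag: the reverse implication of your dictionary, namely ``$R_x$ an extreme ray of $\Cone(v)$ implies $x$ lies in the cluster at $v$,'' i.e.\ global injectivity of the map from cluster variables to $\g$-vectors. You assert this is ``packaged into exactness,'' but it is not: exactness in this paper means injectivity of $v\mapsto C(v)$ together with ampleness (the map $\Seed$ factors through $G$), and neither condition says anything about two cluster variables in \emph{different} seeds sharing a $\g$-vector. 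The statement you need is precisely (a special case of) the injectivity assertion in Conjectures~\ref{g fan conj} and~\ref{strong g fan conj} --- one of the conclusions Corollary~\ref{polyhedral conj} exists to prove --- so taking it as an input is circular within the framework program, and importing it from outside (e.g.\ from \cite{YZ} or \cite{QP2}) would defeat the paper's stated aim of independent proofs and would not cover a general $B$ admitting such a framework.

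The missing idea, which is how the paper closes this loop, is a transport argument along the well-connectedness path in place of any appeal to injectivity. If $G_\X$ is a face of both $\Cone(v_i)$ and $\Cone(v_{i+1})$ for adjacent $v_i, v_{i+1}$, then $G_\X$ lies in their common facet, whereas by Proposition~\ref{dual adjacent} the ray $R(v_i,e)$ of the variable exchanged at that step lies strictly off the hyperplane of that facet; hence the mutation from $\Seed(v_i)$ to $\Seed(v_{i+1})$ leaves every cluster variable whose $\g$-vector is an extreme ray of $G_\X$ literally unchanged as an element of $\FF$. Starting from a seed whose cluster contains $\X$ and inducting along the path supplied by well-connectedness, the variables spanning $G_\X$ at every intermediate vertex \emph{are} the elements of $\X$; this yields the connectivity statement directly and gives the injectivity you wanted to assume as a byproduct rather than a prerequisite. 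With this replacement your argument goes through; without it, the identification of the induced subgraph with the facet-adjacency graph of $\Star(G_\X)$ is unjustified. (A minor further point: Corollary~\ref{cor:BdyFacet} gives only one direction of your ``adjacent precisely when the cones share a facet''; the converse needs completeness plus the fact that in a fan a codimension-one cone bounds at most two maximal cones --- easy, but not contained in that corollary.)
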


Two extended exchange matrices $\tB^u$ and $\tB^v$ are equivalent if there exists a bijection $\lambda:I(u)\to I(v)$ such that $b^u_{ef}=b^v_{\lambda(e)\lambda(f)}$ for all $e,f\in I(u)$ and $b^u_{ie}=b^v_{i\lambda(e)}$ for all $e\in I(u)$ and $i\in J$.
The following is \cite[Conjecture~4.7]{ca4}.

\begin{conj}\label{tildeB equiv}
Let $u$ and $v$ be vertices in the exchange graph of a cluster algebra with principal coefficients.
Then $\tB^u$ and $\tB^v$ are equivalent if and only if $u=v$.
\end{conj}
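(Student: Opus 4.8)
The plan is to prove Conjecture~\ref{tildeB equiv} by exploiting the recipes of Theorem~\ref{framework summary}, which identify the combinatorial data of a cluster algebra with principal coefficients (in the acyclic case) with the data of a framework. The key observation is that the extended exchange matrix $\tB^v=\twomatrix{B^v}{H^v}$ is encoded entirely by the label set $C(v)$: the bottom half $H^v$ records the simple-root coordinates of the vectors $C(v,e)$, and the top half $B^v$ has entries $\omega(C\ck(v,e),C(v,f))$, which are themselves determined by the labels via the Co-label condition. Thus two vertices $u$ and $v$ have equivalent extended exchange matrices precisely when there is a bijection $\lambda:I(u)\to I(v)$ carrying the label set $C(u)$ to the label set $C(v)$ (as an indexed family, compatibly with $\omega$). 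The problem therefore reduces to showing that distinct vertices of the framework carry distinct label sets $C(v)$, after which injectivity of the map $v\mapsto\tB^v$ (up to equivalence) is immediate, and the ``if'' direction is trivial since equal vertices plainly give equal matrices.

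The heart of the matter is the forward direction: I would show that if $C(u)=C(v)$ as sets of labels, then $u=v$. First I would invoke Proposition~\ref{basis}, so that $C(u)$ and $C(v)$ are each bases of the root lattice, and then pass to the dual picture via Proposition~\ref{dual adjacent} and the cones $\Cone(v)=\bigcap_{e\in I(v)}\set{x\in V^*:\br{x,C\ck(v,e)}\ge 0}$. Because the co-labels are determined by the labels (Remark~\ref{ratios} shows the co-label-to-label ratios are fixed throughout the framework, equal to the simple co-root-to-simple root ratios), equality of $C(u)$ and $C(v)$ forces equality of the co-label sets $C\ck(u)$ and $C\ck(v)$, and hence $\Cone(u)=\Cone(v)$. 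The desired conclusion $u=v$ then follows once one knows that the map $v\mapsto\Cone(v)$ is injective, i.e.\ that the cones $\Cone(v)$ are the maximal cones of a fan with one cone per vertex. This injectivity is exactly the content of the polyhedrality and well-connectedness hypotheses discussed in the introduction (see Corollary~\ref{cor:BdyFacet} and the surrounding machinery), so the cleanest route is to quote the fan property of a well-connected polyhedral framework.

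The main obstacle will be establishing that distinct vertices give distinct cones without circularity. Corollary~\ref{cor:BdyFacet} only tells us that adjacent cones share a facet; it does not by itself rule out two non-adjacent vertices from carrying the same cone, nor from two vertices with the same label set being genuinely different seeds. The real work is in the global structure: one must show that the collection $\set{\Cone(v)}$ forms a fan (so that relative interiors are disjoint, forcing injectivity) and that this fan coincides with the $\g$-vector fan, whose cones are in bijection with seeds. I expect this to rest on the well-connectedness hypothesis, which is precisely designed to guarantee that the cones tile their union without overlap. Concretely, I would argue that if $\Cone(u)=\Cone(v)$ then the relative interiors meet, contradicting the fan property unless $u=v$; and that the identification of labels with the columns of $H^v$ (Theorem~\ref{framework summary}) transports this uniqueness statement about frameworks back to the statement about $\tB^u$ and $\tB^v$. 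The delicate point throughout is keeping straight that equivalence of extended exchange matrices is the data-level statement matching the geometric statement about cones, so that the framework's injectivity on cones yields exactly Conjecture~\ref{tildeB equiv} and not merely a weaker version of it.
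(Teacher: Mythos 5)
Your core injectivity argument (labels determine co-labels and hence cones, and polyhedrality forces distinct framework vertices to have distinct cones) is sound and parallels the injectivity half of the paper's notion of exactness (cf.\ Corollary~\ref{fan H}). But there is a genuine gap exactly where you say the identification of labels with columns of $H^v$ ``transports this uniqueness statement about frameworks back to the statement about $\tB^u$ and $\tB^v$.'' Conjecture~\ref{tildeB equiv} is about vertices of the \emph{exchange graph}, while Theorem~\ref{framework summary} (i.e.\ Theorems~\ref{framework exchange} and~\ref{framework principal}) only produces a map $\Seed$ from the \emph{universal cover} $\hG$ to $\Ex_0(B)$. To identify exchange-graph vertices with framework vertices you need two things your proposal never establishes: (i) \emph{ampleness} --- that $\Seed$ factors through the covering $\hG\to G$ --- and (ii) surjectivity of $\Seed$, which requires the framework to be \emph{complete}. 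Without (i), two distinct seeds $u\neq v$ could arise from two vertices of $\hG$ lying over the same vertex of $G$; such seeds would have equal extended exchange matrices, which is precisely the failure the conjecture is meant to exclude. So ampleness is not a formality one can quote away; it is the heart of the conjecture, and it does not follow from polyhedrality or well-connectedness (the paper notes explicitly that a polyhedral framework is exact if and only if it is ample). The paper earns ampleness through real work: descending frameworks are simply connected (Proposition~\ref{descending simply}, via the rank-two cycle analysis of Lemma~\ref{descending down}), and simply connected frameworks are ample (Proposition~\ref{simply ample}, resting on Lemma~\ref{lem:rank2}, which imports the rank-two periodicity results of \cite{ca1}).

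Second, your argument is conditional on the existence of a framework with all the required global properties, but you never produce one; what you have sketched is essentially the paper's Corollary~\ref{exact conj} (a complete, exact framework for $B$ implies Conjecture~\ref{H equiv}, hence Conjecture~\ref{tildeB equiv}). The paper's actual proof of the conjecture --- valid only for $B$ of finite Cartan type --- constructs the Cambrian framework $(\Camb_c,C_c)$, proves it is a descending reflection framework (Theorem~\ref{camb frame}), hence exact, polyhedral and well-connected (Theorem~\ref{descending good}), and then uses $n$-regularity of the Cambrian lattice in finite type to get completeness. Outside finite Cartan type the Cambrian framework is not complete and the conjecture remains open, so any proof must either restrict to finite Cartan type or supply a new complete exact framework; your proposal does neither.
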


In other words, when we associate a seed to each vertex of the $n$-regular tree $T$, the seeds associated to two vertices are equivalent if and only if the two extended exchange matrices are equivalent.  
We offer the following strengthening of Conjecture~\ref{tildeB equiv}.

\begin{conj}\label{H equiv} 
A vertex $v$ in the exchange graph of a cluster algebra with principal coefficients is completely determined by its set of $\c$-vectors.
That is, if $u$ and $v$ are distinct vertices in the exchange graph, then $H^u$ and $H^v$ are not related by permuting columns.
\end{conj}

The following conjecture is \cite[Conjecture~7.10(2)]{ca4}.

\begin{conj}\label{g lattice conj}
For each $v\in\Ex_\bullet(B)$, the vectors $\{ \g(x^v_e):e\in I(v) \}$ are a $\integers$-basis for the weight lattice.
\end{conj}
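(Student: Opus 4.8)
The plan is to deduce the conjecture from the framework machinery, assuming a framework for $B$ is available (and complete). The key input is the $\g$-vector recipe of Theorem~\ref{framework summary}: once each vertex $v\in\Ex_0(B)$ is identified with a vertex of the framework $G$ through the seed map, the $\g$-vectors $\g(x^v_e)$ are exactly the vectors of the basis $R(v)$ of $V^*$ dual to the co-label basis $C\ck(v)$ of $V$. Thus the whole question reduces to showing that $R(v)$ is a $\integers$-basis of the weight lattice.

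First I would isolate the lattice-theoretic fact at the heart of the argument. By definition the weight lattice is generated by the fundamental weights $\rho_i$, which satisfy $\br{\rho_i,\alpha_j\ck}=\delta_{ij}$; consequently the weight lattice is precisely the $\integers$-dual of the co-root lattice, namely the set of $x\in V^*$ with $\br{x,\lambda}\in\integers$ for every $\lambda$ in the co-root lattice. More generally, if $\set{u_e}$ is any $\integers$-basis of a lattice $L\subset V$, then the dual basis $\set{w_e}$ determined by $\br{w_e,u_f}=\delta_{ef}$ is a $\integers$-basis of the $\integers$-dual lattice of $L$.

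Second, I would feed Proposition~\ref{basis} into this fact. That proposition guarantees that $C\ck(v)$ is a $\integers$-basis of the co-root lattice for every vertex $v$. Taking $L$ to be the co-root lattice, $\set{u_e}=C\ck(v)$, and $\set{w_e}=R(v)$, the fact above shows that $R(v)$ is a $\integers$-basis of the weight lattice. Combining this with the identification $\g(x^v_e)=R(v,e)$ from Theorem~\ref{framework summary} completes the argument: the $\g$-vectors at $v$ are a $\integers$-basis of the weight lattice.

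The routine lattice duality is not the difficulty; the two genuine inputs are what must be secured. The identification of the $\g$-vectors with the dual basis $R(v)$ is the real content, supplied by Theorem~\ref{framework summary} (in its precise form, Theorem~\ref{framework principal}), and rests on constructing the framework-to-seed correspondence and checking it against the $\g$-vector mutation recursion. In addition, reaching \emph{every} $v\in\Ex_0(B)$ requires the framework to be complete, so that the seed map surjects onto the exchange graph; for an incomplete framework the same reasoning establishes the conjecture only for seeds lying in the image of the seed map.
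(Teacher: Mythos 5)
Your proposal matches the paper's own route: the paper proves Theorem~\ref{framework principal}(\ref{weight basis}) exactly as you do, by combining the identification $\g(x^v_e)=R(v,e)$ from Theorem~\ref{framework principal}(\ref{g vec}) with Proposition~\ref{basis} and the fact that the weight lattice is the $\integers$-dual of the co-root lattice, and then Theorem~\ref{complete conj} invokes completeness to make the map $\Seed$ surjective onto $\Ex_0(B)$. Your closing caveat --- that without completeness the argument only covers seeds in the image of $\Seed$ --- is likewise exactly the paper's position, since Conjecture~\ref{g lattice conj} is established there only when a complete framework exists (e.g.\ for $B$ of finite Cartan type via the Cambrian framework).
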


A collection of vectors in $\reals^n$ is \newword{sign-coherent} if, for all $i$ from $1$ to $n$, the $i\th$ coordinates of the vectors are either all nonnegative, or all nonpositive.
The following conjecture is \cite[Conjecture~6.13]{ca4}.

\begin{conj}\label{g sign-coherent}
For each vertex $v$ of $\Ex_\bullet(B)$, the fundamental-weight coordinate vectors of $\{ \g(x^v_e):e\in I(v) \}$ are a sign-coherent collection.
\end{conj}
In other words, the conjecture is that for each vertex $v$ of $\Ex_\bullet(B)$ and each $i\in I$, all of the vectors $\g(x^v_e):e\in I(v)$ are weakly on the same side of the hyperplane~$\alpha_i^\perp$.

A \newword{cluster monomial} is a monomial in the cluster variables contained in a single cluster.
The \newword{support} of a cluster monomial is the set of cluster variables appearing in the monomial with nonzero exponent.
The $\g$-vector of a cluster monomial is the sum (with multiplicities) of the $\g$-vectors of the cluster variables in its support.

The following is \cite[Conjecture~4.16]{FZ-CDM}.
\begin{conj}\label{mon indep}
The cluster monomials form a linearly independent set.
\end{conj}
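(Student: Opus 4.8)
The plan is to deduce linear independence from the geometry of the $\g$-vector fan supplied by a framework, using the principal-coefficients grading to turn a geometric separation statement into an algebraic one. Recall that in a cluster algebra with principal coefficients the ambient Laurent ring carries a $\ZZ^n$-grading (the grading of \cite{ca4}, which after the reformulation of Theorem~\ref{framework summary} records fundamental-weight coordinates) under which $\deg(x_i)=\rho_i$, and with respect to which every cluster variable is homogeneous with degree equal to its $\g$-vector. Consequently every cluster monomial $m=\prod_e (x^v_e)^{a_e}$ is homogeneous, with $\deg(m)=\g(m)=\sum_e a_e\,\g(x^v_e)$. Any linear relation among cluster monomials splits into its homogeneous components, each of which involves only monomials of a single degree; so it suffices to prove the separation statement that \emph{distinct cluster monomials have distinct $\g$-vectors}, since then each graded component is a multiple of a single monomial and the relation is trivial.

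To obtain separation I would work in the fan of $\g$-vectors. By Theorem~\ref{framework summary}, $\g(x^v_e)$ is the ray generator $R(v,e)$ of the simplicial cone $\Cone(v)$, so the $\g$-vector of a cluster monomial supported at $v$ is the nonnegative combination $\sum_e a_e\,R(v,e)$ and lies in the relative interior of the face of $\Cone(v)$ spanned by $\set{R(v,e):a_e>0}$. The essential input is that the collection $\set{\Cone(v)}$ forms a genuine fan. Adjacent cones already meet nicely by Corollary~\ref{cor:BdyFacet}, and the matching of shared rays is exactly Proposition~\ref{dual adjacent}; assuming a complete, exact, well-connected polyhedral framework, the hypotheses of Lemma~\ref{MaxCheckFan} hold and these cones, together with their faces, assemble into a fan that coincides with the $\g$-vector fan. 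Sign-coherence (Conjecture~\ref{g sign-coherent}) and the basis property (Conjecture~\ref{g lattice conj}), both read off from the framework, ensure the cones are honestly simplicial and full-dimensional, and exactness ensures that each ray of the fan is the $\g$-vector of a \emph{unique} cluster variable and each maximal cone the cluster of a unique seed.

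Granting the fan structure, the separation statement is immediate. Every point of the support of the fan lies in the relative interior of a unique cone $\sigma$, so $\g(m)$ determines the minimal face $\sigma$ containing it, hence the set of cluster variables whose $\g$-vectors are the rays of $\sigma$ — exactly the support of $m$. Because $\sigma$ is simplicial, the exponents $a_e$ are recovered as the unique nonnegative coordinates of $\g(m)$ in the ray generators of $\sigma$. Thus two cluster monomials with equal $\g$-vectors have identical support and identical exponents and are therefore equal. Combined with the grading reduction of the first paragraph, this yields linear independence.

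The main obstacle is establishing that the \emph{entire} family of $\g$-vector cones is a fan, rather than merely verifying that neighboring cones share a facet: one must rule out unexpected overlaps between cones attached to far-apart vertices, which is precisely the global control provided by completeness and well-connectedness. In finite Cartan type the Cambrian framework is complete, exact, and polyhedral, so the argument goes through unconditionally and proves the conjecture there; outside finite type the proof is conditional on producing a complete framework of the required kind, which is the project carried out in the forthcoming papers.
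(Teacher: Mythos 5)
Your geometric core --- deducing the separation statement (distinct cluster monomials have distinct $\g$-vectors) from the fan structure supplied by a complete, exact, well-connected polyhedral framework --- is exactly the paper's route (Theorem~\ref{well-connected g} and Corollary~\ref{polyhedral conj}), and your final paragraph correctly identifies where the real geometric work lies. The gap is in your first paragraph, where you claim the grading reduces Conjecture~\ref{mon indep} to separation. The conjecture, which is \cite[Conjecture~4.16]{FZ-CDM}, asserts linear independence over the coefficient ring $\ZZ\PP$, and in the principal-coefficients grading of \cite[Section~6]{ca4} the coefficients are \emph{not} of degree zero: one has $\deg(x_i)=\rho_i$ but also $\deg(y_j)=-\b_j$. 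Consequently a $\ZZ\PP$-linear relation $\sum_m p_m(y)\,m=0$ does not split into homogeneous components each involving a single cluster monomial: writing $p_m=\sum_v a_{m,v}y^v$, the term $a_{m,v}y^v m$ has degree $\g(m)-\sum_j v_j\b_j$, so a piece $y^v m$ can lie in the same graded component as a different cluster monomial $m'$ whenever $\g(m')=\g(m)-\sum_j v_j\b_j$. Your sentence ``each graded component is a multiple of a single monomial'' is therefore false as stated; the argument you give only proves independence over $\QQ$, which is strictly weaker than the conjecture.

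This is precisely why the paper needs a second input besides Conjecture~\ref{g fan conj}, namely Conjecture~\ref{F 1 conj} (constant term $1$ of $F$-polynomials), which it obtains from completeness via Theorem~\ref{complete conj}; its proof of Corollary~\ref{polyhedral conj} then invokes the implication ``Conjectures~\ref{g fan conj} and~\ref{F 1 conj} imply Conjecture~\ref{mon indep}'' from \cite[Remark~7.11]{ca4}. The repaired argument runs as follows: write each cluster monomial as $m=x^{\g(m)}F_m(\hat y_1,\dots,\hat y_n)$ with $\hat y_j=y_j\prod_i x_i^{b_{ij}}$, so that $m=y^0x^{\g(m)}+\sum_{w>0}c_{m,w}\,y^w x^{\g(m)+\sum_j w_j\b_j}$, where the presence of the leading term $y^0x^{\g(m)}$ is exactly Conjecture~\ref{F 1 conj}. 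Given a nontrivial relation, choose a pair $(m,u)$ with $a_{m,u}\neq 0$ minimizing the coordinate sum of $u$; then the coefficient of $y^u x^{\g(m)}$ in the expanded relation is $a_{m,u}$ alone (terms with $w>0$ would force a pair $(m',u-w)$ of strictly smaller coordinate sum, and terms with $w=0$ force $m'=m$ by separation), a contradiction. So supplement your proof with Conjecture~\ref{F 1 conj} --- which your framework hypotheses already deliver --- and replace the homogeneous-splitting step by this leading-term argument; as written, the proposal does not establish the conjecture in the intended sense.
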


For each vertex $v$ in the exchange graph, let $\Cone(v)$ be the cone in $V^*$ spanned by the weights $\g(x^v_e):e\in I(v)$.
Note that we earlier defined $\Cone(v)$ for $v$ a vertex of a framework, and we have now defined $\Cone(v)$ for $v$ a vertex of the exchange graph.
Theorem~\ref{framework principal}(\ref{g vec}) will show that these notations are compatible when they both make sense.
The following is a restatement of \cite[Conjecture~7.10(1)]{ca4}.
(For another restatement, see \cite[Conjecture~1.5]{QP2}.)

\begin{conj}\label{g fan conj}
The collection $\set{\Cone(v):v\in\Ex_\bullet(B)}$ is a fan and the map $v\mapsto\Cone(v)$ is injective.   
\end{conj}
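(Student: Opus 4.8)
The plan is to deduce this conjecture from the existence of a complete, exact, well-connected polyhedral framework for $B$, which is the route the paper takes for $B$ of finite Cartan type (via the Cambrian framework) and the sequels take for affine and skew-symmetric $B$. The first step is to reconcile the two meanings of $\Cone(v)$. For a vertex $v$ of a framework we have the simplicial cone $\Cone(v)=\bigcap_{e\in I(v)}\set{x\in V^*:\br{x,C\ck(v,e)}\ge 0}$, spanned by the dual basis $R(v)=\set{R(v,e):e\in I(v)}$ of Proposition~\ref{dual adjacent}, whereas for a vertex of $\Ex_0(B)$ the cone is spanned by the $\g$-vectors $\g(x^v_e)$. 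By the last bullet of Theorem~\ref{framework summary} (proved as Theorem~\ref{framework principal}), the $\g$-vector $\g(x^v_e)$ is exactly $R(v,e)$ written in fundamental-weight coordinates, so the two cones coincide. Completeness and exactness identify the framework graph $G$ with $\Ex_0(B)$, so the conjecture reduces to the two statements that $\set{\Cone(v):v\in G}$ is a fan and that $v\mapsto\Cone(v)$ is injective.

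For the fan property I would apply Lemma~\ref{MaxCheckFan}: each $\Cone(v)$ is full-dimensional and simplicial, so its faces are precisely the cones spanned by subsets of $R(v)$, and it therefore suffices to check that every pair $\Cone(v),\Cone(v')$ meets nicely. When $v$ and $v'$ are adjacent this is exactly Corollary~\ref{cor:BdyFacet}, which supplies a common facet, while Proposition~\ref{dual adjacent} places the two cones on opposite sides of the hyperplane $C\ck(v,e)^\perp$ that they span. The remaining, and genuinely difficult, task is to pass from this local wall-crossing picture to a global one: one must rule out that cones attached to far-apart vertices overlap or fail to meet along a common face. This is precisely where the polyhedrality and well-connectedness hypotheses from Section~\ref{global sec} enter. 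The strategy is a local-to-global (covering-space style) argument: show that crossing the facet $C\ck(v,e)^\perp$ carries $\Cone(v)$ onto the unique neighboring $\Cone(v')$, control the union $\bigcup_v\relint\Cone(v)$ using polyhedrality, and then invoke well-connectedness to conclude that the relative interiors are pairwise disjoint and that any two cones meet nicely.

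Once the cones have pairwise disjoint relative interiors, injectivity of $v\mapsto\Cone(v)$ is immediate, since equal full-dimensional cones have equal nonempty interiors and disjointness then forces $v=v'$. I expect the main obstacle to be the global step of the second paragraph: Corollary~\ref{cor:BdyFacet} is purely local and says nothing about non-adjacent cones, so controlling how the cones assemble over the entire exchange graph—ensuring neither overlaps nor gaps—is the essential geometric input, and it is exactly what the polyhedral and well-connectedness conditions are designed to furnish.
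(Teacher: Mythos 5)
Your overall skeleton does match the paper's route: Theorem~\ref{framework principal}(\ref{g vec}) identifies the $\g$-vector cones with the framework cones $\Cone(v)$, Theorem~\ref{exact framework} identifies $G$ with $\Ex_0(B)$ when the framework is complete and exact, and Corollary~\ref{polyhedral conj} then delivers the conjecture, with the Cambrian framework supplying the hypotheses in finite Cartan type. But your second paragraph contains a genuine circularity. In this paper, \emph{polyhedral} is by definition exactly the statement to be proved: that the cones $\Cone(v)$ are the maximal cones of a fan and that distinct vertices define distinct cones. So your plan to ``control the union $\bigcup_v \relint\Cone(v)$ using polyhedrality'' and then conclude that the cones meet nicely assumes the conclusion; there is no residual local-to-global step inside the conditional statement, because once a polyhedral framework is granted, the fan property is immediate. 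You also misassign the role of well-connectedness: it plays no part in the fan property or in injectivity, entering only in Theorem~\ref{well-connected g} to obtain Conjecture~\ref{strong g fan conj} (equivalently Conjecture~\ref{face conj} together with this conjecture); for Conjecture~\ref{g fan conj} alone, complete, exact, and polyhedral suffice.

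The actual hard content, which your covering-space sketch gestures at but does not supply, is Proposition~\ref{descending polyhedral}: a \emph{descending} framework is polyhedral. The paper does not prove this by checking adjacent pairs and propagating --- as you note, Corollary~\ref{cor:BdyFacet} is purely local --- but by a minimal-counterexample induction on $\ell(u)+\ell(v)$, where $\ell$ is the longest directed path to $v_b$ (well defined by Lemma~\ref{finite}): one perturbs a point of $\Cone(u)\cap\Cone(v)$ toward the interior of $\Cone(v_b)$, and the Sign, Half-edge, and Descending chain conditions guarantee that every hyperplane crossed is $\beta^\perp$ with $\sgn(\beta)=-1$ and corresponds to an actual edge of $G$, producing a badly-meeting pair of strictly smaller total length. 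Note also that injectivity is proved there first, by its own induction; it is not ``immediate'' from disjoint relative interiors, since two vertices could a priori carry the \emph{same} cone. Finally, to make the conditional argument unconditional in finite Cartan type one needs Theorem~\ref{camb frame} (the Cambrian framework is a descending reflection framework), Theorem~\ref{descending good} (descending implies exact, polyhedral, and well-connected, with ampleness obtained via simple connectivity in Propositions~\ref{descending simply} and~\ref{simply ample}), and completeness of $\Camb_c$ from \cite[Corollary~8.1]{camb_fan}; none of this construction appears in your outline, and it is where the proof actually lives.
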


In particular (and as stated in \cite[Conjecture~7.10(1)]{ca4}), different cluster monomials have different $\g$-vectors.
Conjectures~\ref{g fan conj} and~\ref{F 1 conj} imply Conjecture~\ref{mon indep}, as explained in \cite[Remark~7.11]{ca4}.

The assertion that Conjecture~\ref{face conj} and Conjecture~\ref{g fan conj} both hold is equivalent to the following conjecture:
\begin{conj}\label{strong g fan conj}
Suppose two cluster monomials have the same $\g$-vector.
If one is supported on some set $\X$ of cluster variables in a seed, and the other is supported on some set $\X'$ of cluster variables in another seed, then $\X=\X'$, and furthermore, the two seeds are related by a sequence of seed mutations that do not exchange any variables in $\X$.
\end{conj}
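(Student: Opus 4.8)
The plan is to prove the two implications separately, throughout using the translation between cluster monomials and lattice points of the $\g$-vector cones. If a cluster monomial in the seed at $v$ has support $\X$, then its $\g$-vector is $\sum_{x\in\X}a_x\g(x)$ with every $a_x$ a positive integer, so it lies in the relative interior of the face of $\Cone(v)$ spanned by $\set{\g(x):x\in\X}$; here I use that the $\g$-vectors of a single cluster are linearly independent (Conjecture~\ref{g lattice conj}), so that $\Cone(v)$ is simplicial and subsets of the cluster correspond to faces. Conversely, every positive-integer combination of $\set{\g(x):x\in\X}$ is the $\g$-vector of such a monomial.

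First I assume Conjectures~\ref{face conj} and~\ref{g fan conj}, and let two cluster monomials with a common $\g$-vector $\g$ be supported on $\X$ in the seed at $v$ and on $\X'$ in the seed at $v'$. Then $\g$ lies in the relative interior of a face $F$ of $\Cone(v)$ and of a face $F'$ of $\Cone(v')$. By Conjecture~\ref{g fan conj} the collection of all faces of the cones $\Cone(\cdot)$ is a fan, and since the relative interiors of the cones of a fan are pairwise disjoint, $F=F'$. Comparing the extreme rays of this common face, and using the stated consequence of Conjecture~\ref{g fan conj} that distinct cluster monomials have distinct $\g$-vectors (applied to $x^a$ and $(x')^b$ on a shared ray to force $x=x'$), I conclude $\X=\X'$. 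Both seeds then contain $\X$, so by Conjecture~\ref{face conj} the seeds whose clusters contain $\X$ induce a connected subgraph; a path inside this subgraph uses only mutations all of whose intermediate seeds still contain $\X$, and no such mutation can exchange a variable of $\X$. This is exactly Conjecture~\ref{strong g fan conj}.

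For the converse I assume Conjecture~\ref{strong g fan conj}. Conjecture~\ref{face conj} is immediate: given two seeds containing a set $\X$ of cluster variables, the monomial $\prod_{x\in\X}x$ has the same $\g$-vector in both, so Conjecture~\ref{strong g fan conj} supplies a mutation sequence exchanging no variable of $\X$, which stays inside the subgraph of seeds containing $\X$. For Conjecture~\ref{g fan conj} I first prove injectivity of $v\mapsto\Cone(v)$: if $\Cone(u)=\Cone(v)$, then for each shared extreme ray I pick positive integers $a,b$ with $a\g(x^u_e)=b\g(x^v_f)$ and apply Conjecture~\ref{strong g fan conj} to $(x^u_e)^a$ and $(x^v_f)^b$ to get $x^u_e=x^v_f$, so the two clusters coincide; applying Conjecture~\ref{strong g fan conj} to the product of all variables in that common cluster then forces a mutation sequence exchanging no cluster variable at all, i.e. $u=v$.

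It remains to show that any two maximal cones $\Cone(u)$ and $\Cone(v)$ meet nicely, after which Lemma~\ref{MaxCheckFan} completes Conjecture~\ref{g fan conj}; this meeting-nicely step is the main obstacle. The idea is to identify $\Cone(u)\cap\Cone(v)$ with the cone spanned by the rays $\g(x)$ common to the two clusters. Given $p\in\Cone(u)\cap\Cone(v)$, let $F_\X$ and $F'_{\X'}$ be the minimal faces of $\Cone(u)$ and $\Cone(v)$ containing $p$, with supports $\X$ and $\X'$. Since $\relint F_\X\cap\relint F'_{\X'}$ is a nonempty, relatively open subset of a rational subspace, it contains a rational point; clearing denominators produces cluster monomials supported on $\X$ (at $u$) and on $\X'$ (at $v$) with a common $\g$-vector, so Conjecture~\ref{strong g fan conj} gives $\X=\X'$ and hence $F_\X=F'_{\X'}$. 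Thus every ray of $\Cone(u)$ through $p$ also lies in $\Cone(v)$, so $p$ lies in the cone spanned by those rays $\g(x)$ belonging to both clusters; this proves $\Cone(u)\cap\Cone(v)$ is contained in that cone, the reverse inclusion is clear, and that cone is a face of each of the simplicial cones. The delicate points are the rational-point argument inside the intersection of relative interiors and the reliance on simpliciality (Conjecture~\ref{g lattice conj}), which is why I expect this step to carry the real weight of the equivalence.
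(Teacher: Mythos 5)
Your proposal is not comparable to a proof in the paper, because the paper contains none: the statement is a conjecture, asserted in Section~\ref{conj sec} to be equivalent to the conjunction of Conjectures~\ref{face conj} and~\ref{g fan conj}, but that equivalence is never proved there, even though the paper later uses it (the proofs of Corollary~\ref{polyhedral conj} and Theorem~\ref{exact identity model} both invoke the implication from Conjecture~\ref{strong g fan conj} to the other two conjectures). Where the paper actually establishes Conjecture~\ref{strong g fan conj} -- for seeds in the image of a well-connected polyhedral framework, and hence for $B$ of finite Cartan type -- it does so by an entirely different route, namely the framework machinery: Theorem~\ref{framework principal}(\ref{g vec}), Theorem~\ref{well-connected g}, Corollary~\ref{polyhedral conj}, and the Cambrian framework of Theorem~\ref{camb frame}. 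So what you have written is an attempt to supply the unproved equivalence, which is a genuinely useful complement to the paper rather than a rediscovery of its argument.

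That said, your argument has a real gap, the one you half-acknowledge in your closing sentence: the reliance on simpliciality of the cones $\Cone(v)$, i.e.\ on linear independence of the $\g$-vectors of a single cluster. In the forward direction you import this as Conjecture~\ref{g lattice conj}, which is not one of the two conjectures in the asserted equivalence; as written you prove that Conjectures~\ref{face conj}, \ref{g fan conj} and~\ref{g lattice conj} together imply Conjecture~\ref{strong g fan conj}, which is weaker than what is claimed. In the backward direction the problem is more serious: the identification of minimal faces with supports, the rationality of the coordinates of your rational point, and the step ``the two clusters coincide'' in the injectivity argument all presuppose simpliciality, and you never derive it from Conjecture~\ref{strong g fan conj}. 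Both gaps are closed by one small lemma. The $\g$-vectors lie in the weight lattice, so any real linear dependence among the $\g$-vectors of one cluster can be taken integral, say $\sum_x c_x\,\g(x)=0$ with $c_x\in\integers$ not all zero; splitting into positive and negative parts yields two cluster monomials, supported on the disjoint sets $\set{x:c_x>0}$ and $\set{x:c_x<0}$, which are not both empty and hence are distinct (the monomial with empty support being the cluster monomial $1$), yet these monomials have equal $\g$-vectors. This contradicts Conjecture~\ref{strong g fan conj} directly, giving simpliciality in the backward direction; and it contradicts the consequence of Conjecture~\ref{g fan conj} that you already invoke -- that distinct cluster monomials have distinct $\g$-vectors -- which lets you drop Conjecture~\ref{g lattice conj} from the forward direction. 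With this lemma inserted at the start of each direction, your proof of the equivalence goes through.
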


Each cluster variable $x$ is a rational function in the initial cluster variables $x_i:i\in I$ and the initial coefficients $y_i:i\in I$.
The \newword{$F$-polynomial} of $x$ is obtained by specializing each $x_i$ to $1$ in this rational function.
Theorem~\ref{Laurent thm} implies that the $F$-polynomial is a polynomial in $y_i:i\in I$ with integer coefficients.
The following are \cite[Conjecture~5.4]{ca4} and \cite[Conjecture~5.5]{ca4}.

\begin{conj}\label{F 1 conj}
Each $F$-polynomial has constant term $1$.
\end{conj}

\begin{conj}\label{F max conj}
Each $F$-polynomial has a unique monomial of maximal degree.
This monomial has coefficient $1$ and is divisible by all other monomials in the $F$-polynomial.
\end{conj}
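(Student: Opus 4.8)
The plan is to prove Conjecture~\ref{F max conj} in the regime for which the machinery of this paper is built, namely when a complete and exact framework $(G,C,C\ck)$ exists for $B$; in particular for $B$ of finite Cartan type, where the Cambrian framework is complete and Theorem~\ref{finite type conj} applies. There $G$ is identified with the exchange graph $\Ex_0(B)$, and Theorem~\ref{framework summary} lets me read the matrices $H^v$, the columns $\b_i$, and the $\g$-vectors directly off the labels and co-labels. The assertion is really a statement about the Newton polytope of each $F$-polynomial $F_x$: it says that this polytope has a single vertex $\mathbf q(x)$ whose monomial has coefficient $1$ and is divisible by every other monomial of $F_x$. I will first produce $\mathbf q(x)$ explicitly from framework data and then show that it dominates.

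First I would record the candidate top exponent. Combining the separation-of-additions expansion (which, for principal coefficients and under Conjecture~\ref{F 1 conj}, reduces to $x = X^{\g(x)}F_x(\hat y)$ with $\hat y_j = y_j\prod_i x_i^{b_{ij}}$) with the tropical duality of Corollary~\ref{NZ cor} (the matrix of $\g$-vectors at a vertex is the inverse of the principal-coefficient matrix $H^v$), the exponent vector of the maximal monomial of $F_x$ is forced to be determined by $\g(x)$ together with $B$, equivalently by the $c$-vectors that are the columns of $H^v$. Thus $\mathbf q(x)$ is computable without leaving the combinatorics of the framework. In finite Cartan type one gets a second, independent description through Conjecture~\ref{nu conj} (proven there), which expresses the denominator vector $\d(x)$ as an explicit linear image of $\g(x)$; matching the two descriptions pins down $\mathbf q(x)$ and already separates it from the other vertices of the Newton polytope.

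Next I would induct on the distance in $G$ from the base vertex $v_b$. At $v_b$ every $F$-polynomial is $1$, so the maximal monomial is the constant $1$ and the claim is immediate. For the inductive step I would use the $F$-polynomial exchange recursion of~\cite{ca4} across an edge $e$, written in terms of the $\hat y$-monomials determined by $B^v$ and the coefficient rows $H^v$. Sign-coherence of the $c$-vectors, supplied by the framework via Conjecture~\ref{g sign-coherent}, guarantees that the two products of $F$-polynomials occurring in the numerator have comparable, non-interfering $\hat y$-supports, so that their Newton polytopes combine in a controlled way; positivity of the coefficients (available in finite Cartan type) then prevents cancellation of the prospective top term. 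Tracking the top vertex through this recursion shows that a unique maximal monomial persists with coefficient $1$.

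The hard part will be exactly the behaviour of the Newton polytope under the recursion. I must show that when the numerator $\prod_p (F^v_p)^{[b^v_{pe}]_+}\,M_+ + \prod_p (F^v_p)^{[-b^v_{pe}]_+}$, with $M_+$ the $\hat y$-monomial read off from $H^v$, is divided by the monomial that renders $F^{v'}_e$ a polynomial with nonzero constant term, exactly one of the two products contributes the dominating monomial and that this monomial is matched by no term of the other product. This comparison is where sign-coherence and positivity do the real work: without positivity one cannot exclude an accidental cancellation that would destroy uniqueness, which is precisely why a clean unconditional argument is available for $B$ of finite Cartan type and only a conditional one in general. I would therefore present the full proof for $B$ of finite Cartan type and record the general case as contingent on the positivity and sign-coherence inputs that a complete exact framework is designed to furnish.
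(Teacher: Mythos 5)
Your plan has a genuine gap at exactly the point you flag as ``the hard part.'' Tracking the Newton polytope of $F^{v'}_e$ through the mutation recursion---showing that after dividing by $F^v_e$ exactly one of the two products contributes a dominating monomial, with coefficient $1$, divisible by all others---is not a technical detail to be filled in; it is essentially the full content of Conjecture~\ref{F max conj} itself, and your proposal never carries out that step. Worse, the mechanism you invoke to control it, positivity of $F$-polynomial coefficients, is an external input that this paper's machinery neither supplies nor uses anywhere; leaning on it makes the argument depend on results entirely outside the framework apparatus (and positivity does not by itself give the divisibility and uniqueness statements: a sum of two positive polynomials trivially has no cancellation, but that says nothing about whether its top monomials merge into a single dominating vertex after division by $F^v_e$). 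Your opening paragraph has a similar circularity: asserting that ``the exponent vector of the maximal monomial of $F_x$ is forced'' by $\g(x)$ and $B$ presupposes that a unique maximal, all-dominating monomial exists, which is what you are trying to prove.

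The paper's route avoids all of this. By \cite[Proposition~5.3]{ca4}, Conjecture~\ref{F max conj} for $B$ is equivalent to Conjecture~\ref{F 1 conj} (constant term $1$) for $B$ and $-B$: the $F$-polynomials for $-B$ are, up to an invertible monomial change of variables, the ``reversals'' of those for $B$, so the top-monomial statement for one matrix is the constant-term statement for the other. Conjecture~\ref{F 1 conj} in turn follows from the Sign condition of a framework (sign-coherence of the rows of $H^v$, Theorem~\ref{framework principal}(\ref{coef sign-coherent})) via the argument of \cite[Proposition~5.6]{ca4}; this is Theorem~\ref{framework principal}(\ref{F 1}), and completeness makes it apply to the whole exchange graph (Theorem~\ref{complete conj}). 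Since $-B$ has the same Cartan companion as $B$ and is acyclic whenever $B$ is, finite Cartan type gives complete Cambrian frameworks for both $B$ and $-B$, and Conjecture~\ref{F max conj} follows with no analysis of the $F$-polynomial recursion, no Newton polytopes, and no positivity. If you want to salvage your approach, the realistic fix is to replace your inductive step by this duality: prove constant term $1$ for the opposite matrix $-B$ using its framework, rather than trying to chase the maximal monomial through mutation.
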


In light of \cite[Proposition~5.3]{ca4}, Conjecture~\ref{F 1 conj} holds for all $B$ if and only if and Conjecture~\ref{F max conj} holds for all $B$.
More specifically, given a particular $B$, Conjecture~\ref{F 1 conj} holds for $B$ and $-B$ if and only if and Conjecture~\ref{F max conj} holds for $B$ and $-B$.

There are several other formulations of Conjecture~\ref{F 1 conj} listed in \cite[Proposition~5.6]{ca4}.
We will state one of them, which, we will see, corresponds to the Sign condition.
The following condition appears as condition (ii') in the proof of \cite[Proposition~5.6]{ca4}, where it is shown to be equivalent to Conjecture~\ref{F 1 conj}.

\begin{conj}\label{H coherent}
For each vertex $v$ of $\Ex_\bullet(B)$, the rows of $H^v$ are sign-coherent.
In other words, each $\c$-vector has a well-defined sign.
\end{conj}

We add a new conjecture that is suggested by the Cambrian framework constructed in Section~\ref{camb sec}.
We define a map $\nu:V\to V^*$ by setting
\[\nu(\alpha_j)=-\sum_{i\in I}E(\alpha_i\ck,\alpha_j)\rho_i.\]   
When $B$ is acyclic, after suitably ordering $I$, the map $\nu$ is given by the negative of an upper uni-triangular matrix, and therefore it is invertible.
The inverse matrix is easily constructed by a standard combinatorial trick. Define a bilinear form $F$ on $V$ by setting 
\[F(\alpha_i\ck,\alpha_j)=\sum (-E(\alpha_{i_0}\ck,\alpha_{i_1}))(-E(\alpha_{i_1}\ck,\alpha_{i_2}))\cdots(-E(\alpha_{i_{k-1}}\ck,\alpha_{i_k})),\]
where the sum is over all paths $i=i_0 \to i_1 \to \cdots \to i_k=j$ in the complete graph with vertices $I$.
If $k=0$, then the summand is interpreted as $1$.
Since $B$ is acyclic, this is really a finite sum.
Define $\eta:V^*\to V$ by 
\[\eta(\rho_j)=-\sum_{i\in I}F(\alpha_i\ck,\alpha_j)\alpha_j.\]
It is easy to verify that the maps $\eta$ and $\nu$ are inverse to each other.

\begin{conj}\label{nu conj}
If $B$ is acyclic and $x$ is a cluster variable not contained in the initial seed, then $\g(x)=\nu(\d(x))$.
Equivalently, $\d(x)=\eta(\g(x))$.
\end{conj}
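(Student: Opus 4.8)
The plan is to reduce the conjectured identity to a transparent statement about the supports of $F$-polynomials, and then to attack that statement by induction along the exchange graph. The reduction rests on the separation formula of \cite{ca4}: with principal coefficients every cluster variable $x$ may be written as $x=\bigl(\prod_i x_i^{g_i}\bigr)\,F_x(\hat y_1,\dots,\hat y_n)$, where the $g_i$ are the fundamental-weight coordinates of $\g(x)$, where $F_x$ is the $F$-polynomial of $x$, and where $\hat y_j=y_j\prod_i x_i^{b_{ij}}$. Since $\hat y^{\mathbf a}=y^{\mathbf a}\prod_i x_i^{(B\mathbf a)_i}$, the exponent of $x_i$ in the term of $x$ indexed by $\mathbf a$ is $g_i+(B\mathbf a)_i$. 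Invoking positivity of the coefficients of $F_x$ (so that no cancellation occurs among the extremal monomials), reading off the denominator vector gives, for every $i$, the clean formula $d_i=-g_i-\min_{\mathbf a\in\supp F_x}(B\mathbf a)_i$. This holds for all cluster variables; combined with Conjecture~\ref{F 1 conj} it recovers $\d(x_i)=-\alpha_i$ for the initial ones.

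Using the explicit shape of $\nu$, the $\rho_i$-coordinate of $\nu(\d(x))$ equals $-d_i+\sum_{j\neq i}[-b_{ij}]_+\,d_j$, so comparing with the formula above, the conjecture $\g(x)=\nu(\d(x))$ becomes exactly the family of identities
\[
\min_{\mathbf a\in\supp F_x}(B\mathbf a)_i=\sum_{j}[b_{ij}]_-\,d_j(x)\qquad(i\in I).
\]
A quick check shows this identity fails for an initial variable $x_k$ precisely in the coordinates $i$ with $b_{ik}<0$, which is why the conjecture must exclude the initial seed; accordingly the base case of the induction is not $v_b$ but the cluster variables at mutation-distance one, where the identity follows from the direct computation above.

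For the inductive step I would propagate the identity across a single mutation. The $\g$-vectors obey the piecewise-linear recursion~(\ref{gRecurrence}), while the denominator vectors obey the componentwise tropical recursion $\d(x^{v'}_e)+\d(x^v_e)=\max\bigl(\sum_{p\in I(v)}[b^v_{pe}]_+\,\d(x^v_p),\ \sum_{p\in I(v)}[-b^v_{pe}]_+\,\d(x^v_p)\bigr)$ coming from the cluster-exchange relation. Applying the linear map $\nu$ to the denominator recursion and matching the output against~(\ref{gRecurrence}), the step reduces to showing that $\nu$ carries each active branch of the tropical $\max$ to the corresponding active branch of the $[\cdot]_+$ in the $\g$-recursion. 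I expect this alignment of piecewise-linear structures to be the main obstacle: a single linear map can intertwine two piecewise-linear recursions only when their regions of linearity coincide, and here that coincidence is precisely the assertion that the relevant $\g$-vectors—equivalently the rows of $H^{v}$—are sign-coherent, i.e.\ Conjecture~\ref{g sign-coherent} / Conjecture~\ref{H coherent}. For general acyclic $B$ these coherence statements are not available, so this route does not close the induction unconditionally.

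This is also where the Cambrian machinery of Section~\ref{camb sec} enters, and where its limitation is exactly the obstacle. In the Cambrian framework the $\g$-vectors are read off combinatorially (Theorem~\ref{camb nu g}), $\nu$ is by design the map sending the combinatorial denominator data to these $\g$-vectors, and the required sign-coherence is built into the Euler and Sign conditions; hence the identity holds automatically for every cluster variable whose cone occurs in the Cambrian fan. When $B$ is of finite Cartan type the framework is complete, so this covers all cluster variables and proves the conjecture. Outside finite type the framework models only the part of the exchange graph whose cones meet the Tits cone, so the argument establishes the conjecture only for those variables; the remaining variables are precisely the ones a \emph{complete} framework (the affine and quiver-theoretic constructions of the forthcoming papers) would supply. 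Thus the genuine obstacle to the full statement for all acyclic $B$ is the same one isolated above: securing, rather than assuming, the sign-coherence and completeness that a complete framework encodes.
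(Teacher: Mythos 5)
There is a genuine gap, in fact two, one in each branch of your argument. First, the inductive step leans on the componentwise tropical recursion $\d(x^{v'}_e)+\d(x^v_e)=\max\bigl(\sum_p[b^v_{pe}]_+\,\d(x^v_p),\ \sum_p[-b^v_{pe}]_+\,\d(x^v_p)\bigr)$ as if it came for free from the exchange relation. It does not: the exchange relation only yields the inequality $\le$, since the most negative exponents of the two terms could cancel when everything is expanded in the initial cluster; the equality is itself a conjecture of \cite{ca4} (the circle of statements around \cite[Conjecture~7.17]{ca4} that the paper explicitly flags as related to Conjecture~\ref{nu conj}). So even granting sign-coherence (Conjecture~\ref{H coherent}), your induction has an unproved input. (Your reduction $d_i=-g_i-\min_{\mathbf{a}\in\supp F_x}(B\mathbf{a})_i$ is fine with principal coefficients, since distinct $\mathbf{a}$ carry distinct $y$-monomials and cannot cancel; the problem is propagating it across a mutation.)

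Second, and more seriously for the finite-type case the paper actually proves (Theorem~\ref{nu conj camb}), your claim that in the Cambrian framework ``$\nu$ is by design the map sending the combinatorial denominator data to these $\g$-vectors'' and that ``the identity holds automatically'' assumes exactly what must be proved. A framework carries no denominator information at all: Theorem~\ref{framework principal} recovers exchange matrices, coefficients, and $\g$-vectors, and the paper is explicit that denominator data is conjectural outside finite type (Conjecture~\ref{camb denom conj}). The paper's proof has two nontrivial ingredients you elide. One is Proposition~\ref{cl linear}, the identity $R_c^r(v)=\nu_c(\cl_c^r(v))$, which is not a design feature but a genuine induction on rank and length using the Euler-form lemmas (Lemmas~\ref{Ec initial}--\ref{Ec invariant}); this is what makes Theorem~\ref{camb nu g} true and is the actual motivation for defining $\nu$. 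The other is the identification of the combinatorial labels $\cl_c^r(v)$ with the true denominator vectors $\d(x^v_e)$, which requires external finite-type inputs: distinctness of denominator vectors \cite[Theorem~1.9]{ca2}, the fan of combinatorial clusters \cite[Theorem~1.10]{ga}, the combinatorial fan isomorphism \cite[Theorem~1.1]{camb_fan}, and a check (carried out in the proof of Theorem~\ref{camb denom}) that the resulting bijection on rays agrees with the $\g$-vector bijection at and adjacent to the initial seed. Without these, nothing in the framework axioms ties $\cl_c$ to denominators, which is precisely why the statement remains open outside finite Cartan type.
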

As written, the conjecture relates a vector in the weight lattice to a vector in the root lattice.
Equivalently, the conjecture says that the $\g$-vector and the denominator vector, realized as integer vectors, are related by the action of the matrices associated to $-E$ and $-F$.
This conjecture is particularly interesting in connection with \cite[Proposition~7.16]{ca4}, \cite[Conjecture~7.17]{ca4}, and~\cite[Conjecture~6.11]{ca4}.
We will see in Theorem~\ref{nu conj camb} that the conjecture holds when $\Cart(B)$ is of finite type.
It is also easily verified when $n=2$ 
using the recursions \cite[(7.6--7.7)]{ca4} and \cite[Proposition~6.6]{ca4} for $\d$-vectors and $\g$-vectors.

If Conjecture~\ref{nu conj} holds, then Conjectures~\ref{g lattice conj}, \ref{g fan conj}, and~\ref{strong g fan conj} imply the following three conjectures in the case where $B$ is acyclic.
The second of the three is \cite[Conjecture~4.17]{FZ-CDM}, and can be restated in the language of fans, like Conjecture~\ref{g fan conj}.
The first  is only stated in the case of acyclic $B$ in light of a counterexample to the general conjecture given in \cite[Remark~7.7]{ca4}.

\begin{conj}\label{denom lattice conj}
Suppose $B$ is acyclic.
For each vertex $v$ of $\Ex_\bullet(B)$, the vectors $\d(x^v_e):e\in I(v)$ are a $\integers$-basis for the weight lattice. 
\end{conj}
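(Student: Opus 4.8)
The plan is to push the $\g$-vector basis of Conjecture~\ref{g lattice conj} onto the denominator vectors through the map $\nu$. The excerpt shows that $\nu$ is the negative of an integer upper-unitriangular matrix and that $\eta$ is its integer inverse, so both are unimodular, and $\nu$ carries the root lattice $\bigoplus_i\ZZ\alpha_i$ isomorphically onto the weight lattice $\bigoplus_i\ZZ\rho_i$. Following the paper's convention I record each cluster variable $x^v_e$ by its integer denominator vector $(d_i(x^v_e):i\in I)$ and by its integer $\g$-vector in fundamental-weight coordinates, and I assemble these into $n\times n$ integer matrices $D_v$ and $G_v$. Under the fundamental-weight identification $(d_i)\leftrightarrow\sum_i d_i\rho_i$ used in Conjecture~\ref{g lattice conj}, the assertion that the vectors $\d(x^v_e)$ form a $\ZZ$-basis of the weight lattice is \emph{exactly} the statement that $D_v$ is unimodular; this is what I will prove. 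Write $N$ for the (unimodular) integer matrix of $\nu$.

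Suppose first that $v$ is a seed none of whose cluster variables lie in the initial cluster. Then Conjecture~\ref{nu conj} gives $\g(x^v_e)=\nu(\d(x^v_e))$ for every $e\in I(v)$, i.e. $G_v=N D_v$. Since $G_v$ is unimodular by Conjecture~\ref{g lattice conj} and $N$ is unimodular, $D_v=N^{-1}G_v$ is a product of unimodular matrices and hence unimodular, which proves the claim for such seeds. The initial seed $v_b$ itself is immediate, since there $\d(x_i)=-\alpha_i$ and $\g(x_i)=\rho_i$ both visibly give bases.

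The difficulty is a seed $v$ whose cluster contains some initial variables $x_i$, with $i$ ranging over a set $K$, for which Conjecture~\ref{nu conj} does not apply. A direct computation gives $\nu(\d(x_i))=\nu(-\alpha_i)=\rho_i+\sum_{j\ne i}[b_{ji}]_-\,\rho_j=\g(x_i)+c_i$, where $c_i:=\sum_{j\ne i}[b_{ji}]_-\,\rho_j$ is a weight-lattice vector, so the naive transport fails on these columns. Expanding each column $\nu(\d(x^v_e))$ in the basis $G_v$ produces a factorization $N D_v=G_v M$ with $M$ an integer matrix: its column at a non-initial edge is a standard basis vector (by Conjecture~\ref{nu conj}), while its column at an initial index $i$ is $\mathbf e_i$ plus the $G_v$-coordinates of $c_i$. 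Because the non-initial columns of $M$ are standard basis vectors, $\det M=\det M_{KK}$, where $M_{KK}$ is the submatrix on the rows and columns indexed by $K$; hence $\det D_v=\pm\det M_{KK}$, and everything reduces to proving $\det M_{KK}=\pm1$.

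This last step is the main obstacle. Because $c_i$ only involves $\rho_j$ with $b_{ji}<0$, i.e. with an arrow $j\to i$ in the quiver of $B$, acyclicity of $B$ lets us order the indices so that each $c_i$ is supported on strictly earlier vertices; the expectation is that this forces $M_{KK}$ to be triangular with unit diagonal in the topological order (in particular that $c_i$ has vanishing $\g(x_i)$-component), giving $\det M_{KK}=\pm1$ and therefore $D_v$ unimodular. Making this precise requires controlling how the $\rho_j$ occurring in $c_i$ decompose against the $\g$-vector basis of the seed at $v$, ruling out ``forward leakage'' of a correction into a later initial row; this is where I expect the real work, and I would attack it using the fan structure and injectivity of Conjecture~\ref{g fan conj} together with the sign-coherence of Conjecture~\ref{g sign-coherent}, which constrain which initial variables can coexist in a seed and how the surrounding $\g$-vectors decompose. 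In finite Cartan type the entire statement is in any case already available, since Conjecture~\ref{nu conj} holds there by Theorem~\ref{nu conj camb} and the cluster fan is a complete unimodular simplicial fan.
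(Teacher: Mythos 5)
Your reduction is sound as far as it goes, and it isolates a genuine subtlety that the paper itself glosses over. Keep in mind the status of the statement: Conjecture~\ref{denom lattice conj} is not proved in the paper in general. It is deduced, in Section~\ref{conj sec} and Corollary~\ref{camb denom cor}, from Conjectures~\ref{nu conj}, \ref{g lattice conj}, \ref{g fan conj} and~\ref{strong g fan conj}, with the implication \emph{asserted} rather than written out, and those hypotheses are verified only for $B$ of finite Cartan type (Theorem~\ref{nu conj camb}, Corollary~\ref{finite type conj}). Your transport of the $\g$-vector basis through the unimodular map $\nu$ is exactly the mechanism behind that asserted implication, and your observation that Conjecture~\ref{nu conj} says nothing about initial cluster variables --- so that $\nu(\d(x_i))=\g(x_i)+c_i$ with $c_i=\sum_{j\neq i}[b_{ji}]_-\,\rho_j$, and the column-by-column transport fails on seeds meeting the initial cluster --- is correct, as is the linear-algebra reduction $\det D_v=\pm\det M_{KK}$.

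The gap is the step you flag yourself, and it is real: nothing you invoke yields $\det M_{KK}=\pm1$. Writing $e_k$ for the edge of $v$ carrying the initial variable $x_k$, the $(k,i)$ entry of $M_{KK}$ is $\delta_{ki}+\sum_{j\neq i}[b_{ji}]_-\br{\rho_j,C\ck(v,e_k)}$, where $C\ck(v,e_k)$ is the co-label of the framework $(T,H,H\ck)$ of Theorem~\ref{T model}, i.e.\ a $\mathbf{c}$-vector of the seed $v$. Connectivity (Conjecture~\ref{face conj}, via the mutation rule for the bottom rows of $\tB^v$ along a path that never exchanges the $x_{k'}$, $k'\in K$) does kill the terms with $j\in K$ --- and those are the only terms the topological order of the acyclic quiver controls --- but the terms with $j\notin K$ depend on the $\mathbf{c}$-vectors of the particular seed, not on the quiver, and carry no a priori sign or vanishing; sign-coherence and injectivity of the $\g$-fan do not obviously exclude the ``forward leakage'' you describe. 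So the proposal is an incomplete proof, even of the finite-type case, by this route. Where the paper actually proves the statement (finite Cartan type), the clean finish bypasses $M_{KK}$ entirely: by Theorem~\ref{camb denom} the $\d$-vectors at $v$ form the combinatorial cluster $\cl_c(v)$, and every almost positive root compatible with $-\alpha_i$ has vanishing $\alpha_i$-coordinate, so after permuting indices $D_v$ is block-triangular, with $-\mathrm{Id}$ on the block indexed by $K$ and a cluster of the parabolic root subsystem on $I\setminus K$ in the complementary block; induction on rank then gives unimodularity. A general repair of your step would similarly aim to show $d_i(x)=0$ for every non-initial $x$ in a seed containing $x_i$ (connectivity through seeds containing $x_i$ gives $d_i(x)\le 0$), but the matching inequality $d_i(x)\ge 0$ is the positivity of denominator vectors, which is itself a conjecture of \cite{ca4} and not among your hypotheses.
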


\begin{conj}\label{denom fan conj}
Different cluster monomials have different denominator vectors.
\end{conj}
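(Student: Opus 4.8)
The plan is to transport the analogous statement for $\g$-vectors across the linear bijection $\eta=\nu^{-1}$ provided by Conjecture~\ref{nu conj}. First I would note that both invariants are additive over the factors of a cluster monomial: additivity of $\g$-vectors is built into their definition, while additivity of $\d$ follows from the Laurent phenomenon (Theorem~\ref{Laurent thm}), since the numerator of each cluster variable lying in a common cluster is coprime to every initial variable, hence so is the product of numerators, and the denominator exponents therefore add. Next I would record that cluster monomials are already distinguished by their $\g$-vectors: Conjecture~\ref{strong g fan conj} forces two cluster monomials with a common $\g$-vector to have the same support inside a single seed, and Conjecture~\ref{g lattice conj} makes the $\g$-vectors of the cluster variables of that seed linearly independent, so the exponents, and hence the monomials, agree. (This is the $\g$-vector injectivity also recorded just after Conjecture~\ref{g fan conj}.)

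With these facts in hand, the statement is immediate for any cluster monomial $m$ whose support avoids the initial cluster: Conjecture~\ref{nu conj} gives $\d(x)=\eta(\g(x))$ for each factor, additivity yields $\d(m)=\eta(\g(m))$, and since $\eta$ is invertible, equal denominator vectors force equal $\g$-vectors and hence equal monomials. The one place the identity $\d=\eta\circ\g$ breaks down is on the $n$ initial cluster variables, where $\d(x_i)=-\alpha_i$ but $\eta(\g(x_i))=\eta(\rho_i)$ is a different vector (involving the form $F$). Writing a general cluster monomial as $m=\bigl(\prod_i x_i^{a_i}\bigr)m'$ with $m'$ supported off the initial cluster, additivity gives $\d(m)=-\sum_i a_i\alpha_i+\eta(\g(m'))$, so it suffices to recover the multiplicities $a_i$ from $\d(m)$: once they are known, equality of denominator vectors forces equal initial multiplicities and equal denominator vectors for the off-initial parts $m'$, and the clean case above finishes.

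The crux is thus to read the $a_i$ off of $\d(m)$, and I expect this to be the main obstacle. I would argue from the sign pattern of denominator vectors relative to $v_b$, using two properties of the classical theory: that the $i$th simple-root coordinate of $\d(x)$ is negative exactly when $x=x_i$, and the separation property that, in any cluster containing the initial variable $x_i$, every other cluster variable has vanishing $i$th coordinate. Granting these, the factors of $m$ other than $x_i^{a_i}$ contribute nothing to the $i$th coordinate of $\d(m)$, so that coordinate equals $-a_i$; the multiplicities are then determined and $\prod_i x_i^{a_i}$ can be peeled off, reducing the problem to $m'$. The difficulty is that these positivity and separation properties are not formal consequences of the $\g$-vector conjectures listed above; I would either try to deduce them from Conjecture~\ref{nu conj} together with the nonnegativity of the coefficients of $F$ and the sign-coherence of $\g$-vectors (Conjecture~\ref{g sign-coherent}), or invoke them as established results, which they are when $B$ is of finite Cartan type, exactly the range in which Conjecture~\ref{nu conj} is proved.
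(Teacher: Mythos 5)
Your proposal takes essentially the same route as the paper: the paper obtains this conjecture only for $B$ of finite Cartan type (Corollary~\ref{camb denom cor}), via the implication, asserted in Section~\ref{conj sec}, that Conjecture~\ref{nu conj} together with Conjectures~\ref{g lattice conj}, \ref{g fan conj} and~\ref{strong g fan conj} transports $\g$-vector injectivity of cluster monomials to denominator vectors through the invertible linear map $\nu$ --- exactly your strategy. The difference is that the paper states this implication in a single sentence without addressing the point you rightly isolate as the crux: Conjecture~\ref{nu conj} excludes the initial cluster variables, so the initial multiplicities $a_i$ must be recovered from $\d(m)$ by a separate argument. Your recovery step, using that non-initial cluster variables have nonnegative denominator vectors and that any variable sharing a cluster with $x_i$ has vanishing $i$th coordinate, is valid in finite Cartan type (both facts are classical there, via \cite{ca2} and \cite{ga}), which is precisely the range in which the paper claims the conjecture; so your proof is complete exactly where the paper's is, and your caveat that these sign and separation facts are not formal consequences of the listed conjectures correctly flags the looseness in the paper's own one-line justification of the general acyclic-$B$ implication.
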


\begin{conj}\label{strong denom fan conj}
Conjecture~\ref{strong g fan conj} holds with $\g$-vectors replaced by denominator vectors.
\end{conj}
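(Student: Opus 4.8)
The plan is to deduce Conjecture~\ref{strong denom fan conj} from Conjecture~\ref{strong g fan conj} by showing that, for two cluster monomials $m$ and $m'$, the equality of denominator vectors $\d(m)=\d(m')$ already forces the equality of $\g$-vectors $\g(m)=\g(m')$. Once this is established, Conjecture~\ref{strong g fan conj} applied to $m$ and $m'$ delivers exactly the required conclusion: the two supports coincide, and the two seeds are joined by a sequence of mutations that never exchange a variable in the common support. Thus the whole statement collapses to the implication $\d(m)=\d(m')\Rightarrow\g(m)=\g(m')$.

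First I would record how the two vectors of a cluster monomial are related. For a cluster variable $x$ outside the initial seed, Conjecture~\ref{nu conj} gives $\g(x)=\nu(\d(x))$, while for an initial variable $x_i$ one computes directly that $\g(x_i)=\rho_i$ whereas $\nu(\d(x_i))=\nu(-\alpha_i)=\rho_i+\sum_{k\neq i}[b_{ki}]_-\,\rho_k$. Writing $c_i$ for the multiplicity of $x_i$ in $m$ and using linearity of $\nu$, this yields
\[\g(m)=\nu(\d(m))+\sum_{i\in I}c_i\,\Delta_i,\qquad \Delta_i=\sum_{k\neq i}[-b_{ki}]_+\,\rho_k.\]
Because $\nu$ is a linear bijection, $\d(m)=\d(m')$ immediately forces $\nu(\d(m))=\nu(\d(m'))$, so the entire difficulty is concentrated in the correction terms: it remains only to show that the initial-variable multiplicities $c_i$ are themselves determined by $\d(m)$.

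The key step is therefore to prove that $c_i=\bigl[-[\d(m)]_i\bigr]_+$, where $[\d(m)]_i$ is the $\alpha_i$-coordinate of $\d(m)$. This rests on two facts. First, the denominator vector of any non-initial cluster variable is a nonnegative combination of simple roots. Second, if a cluster contains the initial variable $x_i$, then every other variable $x$ of that cluster satisfies $[\d(x)]_i=0$; this should follow by freezing $x_i$ and invoking the Laurent phenomenon (Theorem~\ref{Laurent thm}) in the resulting algebra, in which $x_i$ can no longer occur in any denominator. Granting these, when $x_i$ lies in the support of $m$ the only contribution to the $\alpha_i$-coordinate comes from $x_i$ itself, so $[\d(m)]_i=-c_i\le -1$; when $x_i$ is absent, every contributing variable has nonnegative $\alpha_i$-coordinate, so $[\d(m)]_i\ge 0$. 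Hence the sign of $[\d(m)]_i$ detects the presence of $x_i$ and, when present, recovers $c_i$. Consequently $\d(m)=\d(m')$ gives $c_i=c_i'$ for all $i$, whence $\g(m)=\g(m')$ by the displayed relation, completing the reduction.

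I expect the main obstacle to be precisely the two inputs of the previous paragraph, both of which concern the genuinely exceptional behavior of the initial cluster variables, where the clean linear correspondence $\g=\nu(\d)$ of Conjecture~\ref{nu conj} breaks down. The freezing argument requires that the clusters containing $x_i$ form a cluster subalgebra with $x_i$ frozen, which is most naturally underwritten by the connectivity of such clusters (Conjecture~\ref{vertex conj}); extracting only what the acyclic hypothesis legitimately supplies, rather than assuming the positivity of denominators or the freezing identity outright, is the delicate point of the whole argument.
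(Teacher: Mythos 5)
Your top-level route is the same as the paper's. The paper never proves Conjecture~\ref{strong denom fan conj} outright: it asserts in Section~\ref{conj sec}, without proof, that Conjecture~\ref{nu conj} together with Conjectures~\ref{g lattice conj}, \ref{g fan conj} and~\ref{strong g fan conj} implies the denominator-vector conjectures for acyclic $B$, and then obtains the finite-Cartan-type case (Corollary~\ref{camb denom cor}) by proving Conjecture~\ref{nu conj} (Theorem~\ref{nu conj camb}) and the $\g$-vector conjectures (Corollary~\ref{finite type conj}) in that case. Your proposal is precisely an expansion of that asserted implication, and its skeleton is sound: the conclusion of Conjecture~\ref{strong denom fan conj} is verbatim that of Conjecture~\ref{strong g fan conj}, so it suffices to show $\d(m)=\d(m')\Rightarrow\g(m)=\g(m')$; your correction formula $\g(m)=\nu(\d(m))+\sum_i c_i\Delta_i$ and the recovery $c_i=\bigl[-[\d(m)]_i\bigr]_+$ are correctly computed and do isolate exactly the point (the initial variables) where $\nu$ fails to intertwine $\d$ and $\g$.

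The gap is in the two supporting facts, which you partly assume and partly sketch. Nonnegativity of $\d(x)$ for non-initial $x$, and $[\d(x)]_i=0$ whenever $x$ and $x_i$ lie in a common cluster, are essentially Conjecture~7.4 of \cite{ca4}; they do not follow from the hypotheses you (or the paper) allow yourself, and they remain open for general acyclic $B$. Your freezing argument does not close the second one: connectivity is indeed available (Conjecture~\ref{strong g fan conj} implies Conjecture~\ref{face conj}), so $x$ is a cluster variable of the algebra with $x_i$ frozen, but the Laurent phenomenon for that algebra only places $x$ in $\ZZ\PP'[x_j^{\pm 1}:j\neq i]$ where the coefficient ring $\ZZ\PP'$ contains $x_i^{-1}$; it gives Laurent-ness, not polynomiality, in the frozen variable, so it does not by itself exclude $x_i$ from the denominator (that requires further input, e.g.\ the separation formula together with constant term $1$ of the relevant $F$-polynomials). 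In finite Cartan type---the only case in which the paper actually claims the conjecture as a theorem---both facts are known: denominator vectors of non-initial variables are positive roots, and compatibility with $-\alpha_i$ forces the $\alpha_i$-coordinate to vanish, by \cite[Theorem~1.9]{ca2} and the compatibility results of \cite{ga}, which the paper itself invokes in Section~\ref{denom sec}. If you cite those instead of the freezing heuristic, your argument becomes a complete proof of the finite-type case, and in fact a more explicit one than the paper's, since it records the initial-variable discrepancy that the paper's one-sentence assertion passes over.
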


The following table describes the mildest conditions on $B$ under which these conjectures are known and where they are proved, to the best of our knowledge.
We have omitted some results which were established in~\cite{ca4} under extra hypotheses which have since been removed.
See~\cite[Section~13]{ca4} for a similar table prepared at an earlier time.

\begin{center}
\begin{tabular}{|l|l|}\hline
\ref{vertex conj}, \ref{face conj}. \ Note that  these & Finite type:  \cite{ca2}.  \\
\parbox[t]{1.6 in}{  conjectures do not refer to a particular initial seed.}& \parbox[t]{3 in}{\hangindent 0.2 in Acyclic skew-symmetric: \cite{CK2}  proves \ref{vertex conj} explicitly, and the same argument gives \ref{face conj}.} \\\hline
\ref{tildeB equiv}, \ref{H equiv}& Skew-symmetric:  \cite{IIKNS}, using \cite{Pla1,Pla2}. \\\hline
\ref{g lattice conj}, \ref{g sign-coherent},  \ref{g fan conj}, \ref{F 1 conj}, \ref{F max conj}, & Skew-symmetric: \cite{QP2}, see also \cite{CKLP, Fu-Keller, Nagao, Pla1,Pla2}.  \\
 thus implying \ref{mon indep} and   \ref{H coherent} & A class of cases including acyclic: \cite{Demonet}. \\\hline
 \ref{nu conj} & $B$ is acyclic and skew symmetric: Implicit in \cite{CK2}. \\\hline
\ref{strong g fan conj}, \ref{denom lattice conj}, \ref{denom fan conj}, \ref{strong denom fan conj}& $\Cart(B)$ of finite type:  \cite{YZ}, see Remark~\ref{somewhat similar manner}. \\\hline
\end{tabular}
\end{center}
In this paper, we use frameworks to give an independent proof of Conjectures~\ref{vertex conj}--\ref{strong denom fan conj} for $\Cart(B)$ of finite type and principal coefficients.  
Conjectures~\ref{tildeB equiv}, \ref{H equiv}, and~\ref{nu conj} are new for $\Cart(B)$ of finite type, as far as the authors know.  

\subsection{From frameworks to cluster algebras}\label{frame to clus sec}
We now show that every framework is a combinatorial model for the associated cluster algebra.

Let $G$ be a quasi-graph.  
A \newword{covering} of $G$ is a quasi-graph $G'$ and a surjective map $p:G'\to G$ such that if $v$ and $v'$ are connected by an edge in $G'$, then $p(v)$ and $p(v')$ are connected by an edge in $G$, and such that $p$ induces a bijection between full edges incident to $v$ and full edges incident to $p(v)$.
We also require that half-edges of $v$ and half-edges of $p(v)$ are in bijection and require a covering map $p$ to include a specific choice of bijection from half-edges of each $v\in G'$ to half-edges of $p(v)$.

Given an $n$-regular, connected quasi-graph $G$ and a vertex $v_b$ of $G$, we define the \newword{universal cover} $\hG$ of $G$ at $v_b$ and a corresponding covering map $p$.
The vertices of $\hG$ are sequences $u_0,\ldots,u_k$ for $k\ge 0$ such that $u_0=v_b$ and each pair $u_{i-1},u_i$ is connected by an edge $e_i$ of $G$, subject to the restriction that $e_i\neq e_{i+1}$ for $i$ from $1$ to $k-1$.
If $k>0$, then $u_0,\ldots,u_k$  is connected to $u_0,\ldots,u_{k-1}$ by an edge in $\hG$, and these are all of the full edges of $\hG$.
Also, for each half-edge incident to $v_k$ in $G$, there is exactly one half-edge incident to $u_0,\ldots,u_k$ in $\hG$, and these are all of the half-edges of $\hG$.
The covering map $p:\hG\to G$ is the map sending $u_0,\ldots,u_k$ to $u_k$.
For each vertex $u_0,\ldots,u_k$ of $\hG$, we fix any bijection between the half-edges incident to $u_0,\ldots,u_k$ and the half-edges incident to $u_k$, and use these bijections to complete the definition of $p$.  
Thus $p:\hG\to G$ is a covering.
The universal cover $\hG$ has no cycles, and $p$ is the identity map on vertices if and only if $G$ has no cycles.

A framework $(G,C,C\ck)$ lifts to a framework $(\hG,\hat{C},\hat{C}\ck)$ by setting $\hat{C}(v,e)=C(p(v),p(e))$ and $C\ck(v,e)=C\ck(p(v),p(e))$.
We will overload notation by dropping the hats from $\hC$ and $\hC\ck$ and we will refer to $(\hG,C,C\ck)$ as the universal cover of $(G,C,C\ck)$.
The maps $\mu_e:I(v)\to I(v')$ also lift to $\hG$ in the obvious way.

\begin{theorem}\label{framework exchange}
Suppose $(G,C,C\ck)$ is a framework for $B$ and let $\A=\A(B,Y,X)$ be a cluster algebra whose initial exchange matrix is $B$.
Then there exists a covering $v\mapsto\Seed(v)=(B^v,Y^v,X^v)$ from the universal cover $\hG$ to an induced subgraph of the exchange graph $\Ex(B,Y,X)$ of $\A$, sending $v_b$ to the initial seed, such that the exchange matrix $B^v=[b^v_{ef}]_{e,f\in I(v)}$ has $b^v_{ef}=\omega(C\ck(v,e),C(v,f))$.
\end{theorem}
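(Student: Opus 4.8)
The plan is to define the seed map by induction along the tree $\hG$ and then to verify that the exchange-matrix formula is preserved by each mutation. Since $\hG$ has no cycles, its full edges form a tree rooted at $v_b$, so a map may be defined freely by induction on the distance from $v_b$: set $\Seed(v_b)$ to be the initial seed $(B,Y,X)$, and whenever a vertex $u$ has been assigned a seed and $e$ is a full edge joining $u$ to an as-yet-unassigned vertex $u'$, declare $\Seed(u')=\mu_e(\Seed(u))$, using the lift to $\hG$ of the framework's bijection $\mu_e\colon I(u)\to I(u')$ to identify the two index sets. Because $\hG$ is a tree there are no loops to close up, so no compatibility condition need be checked and $\Seed$ is well defined; the coefficient and cluster data $Y^v,X^v$ come along automatically through seed mutation and require no separate verification. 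The $n$ edges incident to each $u$ (full edges and half-edges together) are thereby put in bijection with the $n$ mutation directions $I(\Seed(u))$, the full edges of $\hG$ corresponding to mutations joining two assigned seeds; this furnishes the required covering $\hG\to\Ex(B,Y,X)$, and $v_b$ maps to the initial seed by construction. It remains only to prove the asserted formula for the exchange matrix.

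I would prove $b^v_{ef}=\omega(C\ck(v,e),C(v,f))$ by induction on the distance from $v_b$. For the base case, the Base condition gives $C(v_b,e)=\alpha_i$ and $C\ck(v_b,e)=\alpha_i\ck$ under the identification of $I(v_b)$ with $I$, so $\omega(C\ck(v_b,e),C(v_b,f))=\omega(\alpha_i\ck,\alpha_j)=b_{ij}$, which is the initial exchange matrix. For the inductive step, suppose the formula holds at $v$ and let $v'$ be joined to $v$ by a full edge $e$; write $\beta=C(v,e)$, $\beta\ck=C\ck(v,e)$ and $\sigma=\sgn(\beta)$. I would compare the cluster-algebra matrix-mutation rule~\eqref{BMatrixRecurrence} with the values of $\omega(C\ck(v',\mu_e(p)),C(v',\mu_e(q)))$ computed from the strengthened Transition and Co-transition conditions (Propositions~\ref{trans restate} and~\ref{cotrans restate}). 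Expanding $\omega$ bilinearly and repeatedly using its antisymmetry together with the vanishing $\omega(\beta\ck,\beta)=0$ (which holds because $\beta\ck$ is a positive multiple of $\beta$), the cases $p=e$ or $q=e$ fall out at once, each reducing to a sign flip exactly as in the first line of~\eqref{BMatrixRecurrence}.

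The substantive case is $p\neq e$ and $q\neq e$. Writing $x=b^v_{pe}$ and $y=b^v_{eq}$, the expansion produces
\[
\omega\bigl(C\ck(v',\mu_e(p)),C(v',\mu_e(q))\bigr)
= b^v_{pq} + [\sigma y]_+\,x + [-\sigma x]_+\,y,
\]
the cross term involving $\omega(\beta\ck,\beta)$ having vanished. The whole proof thus reduces to the elementary identity
\[
[\sigma y]_+\,x + [-\sigma x]_+\,y = \sgn(x)\,[xy]_+,
\]
which I would verify for both values of $\sigma\in\set{\pm1}$ and all real $x,y$ by checking the four sign patterns of $(x,y)$ (the degenerate cases $x=0$ or $y=0$ being trivial). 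This matches the second line of~\eqref{BMatrixRecurrence} and completes the induction.

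The main obstacle I anticipate is precisely this last matching of sign conventions: the framework's transition rules are phrased using $\sgn(\beta)$ and the form $\omega$, whereas matrix mutation is phrased using $\sgn(b^v_{pe})$, and the two must be reconciled uniformly in $\sigma$. The crucial structural inputs that make it work are the antisymmetry of $\omega$, the identity $\omega(\beta\ck,\beta)=0$, and the fact (Proposition~\ref{basis}) that $C(v)$ and $C\ck(v)$ remain bases so that $\mu_e$ is a genuine bijection of index sets; everything else is bookkeeping. A secondary point to handle with care is that the framework's $\mu_e$ and the cluster algebra's $\mu_e$ are identified consistently, since it is this identification that lets the two mutation rules be compared entry by entry in the first place.
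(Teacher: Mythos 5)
Your proposal is correct and follows essentially the same route as the paper: define $\Seed$ recursively along the tree $\hG$, then verify via the strengthened Transition and Co-transition conditions that the formula $b^v_{ef}=\omega(C\ck(v,e),C(v,f))$ is preserved under mutation, with the key points being bilinearity and antisymmetry of $\omega$, the vanishing of $\omega(\beta\ck,\beta)$, and a case check on signs. The only cosmetic difference is that the paper uses involutivity of matrix mutation to reduce to $\sgn(\beta)=1$ before the sign check, whereas you carry the sign $\sigma$ uniformly through the identity $[\sigma y]_+\,x+[-\sigma x]_+\,y=\sgn(x)[xy]_+$; both verifications are equivalent bookkeeping.
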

The assertion that $\Seed:\hG\to\Ex(B,Y,X)$ is a covering of a subgraph implies that this labeling of rows and columns of the exchange matrix by edges in $\hG$ makes sense.

\begin{theorem}\label{framework principal}
Suppose $(G,C,C\ck)$ is a framework for $B$ and let $\A_\bullet=\A_\bullet(B)$ be a cluster algebra with principal coefficients whose initial exchange matrix is $B$.
Let $v$ be a vertex of $\hG$.
Then 
\begin{enumerate}
\item \label{extended mat}
For each $e\in I(v$), the $\c$-vector $\c_e^v$ is $C(v,e)$.
\item \label{coef sign-coherent}
Each $\c$-vector $\c_e^v$ has a definite sign: it is either in the nonnegative span of the simple roots or in the nonpositive span of the simple roots.
\item \label{g vec}
If $X^v=(x^v_e:e\in I(v))$ is the cluster in $\Seed(v)$, then for each $e\in I(v$), the $\g$-vector $\g^v_e=\g(x^v_e)$ is $R(v,e)$.
\item \label{weight basis}
The $\g$-vectors $(\g^v_e:e\in I(v))$ are a basis for the weight lattice.
\item \label{F 1}
If $F^v_e$ is the $F$-polynomial of $x^v_e$, then the constant term of $F^v_e$ is $1$. 
\end{enumerate}
\end{theorem}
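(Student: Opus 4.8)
The plan is to establish the five parts in an order respecting their dependencies: first (1) and (2) together, then (3), then (4) (which is an immediate consequence of (3) and Proposition~\ref{basis}), and finally (5) (which follows from (2) by a citation). Parts (1)--(3) are each proved by induction along paths in $\hG$ emanating from $v_b$. The base case in every induction is the vertex $v_b$, where the Base condition gives $C(v_b,e)=\alpha_i$ and $C\ck(v_b)=\Pi\ck$, while the principal-coefficients normalization gives $H^{v_b}=\Id$ and $\g^{v_b}_i=\rho_i$; each claim is immediate there.

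For part (1) I would compare the extended matrix mutation (\ref{BMatrixRecurrence}), restricted to the bottom rows indexed by $J=I$, against the strengthened Transition condition. Fixing an edge $e$ from $v$ to $v'$ and a column $f\ne e$, the bottom-half recurrence reads $h^{v'}_{i,\mu_e(f)}=h^v_{if}+\sgn(h^v_{ie})[h^v_{ie}\,b^v_{ef}]_+$, whereas taking $\alpha_i$-coordinates in the Transition condition, and using $b^v_{ef}=\omega(C\ck(v,e),C(v,f))$ from Theorem~\ref{framework exchange}, gives $[\alpha_i:C(v',\mu_e(f))]=[\alpha_i:C(v,f)]+[\sgn(C(v,e))\,b^v_{ef}]_+\,[\alpha_i:C(v,e)]$. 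These agree once one knows $\sgn(h^v_{ie})=\sgn(C(v,e))$ whenever $h^v_{ie}\ne0$, which is exactly the inductive hypothesis (the column is $C(v,e)$ in simple-root coordinates) together with the Sign condition; the $[\,\cdot\,]_+$ bracket then factors as claimed, and the $f=e$ column merely negates, matching $C(v',e)=-C(v,e)$. Part (2) is then immediate: by (1) each column of $H^v$ is the simple-root expansion of $C(v,e)$, which the Sign condition forces to be entirely nonnegative or entirely nonpositive, i.e.\ the rows of $H^v$ are sign-coherent.

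The crux is part (3), that $\g^v_e=R(v,e)$. For edges $f\ne e$ the $\g$-vector is unchanged and Proposition~\ref{dual adjacent} gives $R(v,f)=R(v',\mu_e(f))$, so only the mutated coordinate needs checking. Writing $W$ for the right-hand side of the $\g$-recursion (\ref{gRecurrence}) and substituting $\g^v_p=R(v,p)$, I would prove $W=R(v',e)$ by pairing $W$ against the basis $C\ck(v')$ and verifying the defining relations $\br{W,C\ck(v',g)}=\delta_{e,g}$. The key inputs are $\br{R(v,p),C\ck(v,q)}=\delta_{p,q}$, the identity $\br{\b_i,x}=\omega(x,\alpha_i)$ from the definition of $\b_i$, and part (1) in the guise that $\sum_{i}[-h^v_{ie}]_+\alpha_i$ equals $0$ when $C(v,e)$ is positive and $-C(v,e)$ when it is negative. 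Expressing $C\ck(v',g)$ through the strengthened Co-transition condition in terms of the $C\ck(v,\cdot)$, the pairing splits into the case $g=e$, where the $\b_i$-terms drop out because $\omega(C\ck(v,e),C(v,e))=0$ (the Co-label condition makes $C\ck(v,e)\parallel C(v,e)$), yielding $\br{W,C\ck(v',e)}=1$, and the cases $g=\mu_e(f)$, where the coefficient $[-b^v_{fe}]_+$ from the recursion must cancel the coefficient appearing in the Co-transition condition. I expect this cancellation to be the main obstacle: it has to be checked in the two sign cases for $C(v,e)$ separately, the positive case using $\sgn(C(v,e))=+1$ directly, and the negative case relying on the elementary identity $[-\theta]_++\theta=[\theta]_+$ (with $\theta=\omega(C\ck(v,f),C(v,e))$) to reconcile the extra $\b_i$-contribution with the sign flip in the Co-transition coefficient. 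Carrying both cases through gives $\br{W,C\ck(v',g)}=\delta_{e,g}$ for all $g$, hence $W=R(v',e)=\g^{v'}_e$, completing the induction.

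Part (4) then follows with no further work: by (3) the $\g$-vectors are the basis $R(v)$ dual to $C\ck(v)$, and since Proposition~\ref{basis} shows $C\ck(v)$ is a $\integers$-basis of the co-root lattice, its dual $R(v)$ is a $\integers$-basis of the weight lattice. Finally, part (5) requires no new framework computation: part (2) establishes sign-coherence of the rows of $H^v$ at every vertex, which is precisely Conjecture~\ref{H coherent}, appearing as condition (ii') in the proof of \cite[Proposition~5.6]{ca4} and shown there to be equivalent to every $F$-polynomial having constant term $1$ (Conjecture~\ref{F 1 conj}). Since $\Seed$ is a covering and $\hG$ is connected, every seed of $\A_0(B)$ arises as some $\Seed(v)$, so the equivalence applies to all $F$-polynomials and yields (5).
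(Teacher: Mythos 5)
Your treatment of parts (1)--(4) is correct and is essentially the paper's own proof: part (1) is the paper's Lemma~\ref{tilde Bs work} (comparing the bottom rows of the mutation rule \eqref{BMatrixRecurrence} with the strengthened Transition condition, using the Sign condition and the inductive hypothesis to pull $[\alpha_i:C(v,e)]$ out of the bracket), part (2) is read off from the Sign condition, part (3) is proved exactly as in the paper by pairing the right-hand side of \eqref{gRecurrence} against the basis $C\ck(v')$ via the strengthened Co-transition condition, and part (4) is the same lattice-duality remark. The only deviation is that you carry out both sign cases in part (3), whereas the paper assumes $\sgn(C(v,e))=1$ ``without loss of generality''; your version is if anything more self-contained (the WLOG implicitly uses involutivity of mutation), and your cancellation identity $[-\theta]_+ + \theta = [\theta]_+$ does close the negative case.

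Part (5), however, has a genuine gap. You assert that ``since $\Seed$ is a covering and $\hG$ is connected, every seed of $\A_0(B)$ arises as some $\Seed(v)$.'' This is false unless the framework is complete: if $G$ has half-edges then so does $\hG$, no mutation is performed across a half-edge, and the image of $\Seed$ is in general a proper subgraph of $\Ex_0(B)$. The paper claims surjectivity only for complete frameworks, and the Cambrian framework for $B$ of infinite Cartan type is an explicit example where the image is proper; the word ``covering'' in Theorem~\ref{framework exchange} is explained there to mean only that the indexing of rows and columns by edges of $\hG$ makes sense. Consequently you cannot invoke the equivalence of Conjectures~\ref{F 1 conj} and~\ref{H coherent} as global statements: part (2) gives sign-coherence only at seeds in the image of $\Seed$, not at all seeds of $\Ex_0(B)$. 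The repair is the care the paper itself takes: one must reuse the \emph{proof} of \cite[Proposition~5.6]{ca4} rather than its statement. That proof is an induction on the distance, in the exchange graph, from the initial seed, and it requires sign-coherence only at the seeds encountered along the way; since the image of $\Seed$ is a connected subgraph of $\Ex_0(B)$ containing the initial seed, and sign-coherence holds throughout it by part (2), the induction yields constant term $1$ precisely for the $F$-polynomials $F^v_e$ in the statement (and only for those --- your claim about ``all $F$-polynomials'' is likewise unavailable for non-complete frameworks).
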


The first two assertions of Theorem~\ref{framework principal} can be rephrased in terms of the matrix $H^v=[h^v_{ie}]_{i\in I,\,e\in I(v)}$, the bottom half of the extended exchange matrix $\tB^v$ associated to $\Seed(v)$.
Assertion~\eqref{extended mat} is the statement that each $h^v_{ie}$ is $\left[\alpha_i:C(v,e)\right]$, the coefficient of $\alpha_i$ in the simple-root expansion of $C(v,e)$.
Assertion~\eqref{coef sign-coherent} says that the rows of $H^v$ are sign-coherent. 
The third assertion can also be interpreted in similar terms:
If we replace the initial exchange matrix $B$ by $-B^T$ to obtain a matrix $(H')^v$, assertion~\eqref{g vec} says that the rows of the inverse of $(H')^v$ are the $\g$-vectors $(\g^v_e:e\in I(v))$ in $\A_\bullet(B)$.
(See Proposition~\ref{-B^T}.)

We first prove Theorem~\ref{framework exchange}.
For each vertex of $G$, define $B^v=[b^v_{ef}]_{e,f\in I(v)}$ by setting $b^v_{ef}=\omega(C\ck(v,e),C(v,f))$.
The matrix $B^{v_b}$ coincides with the initial exchange matrix $B$, where $I(v_b)$ is identified with $I$ as explained in connection with the Base condition in Section~\ref{frame subsec}.  

\begin{lemma}\label{Bs work}
Suppose $(G,C,C\ck)$ is a framework for $B$.
If $v$ and $v'$ are vertices in $G$, connected by the edge $e$, then the matrices $B^v$ and $B^{v'}$ are related by matrix mutation at $e$, and by applying $\mu_e$ to the row and column indices.
\end{lemma}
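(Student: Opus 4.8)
The plan is to verify directly that the matrix mutation recurrence \eqref{BMatrixRecurrence} is satisfied by the matrices $B^v=[\omega(C\ck(v,e),C(v,f))]$ defined from the framework, using the strengthened Transition and Co-transition conditions (Propositions~\ref{trans restate} and~\ref{cotrans restate}). The key observation is that mutation at the edge $e$ acts on the label and co-label sets by explicit formulas: $C(v',e)=-C(v,e)$, $C\ck(v',e)=-C\ck(v,e)$, and for $f\neq e$,
\[
C(v',\mu_e(f))=C(v,f)+[\sgn(\beta)\,\omega(\beta\ck,C(v,f))]_+\,\beta,
\]
with the analogous formula for $C\ck(v',\mu_e(f))$, where I abbreviate $\beta=C(v,e)$ and $\beta\ck=C\ck(v,e)$. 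So I would simply substitute these expressions into $\omega(C\ck(v',\mu_e(p)),C(v',\mu_e(q)))$ and expand using the bilinearity and antisymmetry of $\omega$, comparing the result case-by-case against the right-hand side of \eqref{BMatrixRecurrence}.

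\textbf{First I would} dispose of the easy cases. When $p=e$ or $q=e$, the recurrence demands $b^{v'}_{\mu_e(p)\mu_e(q)}=-b^v_{pq}$; since mutation negates the label and co-label at $e$ and fixes (up to a multiple of $\beta$, which $\omega$ annihilates against $\beta$ or $\beta\ck$) the others, this follows from antisymmetry and linearity of $\omega$. The substantive case is $p,q\neq e$. Writing $\gamma=C(v,p)$, $\delta=C(v,q)$, $\gamma\ck=C\ck(v,p)$, $\delta\ck=C\ck(v,q)$, I expand
\[
b^{v'}_{\mu_e(p)\mu_e(q)}=\omega\!\left(\gamma\ck+c_p\,\beta\ck,\ \delta+c_q\,\beta\right),
\]
where $c_p=[-\sgn(\beta\ck)\,\omega(\gamma\ck,\beta)]_+$ and $c_q=[\sgn(\beta)\,\omega(\beta\ck,\delta)]_+$ are the integer coefficients supplied by the strengthened Co-transition and Transition conditions. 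The cross terms produce $c_q\,\omega(\gamma\ck,\beta)+c_p\,\omega(\beta\ck,\delta)+c_pc_q\,\omega(\beta\ck,\beta)$, and $\omega(\beta\ck,\beta)=0$ by antisymmetry, leaving $b^v_{pq}+c_q\,\omega(\gamma\ck,\beta)+c_p\,\omega(\beta\ck,\delta)$ to be matched against $b^v_{pq}+\sgn(b^v_{pe})[b^v_{pe}b^v_{eq}]_+$.

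\textbf{The hard part will be} the bookkeeping of signs that reconciles the two surviving cross terms $c_q\,\omega(\gamma\ck,\beta)+c_p\,\omega(\beta\ck,\delta)$ with the single term $\sgn(b^v_{pe})[b^v_{pe}b^v_{eq}]_+$ in \eqref{BMatrixRecurrence}. Here $b^v_{pe}=\omega(\gamma\ck,\beta)$ and $b^v_{eq}=\omega(\beta\ck,\delta)$, and one must check that exactly one of $c_p,c_q$ is nonzero precisely when the product $b^v_{pe}b^v_{eq}$ is positive, and that the surviving contribution carries the correct overall sign $\sgn(\beta)$ (equivalently $\sgn(b^v_{pe})$). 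I would organize this by the sign $\sgn(\beta)=\pm1$: when $\sgn(\beta)=+1$, the bracket $[\cdot]_+$ in $c_q$ retains $\omega(\beta\ck,\delta)$ exactly when it is positive, and simultaneously $c_p=[-\omega(\gamma\ck,\beta)]_+$ retains $-\omega(\gamma\ck,\beta)$ exactly when $\omega(\gamma\ck,\beta)$ is negative; a short truth table in the four sign combinations of $b^v_{pe}$ and $b^v_{eq}$ shows that the nonzero cross term reproduces $[b^v_{pe}b^v_{eq}]_+$ with sign $+1$, and the case $\sgn(\beta)=-1$ is symmetric and yields sign $-1$. This sign reconciliation is routine but is the only place where the precise placement of $\sgn(\beta)$ and of the $\pm$ inside the $[\cdot]_+$ brackets must be tracked carefully; once it is done, the lemma follows, and the base case $B^{v_b}=B$ from the Base condition anchors the induction implicit in Theorem~\ref{framework exchange}.
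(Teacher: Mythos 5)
Your computational setup is exactly the paper's: substitute the strengthened Transition and Co-transition formulas into $\omega(C\ck(v',\mu_e(p)),C(v',\mu_e(q)))$, expand by bilinearity, kill the $\omega(\beta\ck,\beta)$ term, and compare $b^v_{pq}+c_q\,\omega(\gamma\ck,\beta)+c_p\,\omega(\beta\ck,\delta)$ against $b^v_{pq}+\sgn(b^v_{pe})[b^v_{pe}b^v_{eq}]_+$ by a four-case sign check; the cases $p=e$ or $q=e$ are handled correctly. (The only organizational difference is that the paper first reduces to $\sgn(\beta)=\sgn(\beta\ck)=+1$ by invoking the involutivity of matrix mutation, halving the bookkeeping, whereas you carry both signs of $\beta$.)

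However, the outcome you assert for the sign check --- the part you yourself flag as the crux --- is wrong in two ways. First, writing $x=\omega(\gamma\ck,\beta)=b^v_{pe}$ and $y=\omega(\beta\ck,\delta)=b^v_{eq}$, it is not true that ``exactly one of $c_p,c_q$ is nonzero precisely when $b^v_{pe}b^v_{eq}$ is positive'': with $\sgn(\beta)=+1$, $x<0$, $y>0$, one has $c_p=[-x]_+=-x>0$ and $c_q=[y]_+=y>0$, so \emph{both} are nonzero, and the mechanism there is cancellation, $yx+(-x)y=0$. Second, the surviving contribution does not carry the sign $\sgn(\beta)$, and $\sgn(\beta)$ is not ``equivalently $\sgn(b^v_{pe})$'': these are unrelated quantities (at the base vertex of Example~\ref{B2 example}, $\beta=\alpha_1$ is a positive root while $\omega(\alpha_2\ck,\alpha_1)=-1<0$). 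Indeed, if the truth table came out as you predict ($+[b^v_{pe}b^v_{eq}]_+$ when $\sgn(\beta)=+1$, and $-[b^v_{pe}b^v_{eq}]_+$ when $\sgn(\beta)=-1$), the lemma would be \emph{false}, since the recurrence \eqref{BMatrixRecurrence} demands the coefficient $\sgn(b^v_{pe})$, which is independent of $\sgn(\beta)$. What the four cases actually give, for either value of $\sgn(\beta)$, is
\[
[\sgn(\beta)\,y]_+\,x+[-\sgn(\beta)\,x]_+\,y=\sgn(x)\,[xy]_+,
\]
which is exactly what is needed. So your plan does close once the truth table is computed honestly, but the sign bookkeeping you claim it will confirm must be corrected before the argument is a proof.
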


\begin{proof}
Let $e$, $e'$ and $e''$ be distinct edges in $I(v)$.
Let $\beta=C(v,e)$, $\gamma=C(v,e')$ and $\delta=C(v,e'')$ and let $\beta'=C(v',\mu_e(e))$, $\gamma'=C(v',\mu_e(e'))$ and $\delta'=C(v',\mu_e(e''))$.
The corresponding co-labels will be denoted by adding $\ck$ to $\beta$, etc.
Since matrix mutation is an involution, and by the Transition condition and the Co-label condition, we may as well take $\sgn(\beta)=\sgn(\beta\ck)=1$.
The proof consists of verifying the following three identities.
\begin{eqnarray}
\label{beta row}
\omega((\beta')\ck,\gamma')&=&-\omega(\beta\ck,\gamma)\\
\label{beta col top}
\omega((\gamma')\ck,\beta')&=&-\omega(\gamma\ck,\beta)\\
\label{top mut}
\omega((\gamma')\ck,\delta')&=&\omega(\gamma\ck,\delta) +\sgn(\omega(\gamma\ck,\beta))\left[\omega(\gamma\ck,\beta),\omega(\beta\ck,\delta)\right]_+
\end{eqnarray}
The Transition condition, with $\sgn(\beta)=1$, says that $\gamma'=\gamma+[\omega(\beta\ck,\gamma)]_+\,\beta$ and similarly the Co-transition condition says that $(\gamma')\ck=\gamma\ck+[-\omega(\gamma\ck,\beta)]_+\,\beta\ck$.
Thus \eqref{beta row} and \eqref{beta col top} follow immediately from the linearity of $\omega$ and the fact that $\omega(\beta\ck,\beta)=0$.
Again using the Transition condition, $\delta'=\delta+[\sgn(\beta)\omega(\beta\ck,\delta)]_+\,\beta$, so 
\[\omega((\gamma')\ck,\delta')=\omega(\gamma\ck,\delta) +\omega(\beta\ck,\delta)[-\omega(\gamma\ck,\beta)]_++\omega(\gamma\ck,\beta)[\omega(\beta\ck,\delta)]_+.\]
It is now trivial to check that \eqref{top mut} holds in all four cases given by $\pm\omega(\beta\ck,\delta)\ge 0$ and $\pm\omega(\gamma\ck,\beta)\le 0$.
\end{proof}

Lemma~\ref{Bs work} has an immediate extension:  In the statement of the lemma, we may replace $(G,C,C\ck)$ by $(\hG,C,C\ck)$.

We next extend $v\mapsto B^v$ to a map from vertices of $\hG$ to seeds.
The seeds are defined recursively in terms of completely general coefficients.
The initial seed is given by the following data: the exchange matrix $B=B^{v_b}$, an $n$-tuple $Y=Y^{v_b}=(y_i:i\in I)$ of elements of the semifield $\PP$, and an $n$-tuple $X=X^{v_b}=(x_i:i\in I)$ of elements of the field $\FF$.
We continue to identify $I$ with $I(v_b)$.

We define, for each vertex $v$ of $\hG$, a tuple $Y^v=(y^v_e:e\in I(v))$ and a tuple $X^v=(x^v_e:e\in I(v))$ and set $\Seed(v)=(B^v,Y^v,X^v)$.
Recall a vertex of $\hG$ is a path $\chi$ in $G$, starting at $v_b$, such that no edge occurs twice consecutively in the path.
Given such a path $\chi$, representing a vertex $v$ of $\hG$, we define $\Seed(v)$ in the obvious way:
If $\chi$ is a single vertex (necessarily $v_b$), then $\Seed(v)$ is the initial seed.
Otherwise, let $e$ be the last edge in the path and let $\chi'$ be the path in $G$ obtained from $\chi$ by deleting the last vertex of the path $\chi$.
Let $v'$ be the vertex of $\hG$ corresponding to $\chi'$.
Then $\Seed(v')$ is defined by induction, and we define $\Seed(v)$ to be the seed obtained by mutating $\Seed(v')$ at the edge $e$.

We have constructed the map $\Seed$ so that it has the property that, for adjacent vertices $v$ and $v'$ in $\hG$, connected by the edge $e$, the seeds $\Seed(v)$ and $\Seed(v')$ are related by mutation at $e$, and by applying $\mu_e$ to the indexing sets.
Together with Lemma~\ref{Bs work}, this completes the proof of Theorem~\ref{framework exchange}.

Now suppose, $\A=\A_\bullet(B)$.
For each vertex of $G$, define $\tB^v=\twomatrix{B^v}{H^v}$, where $B^v$ is the matrix defined before Lemma~\ref{Bs work} and $H^v=[h^v_{ie}]_{i\in I,\,e\in I(v)}$ is defined by setting $h^v_{ie}=[\alpha_i:C(v,e)]$. 
Then $H^{v_b}$ is the identity matrix by the Base condition.

\begin{lemma}\label{tilde Bs work}
Suppose $(G,C,C\ck)$ is a framework for $B$.
Let $v$ and $v'$ be adjacent in $G$, via the edge $e$.
Then $\tB^v$ and $\tB^{v'}$ are related by matrix mutation at $e$, and by applying $\mu_e$ to the column indices and to the indices $I(v)$ of the top $n$ rows.
\end{lemma}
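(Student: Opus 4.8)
The top half of the two matrices already behaves correctly: Lemma~\ref{Bs work}, extended to $\hG$, shows that $B^v$ and $B^{v'}$ are related by mutation at $e$ together with the relabeling $\mu_e$. So the plan is to verify only that the bottom half $H^v$ obeys the extended matrix-mutation rule (\ref{BMatrixRecurrence}), in which the column index and the top-row indices $I(v)$ are mutated by $\mu_e$, while each bottom-row index $i\in J=I$ is fixed. I would handle the column $e$ and the remaining columns separately.

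For the column $e$ itself, the Transition condition gives $C(v',e)=-C(v,e)$, so extracting the coefficient of $\alpha_i$ yields $h^{v'}_{ie}=-h^v_{ie}$, which is exactly the $q=e$ case of (\ref{BMatrixRecurrence}). For a column $\mu_e(f)$ with $f\in I(v)\setminus\set{e}$, specializing (\ref{BMatrixRecurrence}) to a bottom row (so $\mu_e$ fixes $i$, and $b^v_{ie}=h^v_{ie}$, $b^v_{ef}=\omega(C\ck(v,e),C(v,f))$) predicts
\[h^{v'}_{i,\mu_e(f)}=h^v_{if}+\sgn(h^v_{ie})\,[h^v_{ie}\,\omega(C\ck(v,e),C(v,f))]_+.\]
On the other hand, the strengthened Transition condition writes $C(v',\mu_e(f))=C(v,f)+[\sgn(C(v,e))\,\omega(C\ck(v,e),C(v,f))]_+\,C(v,e)$, and taking the coefficient of $\alpha_i$ gives
\[h^{v'}_{i,\mu_e(f)}=h^v_{if}+[\sgn(C(v,e))\,\omega(C\ck(v,e),C(v,f))]_+\,h^v_{ie}.\]
The remaining task is to show these two expressions for $h^{v'}_{i,\mu_e(f)}$ agree.

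I expect the reconciliation of these two formulas to be the only real obstacle, and it rests on sign-coherence of the entries within a single column of $H^v$. By the Sign condition, either $C(v,e)$ or $-C(v,e)$ lies in the nonnegative span of the simple roots, so every nonzero coefficient $h^v_{ie}=[\alpha_i:C(v,e)]$ has sign equal to $\sgn(C(v,e))$. Setting $s=\sgn(C(v,e))$, $a=h^v_{ie}$, and $w=\omega(C\ck(v,e),C(v,f))$, the equality of the two displays reduces to the scalar identity $\sgn(a)\,[aw]_+=a\,[sw]_+$. When $a=0$ both sides vanish; when $a\neq 0$ we have $\sgn(a)=s$ and $a=s|a|$, so $[aw]_+=|a|\,[sw]_+$ and hence $\sgn(a)\,[aw]_+=s|a|\,[sw]_+=a\,[sw]_+$.

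The whole verification is therefore a short case analysis, and the single piece of genuine content is recognizing that the Sign condition is exactly what permits the per-entry sign $\sgn(b^v_{ie})$ appearing in (\ref{BMatrixRecurrence}) to be replaced by the per-vector sign $\sgn(C(v,e))$ supplied by the Transition condition. Combining the column-$e$ case, the general-column case, and Lemma~\ref{Bs work} for the top half completes the proof that $\tB^v$ and $\tB^{v'}$ are related by matrix mutation at $e$.
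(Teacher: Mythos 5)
Your proof is correct and follows essentially the same route as the paper: the top half is disposed of by Lemma~\ref{Bs work}, and the bottom half is verified entry by entry by comparing the mutation rule \eqref{BMatrixRecurrence} with the coefficient of $\alpha_i$ extracted from the strengthened Transition condition, with the Sign condition supplying exactly the sign-coherence needed to replace the per-entry sign $\sgn(h^v_{ie})$ by the per-vector sign $\sgn(C(v,e))$. The only cosmetic difference is that the paper first reduces to $\sgn(C(v,e))=1$ (using involutivity of mutation, as set up in the proof of Lemma~\ref{Bs work}), whereas you verify the scalar identity $\sgn(a)\,[aw]_+=a\,[sw]_+$ directly for both signs.
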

\begin{proof}
Lemma~\ref{Bs work} establishes that the restrictions to top square submatrices are indeed related by matrix mutation.
The remainder of the proof consists of verifying the following two identities, for $i\in I$ and for $\beta$, $\beta'$, $\gamma$, and $\gamma'$ as in the proof of Lemma~\ref{Bs work}.
As in that proof, we will also assume that $\sgn(\beta)=1$.
\begin{eqnarray}
\label{beta col bottom}
\left[\alpha_i:\beta'\right]&=&-\left[\alpha_i:\beta\right]\\
\label{bottom mut}
\left[\alpha_i:\gamma'\right]&=&\left[\alpha_i:\gamma\right] +\sgn(\left[\alpha_i:\beta\right])\left[\left[\alpha_i:\beta\right]\omega(\beta\ck,\gamma)\right]_+
\end{eqnarray}
The identity \eqref{beta col bottom} is immediate by the Transition condition.
Since $\sgn(\beta)=1$, the right side of \eqref{bottom mut} is $\left[\alpha_i:\gamma\right] +[\alpha_i:\beta]\left[\omega(\beta\ck,\gamma)\right]_+$, which, by the Transition condition, is the coefficient of $\alpha_i$ in $\gamma'=\gamma+[\omega(\beta\ck,\gamma)]_+\,\beta$.
\end{proof}

Lemma~\ref{tilde Bs work} proves Theorem~\ref{framework principal}(\ref{extended mat}), and Theorem~\ref{framework principal}(\ref{coef sign-coherent}) follows immediately by the Sign condition.  

We now prove Theorem~\ref{framework principal}(\ref{g vec}).
When $v=v_b$, the vector $R(v_b,i)$ is the fundamental weight dual to $\alpha\ck_i$, for each $i\in I=I(v_b)$, so the assertion holds in this case.
Thus by Theorem~\ref{framework exchange}, we need only check that the assertion at a vertex $v$ implies the assertion at any adjacent vertex $v'$.

Let $v$ and $v'$ be connected by an edge $e$.
Without loss of generality, take $\sgn(C(v,e))=1$.
Using Theorem~\ref{framework exchange} and assuming the assertion for $\g$-vectors at $v$, the $\g$-vector recursion \eqref{gRecurrence} says that $\g^{v'}_{\mu_e(f)}=R(v,\mu_e(f))$ if $f\in I(v')\setminus\set{e}$, and that $\g^{v'}_e$ equals
\begin{equation}
\label{g recur rewrite}
-R(v,e) - \sum_{p\in I(v)}\left[\omega\left(C\ck(v,p),C(v,e)\right)\right]_{-} \, R(v,p)
 + \sum_{i\in I}\left[[\alpha_i:C(v,e)]\right]_{-}\, \b_i.
\end{equation}

By Proposition~\ref{dual adjacent}, $R(v,\mu_e(f))=R(v',f)$.
It remains to show that \eqref{g recur rewrite} is $R(v',e)$ by showing that for every $f\in I(v')$, the pairing of $C\ck(v',f)$ with \eqref{g recur rewrite} equals $\delta_{fe}$.
Since the sign of $C(v,e)$ is $1$, the second sum in \eqref{g recur rewrite} vanishes.
In the first sum, $\omega\left(C\ck(v,p),C(v,e)\right)$ vanishes for $p=e$ by the antisymmetry of $\omega$.
Thus \eqref{g recur rewrite} is
\begin{equation}
\label{g recur easier}
-R(v,e) - \sum_{p\in I(v)\setminus\set{e}}\left[\omega\left(C\ck(v,p),C(v,e)\right)\right]_{-} \,R(v,p)
\end{equation}
The Co-transition condition says that $C\ck(v',e)=-C\ck(v,e)$, so the pairing of  \eqref{g recur easier} with $C\ck(v',e)$ is $\br{-R(v,e),-C\ck(v,e)}=1$.
Suppose $f\in I(v')\setminus\set{e}$.
Since the sign of $C(v,e)$ is $1$, the Co-transition condition says that $C\ck(v',f)$ equals $C\ck(v,\mu_e(f)) - [\omega(C\ck(v,\mu_e(f)), C(v,e))]_{-} \,C\ck(v,e)$.
Thus \eqref{g recur easier} paired with $C\ck(v',f)$ is 
\[[\omega(C\ck(v,\mu_e(f)),C(v,e))]_{-}  - [\omega(C\ck(v,\mu_e(f)),C(v,e))]_{-}=0.\]
We have proved Theorem~\ref{framework principal}(\ref{g vec}).

Theorem~\ref{framework principal}(\ref{weight basis}) follows from Theorem~\ref{framework principal}(\ref{g vec}) and Proposition~\ref{basis} because the weight lattice is the dual lattice to the co-root lattice.
Theorem~\ref{framework principal}(\ref{F 1}) follows from Theorem~\ref{framework principal}(\ref{coef sign-coherent}) by the argument given in the proof of \cite[Proposition~5.6]{ca4}, which shows that Conjectures~\ref{F 1 conj} and~\ref{H coherent} are equivalent.
We must be careful, because \cite[Proposition~5.6]{ca4} states that one conjecture, for all $B$, is equivalent to the other conjecture, for all $B$.
Theorems~\ref{framework principal}(\ref{F 1}) and Theorem~\ref{framework principal}(\ref{coef sign-coherent}) refer to a specific $B$, and perhaps only to part of the exchange graph.
However, the proof of \cite[Proposition~5.6]{ca4} does not require $B$ to vary, and argues by induction on distance, in the exchange graph, to the initial seed.
Since $G$ is connected by hypothesis, the argument goes through.
This completes the proof of Theorem~\ref{framework principal}.

Barot, Geiss and Zelevinsky~\cite{capsm} gave a description of seeds of finite type using the concept of quasi-Cartan companions: A \newword{quasi-Cartan companion} to an exchange matrix $B=[b_{ij}]$ is a symmetrizable matrix $A=[a_{ij}]$ with $a_{ii}=2$ for all $i$ and $|a_{ij}|=|b_{ij}|$ for all distinct $i$ and $j$.
In particular, Barot, Geiss and Zelevinsky showed that a seed which is mutation equivalent to an orientation of a Dynkin diagram has a positive definite quasi-Cartan companion. 
We will now explain how quasi-Cartan companions occur in our setting.

Suppose $B$ is an acyclic exchange matrix.
Theorem~\ref{ST frame} and the definition of a reflection framework imply that for each vertex $v$ in the principal-coefficients exchange graph $\Ex_\bullet(B)$, the $\c$-vectors $\c_e^v$ are roots.
Define $A^v$ to be the matrix indexed by $I(v)$ whose $ef$-entry is $K((\c_e^v)\ck,\c_f^v)$.

\begin{cor}\label{q-C cor}
Suppose $B$ is an acyclic exchange matrix.
For each vertex $v$ in the principal-coefficients exchange graph $\Ex_\bullet(B)$, the matrix $A^v$ is a quasi-Cartan companion of $B^v$.
\end{cor}
\begin{proof}  
Theorem~\ref{ST frame} says that a complete reflection framework $(G,C)$ exists for~$B$.
Recall that a vertex $v$ in $\Ex_\bullet(B)$ is an equivalence class of seeds.
If $v'$ is some vertex of $G$ such that $\Seed(v')$ is in the equivalence class $v$, then Theorem~\ref{framework exchange} says that $B^v=[\omega(C\ck(v,e),C(v,f))]_{e,f\in I(v)}$.
By Theorem~\ref{framework principal}\eqref{extended mat}, the matrix $A^v$ is $[K(C\ck(v,e),C(v,f))]_{e,f\in I(v)}$.
As was already remarked, condition (E0) on a reflection framework combines with Proposition~\ref{sym antisym} to imply that $\omega(C\ck(v,e),C(v,f))$ and $K(C\ck(v,e),C(v,f))$ agree in absolute value for $e\neq f$.
Also, $C\ck(v,e)$ is the co-root associated to $C(v,e)$, so $K(C\ck(v,e),C(v,e))=2$.
\end{proof}

\subsection{From cluster algebras to frameworks}\label{clus to frame sec}  
We now show that, assuming Conjecture~\ref{H coherent}, every exchange matrix $B$ has a framework.
The point is to validate the notion of a framework by showing that, if cluster algebras behave as we expect them to, frameworks are unavoidable.
In the process, we establish some interesting statements about cluster algebras.

Let $B$ be an exchange matrix.  
Let $T$ be the $n$-regular tree considered in Section~\ref{ca background sec} and recall the maps $\mu_e$ defined for each edge $e$.
Let $v\mapsto\tB^v$ be the map that associates to each vertex of $T$ an extended exchange matrix, with exchange matrix $B$ and principal coefficients at the base vertex. 
Define a labeling $H$ of incident pairs in $T$ by taking $H(v,e)$ to be the $\c$-vector $\c_e^v$ (the vector whose simple root coordinates are given by the bottom $n$ entries of the column of $\tB^v$ labeled by $e$).
Similarly, we consider a different map $v\mapsto(\tB')^v$ that associates to each vertex of $T$ an extended exchange matrix, with exchange matrix $-B^T$ and principal coefficients at the base vertex.
Define a co-labeling $H\ck$ by taking $H\ck(v,e)$ to be the vector whose simple co-root coordinates are given by the bottom $n$ entries of the column of $(\tB')^v$ labeled by $e$.
(Recall from the discussion immediately following Proposition~\ref{-B^T} that the simple co-roots associated to $\Cart(B)$ are the simple roots associated to $\Cart(B)^T=\Cart(-B^T)$.)  
\begin{theorem}\label{T model}
Suppose Conjecture~\ref{H coherent} holds for $B$.
Then the triple $(T,H,H\ck)$ is a framework for $B$.
\end{theorem}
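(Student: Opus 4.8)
The plan is to verify, one at a time, the five defining conditions of a framework (Co-label, Sign, Base, Transition, Co-transition) for the triple $(T,H,H\ck)$, after first noting that $T$, being the $n$-regular tree, is connected, regular of degree $n$, and free of half-edges, so the underlying quasi-graph requirements hold automatically. The Base condition is immediate: at $v_b$ the extended matrix $\tB^{v_b}$ has principal coefficients, so its bottom half $H^{v_b}$ is the identity, whence $H(v_b,e)=\alpha_e$ and $H\ck(v_b,e)=\alpha_e\ck$. Part (2) of the Sign condition is exactly the hypothesis: sign-coherence of the rows of $H^v$ (Conjecture~\ref{H coherent}) says precisely that, for each fixed $e$, the entries $h^v_{ie}$ share a common sign, i.e.\ that $H(v,e)$ lies in the nonnegative, or the nonpositive, span of the simple roots. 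Part (1), that no label vanishes, I will obtain at the end from the basis property.

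I would establish the Co-label condition directly and unconditionally, exploiting the conjugation relation $-B^T=DBD^{-1}$, where $D=\diag(\delta(i))$ is the skew-symmetrizer. Since matrix mutation commutes with conjugation by the (mutation-invariant) diagonal $D$, an induction on distance from $v_b$ shows that the two principal-coefficient systems are related throughout by $(B')^v=DB^vD^{-1}$ and $(h')^v_{ie}=\delta(i)\,h^v_{ie}\,\delta(e)^{-1}$. Expanding $H\ck(v,e)=\sum_i (h')^v_{ie}\alpha_i\ck$ and using $\alpha_i\ck=\delta(i)^{-1}\alpha_i$ then collapses this to $H\ck(v,e)=\delta(e)^{-1}H(v,e)$, a positive scalar multiple (consistent with Remark~\ref{ratios}). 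A useful byproduct is that this transfers the sign-coherence of $H$ to $H\ck$, so I never need to assume Conjecture~\ref{H coherent} for $-B^T$.

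The heart of the proof, and the main obstacle, is to show that the labels recover the exchange matrices, that is, $b^v_{ef}=\omega(H\ck(v,e),H(v,f))$ at every vertex $v$. I would prove this by induction on distance from $v_b$, the base case being $\omega(\alpha_e\ck,\alpha_f)=b_{ef}$. For the inductive step I first rewrite the extended-matrix mutation of $H$ into framework form. The raw rule reads
\[h^{v'}_{i,\mu_e(f)}=h^v_{if}+\sgn(h^v_{ie})\big[h^v_{ie}\,b^v_{ef}\big]_+ ,\]
and after summing against the simple roots, sign-coherence lets me replace the coordinatewise sign $\sgn(h^v_{ie})$ by the common value $\sgn(H(v,e))$ and factor it out, yielding $H(v',\mu_e(f))=H(v,f)+[\sgn(H(v,e))\,b^v_{ef}]_+\,H(v,e)$; the analogous formula for $H\ck$ follows from this and the Co-label relation. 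Feeding these label- and co-label-mutation formulas into $\omega(H\ck(v',\cdot),H(v',\cdot))$ and invoking the computation of Lemma~\ref{Bs work} (the identities for $\omega((\beta')\ck,\gamma')$, $\omega((\gamma')\ck,\beta')$, and $\omega((\gamma')\ck,\delta')$, which use only bilinearity, antisymmetry of $\omega$, and the Co-label condition) shows that the quantities $\omega(H\ck(v,e),H(v,f))$ obey matrix mutation. Since $b^v_{ef}$ obeys the same rule and the two agree at $v_b$, the identity propagates.

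Granting $b^v_{ef}=\omega(H\ck(v,e),H(v,f))$, the Transition condition is nothing more than the simplified $H$-mutation formula with $b^v_{ef}$ rewritten as $\omega(H\ck(v,e),H(v,f))$, and the Co-transition condition then follows from the Co-label condition together with the Transition condition exactly as in Proposition~\ref{cotrans restate}. Finally, with the Transition and Base conditions available, the argument of Proposition~\ref{basis} shows each label set $H(v)$ is a lattice basis, so no label vanishes and Sign condition (1) holds. The one place where the hypothesis is genuinely used is the factoring step above: without sign-coherence the coordinatewise signs $\sgn(h^v_{ie})$ need not agree, and the extended-matrix mutation of the labels fails to reduce to the framework mutation rule. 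This is exactly why Conjecture~\ref{H coherent} is the precise hypothesis that is needed.
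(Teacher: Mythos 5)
Your proposal is correct, and it reaches the theorem by a route that differs from the paper's in a meaningful way. The paper runs a single induction along $T$ carrying three statements at once: (i) the Co-label condition at $v$, (ii) $b^v_{ef}=\omega(H\ck(v,e),H(v,f))$, and (iii) the mirror statement $(b')^v_{ef}=\omega(H(v,e),H\ck(v,f))$ for the $-B^T$ system; it gets the Transition condition from (ii) and the Co-transition condition from (iii) (each by running the proof of Lemma~\ref{tilde Bs work} in reverse, exactly as you do for the label side), and then recovers (i) at the next vertex by a modified Proposition~\ref{cotrans restate}. You instead dispose of the Co-label condition \emph{unconditionally} at the outset, via the observation that $-B^T=DBD^{-1}$ for the positive diagonal skew-symmetrizer $D$ and that matrix mutation of the extended matrices commutes with this two-sided positive scaling; this argument appears nowhere in the paper, and it shows that the Co-label condition (with the ratios of Remark~\ref{ratios}) needs no conjecture at all. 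As a result your induction carries only (ii), the Co-transition condition is derived from the Transition condition plus the Co-label relation (the computation of Proposition~\ref{cotrans restate}), and the $-B^T$ dynamics --- the paper's condition (iii) --- never enter. What each approach buys: yours is leaner, makes transparent that sign-coherence is needed only on the $B$ side, and isolates exactly where Conjecture~\ref{H coherent} is used (the sign-factoring step); the paper's treats $B$ and $-B^T$ symmetrically, in keeping with Proposition~\ref{-B^T}, and avoids the bookkeeping of the vertex-dependent scalings $\delta_v(e)$ that your conjugation argument requires (your ``$\delta(e)^{-1}$'' should really be the scaling propagated along the tree by $\mu_e$ from the base, not the initial $\delta(e)$ --- harmless, but worth stating precisely). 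One small ordering wrinkle: the Transition condition refers to $\sgn(H(v,e))$, so non-vanishing of the labels should be carried \emph{within} the induction (each mutation step is an elementary column operation, so $\det H^v=\pm1$ throughout) rather than deduced only at the end from Proposition~\ref{basis}; this is minor, and the paper itself glosses the same point when it asserts that Conjecture~\ref{H coherent} ``is exactly'' the Sign condition.
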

\begin{proof}
The hypothesis that Conjecture~\ref{H coherent} holds for $B$ is exactly the statement that the Sign condition holds.
Consider the following conditions:
\begin{enumerate}
\item[(i)] $H\ck(v,e)$ is a positive scalar multiple of $H(v,e)$ for all $e\in I(v)$.
\item[(ii)] The exchange matrix $B^v$ has entries $\omega(H\ck(v,e),H(v,f))$ for $e,f\in I(v)$.
\item[(iii)] The exchange matrix $(B')^v$ has entries $\omega(H(v,e),H\ck(v,f))$ for $e,f\in I(v)$.
\end{enumerate}
These conditions hold at the base vertex.
Suppose now they hold at a vertex $v$ and suppose that $v'$ is an adjacent vertex.
Given that (ii) holds at $v$, the proof of Lemma~\ref{tilde Bs work} establishes that the (strengthened) Transition condition holds for $v$ and $v'$.
Similarly, given that (iii) holds at $v$, the proof of Lemma~\ref{tilde Bs work} establishes that the (strengthened) Co-transition condition holds for $v$ and $v'$.
Now the proof of Proposition~\ref{cotrans restate} is easily modified to show that condition (i) holds at $v'$ as well.
Thus (i) holds at every vertex, or in other words, the Co-label condition holds.
We have also established the Transition and Co-transition conditions, and the Base condition is immediate.
\end{proof}

We establish several corollaries.
First, combining Proposition~\ref{-B^T} with Theorems~\ref{framework principal} and~\ref{T model}, we obtain the following result, which is the first assertion of \cite[Theorem~1.2]{NZ}.
Let $v$ be any vertex in $T$.
Let $G^v$ be the matrix whose rows are the fundamental weight coordinates of the $\g$-vectors $\g_e^v$ for $e\in I(v)$.
As before, $H^v$ is the bottom half of $\tB^v$, the principal-coefficients extended exchange matrix at $v$.
The following corollary is about $\c$-vectors, although it is more convenient to phrase it in terms of matrices $H^v$.

\begin{cor}\label{NZ cor}
Suppose Conjecture~\ref{H coherent} holds for $B$ and let $v$ be any vertex of $T$.
Then the matrix $G^v$, calculated with initial exchange matrix $B$ at $v_b$, is the inverse to the matrix $H^v$, calculated with initial exchange matrix $-B^T$ at $v_b$.
\end{cor}
Naturally, the corollary assumes that we have chosen the same linear order on $I(v)$ to write both matrices.

\begin{cor}\label{fan coincide}
Suppose Conjecture~\ref{H coherent} holds for $B$ and suppose Conjecture~\ref{g fan conj} holds for $B$ and for $-B^T$.
Then the fans associated to $B$ and $-B^T$ coincide.
\end{cor}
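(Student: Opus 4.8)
The plan is to produce both $\g$-vector fans from a single labeling of the $n$-regular tree $T$ and to observe that, cone by cone, they are literally the same subsets of $V^*$. First I would invoke Theorem~\ref{T model}: since Conjecture~\ref{H coherent} is assumed to hold for $B$, the triple $(T,H,H\ck)$ is a framework for $B$, and then by Proposition~\ref{-B^T} the triple $(T,H\ck,H)$ is a framework for $-B^T$. Crucially, both frameworks live on the same graph $T$ with the same base vertex, so their vertices and edges are the very same objects, and I can compare the two constructions vertex by vertex.

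Next I would identify the two fans. By Theorem~\ref{framework principal}(\ref{g vec}), in the framework $(T,H,H\ck)$ for $B$ the $\g$-vector $\g^v_e$ equals $R(v,e)$, the vector of the basis dual to the co-label set $H\ck(v)$; thus the cone $\Cone(v)$ for $B$ is spanned by the basis of $V^*$ dual to $H\ck(v)$. Applying the same theorem to the framework $(T,H\ck,H)$ for $-B^T$, whose co-label set is now $H(v)$, the cone $\Cone(v)$ for $-B^T$ is spanned by the basis dual to $H(v)$.

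The key step is the Co-label condition, which forces $H(v,e)$ and $H\ck(v,e)$ to be positive scalar multiples of one another for every incident pair. A short computation then shows that the basis dual to $H(v)$ and the basis dual to $H\ck(v)$ differ only by positive rescalings, vector by vector: if $H(v,e)=\lambda_e H\ck(v,e)$ with $\lambda_e>0$, then the vector dual to $H(v,e)$ is $\lambda_e^{-1}$ times the vector dual to $H\ck(v,e)$. Since a simplicial cone is unchanged when its spanning rays are positively rescaled, this yields the equality of cones $\Cone_B(v)=\Cone_{-B^T}(v)$ in $V^*$ for every vertex $v$ of $T$.

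Finally I would assemble the fans. Because the $\g$-vectors at $v$ depend only on $\Seed(v)$, the cone $\Cone(v)$ factors through the covering $T\to\Ex_0(B)$ (respectively $T\to\Ex_0(-B^T)$), so as collections of cones in $V^*$ the maximal cones of the fan for $B$ are exactly $\set{\Cone_B(v):v\in T}$ and those for $-B^T$ are exactly $\set{\Cone_{-B^T}(v):v\in T}$. By the previous paragraph these two collections of full-dimensional cones coincide, and since each is assumed to be a fan (Conjecture~\ref{g fan conj} for $B$ and for $-B^T$) and a fan is determined by its maximal cones together with their faces, the two fans are equal. I expect the only real obstacle to be bookkeeping: keeping the roles of labels versus co-labels, and of the root versus co-root lattices, straight when passing between $B$ and $-B^T$, and checking that the dual-basis rescaling genuinely preserves each cone. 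Once the Co-label condition is brought to bear, the coincidence of the cones is immediate.
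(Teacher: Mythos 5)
Your proposal is correct and follows essentially the same route as the paper's proof: both invoke Theorem~\ref{T model} together with Proposition~\ref{-B^T} to realize the two frameworks $(T,H,H\ck)$ and $(T,H\ck,H)$ on the same tree, apply Theorem~\ref{framework principal}(\ref{g vec}) to identify the $\g$-vector cones of each fan with the cones attached to vertices of $T$, and conclude via the Co-label condition that the cones coincide. The only cosmetic difference is that you describe each cone by its spanning dual basis (hence need the inverse-rescaling computation for dual bases), whereas the paper describes it by the inequalities $\br{x,H(v,e)}\ge 0$, under which invariance by positive rescaling is immediate.
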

\begin{proof} 
By Theorem~\ref{framework principal}(\ref{g vec}) and Theorem~\ref{T model},  the cones of the fan associated to $B$ are exactly the cones defined by the label sets $H(v)$ for vertices $v$ of $T$.
But Proposition~\ref{-B^T}, together with the same two theorems, implies that the cones of the fan associated to $-B^T$ are exactly the cones defined by the label sets $H\ck(v)$ for vertices $v$ of $T$.
The Co-label condition says that these are the same cones.
\end{proof}

\begin{remark}\label{coincide caveat}
Recall that we are defining $\g$-vectors as elements of the weight lattice, rather than as integer vectors.
(See Remark~\ref{gvec weight or int vec}.)
Corollary~\ref{fan coincide} becomes false if we interpret $\g$-vectors in terms of the wrong basis for $V^*$.
Importantly, the basis changes when we pass from $B$ to $-B^T$.
(See also Remark~\ref{gvec caveat}.)
\end{remark}

\begin{cor}\label{fan H}
Suppose Conjectures~\ref{g fan conj} and~\ref{H coherent} hold for $B$.
Then Conjecture~\ref{H equiv} holds for $B$.  
\end{cor}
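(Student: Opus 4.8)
The plan is to combine the framework produced by Theorem~\ref{T model} with the observation that a $\g$-vector cone is determined by nothing more than the unordered set of columns of the corresponding $H$-matrix. The forward implication of Conjecture~\ref{H equiv}, that $u=v$ forces $H^u$ and $H^v$ to be equivalent, is immediate, so I would concentrate on the converse.

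First I would invoke Theorem~\ref{T model}: since Conjecture~\ref{H coherent} is assumed for $B$, the triple $(T,H,H\ck)$ is a framework for $B$, and by construction (equivalently, by Theorem~\ref{framework principal}(\ref{extended mat})) the columns of $H^w$ are exactly the simple-root coordinate vectors of the labels $C(w,e)=H(w,e)$. By Proposition~\ref{basis}, each label set $C(w)$ is a basis for the root lattice, so its members are linearly independent and hence distinct. Consequently, if $H^u$ and $H^v$ are equivalent, meaning that some bijection matches their columns, then the two column multisets agree, and therefore the label sets coincide as sets: $C(u)=C(v)$.

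Next I would show that the cone $\Cone(w)$ is determined by the label set $C(w)$. By definition $\Cone(w)=\bigcap_{e\in I(w)}\set{x\in V^*:\br{x,C\ck(w,e)}\ge0}$, and the Co-label condition says each $C\ck(w,e)$ is a positive scalar multiple of $C(w,e)$. Positive scaling of a functional does not change the closed half-space it cuts out, so $\Cone(w)=\bigcap_{e\in I(w)}\set{x\in V^*:\br{x,C(w,e)}\ge0}$, which depends only on the unordered set $C(w)$. Combining with the previous step, $C(u)=C(v)$ forces $\Cone(u)=\Cone(v)$. Here $\Cone$ is the framework cone, which by Theorem~\ref{framework principal}(\ref{g vec}) coincides with the cone spanned by the $\g$-vectors used to define $\Cone(v)$ in Conjecture~\ref{g fan conj}.

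Finally, Conjecture~\ref{g fan conj} asserts that the map $v\mapsto\Cone(v)$ is injective on $\Ex_0(B)$, so $\Cone(u)=\Cone(v)$ yields $u=v$, which completes the converse. The only real care needed, and the step I expect to be the main obstacle, is the bookkeeping of passing between the tree $T$ on which the framework of Theorem~\ref{T model} is defined and the exchange graph $\Ex_0(B)$ on which $H^v$, the $\g$-vectors, and the injectivity of $v\mapsto\Cone(v)$ are phrased. I would handle this by noting that $H^v$, the label set $C(v)$, and the cone $\Cone(v)$ all descend to $\Ex_0(B)$, so every equality above can be read directly on the exchange graph and the argument transfers without change.
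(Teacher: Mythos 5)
Your proof is correct and takes essentially the same route as the paper: both invoke Theorem~\ref{T model} together with Theorem~\ref{framework principal}(\ref{g vec}) to convert equivalence of $H^u$ and $H^v$ into equality of the $\g$-vector cones $\Cone(u)$ and $\Cone(v)$, and then conclude $u=v$ from the injectivity assertion of Conjecture~\ref{g fan conj}. The additional steps you spell out --- that equivalent $H$-matrices give equal label sets, that the cone depends only on the unordered label set up to positive scaling, and the bookkeeping between $T$ and $\Ex_0(B)$ --- are precisely the details the paper's three-line proof leaves implicit.
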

\begin{proof}
Suppose $u$ and $v$ are vertices of $T$ with $H^u=H^v$.
Then Theorem~\ref{framework principal}(\ref{g vec}) and Theorem~\ref{T model} imply that $\Cone(u)=\Cone(v)$.
Now Conjecture~\ref{g fan conj} implies that $u=v$.
\end{proof}

Recall that $\Ex_\bullet(B)$ is the exchange graph of the principal-coefficients cluster algebra associated to $B$.
\begin{cor}\label{identity model}
Suppose Conjecture~\ref{H coherent} holds for $B$.
Suppose either that $B$ is skew-symmetric or that Conjecture~\ref{g fan conj} holds for $B$ and $-B^T$.
Then the triple $(\Ex_\bullet(B),H,H\ck)$ is a framework for $B$.
\end{cor}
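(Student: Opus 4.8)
The plan is to descend the framework $(T,H,H\ck)$ of Theorem~\ref{T model} along the quotient map $T\to\Ex_0(B)$. All of the framework conditions (Co-label, Sign, Base, Transition, and Co-transition) are local to a single incident pair or a single edge, and $\Ex_0(B)$ is a connected quasi-graph that is regular of degree $n$, so once I know that $H$ and $H\ck$ are well defined on $\Ex_0(B)$ the conditions will be inherited from $(T,H,H\ck)$, with the image of $v_b$ serving as the base vertex. Thus the whole corollary reduces to checking that the labels and co-labels agree on any two vertices of $T$ that are identified in $\Ex_0(B)$.

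I would first dispose of the labels. Since $\Ex_0(B)$ uses principal coefficients, the coefficient $y^v_e$ is the monomial $\prod_{j}x_j^{h^v_{je}}$ in the tropical semifield, whose exponent vector is exactly the $e$-column of $H^v$, i.e.\ the simple-root coordinates of $H(v,e)$. As the generators $x_j$ are algebraically independent, $y^v_e$ recovers this column, so two vertices of $T$ that define the same seed (hence have matching coefficients under the identifying bijection $\lambda$) must have matching labels. Therefore $H$ descends to $\Ex_0(B)$. The identical argument applied to the $-B^T$ principal-coefficients cluster algebra shows that $H\ck$ descends along the quotient $T\to\Ex_0(-B^T)$.

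The remaining, and main, difficulty is that $H\ck$ is governed by the exchange graph of $-B^T$ rather than that of $B$, so I must reconcile the two quotients. When $B$ is skew-symmetric this is immediate: $-B^T=B$ and $H\ck=H$ by Remark~\ref{skew sym remark}, so the descent of $H$ already gives the descent of $H\ck$. Otherwise I invoke Conjecture~\ref{g fan conj} for $B$ and for $-B^T$. Writing $\Cone(v)$ and $\Cone'(v)$ for the cones attached to $v\in T$ by the $B$- and $-B^T$-frameworks, two vertices identified in $\Ex_0(B)$ manifestly have the same seed and hence the same cone $\Cone(\cdot)$, while the injectivity in Conjecture~\ref{g fan conj} for $B$ gives the converse; thus $u$ and $v$ are identified in $\Ex_0(B)$ precisely when $\Cone(u)=\Cone(v)$, and likewise they are identified in $\Ex_0(-B^T)$ precisely when $\Cone'(u)=\Cone'(v)$. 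By Corollary~\ref{fan coincide} the two families of cones coincide, $\Cone(v)=\Cone'(v)$, so the two equivalence relations on $T$ are equal and $\Ex_0(B)=\Ex_0(-B^T)$ as quotients.

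Finally, the identifying bijections $\lambda$ also agree, since at each vertex the labels form a basis of the root lattice (Proposition~\ref{basis}) whose rays are distinct, and both bijections are forced to be the unique ray-matching one; in particular the co-label-to-label ratios match up as well. Consequently $H\ck$, already known to descend to $\Ex_0(-B^T)$, descends to $\Ex_0(B)$, and $(\Ex_0(B),H,H\ck)$ is a framework for $B$. I expect the reconciliation of the two exchange graphs---packaged in Corollary~\ref{fan coincide} together with the injectivity of the $\g$-vector map---to be the crux of the argument, since it is the only place where the fans of $B$ and $-B^T$ must be shown to carry the same combinatorial identifications.
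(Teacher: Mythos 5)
Your proposal is correct and takes essentially the same route as the paper: both reduce the corollary to showing that $\Ex_0(B)$ and $\Ex_0(-B^T)$ are identical as quotients of $T$, treating the skew-symmetric case as a tautology and otherwise combining the Co-label condition (which identifies the $B$-cones with the $-B^T$-cones, exactly the content of Corollary~\ref{fan coincide}) with the injectivity assertion of Conjecture~\ref{g fan conj} for $B$ and for $-B^T$. Your additional care with the identifying bijections $\lambda$, and the observation that principal coefficients directly encode the columns of $H^v$, merely make explicit details that the paper's proof leaves implicit.
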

\begin{proof}
We first show that $\Ex_\bullet(B)$ and $\Ex_\bullet(-B^T)$ are identical as quotients of the $n$-regular graph $T$.
This is a tautology if $B$ is skew-symmetric, so we need only consider the case where Conjecture~\ref{g fan conj} holds for $B$ and $-B^T$.
We need to show that, for any two vertices $u$ and $v$ of $T$, we have $\tB^u$ equivalent to $\tB^v$ if and only if $(\tB')^u$ is equivalent to $(\tB')^v$.
By symmetry, we need only check one direction.
Suppose $\tB^u$ is equivalent to $\tB^v$, so that the cones defined by $H(u)$ and $H(v)$ coincide.
Then the Co-label condition implies that the cones defined by $H\ck(u)$ and $H\ck(v)$ coincide.
Since Conjecture~\ref{g fan conj} holds for $-B^T$, we conclude that $(\tB')^u$ is equivalent to $(\tB')^v$.

Since $\Ex_\bullet(B)$ and $\Ex_\bullet(-B^T)$ are identical as quotients, the quotient inherits the labeling $H$ and the co-labeling $H\ck$.
The triple $(\Ex_\bullet(B),H,H\ck)$ inherits the Co-label, Sign, Base, Transition, and Co-transition conditions from $(T,H,H\ck)$.
\end{proof}

\section{Global conditions on frameworks}\label{global sec}
All of the conditions defining a framework are local.
In this section, we consider some global conditions on a framework and show how the existence of a framework for $B$ with various global properties establishes, for $B$, various conjectures from Section~\ref{conj sec}.
All the results in this section apply to general frameworks, whether or not they are reflection frameworks.

\subsection{Complete, exact, and well-connected frameworks}\label{nice frame sec}
We say that a framework $(G,C,C\ck)$ is \newword{complete} if $G$ has no half-edges. 
This is a local condition, but it is convenient to discuss it together with the other global conditions discussed in this section.
The universal cover of a complete framework is the $n$-regular tree, and thus it is easily seen that the map $\Seed$ must be surjective when the framework is complete.
Thus Theorem~\ref{framework principal} implies the following theorem.
\begin{theorem}\label{complete conj}
If a complete framework exists for $B$, then Conjectures~\ref{g lattice conj}, \ref{F 1 conj}, and~\ref{H coherent} all hold for $B$. 
If in addition a complete framework exists for $-B$, then Conjecture~\ref{F max conj} also holds for $B$. 
\end{theorem}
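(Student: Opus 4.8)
The plan is to exploit that completeness makes the covering $\Seed$ of Theorem~\ref{framework exchange} surject onto the whole exchange graph, and then to transfer every vertex-wise conclusion of Theorem~\ref{framework principal} from $\hG$ to $\Ex_0(B)$.

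First I would confirm the surjectivity noted just before the statement. Because $G$ is complete it has no half-edges; being regular of degree $n$ with only full edges, its universal cover $\hG$ is exactly the $n$-regular tree $T$ of Section~\ref{ca background sec}. The map $\Seed:\hG\to\Ex(B,Y,X)$ is produced by the same recursive mutation procedure that assigns a seed to each vertex of $T$, with $v_b$ sent to the initial seed. Since every seed is obtained from the initial one by a finite sequence of mutations, i.e.\ along a path in $T$, the map $\Seed$ hits every vertex of $\Ex_0(B)$.

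Next I would read off the three conjectures. Fix $w\in\Ex_0(B)$ and choose $v\in\hG$ with $\Seed(v)=w$; the data $H^w$, the $\g$-vectors $\g(x^w_e)$, and the $F$-polynomials are functions of the seed, so they coincide with the corresponding objects at $v$, where Theorem~\ref{framework principal} applies. Part~(\ref{weight basis}) gives that the $\g$-vectors are a $\integers$-basis of the weight lattice (Conjecture~\ref{g lattice conj}); part~(\ref{coef sign-coherent}) gives sign-coherence of the rows of $H^w$ (Conjecture~\ref{H coherent}); and part~(\ref{F 1}) gives constant term $1$ for each $F$-polynomial (Conjecture~\ref{F 1 conj}). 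One need only observe that sign-coherence and the lattice-basis property are unaffected by the index relabeling identifying equivalent seeds, so these are well-posed statements at the vertex $w$ rather than at a particular lift.

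For the last assertion I would apply the first part to $B$ and to $-B$ separately: the two complete frameworks give Conjecture~\ref{F 1 conj} for $B$ and for $-B$. By the equivalence recorded in Section~\ref{conj sec} --- namely \cite[Proposition~5.3]{ca4}, which says Conjecture~\ref{F 1 conj} for the pair $\{B,-B\}$ is equivalent to Conjecture~\ref{F max conj} for that pair --- Conjecture~\ref{F max conj} then holds for $B$. I expect no real obstacle: the whole proof is a transfer of Theorem~\ref{framework principal} along the surjection $\Seed$, and the only point demanding attention is that deducing the maximal-monomial statement requires the constant-term statement for $-B$ as well as $B$, which is precisely why the hypothesis invokes a complete framework for $-B$.
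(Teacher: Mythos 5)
Your proposal is correct and is essentially identical to the paper's own argument: completeness forces the universal cover to be the $n$-regular tree, hence $\Seed$ surjects onto $\Ex_0(B)$, and the three conjectures then follow from parts (\ref{coef sign-coherent}), (\ref{weight basis}), and (\ref{F 1}) of Theorem~\ref{framework principal}. Your deduction of Conjecture~\ref{F max conj} from Conjecture~\ref{F 1 conj} for both $B$ and $-B$ via \cite[Proposition~5.3]{ca4} is exactly the relationship the paper invokes from Section~\ref{conj sec}.
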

The assertion about Conjecture~\ref{F max conj} holds by the relationship between Conjectures~\ref{F 1 conj} and~\ref{F max conj} explained in Section~\ref{conj sec}.

A framework is \newword{injective} \label{injective} if, for every pair $u,v$ of vertices of $G$, the following three conditions are equivalent:
$C(u)=C(v)$; $C\ck(u)=C\ck(v)$; and $u=v$.

Consider the special case of Theorem~\ref{framework exchange} where $(B,Y,X)$ is a seed with principal coefficients, so that $\A(B,Y,X)$ is $\A_\bullet(B)$.
Given a framework $(G,C,C\ck)$, Theorem~\ref{framework exchange} asserts the existence of a map $v\mapsto\Seed(v)$ from the universal cover $\hG$ to $\Ex_\bullet(B)$.
The framework $(G,C,C\ck)$ is \newword{ample} \label{ample} if $\Seed$ factors through the covering map $\hG \to G$.
A framework is \newword{exact} \label{exact}if it is injective and ample.
Note that exactness and ampleness are not purely combinatorial, but depend on the framework's interaction with a cluster algebra.

\begin{theorem}\label{exact framework}
Suppose $(G,C,C\ck)$ is an exact framework for $B$ and let $\A_\bullet(B)$ be the cluster algebra with principal coefficients whose initial exchange matrix is $B$.
Then the map $v\mapsto\Seed(v)=(\tB^v,X^v)$ is a graph isomorphism from $G$ (ignoring half-edges) to its image, a subgraph of the exchange graph $\Ex_\bullet(B)$.
If $(G,C,C\ck)$ is also complete, then the map is a graph isomorphism from $G$ to $\Ex_\bullet(B)$.
\end{theorem}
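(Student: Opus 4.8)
The plan is to build the isomorphism in three stages: descend $\Seed$ from the universal cover to $G$, prove injectivity on vertices, and then upgrade to a graph isomorphism onto the image. Since the framework is ample, the map $\Seed\colon\hG\to\Ex_0(B)$ furnished by Theorem~\ref{framework exchange} factors through the covering map $p\colon\hG\to G$, so there is a well-defined map $v\mapsto\Seed(v)$ on the vertices of $G$ itself. I would first note that this descended map is a graph homomorphism once half-edges are ignored: if a full edge $e$ joins $v$ to $v'$ in $G$, lift $e$ to an edge of $\hG$ joining a vertex over $v$ to a vertex over $v'$; these endpoints map to seeds related by mutation at $e$, hence to adjacent vertices of $\Ex_0(B)$, and descending recovers $\Seed(v)$ and $\Seed(v')$.

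The heart of the argument is injectivity on vertices, which is exactly where the hypothesis that the framework is injective (and not merely ample) is needed. Suppose $\Seed(u)=\Seed(v)$. Then the seeds coincide, so the extended exchange matrices $\tB^u$ and $\tB^v$ are equivalent via some bijection $\lambda\colon I(u)\to I(v)$ that permutes the mutable indices but fixes the frozen coefficient rows; in particular the bottom halves $H^u$ and $H^v$ have equal column sets. By Theorem~\ref{framework principal}(\ref{extended mat}) the columns of $H^v$ are precisely the simple-root coordinate vectors of the labels $C(v,e)$, so equality of the column sets says exactly that $C(u)=C(v)$. Injectivity of the framework then forces $u=v$.

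With vertex injectivity in hand, the isomorphism onto the image follows formally. Because $G$ is simple, distinct full edges have distinct endpoint pairs, and vertex injectivity carries these to distinct endpoint pairs in $\Ex_0(B)$; thus the homomorphism is injective on full edges, and it is a bijection onto the subgraph of $\Ex_0(B)$ whose vertices are the $\Seed(v)$ and whose edges are the images of full edges of $G$. The inverse is a homomorphism for the same reason---an image edge comes from a unique full edge of $G$---so this is the asserted graph isomorphism onto the image.

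Finally I would treat the complete case. When $G$ has no half-edges it is regular of degree $n$ with only full edges, so (as noted before Theorem~\ref{complete conj}) its universal cover is the $n$-regular tree $T$ and $\Seed\colon T\to\Ex_0(B)$ is the standard assignment of seeds to vertices of $T$, which is surjective because $\Ex_0(B)$ is a quotient of $T$. Ampleness then makes the descended map $G\to\Ex_0(B)$ surjective on vertices, hence a bijection by the injectivity already established. At each vertex, the $n$ neighbors of $v$ in $G$ map to $n$ distinct neighbors of $\Seed(v)$, which must be all of them since a seed admits only $n$ mutations; thus the map is surjective on edges as well, and the vertex bijection upgrades to a graph isomorphism $G\cong\Ex_0(B)$. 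I expect the main obstacle to be the vertex-injectivity step---specifically, correctly extracting the equality $C(u)=C(v)$ of label sets from the equivalence of seeds, keeping careful track of which indices the relabeling $\lambda$ permutes and which (the frozen coefficient rows) it fixes.
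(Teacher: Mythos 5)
Your proposal is correct and takes essentially the same route as the paper: descend $\Seed$ to $G$ via ampleness, deduce $C(u)=C(v)$ from $\Seed(u)=\Seed(v)$ using Theorem~\ref{framework principal}(\ref{extended mat}), and conclude vertex injectivity from the injectivity hypothesis, exactly as in the paper's proof. The remaining bookkeeping differs only in minor ways: where you invoke simplicity of $G$ to upgrade to an isomorphism onto the image and obtain surjectivity in the complete case from the universal cover being the $n$-regular tree $T$ (the observation the paper records just before Theorem~\ref{complete conj}), the paper instead runs an induction on distance from the initial seed and, for completeness, notes that the image is an $n$-regular subgraph of the connected $n$-regular graph $\Ex_0(B)$.
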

\begin{proof}
Since $(G,C,C\ck)$ is ample, the map $v\mapsto\Seed(v)$ descends to a map $v\mapsto\Seed(v)$ from $G$ to $\Ex_\bullet(B)$.
We will show that the map is an isomorphism to its image.
If $u$ and $v$ are vertices of $G$ with $\Seed(u)=\Seed(v)$, then Theorem~\ref{framework principal}(\ref{extended mat}) implies that $C(u)=C(v)$.
Since $(G,C,C\ck)$ is an injective framework, we conclude that $v\mapsto\Seed(v)$ is one-to-one.
Now $G$ is $n$-regular and $\Seed$ maps the neighbors of each vertex $v$ to distinct neighbors of $\Seed(v)$ in $\Ex_\bullet(B)$.
Since $\Ex_\bullet(B)$ is also $n$-regular, an easy proof by induction on the distance from the initial seed shows that $v\mapsto\Seed(v)$ is an isomorphism to its image.
If $G$ has no half-edges, then the image of $\Seed$ is an $n$-regular subgraph of the $n$-regular graph $\Ex_\bullet(B)$.
Since $\Ex_\bullet(B)$ is connected, the image is all of $\Ex_\bullet(B)$.
\end{proof}

\begin{remark}\label{framework uniqueness}
Theorem~\ref{exact framework} implies that, up to isomorphism of $G$, there is at most one complete, exact framework for a given $B$.
Furthermore, non-complete, exact frameworks coincide where they overlap, in a sense that can be made precise.
However, in this circumstance, the phrase ``up to isomorphism'' allows some meaningful freedom.
Making a useful framework means choosing an appropriate combinatorial, algebraic, or geometric realization of the triple $(G,C,C\ck)$. 
\end{remark}

Theorem~\ref{exact framework} combines with Theorems~\ref{framework principal}(\ref{extended mat}) to give the following corollary.
\begin{cor}\label{exact conj}
If a complete, exact framework exists for $B$, then Conjecture~\ref{H equiv} holds for $B$. 
\end{cor}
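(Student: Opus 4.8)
The plan is to use the graph isomorphism supplied by Theorem~\ref{exact framework} to transport the question from the exchange graph back to the framework, and then to read off the matrices $H^u$ and $H^v$ from the labels via Theorem~\ref{framework principal}(\ref{extended mat}). Since the framework is complete and exact, Theorem~\ref{exact framework} says that $v\mapsto\Seed(v)$ is a graph isomorphism from $G$ onto $\Ex_0(B)$. Thus every vertex of $\Ex_0(B)$ is $\Seed(v)$ for a unique vertex $v$ of $G$, and the matrix $H^v$ appearing in Conjecture~\ref{H equiv} is exactly the bottom half of $\tB^v$ computed in Theorem~\ref{framework principal}(\ref{extended mat}). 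The converse direction of the conjecture (that $u=v$ forces $H^u$ and $H^v$ to be equivalent) is immediate, so the content is to show that equivalence of $H^u$ and $H^v$ implies $u=v$.

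First I would record the precise meaning of equivalence for $H^u$ and $H^v$, the natural analogue of the definition for extended exchange matrices: there is a bijection $\lambda\colon I(u)\to I(v)$ with $h^u_{ie}=h^v_{i\lambda(e)}$ for all $i\in I$ and all $e\in I(u)$; equivalently, $\lambda$ matches the columns of $H^u$ with the columns of $H^v$. By Theorem~\ref{framework principal}(\ref{extended mat}), the column of $H^v$ indexed by $f$ has entries $[\alpha_i:C(v,f)]$, so it is precisely the vector of simple-root coordinates of the label $C(v,f)$. Since $\set{\alpha_i:i\in I}$ is a basis for the root lattice, each label is determined by its column, and hence the column-matching condition $h^u_{ie}=h^v_{i\lambda(e)}$ is equivalent to $C(u,e)=C(v,\lambda(e))$ for every $e\in I(u)$. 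As $\lambda$ is a bijection, this is exactly the statement that the two label sets coincide: $C(u)=C(v)$.

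It then remains to invoke injectivity. An exact framework is by definition injective, so $C(u)=C(v)$ implies $u=v$, which completes the proof once the trivial converse is noted. I do not expect a serious obstacle: the only step requiring care is the translation in the second paragraph between the combinatorial equivalence of the matrices $H^u$ and $H^v$ and the equality of the label sets $C(u)$ and $C(v)$, which rests entirely on the identification of columns of $H^v$ with simple-root coordinate vectors of labels provided by Theorem~\ref{framework principal}(\ref{extended mat}), together with the fact that the simple roots form a basis.
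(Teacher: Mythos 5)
Your proof is correct and follows essentially the same route as the paper: the paper's own proof is precisely the observation that Theorem~\ref{exact framework} (giving the isomorphism $G\cong\Ex_0(B)$ and injectivity) combines with Theorem~\ref{framework principal}(\ref{extended mat}) (identifying columns of $H^v$ with simple-root coordinates of labels), which is exactly the combination you spell out.
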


The framework $(G,C,C\ck)$ is \newword{polyhedral} \label{polyhedral} if the cones $\Cone(v)$, where $v$ ranges over all vertices of $G$, are the maximal cones of a fan with the property that  distinct vertices $v$ of $G$ define distinct cones $\Cone(v)$.
The fan in question is always \newword{simplicial}, meaning that it is composed of simplicial cones.
Theorem~\ref{framework principal}(\ref{g vec}) also implies that a polyhedral framework is automatically injective.  
Thus a polyhedral framework is exact if and only if it is ample. 

A polyhedral framework is \newword{well-connected} \label{well-connected} if it has the following property:
If $F$ is a face of $\Cone(v)$ and of $\Cone(v')$, then there exists a path $v=v_0$, $v_1$, \dots, $v_k=v'$ in $G$ such that $F$ is a face of $\Cone(v_i)$  for all $i$ from $0$ to $k$.
The following theorem is the reason for considering well-connected polyhedral frameworks.
It is immediate from Theorem~\ref{framework principal}(\ref{g vec}).

\begin{theorem}\label{well-connected g}
If $(G,C,C\ck)$ is a polyhedral framework, then $\g$-vector cones for seeds in the image of $v\mapsto\Seed(v)$ form a fan, identical to the fan defined by $(G,C,C\ck)$.
If $(G,C,C\ck)$ is also well-connected, then Conjecture~\ref{strong g fan conj} holds for pairs of cluster monomials supported on clusters in the image of $v\mapsto\Seed(v)$.
\end{theorem}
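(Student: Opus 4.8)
The plan is to derive both assertions from the identification $\g^v_e = R(v,e)$ furnished by Theorem~\ref{framework principal}(\ref{g vec}), which says that the $\g$-vector of the cluster variable $x^v_e$ is exactly the dual-basis vector $R(v,e)$ spanning an extreme ray of $\Cone(v)$. The first assertion is then formal: the $\g$-vector cone of the seed $\Seed(v)$ is $\Cone\set{\g^v_e : e\in I(v)} = \Cone\set{R(v,e): e\in I(v)}$, which is precisely the framework cone $\Cone(v)$. Since a polyhedral framework declares these cones to be the maximal cones of a fan, the $\g$-vector cones of seeds in the image of $\Seed$ are exactly the maximal cones of that fan, and so form it.

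For the second assertion, I would first locate the $\g$-vector of a cluster monomial inside this fan. If a cluster monomial $m$ is supported on $\X = \set{x^v_e : e\in S}$ for some $S\subseteq I(v)$, with (necessarily positive) exponents $a_e$, then its $\g$-vector is $\sum_{e\in S} a_e R(v,e)$. Because the $R(v,e)$ form a basis (Theorem~\ref{framework principal}(\ref{weight basis})) and every $a_e>0$, this vector lies in the relative interior of the face $F := \Cone\set{R(v,e):e\in S}$ of $\Cone(v)$, a cone of the framework fan. Performing the same analysis for the second monomial $m'$ supported on $\X'\subseteq X^{v'}$ produces a face $F'$ of $\Cone(v')$. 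If $\g(m)=\g(m')$, then this common vector lies in the relative interior of both $F$ and $F'$; since in a fan each point lies in the relative interior of a unique cone, I conclude $F=F'$.

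Next I would extract the combinatorial conclusion from well-connectedness. As $F$ is a common face of $\Cone(v)$ and $\Cone(v')$, well-connectedness supplies a path $v=v_0,v_1,\dots,v_k=v'$ along which $F$ remains a face of every $\Cone(v_i)$. By Proposition~\ref{dual adjacent}, crossing the edge $e_i$ from $v_{i-1}$ to $v_i$ alters exactly one extreme ray of the cone, the one spanned by $R(v_{i-1},e_i)$, and fixes the other $n-1$. Because $F$ is a face of both simplicial cones, its extreme rays lie among the $n-1$ shared rays, so the ray $\reals_{\ge 0} R(v_{i-1},e_i)$ is not a ray of $F$. Reading this back through $\g^v_e=R(v,e)$ and the cluster mutation rule, the variable exchanged at step $i$ is not among those whose $\g$-vectors span $F$; hence the cluster variables realizing the extreme rays of $F$ are never mutated and persist, as the same elements of $\F$, along the whole path. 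At $v_0$ these variables are exactly $\X$ and at $v_k$ exactly $\X'$, giving $\X=\X'$, and by construction no mutation in the path exchanges a variable of $\X$. This is precisely Conjecture~\ref{strong g fan conj} for the pair.

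The step I expect to be the main obstacle is this last one: converting the geometric equality $F=F'$ into the combinatorial persistence statement. The delicate points are verifying rigorously (via Proposition~\ref{dual adjacent} and the simpliciality guaranteed by the polyhedral hypothesis) that a face shared by adjacent cones never uses the mutated ray, and tracking the support variables as honest elements of $\F$ rather than merely as rays, together with the bookkeeping needed to pass between $G$, its universal cover, and the exchange graph, so that the well-connected path in $G$ is genuinely realized by seed mutations connecting $\Seed(v)$ to $\Seed(v')$. The earlier steps, by contrast, are essentially formal consequences of Theorem~\ref{framework principal}(\ref{g vec}) and the fan axioms.
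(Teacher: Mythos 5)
Your argument is, in substance, exactly the one the paper intends: the paper offers no written proof at all, declaring the theorem ``immediate from Theorem~\ref{framework principal}(\ref{g vec})'', and your expansion---identifying the $\g$-vector cone of a seed with $\Cone(v)$, placing $\g(m)$ in the relative interior of the face spanned by $\set{R(v,e):e\in S}$, invoking disjointness of relative interiors in a fan to force $F=F'$, and then walking the well-connected path while checking via Proposition~\ref{dual adjacent} that a face common to two adjacent simplicial cones never uses the mutated ray---is the natural filling-in of that claim. The first assertion, the step $F=F'$, and the ray-avoidance argument are all complete and correct as you present them.

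However, the obstacle you flag at the end is not mere bookkeeping; it is a genuine gap, albeit one inherited from the paper's statement rather than created by you. The map $\Seed$ is defined on the universal cover $\hG$, so the two monomials live on clusters $X^{\tilde v}$ and $X^{\tilde v'}$ for vertices $\tilde v,\tilde v'$ of $\hG$, whereas the well-connected path lives in $G$. Lifting that path starting at $\tilde v$ produces a mutation sequence from $\Seed(\tilde v)$ to $\Seed(\tilde v'')$ for some $\tilde v''$ lying over $v'$, and without knowing that $\Seed$ factors through the covering $\hG\to G$ (ampleness), one cannot identify $\Seed(\tilde v'')$ with $\Seed(\tilde v')$; your assertion ``at $v_k$ exactly $\X'$'' is precisely this unproved identification. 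The point is substantive, not cosmetic: if a polyhedral, well-connected framework failed to be ample, there would be vertices $\tilde v,\tilde v''$ of $\hG$ over the same $v$ with distinct seeds; since both seeds carry the extended exchange matrix read off from the labels at $v$ (Theorems~\ref{framework exchange} and~\ref{framework principal}(\ref{extended mat})), their clusters would have to differ, producing distinct cluster variables $x^{\tilde v}_e\neq x^{\tilde v''}_e$ with the common $\g$-vector $R(v,e)$---and then the second assertion of the theorem would fail outright for that pair of one-variable monomials. So the second assertion genuinely requires ampleness (or an argument that polyhedral plus well-connected forces ampleness), which neither your proposal nor the paper supplies. In the paper's only application, Corollary~\ref{polyhedral conj}, the framework is assumed exact, hence ample; under that hypothesis $\Seed$ descends to $G$, the lifted path connects the correct seeds, and your argument closes completely.
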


\begin{cor}\label{polyhedral conj}
If a complete, exact, well-connected polyhedral framework exists for $B$, then Conjectures~\ref{vertex conj}, \ref{face conj}, \ref{mon indep} (for principal coefficients) and Conjectures~\ref{g fan conj} and~\ref{strong g fan conj} all hold for $B$. 
Furthermore, the fan defined by the framework is identical to the fan defined by $\g$-vectors of clusters in $\A_\bullet(B)$. 
\end{cor}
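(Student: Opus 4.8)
The plan is to assemble the corollary from the machinery already in place, upgrading the ``image of $\Seed$'' statements of Theorems~\ref{exact framework} and~\ref{well-connected g} to statements about the entire exchange graph by means of completeness, and then invoking the logical equivalences among the conjectures recorded in Section~\ref{conj sec}. First I would apply Theorem~\ref{exact framework}: because $(G,C,C\ck)$ is complete and exact, the map $v\mapsto\Seed(v)$ is a graph isomorphism from $G$ onto all of $\Ex_0(B)$. In particular every vertex of the exchange graph occurs in the image, so henceforth ``seeds in the image of $\Seed$'' may be read as ``all seeds of $\A_0(B)$.''

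Next I would establish Conjecture~\ref{g fan conj} together with the ``furthermore'' assertion. By Theorem~\ref{framework principal}(\ref{g vec}) the $\g$-vector $\g^v_e$ equals $R(v,e)$, so the cone spanned by the $\g$-vectors at $\Seed(v)$ coincides with the framework cone $\Cone(v)$; thus the two meanings of $\Cone(v)$ agree. Since the framework is polyhedral, the cones $\Cone(v)$ are the maximal cones of a fan and distinct vertices give distinct cones. Completeness makes the image all of $\Ex_0(B)$, and exactness makes $v\mapsto\Cone(v)$ injective on $\Ex_0(B)$; this is precisely Conjecture~\ref{g fan conj}, and the resulting fan is identical to the framework fan, which is the ``furthermore'' claim (Theorem~\ref{well-connected g} packages this fan identity directly).

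I would then turn to Conjecture~\ref{strong g fan conj}. Because $(G,C,C\ck)$ is in addition well-connected, Theorem~\ref{well-connected g} gives Conjecture~\ref{strong g fan conj} for all pairs of cluster monomials supported on clusters in the image of $\Seed$; completeness again promotes this to all pairs, establishing Conjecture~\ref{strong g fan conj} for $B$. As recorded in Section~\ref{conj sec}, Conjecture~\ref{strong g fan conj} is equivalent to the conjunction of Conjectures~\ref{face conj} and~\ref{g fan conj}, so Conjecture~\ref{face conj} follows; and Conjecture~\ref{vertex conj} is the special case $\X=\set{x}$ of Conjecture~\ref{face conj}. Finally, for Conjecture~\ref{mon indep} I would invoke Theorem~\ref{complete conj}, which yields Conjecture~\ref{F 1 conj} from the existence of a complete framework, and then combine Conjectures~\ref{g fan conj} and~\ref{F 1 conj} as in \cite[Remark~7.11]{ca4}.

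There is no genuinely hard step here: the corollary is essentially a bookkeeping exercise that collects conclusions already proved in Theorems~\ref{complete conj}, \ref{exact framework}, and~\ref{well-connected g}. The only points requiring care---and thus the closest thing to an obstacle---are (i) verifying that completeness is exactly what is needed to replace each ``in the image of $\Seed$'' hypothesis by a statement about the full exchange graph, and (ii) chaining the equivalence ``Conjecture~\ref{strong g fan conj} $\Leftrightarrow$ Conjectures~\ref{face conj} and~\ref{g fan conj}'' in the correct direction, so that no conjecture is used before it has been established.
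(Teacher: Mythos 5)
Your proposal is correct and follows essentially the same route as the paper's own proof: both derive Conjecture~\ref{strong g fan conj} from Theorem~\ref{well-connected g} plus completeness, obtain Conjectures~\ref{vertex conj}, \ref{face conj}, and~\ref{g fan conj} from the equivalences recorded in Section~\ref{conj sec}, and get Conjecture~\ref{mon indep} by combining Conjecture~\ref{F 1 conj} (via Theorem~\ref{complete conj}) with Conjecture~\ref{g fan conj}. Your extra explicit step deriving Conjecture~\ref{g fan conj} and the fan identity directly from Theorem~\ref{exact framework} and Theorem~\ref{framework principal}(\ref{g vec}) is just a more detailed unpacking of what the paper leaves implicit, not a different argument.
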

\begin{proof}
Conjecture~\ref{strong g fan conj} follows from Theorem~\ref{well-connected g} because the completeness of the framework implies that the image of $\Seed$ is the whole exchange graph.
As discussed in Section~\ref{conj sec}, Conjecture~\ref{strong g fan conj} implies the other conjectures except for Conjecture~\ref{mon indep}.
Conjecture~\ref{F 1 conj} holds by Theorem~\ref{complete conj}.  
Conjectures~\ref{g fan conj} and~\ref{F 1 conj} imply Conjecture~\ref{mon indep}, as explained in Section~\ref{conj sec}.
\end{proof}

We can expand on Corollary~\ref{identity model} to include global conditions.
\begin{theorem}\label{exact identity model}
Suppose Conjecture~\ref{H coherent} holds for $B$.
If Conjecture~\ref{g fan conj} holds for $B$ and $-B^T$, then the triple $(\Ex_\bullet(B),H,H\ck)$ is a complete, exact polyhedral framework for $B$.
It is well-connected if and only if Conjecture~\ref{strong g fan conj} holds for $B$.
\end{theorem}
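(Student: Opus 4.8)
The plan is to read off each listed property from results already in hand, treating existence and completeness first, then polyhedrality and exactness, and finally the well-connectedness equivalence. The framework itself costs nothing new: under the hypotheses (Conjecture~\ref{H coherent} for $B$ and Conjecture~\ref{g fan conj} for both $B$ and $-B^T$), Corollary~\ref{identity model} already asserts that $(\Ex_0(B),H,H\ck)$ is a framework for $B$. Completeness is equally immediate: $\Ex_0(B)$ is a quotient of the $n$-regular tree $T$, and since a mutation changes the cluster, no two adjacent vertices of $T$ are identified; hence every edge of $\Ex_0(B)$ is a full edge and the framework has no half-edges.

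I would then obtain polyhedrality and exactness together. By Theorem~\ref{framework principal}(\ref{g vec}) the cone $\Cone(v)$ that the framework attaches to $v$ (spanned by the $R(v,e)$) is exactly the $\g$-vector cone spanned by $\g(x^v_e):e\in I(v)$, so the two meanings of $\Cone(v)$ agree. Conjecture~\ref{g fan conj} for $B$ says precisely that $\set{\Cone(v):v\in\Ex_0(B)}$ is a fan and that $v\mapsto\Cone(v)$ is injective, which is the definition of a polyhedral framework. By the observation recorded after Corollary~\ref{polyhedral conj}, a polyhedral framework is automatically injective, so exactness reduces to ampleness, namely that $\Seed$ factors through the covering $p:\hG\to\Ex_0(B)$. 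This holds because the recursive definition of $\Seed$ on a path $\chi$ in $\Ex_0(B)$ mutates the initial seed along $\chi$, and edges of the exchange graph are by construction seed mutations; thus $\Seed(\chi)$ is the seed attached to the endpoint $p(\chi)$ and depends only on $p(\chi)$. Hence the framework is complete, exact, and polyhedral.

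For the final clause I would argue both directions. Completeness makes the image of $\Seed$ all of $\Ex_0(B)$, so Theorem~\ref{well-connected g} immediately gives that well-connectedness implies Conjecture~\ref{strong g fan conj} for $B$. For the converse, suppose a face $F$ is shared by $\Cone(u)$ and $\Cone(v)$. Since these are simplicial $\g$-vector cones, the extreme rays of $F$ are the $\g$-vectors of a subset of the cluster variables at $u$, and also of a subset at $v$; let $m_u$ and $m_v$ be the corresponding squarefree cluster monomials. These monomials have the same $\g$-vector (the sum of the rays of $F$), so Conjecture~\ref{strong g fan conj} forces their supports $\X,\X'$ to coincide and provides a sequence of mutations from the seed at $u$ to the seed at $v$ that never exchanges a variable of $\X$. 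Reading this sequence as a path $u=v_0,\dots,v_k=v$ in $\Ex_0(B)$, every intermediate cluster still contains $\X$, whose $\g$-vectors span $F$; hence $F$ is a face of each $\Cone(v_i)$, which is exactly well-connectedness.

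I expect the real content to lie in this converse half: translating the seed-theoretic output of Conjecture~\ref{strong g fan conj} (a mutation path fixing $\X$) into the polyhedral statement of well-connectedness (a chain of maximal cones all retaining the common face $F$). The crux is the dictionary between faces spanned by subsets of $\g$-vectors and supports of cluster monomials, together with the fact that a mutation not exchanging a variable of $\X$ preserves $F$ as a face of the associated cone. Everything else is assembled directly from Corollary~\ref{identity model}, Theorem~\ref{framework principal}(\ref{g vec}), Theorem~\ref{well-connected g}, and the remark following Corollary~\ref{polyhedral conj}.
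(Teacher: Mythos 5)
Your proposal is correct and follows essentially the same route as the paper: Corollary~\ref{identity model} for the framework structure, absence of half-edges for completeness, Theorem~\ref{framework principal}(\ref{g vec}) plus Conjecture~\ref{g fan conj} for polyhedrality (hence injectivity), descent of $\Seed$ to the identity map on $\Ex_0(B)$ for ampleness, and the dictionary between common faces and supports of cluster monomials for the well-connectedness equivalence, which the paper simply declares immediate. The one detail you gloss over is why the generators of a common ray of $\Cone(u)$ and $\Cone(v)$ coincide (so that your monomials $m_u$ and $m_v$ really have equal $\g$-vectors); this follows because the $\g$-vectors at each vertex form a $\integers$-basis of the weight lattice by Theorem~\ref{framework principal}(\ref{weight basis}), hence are primitive, and primitive lattice generators of the same ray are equal.
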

\begin{proof}
By Corollary~\ref{identity model}, $(\Ex_\bullet(B),H,H\ck)$ is a framework, and it is complete because the exchange graph has no half-edges.
Conjecture~\ref{g fan conj} for $B$ and $-B^T$ and Theorem~\ref{framework principal} imply Conjecture~\ref{H equiv} for $B$ and $-B^T$.
Thus $(\Ex_\bullet(B),H,H\ck)$ is injective.
It is ample because the map $\Seed:\widehat{\Ex}_\bullet(B)\to\Ex_\bullet(B)$
descends to the identity map on $\Ex_\bullet(B)$.
The statement about well-connectedness is immediate.
\end{proof}

Combining Theorem~\ref{T model} and/or Corollary~\ref{identity model} and Theorem~\ref{exact identity model} with Theorem~\ref{complete conj} and Corollaries~\ref{exact conj} and~\ref{polyhedral conj}, we obtain some additional dependencies among conjectures. 

Ampleness is a difficult condition to establish. 
The easiest way is to know Conjecture~\ref{H equiv} or~\ref{g fan conj} in advance:

\begin{prop}\label{conj imply ample}
Suppose $(G,C,C\ck)$ is a framework for $B$.
If Conjecture~\ref{H equiv} or Conjecture~\ref{g fan conj} holds for $B$, then $(G,C,C\ck)$ is ample.
\end{prop}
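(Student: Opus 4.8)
The plan is to unwind the definition of ampleness and reduce it, in each case, to the injectivity built into the relevant conjecture. By Theorem~\ref{framework exchange} we already have the map $\Seed:\hG\to\Ex_0(B)$, and ampleness asks precisely that $\Seed$ factor through the covering $p:\hG\to G$, i.e.\ that $\Seed$ be constant on the fibers of $p$. So I would fix two vertices $v_1,v_2$ of $\hG$ with $p(v_1)=p(v_2)=w$ for some vertex $w$ of $G$, and aim to prove $\Seed(v_1)=\Seed(v_2)$.

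First I would record the structural fact that drives everything. Since the lifted framework on $\hG$ is defined by pulling the labels and co-labels back along $p$, the bijection $\lambda=(p|_{I(v_2)})^{-1}\circ(p|_{I(v_1)})$ from $I(v_1)$ to $I(v_2)$ satisfies $C(v_1,e)=C(w,p(e))=C(v_2,\lambda(e))$ and likewise $C\ck(v_1,e)=C\ck(v_2,\lambda(e))$ for every $e\in I(v_1)$. In other words, $v_1$ and $v_2$ carry identical label sets and identical co-label sets, matched up by $\lambda$.

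Then I would split into the two cases. If Conjecture~\ref{H equiv} holds, I would invoke Theorem~\ref{framework principal}(\ref{extended mat}), according to which the columns of $H^{v_i}$ are exactly the simple-root coordinate vectors of the labels $C(v_i,\cdot)$. Because these labels agree under $\lambda$, the matrices $H^{\Seed(v_1)}$ and $H^{\Seed(v_2)}$ are equivalent in the sense demanded by Conjecture~\ref{H equiv}, which then forces $\Seed(v_1)=\Seed(v_2)$. If instead Conjecture~\ref{g fan conj} holds, I would use Theorem~\ref{framework principal}(\ref{g vec}): the $\g$-vectors of $\Seed(v_i)$ are the dual-basis vectors $R(v_i,e)$, and since $R(v_i)$ is determined by the co-label set $C\ck(v_i)$, the equality $C\ck(v_1)=C\ck(v_2)$ yields $\Cone(v_1)=\Cone(v_2)$. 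Thus $\Seed(v_1)$ and $\Seed(v_2)$ have the same $\g$-vector cone, and the injectivity of $v\mapsto\Cone(v)$ asserted in Conjecture~\ref{g fan conj} gives $\Seed(v_1)=\Seed(v_2)$.

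The hard part will be purely bookkeeping the index sets: I must check that the bijection $\lambda$ matching the incident edges of $v_1$ and $v_2$ is precisely the bijection witnessing the equivalence of the $H$-matrices (respectively the coincidence of the cones), so that each conjecture applies verbatim to the vertices $\Seed(v_1),\Seed(v_2)$ of $\Ex_0(B)$. This is routine, since $p$ induces bijections on incident edges and $\Seed$ is constructed compatibly with the maps $\mu_e$, but it is the one place where a careless identification could silently mismatch rows and columns.
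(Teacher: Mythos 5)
Your proposal is correct and is essentially the paper's own argument: the paper likewise observes that the (lifted) labels and co-labels at a vertex of $\hG$ depend only on its projection to $G$, so by Theorem~\ref{framework principal}(\ref{extended mat}) the matrix $H^v$, and by Theorem~\ref{framework principal}(\ref{g vec}) the $\g$-vector cone, depend only on that projection, whence either conjecture forces $\Seed$ to factor through $p$. Your version just makes explicit the index-set bijection $\lambda$ that the paper leaves implicit.
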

\begin{proof}
Consider the map $\Seed$ from $\hG$ to the principal-coefficients exchange graph $\Ex_\bullet(B)$.
If $v$ is the vertex $(v_0,\ldots,v_k)$ of $\hG$, then Theorem~\ref{framework principal}(\ref{extended mat}) implies that the extended exchange matrix $\tB^v$ depends only on $v_k$.
Thus if Conjecture~\ref{H equiv} holds for $B$, then $\Seed(v)$ depends only on $v_k$.
Furthermore, Theorem~\ref{framework principal}(\ref{g vec}) implies that $\g$-vectors of the cluster $X^v$ depend only on $v_k$.
Thus if Conjecture~\ref{g fan conj} holds for $B$, then $\Seed(v)$ depends only on $v_k$.
\end{proof}

The following sections discuss other ways to prove ampleness.

\subsection{Simply connected frameworks}\label{simply sec}
We now define the notion of a simply connected framework, and show that simple connectivity implies ampleness.
The ideas here are similar in spirit to the ideas in \cite[Section~2]{ca2}.
The definition of simple connectivity requires much preparation, beginning with the definition of a \newword{rank-two cycle}.

Suppose $(G,C,C\ck)$ is a framework, let $v$ be a vertex of $G$ and let $e$ and $f$ be edges incident to $v$.
Construct a doubly infinite sequence of vertices and edges as follows:
Set $v_0=v$, $e_0=f$ and $e_1=e$.
Then, recursively, let $e_{k+1} = \mu_{e_k}(e_{k-1})$ and let $e_k$ join $v_{k-1}$ and $v_k$, as shown in Figure~\ref{vi ei fig}.
\begin{figure}
\includegraphics{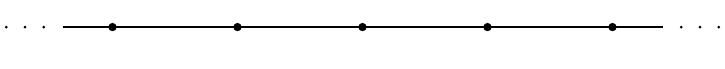}
\begin{picture}(0,0)(174,-20)
\put(-128,-9){\small$v_{-2}$}
\put(-148,-18){\small$-\gamma_{-3}$, $\gamma_{-2}$}
\put(-68,-9){\small$v_{-1}$}
\put(-87,-18){\small$-\gamma_{-2}$, $\gamma_{-1}$}
\put(-10,-9){\small$v_0=v$}
\put(-27,-18){\small$-\gamma_{-1}$, $\gamma_{0}$}
\put(54,-9){\small$v_1$}
\put(39,-18){\small$-\gamma_{0}$, $\gamma_{1}$}
\put(114,-9){\small$v_2$}
\put(99,-18){\small$-\gamma_{1}$, $\gamma_{2}$}
\put(-152,4){\small$e_{-2}$}
\put(-95,4){\small$e_{-1}$}
\put(-45,4){\small$e_0=f$}
\put(15,4){\small$e_1=e$}
\put(82,4){\small$e_2$}
\put(135,4){\small$e_3$}
\end{picture}
\caption{The path with vertices $v_i$, and the relevant elements of each $C(v_i)$}
\label{vi ei fig}
\end{figure}
Assume, for the rest of this section, that none of the $e_k$ are half-edges, so that this recursion is well-defined.
If the set of distinct edges in this sequence is finite, then the edges define a cycle in $G$ which we will call a \newword{rank-two cycle}.  \label{rank two cycle}

Let $\gamma_k = C(v_k, e_{k+1})$ and define $\gamma\ck_k$ similarly. 
The Transition and Co-transition conditions imply that $C(v_k, e_k) = - \gamma_{k-1}$, that $C\ck(v_k, e_k) = - \gamma\ck_{k-1}$, and that
\begin{equation}
\begin{aligned}
\gamma_{k+1} &= - \gamma_{k-1} - [\sgn(\gamma_k) \omega(\gamma_k^{\vee}, \gamma_{k-1})]_{-} \gamma_k \\
\gamma^{\vee}_{k+1} &= - \gamma^{\vee}_{k-1} + [\sgn(\gamma_k^{\vee}) \omega(\gamma_{k-1}^{\vee}, \gamma_{k})]_{+} \gamma^{\vee}_k
\end{aligned}\label{GammaRecur}
\end{equation}

\begin{lemma} \label{lem:negsomewhere}
Let $(G, C, C^{\vee})$ be a framework, let $v$ be a vertex and $e$ and $f$ two edges adjacent to $v$ as above. Define $v_k$, $e_k$ and $\gamma_k$ as above.

Suppose that $v_k$ is periodic with some finite period. Then there is some $k$ such that $\sgn(\gamma_k)=\sgn(-\gamma_{k-1})=-1$.  
\end{lemma}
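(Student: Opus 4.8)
The plan is to reduce the statement to a planar winding argument. First I would observe that, by the recursion \eqref{GammaRecur}, each $\gamma_{k+1}$ is a linear combination of $\gamma_{k-1}$ and $\gamma_k$ in which the coefficient of $\gamma_{k-1}$ is $-1$; running the recursion in both directions then shows that every $\gamma_k$ lies in the two-dimensional subspace $U=\Span(\gamma_0,\gamma_1)$. Here $\gamma_{k-1}$ and $\gamma_k$ are linearly independent for every $k$, since $C(v_k,e_k)=-\gamma_{k-1}$ and $C(v_k,e_{k+1})=\gamma_k$ are distinct members of the basis $C(v_k)$ guaranteed by Proposition~\ref{basis}. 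I would also record that periodicity of the $v_k$ forces periodicity of the $\gamma_k$: since $G$ is a simple quasi-graph, if $v_{k+p}=v_k$ for all $k$ then the full edge joining $v_{k-1}$ to $v_k$ must coincide with the one joining $v_{k+p-1}$ to $v_{k+p}$, so $e_{k+p}=e_k$ and hence $\gamma_k=C(v_k,e_{k+1})$ is periodic with period $p$.

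The heart of the argument is that the $\gamma_k$ wind monotonically around the origin of $U$. Writing the recursion as $\gamma_{k+1}=-\gamma_{k-1}+c_k\gamma_k$ with $c_k=-[\sgn(\gamma_k)\omega(\gamma_k\ck,\gamma_{k-1})]_-\ge 0$, the change from the ordered basis $(\gamma_{k-1},\gamma_k)$ of $U$ to $(\gamma_k,\gamma_{k+1})$ is given, in coordinates, by the matrix $\left(\begin{smallmatrix}0&-1\\1&c_k\end{smallmatrix}\right)$, whose determinant is $1$. Thus all consecutive pairs $(\gamma_k,\gamma_{k+1})$ carry the same orientation; fixing the orientation of $U$ for which $(\gamma_0,\gamma_1)$ is positive, the sequence $\gamma_k$ rotates strictly counterclockwise, each step through an angle in $(0,\pi)$. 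Periodicity then forces the total angle swept over one period to be a positive multiple of $2\pi$, so the $\gamma_k$ complete at least one full revolution.

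Finally I would bring in the Sign condition. The cone $P=(\reals_{\ge 0}\Pi)\cap U$ is the intersection of the pointed simplicial cone $\reals_{\ge 0}\Pi$ with the plane $U$, and since consecutive $\gamma_k$ are independent it is genuinely two-dimensional; being pointed it is an angular sector of some opening $\theta_P\in(0,\pi)$. By the Sign condition each $\gamma_k$ lies in $P$ (when positive) or in $-P$ (when negative), and $P\cup(-P)$ leaves two open angular gaps, each of width $\pi-\theta_P>0$. Because each step turns by less than $\pi$, the sequence cannot jump over the whole sector $-P$ in a single step; hence, during the full revolution guaranteed above, it must pass directly from $P$ into $-P$ at some step, say from $\gamma_{k-1}$ to $\gamma_k$. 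That step yields $\sgn(\gamma_{k-1})=1$ and $\sgn(\gamma_k)=-1$, i.e.\ $\sgn(\gamma_k)=\sgn(-\gamma_{k-1})=-1$, as desired. I expect the main obstacle to be the monotone-winding step: making the orientation bookkeeping rigorous and combining it with the pointedness of $P$ so that a single $P\to -P$ crossing is forced; the determinant computation is the crucial ingredient that makes the winding strictly monotone rather than merely nonzero.
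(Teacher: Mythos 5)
Your proof is correct, but it takes a genuinely different route from the paper's: you work in the primal space $V$, while the paper works dually in $V^*$. The paper never shows that the $\gamma_k$ lie in a plane; instead it uses Corollary~\ref{cor:BdyFacet} to see that the cones $\Cone(v_k)$ share a codimension-$2$ face $F$ and wind around it, so that by periodicity the wedges $U_k=\set{x\in V^*: \br{x,\gamma_k\ck}\ge 0,\ \br{x,-\gamma_{k-1}\ck}\ge 0}$ cover $V^*$; the Sign condition then says that no hyperplane $\gamma_k^\perp$ meets the interior of the negative fundamental domain $-D$, so the connected set $-D$ must sit inside a single wedge $U_k$, and the containment $-D\subset U_k$ is precisely the assertion $\sgn(\gamma_k)=\sgn(-\gamma_{k-1})=-1$. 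You instead confine the roots themselves to the plane $U=\Span(\gamma_0,\gamma_1)$, get strict monotone winding from the determinant-$1$ transition matrix, and force a direct crossing from the sector $P$ into $-P$. Both arguments convert periodicity into a full revolution and then apply the Sign condition, so they are dual shadows of one another; what yours buys is elementarity and self-containment (only Proposition~\ref{basis} plus plane geometry, with the winding made rigorous by the determinant computation, where the paper asserts the cyclic winding rather tersely), and it anticipates Lemma~\ref{lem:rk2roots}, where the paper identifies the $\gamma_k$ with almost positive roots of a rank-two system in cyclic order. What the paper's version buys is that it needs only periodicity of the cones $\Cone(v_k)$, which is immediate, whereas you need periodicity of the labels $\gamma_k$ themselves --- a point you correctly handle via simplicity of the quasi-graph. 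One small gap to patch in your last step: before invoking "the sequence cannot jump over $-P$," you should note that the sequence visits $P$ at all (if every $\gamma_k$ lay in $-P$, the winding would force an exit from that sector, and by the same no-jump estimate the exit lands in $P$); once some $\gamma_k$ lies in $P$, the first subsequent exit from $P$ lands in $-P$ exactly as you argue.
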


\begin{proof}  
Since $v_k$ is periodic, the sequence of cones $\Cone(v_k)$ is periodic. 
By Proposition~\ref{dual adjacent}, $R(v_{k-1})\cap R(v_k)$ is a set of $n-1$ vectors, with $R(v_{k-1})$ and $R(v_k)$ differing in the vectors $R(v_{k-1},e_k)$ and $R(v_k,e_k)$.
Since $e_{k+1}=\mu_{e_k}(e_{k-1})$, the set $R(v_k)\setminus\set{R(v_k,e_k),R(v_k,e_{k+1})}$ is independent of $k$.
The nonnegative span of $R(v_k)\setminus\set{R(v_k,e_k),R(v_k,e_{k+1})}$ is a codimension-$2$ face $F$ of $\Cone(v_k)$.
All of the cones $\Cone(v_k)$ share the face $F$ and wind in cyclic order around $F$. 
Let $U_k$ be the cone consisting of points $x \in V^*$ with $\br{x,\gamma_k\ck}\ge 0$ and $\br{x,-\gamma_{k-1}\ck}\ge 0$.   
The cone $U_k$ is full-dimensional and has exactly two codimension-$1$ faces, defined by $\gamma_k^\perp$ and $\gamma_{k-1}^\perp$, both containing~$F$.

Since the $\Cone(v_k)$ repeat periodically, the $U_k$ do as well, winding cyclically around $\mathrm{Span}_{\RR}(F)$. So $\bigcup U_k = V^*$. 
Moreover, all of the vectors $\gamma_k\ck$ satisfy the Sign condition, so none of the hyperplanes $\gamma_k^\perp$ intersect the interior of the negative fundamental domain $-D$.
Putting these observations together, there is some $k$ such that $-D \subset U_k$. Since $\langle \gamma, \ \rangle$ is positive on $-D$ if and only if $\sgn(\gamma)=-1$, we see that $\sgn(\gamma_k)=\sgn(-\gamma_{k-1})=-1$ for this $k$.
\end{proof}

From Equations~\eqref{GammaRecur} we see that 
$$\omega(\gamma_{k-1}^{\vee}, \gamma_k) = - \omega(\gamma^{\vee}_{k+1}, \gamma_k) \ \mbox{and} \ \omega(\gamma_k^{\vee}, \gamma_{k-1}) = - \omega(\gamma^{\vee}_k, \gamma_{k+1}).$$
So, if $b = \omega(\gamma_{-1}^{\vee}, \gamma_0)$ and $b' = \omega(\gamma_0^{\vee}, \gamma_{-1})$, then the values of $\omega(\gamma_{k}\ck, \gamma_{k+1})$ alternate $b$, $-b'$, $b$, $-b'$ et cetera, and the values of $\omega(\gamma\ck_{k+1}, \gamma_k)$ alternate $b'$, $-b$, $b'$, $-b$.
Note that $b$ and $b'$ have opposite signs, and one is $0$ if and only if the other is.
Switching $e$ and $f$ if necessary, we may assume without loss of generality that $b \geq 0$ and $b' \leq 0$.

Motivated by Lemma~\ref{lem:negsomewhere}, we assume, for the moment, that $\sgn(C(v, e))=\sgn(C(v,f))=-1$. 
Recall that $C(v,e) = \gamma_0$ and $C(v,f) = - \gamma_{-1}$.
Let $\tilde{V}$ be the two dimensional vector space spanned by $\gamma_{-1}$ and $\gamma_0$.
Let $\tilde{A}$ be the Cartan matrix $\begin{psmallmatrix*}[r] 2 & b' \\ -b & 2 \end{psmallmatrix*}$.
Let $\tilde{\Phi}$ be the root system in $\tilde{V}$ defined by $\tilde{A}$ with negative simple roots $-\gamma_{-1}$ and $\gamma_0$, and negative simple co-roots $-\gamma_{-1}\ck$ and $\gamma_0\ck$.
This defines a symmetric bilinear form $\tilde{K}$ on $\tilde{V}$ with $\tilde{K}(\gamma_{-1}\ck, \gamma_0) = b$ and  $\tilde{K}(\gamma_0\ck, \gamma_{-1}) = -b'$.

\begin{lemma} \label{lem:rk2roots}
Suppose that $C(v,e)$ and $C(v,f)$ are negative roots and make the definitions of the above paragraph, including assuming that $b \geq 0$ and $b' \leq 0$ as discussed above. 
Then the vectors $\{\gamma_k \}_{k \in \ZZ}$ are the almost positive roots (see Section~\ref{ref frame subsec}) of $\tilde{\Phi}$, and $\{\gamma^{\vee}_k \}_{k \in \ZZ}$ are the almost positive co-roots, occurring in cyclic order.
More specifically, $\gamma_0$ and $\gamma_1=-\gamma_{-1}$ are the negative simple roots of $\tilde{\Phi}$.  
\end{lemma}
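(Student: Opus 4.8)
The plan is to show that the recursion \eqref{GammaRecur}, read entirely inside the two-dimensional space $\tilde V$, is precisely the recursion that generates the almost positive roots of $\tilde\Phi$ by reflections, and then to use the winding of the cones $\Cone(v_k)$ to pin down the cyclic order. First I would record the base data: since $-\gamma_{-1}=C(v,f)$ and $\gamma_0=C(v,e)$ are the negative roots, the vectors $\tilde\alpha_1=-\gamma_0$ and $\tilde\alpha_2=\gamma_{-1}$ are the positive simple roots of $\tilde\Phi$, so $\gamma_0$ and $\gamma_1=-\gamma_{-1}$ are its negative simple roots; this gives the ``more specifically'' clause. I would also note that the antisymmetric form appearing in \eqref{GammaRecur} agrees on $\tilde V$ with the form $\tilde\omega$ attached to $\tilde\Phi$ (both are determined on the basis $\gamma_{-1},\gamma_0$ by $b$ and $c$, and the co-root scalings agree by Remark~\ref{ratios}), so that Proposition~\ref{sym antisym} applied to $\tilde\Phi$ yields $\omega=\tilde E-\tilde E^{T}$ and $\tilde K=\tilde E+\tilde E^{T}$ on $\tilde V$.

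The heart of the argument is an induction on $|k|$ showing that each $\gamma_k$ is an almost positive root of $\tilde\Phi$ and that one step of \eqref{GammaRecur} is an elementary move. The point is that \eqref{GammaRecur} rewrites the reflection $s_{\gamma_k}(x)=x-\tilde K(\gamma_k\ck,x)\gamma_k$ applied to $-\gamma_{k-1}$:
\[
\gamma_{k+1}=\begin{cases} s_{\gamma_k}(-\gamma_{k-1}) & \text{if } \gamma_k \text{ is positive,}\\ -\gamma_{k-1} & \text{if } \gamma_k \text{ is negative.}\end{cases}
\]
The key computation is that $\omega(\gamma_k\ck,\gamma_{k-1})\le 0$ always, since its values alternate between $c\le 0$ and $-b\le 0$; hence the coefficient $-[\sgn(\gamma_k)\omega(\gamma_k\ck,\gamma_{k-1})]_{-}$ vanishes exactly when $\gamma_k$ is negative and otherwise equals $-\omega(\gamma_k\ck,\gamma_{k-1})$. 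To identify the latter with $\tilde K(\gamma_k\ck,\gamma_{k-1})$ I would invoke Corollary~\ref{E 0} for $\tilde\Phi$: a consecutive pair satisfies $\tilde E(\gamma_k\ck,\gamma_{k-1})=0$, and then $-\omega(\gamma_k\ck,\gamma_{k-1})=\tilde E(\gamma_{k-1}\ck,\gamma_k)=\tilde K(\gamma_k\ck,\gamma_{k-1})$. Since $\gamma_0,\gamma_1$ are the only negative terms, the turnaround occurs exactly at the two negative simple roots and every other step is a genuine reflection, so inductively each $\gamma_k$ stays among the real roots of $\tilde\Phi$; the parallel second line of \eqref{GammaRecur}, together with the compatibility $w(\beta\ck)=(w\beta)\ck$, does the same for the co-roots.

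To obtain the cyclic order and exhaustion, I would argue geometrically in $\tilde V$. By Corollary~\ref{cor:BdyFacet} the cones $\Cone(v_k)$ meet in consecutive facets and, exactly as in the proof of Lemma~\ref{lem:negsomewhere}, they wind monotonically around their common codimension-two face; restricting to $\tilde V$, the rays spanned by the $\gamma_k$ advance monotonically around the origin. Monotone winding forces the $\gamma_k$ to be pairwise non-proportional until the pattern closes up, so the reflection recursion above sweeps out the real roots of $\tilde\Phi$ in cyclic order, beginning with $\gamma_0,\gamma_1$ and passing through the positive roots in both directions. As the almost positive roots of $\tilde\Phi$ are by definition its positive roots together with its two negative simple roots, this is the desired conclusion, and the co-root statement follows verbatim.

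I expect the main obstacle to be the inductive step, specifically verifying $\tilde E(\gamma_k\ck,\gamma_{k-1})=0$ for every consecutive pair (so that Corollary~\ref{E 0} applies and the coefficient is genuinely $\tilde K(\gamma_k\ck,\gamma_{k-1})$) and confirming that no second turnaround can occur away from the two simple roots. Equally delicate is the exhaustion claim in infinite type, where the positive roots accumulate toward the limiting rays rather than closing up: here one must rule out skipped real roots, and it is the monotone winding of the $\Cone(v_k)$, rather than any finiteness, that does the real work.
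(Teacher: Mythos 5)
Your reading of the recursion \eqref{GammaRecur} is correct and coincides with the paper's starting point: since $\omega(\gamma_k\ck,\gamma_{k-1})$ alternates between $c\le 0$ and $-b\le 0$, a step adds a nonnegative multiple of $\gamma_k$ exactly when $\gamma_k$ is positive and is the plain sign flip $\gamma_{k+1}=-\gamma_{k-1}$ when $\gamma_k$ is negative, and $\gamma_0$, $\gamma_1$ are the negative simple roots of $\tilde{\Phi}$. But the two mechanisms you add to finish from there do not finish. The identity $-\omega(\gamma_k\ck,\gamma_{k-1})=\tilde{K}(\gamma_k\ck,\gamma_{k-1})$ for \emph{every} consecutive pair, which you reduce to ``$\tilde{E}(\gamma_k\ck,\gamma_{k-1})=0$ for every consecutive pair'' and then defer, is not a side verification: it is the assertion that each consecutive pair is an angularly adjacent pair of roots of $\tilde{\Phi}$, i.e.\ essentially the lemma itself. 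Moreover it cannot be discharged the way you suggest, by Corollary~\ref{E 0} plus a transport induction, because Euler forms are not reflection-invariant (compare Lemma~\ref{Ec invariant}, where invariance of $E_c$ holds only at the price of conjugating $c$); so $\tilde{E}$-vanishing for $(\gamma_k,\gamma_{k-1})$ does not push forward under $s_{\gamma_k}$ to $(\gamma_{k+1},\gamma_k)$. What does push forward is $\tilde{K}$: the efficient argument --- and what the paper's ``straightforward computation'' amounts to --- shows by induction that each consecutive pair is a $\tilde{W}$-translate of the base pair $(\gamma_0,\gamma_1)$ up to sign, so the coefficient equals $\tilde{K}$ of the base pair ($b$ or $-c$ by parity), and the sequence for $k\ge 2$ is $\tilde\alpha_1,\ s_1\tilde\alpha_2,\ s_1s_2\tilde\alpha_1,\ldots$, the positive roots in angular order.

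The second mechanism, monotone winding of the cones $\Cone(v_k)$, cannot deliver cyclic order and exhaustion, and this is where your proposal genuinely fails rather than merely defers. Without periodicity, all that survives of Lemma~\ref{lem:negsomewhere} is the local fact (Proposition~\ref{dual adjacent}, Corollary~\ref{cor:BdyFacet}) that consecutive cones share a facet through the common codimension-two face and lie on opposite sides of it; the global conclusion in that proof (that the $U_k$ cover $V^*$) uses periodicity and is false in infinite type. More fundamentally, monotone winding is compatible with skipping: a sequence of real roots whose rays advance strictly monotonically around the origin need not consist of angularly consecutive roots of $\tilde{\Phi}$, so winding can never rule out skipped roots, in finite or infinite type. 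No-skipping is an algebraic statement: the base pair of negative simple roots is adjacent, and each step applies $s_{\gamma_k}$, an automorphism of $\tilde{\Phi}$, which (together with symmetry of the root system under negation) carries an adjacent pair to an adjacent pair. Your closing claim that the winding ``does the real work'' inverts the actual logical dependency; once the reflection identity of the previous paragraph is in place, the winding is not needed at all, which is why the paper's proof never leaves $\tilde{V}$.
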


\begin{remark}\label{tildePhi clarification}  
The root system $\tilde\Phi$ is not necessarily the same as the sub root system $\Phi\cap\Span_\RR\set{\gamma_{-1},\gamma_0}$, because $-|b|$ is not necessarily equal to $K(\gamma_{-1}^{\vee}, \gamma_0)$ and $-|b'|$ is not necessarily equal to $K(\gamma_0^{\vee}, \gamma_{-1})$.
\end{remark}

\begin{remark}
If $b \leq 0$ and $b' \geq 0$, then $\{\gamma_k \}_{k \in \ZZ}$ is the negatives of the almost positive roots, with $\gamma_{-2}$ and $\gamma_{-1}$ being the positive simple roots.
\end{remark}

\begin{proof}[Proof Sketch for Lemma~\ref{lem:rk2roots}]  
This is a straightforward computation, using the recursions~\eqref{GammaRecur}.
We have $\gamma_1 = - \gamma_{-1}$ and then 
$$\gamma_{k+1} = -\gamma_{k-1} + (b \ \textrm{or} \ (-b')) \gamma_{k}$$
as long as $\gamma_k$ is a positive root, where the coefficient of $\gamma_k$ alternates between $b$ and $-b'$ based on the parity of $k$.
This recursion marches through the positive roots of $\tilde{\Phi}$ in order. If the matrix $\begin{psmallmatrix*}[r] 2 & b' \\ -b & 2 \end{psmallmatrix*}$ is of infinite type, then the $\gamma_k$ will always be positive roots.
If this matrix is of finite type, the recursion takes on all the positive roots, then becomes $\gamma_0$ and $\gamma_1$ again, and then repeats the positive roots, indefinitely.
So, for $k >0$, the recursion travels through the almost positive roots in order.

A similar analysis applies for $k<0$.
\end{proof}

\begin{cor} \label{cor:Periodic}
Let $(G, C, C^{\vee})$ be a framework. Let $v$ be a vertex of $G$ with adjacent vertices $e$ and $f$; define $v_k$, $e_k$, $\gamma_k$, $b$ and $b'$ as above. Suppose that the $v_k$ form a finite cycle, of length $\ell$.
Then  $\begin{psmallmatrix} 2 & -|b'| \\ -|b| & 2 \end{psmallmatrix}$ is a Cartan matrix of finite type. 
Moreover, $\ell$ is divisible by $h+2$, where $h$ is the Coxeter number of this Cartan matrix.
\end{cor}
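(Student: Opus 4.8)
The plan is to reduce everything to the rank-two root system $\tilde\Phi$ produced in Lemma~\ref{lem:rk2roots} and then read off both conclusions from the enumeration of its almost positive roots.

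First I would normalize the situation. Since the $v_k$ form a finite cycle, the doubly infinite sequence is periodic, so Lemma~\ref{lem:negsomewhere} applies and yields an index with $\sgn(\gamma_k)=\sgn(-\gamma_{k-1})=-1$; after cyclically reindexing I may assume this holds at $k=0$, i.e.\ that $C(v,e)=\gamma_0$ and $C(v,f)=-\gamma_{-1}$ are both negative roots. Interchanging $e$ and $f$ if necessary I may further assume $b\ge 0$ and $c\le 0$, so that $\tilde A=\left(\begin{smallmatrix}2&c\\-b&2\end{smallmatrix}\right)=\left(\begin{smallmatrix}2&-|c|\\-|b|&2\end{smallmatrix}\right)$ is exactly the matrix appearing in the statement. (The absolute-value form of the matrix makes it independent of these choices of sign and labelling, so the conclusion is symmetric.) With these normalizations Lemma~\ref{lem:rk2roots} applies verbatim.

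For the finite-type assertion I would argue by contradiction. Because the cycle is finite, the values $\gamma_k=C(v_k,e_{k+1})$ lie among the finitely many labels attached to the finitely many vertices $v_0,\dots,v_{\ell-1}$, so the sequence $(\gamma_k)$ takes only finitely many values. The recursion~\eqref{GammaRecur} is a reversible second-order recursion, and any such sequence taking finitely many values is purely periodic. But if $\tilde A$ were of infinite type, then by Lemma~\ref{lem:rk2roots} the $\gamma_k$ would march through the infinitely many distinct positive roots of $\tilde\Phi$ without ever repeating, contradicting periodicity. Hence $\tilde A$ is of finite type.

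For the divisibility claim I would use the cyclic-order statement of Lemma~\ref{lem:rk2roots}: over one full period the $\gamma_k$ traverse each almost positive root of the rank-two finite system $\tilde\Phi$ exactly once, so the minimal period of $(\gamma_k)$ equals the number of almost positive roots, which is $|\Phi^+|+2=h+2$ by the standard fact that a rank-two finite root system has exactly $h$ positive roots. Finally, since the cycle closes up after $\ell$ steps we have $v_{k+\ell}=v_k$ and $e_{k+\ell}=e_k$, whence $\gamma_{k+\ell}=C(v_{k+\ell},e_{k+\ell+1})=C(v_k,e_{k+1})=\gamma_k$; thus $\ell$ is a period of $(\gamma_k)$ and is therefore divisible by the minimal period $h+2$. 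The main point requiring care—rather than a genuine obstacle—is pinning down that the minimal period is \emph{exactly} $h+2$ and cleanly transferring the combinatorial period $\ell$ of the vertex cycle to the period of the root sequence; both follow from the ``in cyclic order'' clause of Lemma~\ref{lem:rk2roots} together with the observation that closing the edge cycle forces the labels to repeat.
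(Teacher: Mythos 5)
Your proof is correct and follows the paper's own route: reindex using Lemma~\ref{lem:negsomewhere}, normalize so that $b\ge 0$ and $c\le 0$, and apply Lemma~\ref{lem:rk2roots} so that periodicity of the $\gamma_k$ forces the rank-two root system $\tilde{\Phi}$ to be finite. You are in fact slightly more explicit than the paper, whose written proof addresses only the finite-type assertion and leaves the divisibility-by-$(h+2)$ step implicit, whereas you derive it cleanly from the cyclic traversal of the $h+2$ almost positive roots together with the observation that the closed edge cycle makes $\ell$ a period of the label sequence.
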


\begin{proof}
By Lemma~\ref{lem:negsomewhere}, we can reindex the cycle of $v_k$'s so that  $\gamma_k$ and $-\gamma_{k-1}$ are negative roots.
Switching $e$ and $f$ if necessary, we may assume that $b \geq 0$ and $b' \leq 0$.
By Lemma~\ref{lem:rk2roots}, the $\gamma_k$ go through the almost positive roots of $\tilde{\Phi}$ in circular order.
So, since $v_k$ repeats, this shows that the root system $\tilde{\Phi}$ is finite, so $\begin{psmallmatrix*}[r] 2 & b' \\ -b & 2 \end{psmallmatrix*}$ is of finite type.
\end{proof}

\begin{remark}
We take the opportunity to remind the reader that $\begin{psmallmatrix} 2 & -|b'| \\ -|b| & 2 \end{psmallmatrix}$ is of finite type if and only if $|bb'| < 4$, and that $h$ is $2$, $3$, $4$ or $6$ according to whether $|bb'|$ is $0$, $1$, $2$ or $3$.
\end{remark}

We now prove the key lemmas of this section.

\begin{lemma} \label{lem:rank2}
Let $(G, C, C^{\vee})$ be a framework. 
Let $v$ be a vertex of $G$ with adjacent vertices $e$ and $f$ and define $v_k$, $e_k$ as above. 
Suppose that the $v_k$ form a finite cycle of length $m$. 
Let $\tilde{v}_0,\tilde{v}_1,\ldots$ be an infinite path in $\hG$ lying above the infinite path $v_0,v_1,\ldots$ in $G$.
Then $\Seed(\tilde{v}_0) = \Seed(\tilde{v}_m)$.
\end{lemma}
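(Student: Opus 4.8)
The plan is to recognize the mutations along the cycle as the mutations of a rank-two cluster algebra of finite type, and to invoke the periodicity of such algebras. Since the assertion $\Seed(\tilde{v}_0)=\Seed(\tilde{v}_m)$ is equivalent to the statement that the seed sequence $k\mapsto\Seed(\tilde{v}_k)$ is periodic with period dividing $m$, and since $m$ is divisible by $h+2$ by Corollary~\ref{cor:Periodic}, it suffices to show that this seed sequence has period $h+2$. By Lemma~\ref{lem:negsomewhere} I may reindex the cycle so that $\gamma_0=C(v,e)$ and $-\gamma_{-1}=C(v,f)$ are negative roots and, switching $e$ and $f$ if necessary, so that $b\ge0$ and $c\le0$; reindexing is harmless because periodicity of the seed sequence is invariant under shifting the starting vertex. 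Then Lemma~\ref{lem:rk2roots} and Corollary~\ref{cor:Periodic} identify the two ``active'' directions $e_k,e_{k+1}$ at $\tilde{v}_k$ with the two mutable directions of a rank-two root system $\tilde{\Phi}$ of finite type and Coxeter number $h$.

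First I would separate the active directions from the rest. The edge mutations $\mu_{e_k}$ carry the two cycle edges of $v_{k-1}$ to the two cycle edges of $v_k$, so by induction they carry the remaining $n-2$ directions to one another; hence along the path $\tilde{v}_0,\tilde{v}_1,\ldots$ no such ``spectator'' direction is ever the mutated one. Consequently each spectator cluster variable is literally constant along the path, and the two active cluster variables evolve by the rank-two exchange relations, in which the spectator variables and the original coefficients enter only through coefficient monomials. In other words, freezing the $n-2$ spectator directions realizes the active pair as a rank-two cluster algebra with coefficients in the semifield generated by $\PP$ and the (frozen) spectator variables, whose exchange matrix is read off from Theorem~\ref{framework exchange} and which is of finite type by Corollary~\ref{cor:Periodic}.

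Next I would invoke periodicity. A rank-two cluster algebra of finite type has seeds periodic with period $h+2$, for any coefficients; self-containedly, the active cluster variables satisfy the same finite-type recursion that drives the almost-positive-root recursion \eqref{GammaRecur} analyzed in Lemma~\ref{lem:rk2roots}, and hence repeat after $h+2$ mutations. Thus after $h+2$ mutations the active part of the seed --- its exchange matrix, its two cluster variables, and its coefficients --- returns to its initial state, while the spectator cluster variables have not moved; the spectator coefficients and the mixed matrix entries return as well, since they are governed by the same rank-two coefficient dynamics once the spectators are frozen. This gives period $h+2$, hence period $m$, for the full seed, and the bijection identifying $I(\tilde{v}_0)$ with $I(\tilde{v}_m)$ is the one induced by $\mu_{e_m}\circ\cdots\circ\mu_{e_1}$, which is consistent because $v_m=v_0$ in $G$.

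I expect the main obstacle to be the freezing reduction itself: making precise that restricting the mutations to the two active directions, with all other directions and the original coefficients frozen into an enlarged coefficient semifield, reproduces exactly the seed sequence of a genuine rank-two cluster algebra (including the correct dynamics of the ``mixed'' coefficient data recording how the frozen directions interact with the active ones), so that the classical rank-two periodicity applies to the full seed and not merely to the two active cluster variables. A secondary point requiring care is the bookkeeping of the index bijection between $I(\tilde{v}_0)$ and $I(\tilde{v}_m)$, which must be shown to match labels, coefficients, and cluster variables simultaneously; this is where the identity $v_m=v_0$ in $G$ is used.
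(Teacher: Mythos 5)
Your top-level strategy coincides with the paper's: reduce the lemma to the assertion that the seed sequence $k\mapsto\Seed(\tilde v_k)$ is periodic with period $h+2$, and then conclude via the divisibility of $m$ by $h+2$ from Corollary~\ref{cor:Periodic}. The difference is in how that $(h+2)$-periodicity of the \emph{full} seed is obtained. The paper gets it in one stroke, by citing \cite[Theorems~6.2 and~7.7]{ca1}, which (translated into framework language via Lemma~\ref{Bs work}) say exactly that alternately mutating a seed in two directions whose $2\times 2$ exchange submatrix is of finite type returns the whole seed after $h+2$ steps. You instead try to derive this by freezing the $n-2$ spectator directions, and that is where your argument has a genuine gap.

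The gap is that the frozen rank-two cluster algebra you construct contains only part of the seed data: the two active cluster variables and the two active columns of the extended exchange matrix (with spectator rows demoted to coefficient rows). It contains nothing about the columns of $B^v$ (or $\tB^v$) indexed by spectator directions --- in particular the spectator-spectator entries $b_{pq}$ --- nor, for general semifield coefficients, about the spectator coefficients $y_q$. Yet this data is \emph{not} constant along the path: mutation at an active direction $e$ sends $b_{pq}\mapsto b_{pq}+\sgn(b_{pe})[b_{pe}b_{eq}]_+$, which is nonzero whenever two spectators couple suitably to $e$, and similarly $y_q$ moves whenever $b_{eq}\neq 0$. So your claim that ``the spectator coefficients and the mixed matrix entries return as well, since they are governed by the same rank-two coefficient dynamics once the spectators are frozen'' is unfounded: that data is simply invisible to the frozen algebra, so no rank-two periodicity statement, however carefully set up, can control it; even a perfect execution of your plan proves only that the active part of the seed returns. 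The missing piece is easy to supply in the framework setting: by Lemma~\ref{Bs work} (and Theorem~\ref{framework principal}(\ref{extended mat}) for the principal-coefficient rows), the entire (extended) exchange matrix at $\tilde v_k$ is computed from the labels $C(v_k)$ and $C\ck(v_k)$, hence returns automatically because $v_m=v_0$ in $G$; alternatively one cites the full-seed statement of \cite{ca1}, as the paper does. Note also that your ``self-contained'' justification of rank-two periodicity addresses only the cluster variables; periodicity of the coefficient dynamics over an arbitrary semifield is the other half of what the cited theorems of \cite{ca1} provide.
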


\begin{proof}
Using Lemma~\ref{Bs work} to translate \cite[Theorem~6.2 and~7.7]{ca1} into the language of this paper, we obtain the statement that the sequence $\Seed(\tilde{v}_k)$ is periodic with period $h+2$, where $h$ is the Coxeter number appearing in Corollary~\ref{cor:Periodic}.
This, combined with Corollary~\ref{cor:Periodic} implies the lemma.
\end{proof}

\begin{lemma} \label{lem:rank2variant}
Let $(G, C, C^{\vee})$ be a framework. Let $v$ be a vertex of $G$ with adjacent vertices $e$ and $f$ with $C(v,e)$ and $C(v,f)$ negative. Define $v_k$, $b$ and $b'$ as above.
Suppose that $\begin{psmallmatrix} 2 & -|b'| \\ -|b| & 2 \end{psmallmatrix}$ is of finite type, with Coxeter number $h$.
Then $C(v_k)$ and $C^{\vee}(v_k)$ repeat with period $h+2$.
\end{lemma}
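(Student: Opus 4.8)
The plan is to deduce the periodicity of the full label set from the periodicity of the seeds along the rank-two walk, using \cite{ca1} exactly as in the proof of Lemma~\ref{lem:rank2}, and then to obtain the co-labels by passing to $-B^T$. First I would fix the set-up: since $C(v,e)=\gamma_0$ and $C(v,f)=-\gamma_{-1}$ are negative, Lemma~\ref{lem:rk2roots} applies (after switching $e$ and $f$, if necessary, so that $b\ge 0$ and $c\le 0$), so the $\gamma_k$ run through the almost positive roots of $\tilde{\Phi}$ and the $\gamma_k\ck$ through the almost positive co-roots, in cyclic order. The effective rank-two exchange matrix at $v_0$ is computed from Theorem~\ref{framework exchange} to be $b^{v_0}_{e_0e_1}=\omega(-\gamma_{-1}\ck,\gamma_0)=-b$ and $b^{v_0}_{e_1e_0}=-c$, with Cartan companion $\left(\begin{smallmatrix}2&-|b|\\-|c|&2\end{smallmatrix}\right)$, the transpose of the matrix in the statement and hence of the same finite type and Coxeter number $h$. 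In particular $\tilde{\Phi}$ has exactly $h+2$ almost positive roots, so $\gamma_k$ and $\gamma_k\ck$ already repeat with period $h+2$; this settles the two ``active'' members $-\gamma_{k-1},\gamma_k$ of $C(v_k)$ and explains the role of the negativity hypothesis.

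The real content is that the remaining $n-2$ spectator labels also return after $h+2$ steps, and this is where I expect the main obstacle to lie: along the walk a spectator label $\delta$ is repeatedly altered by $\delta\mapsto\delta+[\sgn(\gamma_{k-1})\omega(\gamma_{k-1}\ck,\delta)]_+\gamma_{k-1}$ and need not be individually periodic at each step. Rather than tracking these changes directly, I would invoke full-seed periodicity: translating \cite[Theorems~6.2 and~7.7]{ca1} via Lemma~\ref{Bs work}, exactly as in the proof of Lemma~\ref{lem:rank2}, the finite type of the effective rank-two matrix forces the seed sequence $\Seed(\tilde{v}_k)$ along a lift of the walk to $\hG$ to be periodic with period $h+2$. (Here I neither need nor assume that the $v_k$ form a finite cycle: finite type is given by hypothesis rather than deduced from a cycle as in Corollary~\ref{cor:Periodic}.) Applying Theorem~\ref{framework principal}(\ref{extended mat}) to the principal-coefficients algebra $\A_0(B)$, the columns of $H^{v_k}$ are the simple-root coordinate vectors of the labels in $C(v_k)$; since $\Seed(\tilde{v}_{k+h+2})=\Seed(\tilde{v}_k)$ means the corresponding extended exchange matrices are equivalent and so have the same set of columns, I conclude $C(v_{k+h+2})=C(v_k)$ for all $k$.

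For the co-labels I would run the same argument on the framework $(G,C\ck,C)$ for $-B^T$ provided by Proposition~\ref{-B^T}. Its labels are the original co-labels $C\ck(v_k)$, the maps $\mu_{e_k}$ and hence the vertices $v_k$ of the walk are unchanged (the strengthened Transition and Co-transition conditions determine the same $\mu_{e_k}$), and its effective rank-two Cartan companion $\left(\begin{smallmatrix}2&-|c|\\-|b|&2\end{smallmatrix}\right)$ has the same product $|bc|$ and so the same Coxeter number $h$. Thus $C\ck(v_{k+h+2})=C\ck(v_k)$ as well, completing the proof. The single delicate point throughout is the reading of \cite{ca1} that yields periodicity of the \emph{entire} seed, not merely of the two exchanged cluster variables; it is precisely this that returns the spectator columns of $H^{v_k}$, and it is the same reading already relied upon in Lemma~\ref{lem:rank2}.
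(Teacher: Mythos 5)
Your proposal is correct, but it takes a genuinely different route from the paper's. The paper never leaves the framework formalism: the active labels $\gamma_k$ repeat by Lemma~\ref{lem:rk2roots}, and each spectator label $\delta$ is tracked directly --- either by an explicit four-step computation (type $A_1\times A_1$) or, when $\omega$ restricted to $\tilde{V}$ is nondegenerate, by writing $\delta=\nu+\kappa$ with $\kappa\in\tilde{V}$ and $\nu$ $\omega$-orthogonal to $\tilde{V}$, noting that the piecewise-linear mutation maps fix $\nu$, and checking case by case that the induced action on the $h+2$ cones spanned by adjacent almost positive roots has period $h+2$. You instead outsource the spectator bookkeeping to cluster algebras: the reading of \cite[Theorems~6.2 and~7.7]{ca1} via Lemma~\ref{Bs work} already used in Lemma~\ref{lem:rank2} gives full-seed periodicity along the lifted walk, and Theorem~\ref{framework principal}(\ref{extended mat}) converts equality of seeds (hence of the column sets of $H^{\tilde{v}_k}$, since principal coefficients encode exactly those columns) back into equality of label sets; Proposition~\ref{-B^T}, together with your correct observation that the Transition and Co-transition conditions determine the same maps $\mu_{e_k}$, handles the co-labels. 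Your structural points are sound: finite type is hypothesized rather than deduced, so Corollary~\ref{cor:Periodic} is not needed; Theorems~\ref{framework exchange} and~\ref{framework principal} are proved before Section~\ref{global sec}, so there is no circularity; and since Lemma~\ref{lem:rank2} (hence Proposition~\ref{simply ample}) already rests on the same \cite{ca1} input, your proof adds no new dependency to the paper's overall logic. The trade-off: the paper's argument is self-contained and purely linear-algebraic, keeping this lemma and the simple-connectivity machinery of Lemma~\ref{descending down} independent of cluster-algebra theorems, at the price of a sketched case analysis; yours is shorter, avoids that case analysis, treats labels and co-labels uniformly, and never uses the negativity hypothesis --- but it stands or falls with the same reading of \cite{ca1} (periodicity of the \emph{entire} seed, spectator data included) that the paper invokes only for Lemma~\ref{lem:rank2}.
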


\begin{proof}
We have already seen that the $\gamma_k$ repeat in this manner. 
Let $\delta$ be an element of $C(v_0)$ other that $\gamma_0$ and $-\gamma_{-1}$. Let $\delta_k$ be the element of $C(v_k)$ that is obtained by repeatedly applying the Transition condition to $\delta$. For shorthand, set $\alpha = C(v,e)$ and $\beta = C(v,f)$.

\textbf{Case 1:} $\tilde{\Phi}$ is of type $A_1 \times A_1$, so $\omega(\alpha^{\vee}, \beta)=0$. Then 
$$\delta_4 = \delta + [\omega(\alpha^{\vee}, \delta)]_{+} (\alpha) + [\omega(\beta\ck, \delta)] (\beta) +  [\omega(\alpha^{\vee}, \delta)]_{+} (-\alpha) + [\omega(\beta^{\vee}, \delta)] (-\beta) =\delta$$
as desired.

\textbf{Case 2:} $\tilde{\Phi}$ is not of type $A_1 \times A_1$, so the restriction of $\omega$ to $\tilde{V}$ is nondegenerate.
Then we can write $\delta = \nu + \kappa$ with $\kappa \in \tilde{V}$ and $\omega(\alpha^{\vee}, \nu) = \omega(\beta^{\vee}, \nu)=0$.
The piecewise linear transformations turning $\delta_{j}$ into $\delta_{j+1}$ all preserve the $\nu$ component and act solely on $\kappa$. 
Thus, it is enough to see that these piecewise linear transformations act on $\tilde{V}$ with period $h+2$. 

Divide $\tilde{V}$ into $h+2$ cones, one spanned by each pair of adjacent almost positive roots. A case-by-case verification shows that the action on each of these cones is linear, and repeats after $h+2$ steps.
\end{proof}

We pause to notice a geometric description of the rank-two cycles in a polyhedral framework.
The following proposition is immediate.

\begin{prop}\label{rk2cyc poly}
Suppose $(G,C,C\ck)$ is a well-connected polyhedral framework with corresponding fan $\F$.
Then a cycle $\rho$ in $G$ is a rank-two cycle if and only if there exists a codimension-$2$ face $F$ of $\F$  such that $\rho$ is the set of vertices $v$ of $G$ with $F\subset\Cone(v)$.  
\end{prop}

We are now prepared to define a simply connected framework.
As before, let $(G, C, C^{\vee})$ be a framework.
Motivated by Lemma~\ref{lem:rank2}, define a $2$-dimensional $CW$-complex $\Sigma$ whose $1$-skeleton is $G$ and whose $2$-faces have boundaries the rank-two cycles.
We define $(G, C, C^{\vee})$ to be \newword{simply connected} \label{simply connected} if $\Sigma$ is simply connected. 
Equivalently, $(G, C, C^{\vee})$ is simply connected if $\pi_1(G, v)$ is generated by paths of the form $\sigma \tau \sigma^{-1}$ where $\tau$ travels around a rank $2$ cycle and $\sigma$ is some path from the basepoint $v$ to that rank-two cycle. (This condition is plainly independent of the choice of basepoint $v$.)

\begin{prop}\label{simply ample}
If $(G,C,C\ck)$ is simply connected, then it is ample.
\end{prop}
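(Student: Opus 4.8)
The plan is to show that the map $\Seed\colon\hG\to\Ex_0(B)$ produced by Theorem~\ref{framework exchange} is constant on the fibers of the covering map $p\colon\hG\to G$; since both $\Seed$ and $p$ are coverings (local isomorphisms), such constancy on vertices automatically promotes to a factorization $\Seed=q\circ p$ through a covering $q\colon G\to\Ex_0(B)$, which is exactly ampleness. Because $\hG$ is a tree, any two vertices $\tilde u,\tilde v$ lying over a common vertex $v=p(\tilde u)=p(\tilde v)$ are joined by a unique path in $\hG$, whose projection is a closed walk in $G$; and since a backtrack in a walk amounts to applying $\mu_e$ twice, which is trivial because seed mutation is involutive, the seed reached at the end of such a walk depends only on its class in $\pi_1(G,v_b)$. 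Thus it suffices to prove that $N=\pi_1(G,v_b)$, where $N$ denotes the set of classes $[\ell]$ of loops at $v_b$ for which following the induced sequence of seed mutations returns the initial seed $\Seed(v_b)$ with the identity identification of indices.

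First I would check that $N$ is a subgroup of $\pi_1(G,v_b)$. The constant loop lies in $N$; closure under concatenation is immediate, since following $\ell_1$ and then $\ell_2$ composes two seed transformations that each fix the initial seed; and closure under inverses follows again from the involutivity of $\mu_e$, which makes the mutation sequence along $\ell^{-1}$ the inverse of the mutation sequence along $\ell$.

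The heart of the argument is to show that $N$ contains a generating set of $\pi_1(G,v_b)$. By the definition of simple connectivity, $\pi_1(G,v_b)$ is generated by the classes $[\sigma\tau\sigma^{-1}]$, where $\tau$ runs once around a rank-two cycle based at a vertex $w$ and $\sigma$ is a path from $v_b$ to $w$. Following such a loop, the mutations along $\sigma$ carry the initial seed to some seed $s$ attached to $w$; the mutations along $\tau$ then return to $s$, because this is exactly the content of Lemma~\ref{lem:rank2}, which asserts that traversing a finite rank-two cycle of length $m$ gives $\Seed(\tilde v_0)=\Seed(\tilde v_m)$ regardless of the seed at $\tilde v_0$; and finally the mutations along $\sigma^{-1}$ undo those along $\sigma$ by involutivity, returning to the initial seed. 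Hence each generator lies in $N$, so $N=\pi_1(G,v_b)$, the map $\Seed$ is constant on the fibers of $p$, and the framework is ample.

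I expect the main obstacle to lie in the rank-two step: one must verify not merely that the \emph{vertex} $\Seed(\tilde v_0)$ of $\Ex_0(B)$ is recovered after traversing $\tau$, but that the induced relabeling of the cluster and exchange-matrix indices is the identity, so that the transformation attached to $\tau$ is genuinely trivial rather than an automorphism of the seed. This is precisely what the periodicity with period $h+2$ underlying Lemma~\ref{lem:rank2} supplies, together with the fact that the cycle length $m$ is a multiple of $h+2$ (Corollary~\ref{cor:Periodic}). In the reduction I would also note that, once the vertex map $q$ is defined, its extension over edges and half-edges is forced, since $p$ and $\Seed$ are both local isomorphisms, so no separate verification of the covering structure is required.
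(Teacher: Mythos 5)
Your proposal is correct and follows essentially the same route as the paper's proof: reduce to showing that $\Seed$ is constant on fibers of $\hG\to G$, use simple connectivity to write the resulting loop as a product of conjugates $\sigma\tau\sigma^{-1}$ of rank-two cycles, invoke Lemma~\ref{lem:rank2} to see that traversing $\tau$ fixes the seed, and use involutivity of mutation so that $\sigma^{-1}$ undoes $\sigma$. Your explicit subgroup $N$ and the remark about backtracks merely formalize steps the paper treats implicitly, so there is no substantive difference.
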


\begin{proof}
Let $\tilde{v}_{\mathrm{start}}$ and $\tilde{v}_{\mathrm{end}}$ be two vertices of $\hG$, lying above the same vertex $v$ of $G$. Let $\tilde{\rho}$ be the path from $\tilde{v}_{\mathrm{start}}$ to $\tilde{v}_{\mathrm{end}}$ in $\hG$, so $\tilde{\rho}$ projects down to a cycle $\rho$ in $G$. So $\rho$ can be written as the concatenation of paths of the form $\sigma \tau \sigma^{-1}$ as above. Let $\sigma$ run from $v$ to $u$. The cycle $\tau$ lifts to a path $\tilde{\tau}$ from some $\tilde{u}_1$ to some $\tilde{u_2}$. Let $\sigma$ lift to the path in $\hG$ from $\tilde{v}_1$ to $\tilde{u}_1$, and let $\sigma^{-1}$ lift to the path from $\tilde{u}_2$ to $\tilde{v}_2$.

By Lemma~\ref{lem:rank2}, $\Seed$ takes the same value at $\tilde{u}_1$ and $\tilde{u}_2$. 
Since mutation is involutive, traveling from $\tilde{u}_2$ to $\tilde{v}_2$ precisely undoes the effect on $\Seed$ of traveling from  from $\tilde{v}_1$ to $\tilde{u}_1$. So $\Seed(\tilde{v}_1) = \Seed(\tilde{v}_2)$.   
Continuing in this manner, we deduce that $\Seed(\tilde{v}_{\mathrm{start}}) = \Seed(\tilde{v}_{\mathrm{end}})$, as desired.
\end{proof}

It seems natural that simple connectivity should have some characterization in terms of the topology of the fan arising from the framework.
We now explain what prevents us from giving such a characterization.
Let $(G, C, C\ck)$ be a well-connected polyhedral framework.  
Define $\Delta$ to be the simplicial complex whose vertices are the rays of the cones $\Cone(v)$, as $v$ ranges over the vertices of $G$, with a collection of rays forming a face if they are contained in $\Cone(v)$ for some vertex $v$ of~$G$. 
In light of Proposition~\ref{rk2cyc poly}, the framework $(G,C,C\ck)$ is simply connected if and only if the space obtained from $\Delta$ by deleting all codimension-$3$ faces is simply connected as a topological space.

However, we would really like a characterization of simple connectivity in terms of a concrete topological space in $V^*$. 
Let $\F$ be the fan associated to $(G, C, C\ck)$.
We write $|\F|$ for the union of the cones of $\F$. 
The space $|\F|$ is contractible to the origin; the important topological information is contained in $|\F|\cap S$, where $S$ be the unit sphere in $V^*$. 
While there is an obvious continuous surjection $\Delta \longrightarrow |\F| \cap S$, it isn't clear that $|\F| \cap S$ is homeomorphic to $\Delta$. 
One can imagine situations where $\Delta$ is simply connected while $|\F|\cap S$ is not, or vice versa.  
Thus, while we certainly encourage the reader to think about simple connectivity of a framework in terms of the topology of $|\F| \cap S$, we do not adopt this as a definition.

\subsection{Descending frameworks} \label{sec descend}

We now describe a condition that implies simple-connectivity, and many other good conditions, but requires no topological notions.  
We say a framework is \newword{descending} \label{descending} if it satisfies the following three conditions.\\

\noindent
\textbf{Unique minimum condition:}   \label{Unique minimum condition}
If a vertex $v$ of $G$ has $\set{\sgn(\beta):\beta\in C(v)}=\set{1}$, then~$v$ is the base vertex $v_b$.\\

\noindent
\textbf{Full edge condition:}  \label{Full edge condition}
If $(v,e)$ is an incident pair and $\sgn(C(v,e)) = -1$ then $e$ is a full edge.\\

The Sign and Transition conditions let us give an orientation to each edge of $G$.
If $e$ is an edge incident to a vertex $v$, then we direct $e$ towards $v$ if $\sgn(C(v,e)) = 1$ and away from $v$ if $\sgn(C(v,e))=-1$.\\

\noindent
\textbf{Descending chain condition:}  \label{Descending chain condition}
There exists no infinite sequence $v_0\to v_1\to\cdots$.\\

\begin{remark} \label{green framework}
This remark concerns the relation between frameworks and maximal green sequences in the sense of~\cite{MGS}. (See also~\cite{MGS2}.)
Let $v$ be a vertex of $G$ and let $e$ be an edge incident to $v$. Let $Q$ be the quiver at $v$ and let $[e]$ be the corresponding vertex of $Q$.
From Theorem~\ref{framework principal}(\ref{extended mat}), we see that $[e]$ is a ``green" vertex of $Q$ if $C(v,e)$ is a positive root and $[e]$ is a ``red" vertex if $C(v,e)$ is a negative root.
By our convention of directing $e$ towards $v$ if $C(v,e)$ is positive, a green sequence is a path $v_b=v_0 \leftarrow v_1 \leftarrow \cdots \leftarrow v_p$.
We read the green sequence in the direction opposed to the arrows.
In a descending framework, following red edges (i.e.\ following arrows)
from any vertex is guaranteed to eventually lead to the initial vertex.
\end{remark}

The Descending chain condition is unsatisfying, because it will be easy to see, by \cite[Theorem~1.8]{ca2} (and in particular the implication (iii)$\implies$(i) in that theorem), that a complete framework cannot satisfy the Descending chain condition unless $B$ is of finite type.
However, the notion of a descending framework will be critical to the construction of complete exact frameworks for exchange matrices $B$ whose associated Cartan matrix is of finite or affine type, and the construction of (non-complete) exact frameworks in general.
The key point is the following theorem.

\begin{theorem}\label{descending good}
A descending framework is exact, polyhedral, well-connected, and simply connected.
\end{theorem}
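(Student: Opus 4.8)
The plan is to extract from the three conditions a combinatorial descent structure, use it to prove simple connectivity (hence ampleness) and the fan property (hence injectivity, and so exactness), and finally to localize the descent to obtain well-connectedness.

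First I would record the orientation of $G$ defined just before the Descending chain condition and draw out its consequences. Because $C(v,e)=-C(v',e)$ on a full edge, the orientation is consistent; half-edges point inward by the Half-edge condition; and the Positive labels condition shows that every vertex other than $v_b$ has a negative label, hence a full outgoing edge. Thus $v_b$ is the unique sink and, by the Descending chain condition together with the regularity of $G$, every vertex reaches $v_b$ along a directed path. Setting $h(v)$ to be the length of the longest directed path out of $v$ (finite by the Descending chain condition and K\"onig's lemma) gives a height function that strictly decreases along directed edges. I would then show that \emph{every} rank-two walk closes into a finite rank-two cycle: if the associated $2\times 2$ Cartan matrix were of infinite type, the bi-infinite walk $(v_k)$ would have a finite ``bottom'' flanked by two arms of constantly-signed labels (by the analysis behind Lemmas~\ref{lem:rk2roots} and~\ref{lem:negsomewhere}), and one of these arms would be an infinite directed path, contradicting the Descending chain condition. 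Consequently every rank-two cycle is of finite type, and by Corollary~\ref{cor:Periodic} and Lemma~\ref{lem:rank2variant} the cones $\Cone(v_k)$ wind exactly once around each codimension-$2$ face; that is, the framework is locally complete around codimension-$2$ faces.

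Next I would prove that $\Sigma$ is simply connected, directly and combinatorially. Given a loop in $G$, choose a vertex $w$ on it of maximal height; both of its edges along the loop are outgoing, so the loop climbs to $w$ and then descends. The (finite) rank-two cycle determined by these two edges has $w$ as its unique local maximum of $h$, so replacing the two-edge detour over $w$ by the complementary arc of that cycle strictly lowers the height profile of the loop while altering it only by a rank-two $2$-cell. Iterating, and invoking the Descending chain condition to guarantee termination, contracts the loop to $v_b$; hence $\pi_1(\Sigma)$ is trivial, and Proposition~\ref{simply ample} then yields that the framework is ample.

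The heart of the argument is the fan property. Using local completeness around codimension-$2$ faces together with Corollary~\ref{cor:BdyFacet} (adjacent cones meet in a common facet), the union of the $\Cone(v)$ is, away from its codimension-$\ge 3$ skeleton, a manifold tiled by the cones, and the tautological ``developing'' map from this tiling into $V^*$ is a local homeomorphism. Simple connectivity of $\Sigma$, proved above, makes the domain simply connected, and the descent structure---the height function $h$ furnishes a proper exhaustion retracting the tiled region toward $\Cone(v_b)=D$---shows that the developing map is a covering onto its image; a simply connected domain then forces this covering to be a homeomorphism, so the cones have pairwise disjoint interiors and meet nicely. By Lemma~\ref{MaxCheckFan} they are the maximal cones of a fan, and distinct vertices give distinct cones; thus the framework is polyhedral, and by the observation following Theorem~\ref{framework principal}(\ref{g vec}) it is therefore injective. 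Being injective and ample, it is exact. \emph{I expect this global non-overlap, equivalently injectivity of the developing map, to be the main obstacle}: the local facet and codimension-$2$ data alone do not preclude two far-apart vertices having overlapping cones, and it is precisely the Descending chain condition, through the properness of $h$ and the resulting covering-space argument, that must be leveraged to rule this out. Finally, for well-connectedness I would localize the descent: given a face $F$ of both $\Cone(v)$ and $\Cone(v')$, the set $S_F$ of vertices $u$ with $F$ a face of $\Cone(u)$, equipped with the orientation inherited from $G$, has by the Descending chain condition a unique minimal vertex to which both $v$ and $v'$ descend through cones all having $F$ as a face; concatenating these two descending paths produces the path required by the definition of well-connectedness, completing the proof.
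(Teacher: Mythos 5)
Your proof breaks at the claim that \emph{every} rank-two walk closes into a finite cycle, and with it the assertion of ``local completeness around codimension-$2$ faces'' on which your developing-map proof of polyhedrality rests. The inference ``one of these arms would be an infinite directed path'' is wrong: an arm along which the labels $\gamma_k$ have constant sign is an infinite \emph{directed} path only when those labels are negative; when they are positive, every edge of the arm points backwards, toward the bottom of the walk, and the Descending chain condition is not violated. This is not a repairable technicality. Take $B$ of infinite type with $n=2$ and its Cambrian framework, which is descending by Theorem~\ref{camb frame}: the rank-two walk through $v_b$ has one arm terminating in a half-edge at the vertex $s_2$, and one arm ascending the weak order forever through $s_1,\ s_1s_2,\ s_1s_2s_1,\dots$, with all edges directed downward toward $v_b$. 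The walk never closes, and the cones $\Cone(v_k)$ do not wind around the origin; their union omits an open set, as must happen outside finite type since every Cambrian cone meets the Tits cone. Consequently the union of the cones is not a manifold near codimension-$2$ faces, the developing map is not a local homeomorphism there, and the covering-space argument for non-overlap of cones collapses. The paper's Proposition~\ref{descending polyhedral} proves polyhedrality by a different mechanism that never mentions rank-two cycles: induction on $\ell(u)+\ell(v)$, taking a generic point in a putative bad intersection and moving it toward the interior of $\Cone(v_b)=D$; the first facet crossed is cut out by a negative label, so by the Half-edge condition it leads to an actual adjacent vertex, giving a bad pair of smaller total length, a contradiction.

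There is also a circularity in your ordering even where your rank-two analysis is sound, namely at a source, where both labels at the bottom vertex are negative --- exactly the situation of Lemma~\ref{descending down}, and the only case your simple-connectivity step needs. There, infinite type does force an infinite directed path, so the type is finite, and Lemma~\ref{lem:rank2variant} gives that the label sets $C(v_k)$ repeat with period $h+2$; but to conclude that the \emph{vertices} repeat, i.e.\ that the walk closes into a cycle of $G$, one needs that $\Cone(u)=\Cone(v)$ forces $u=v$. That injectivity is part of polyhedrality, which in your plan comes afterwards and is itself derived from simple connectivity --- so each of your two main steps presupposes the other. The paper avoids this by proving polyhedrality first, independently, and only then establishing Lemma~\ref{descending down} and Proposition~\ref{descending simply}. (Your well-connectedness step has a smaller instance of the same issue: the Descending chain condition gives existence of minimal vertices in your set $S_F$, not uniqueness; uniqueness again requires the polyhedral geometry, as in the proof of Proposition~\ref{desc w-c}.)
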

Recall that the polyhedral property implies injectivity, and that simple connectivity implies ampleness.
Thus we need only prove that a descending framework is polyhedral, well-connected, and simply connected.
We prove this as three separate propositions.

\begin{prop}\label{descending polyhedral}
A descending framework is polyhedral.
\end{prop}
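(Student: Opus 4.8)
The plan is to reduce the polyhedrality of $(G,C,C\ck)$ to two facts about the cones $\Cone(v)$: that distinct vertices give cones with disjoint interiors, and that any two cones meet nicely. Each $\Cone(v)$ is a full-dimensional simplicial cone whose facet-defining inward normals are the vectors $C\ck(v,e)$, $e\in I(v)$, by Proposition~\ref{basis} and the definition of $\Cone(v)$. Since the interior of a full-dimensional cone determines the cone, disjoint interiors for distinct vertices immediately yields injectivity of $v\mapsto\Cone(v)$; and once pairwise meeting-nicely is established, Lemma~\ref{MaxCheckFan} shows that the collection of all faces of the $\Cone(v)$ is a fan with the $\Cone(v)$ as its maximal cones, which is exactly the polyhedral condition. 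So the whole proposition rests on one separation statement, $\Int\Cone(u)\cap\Int\Cone(v)=\emptyset$ for $u\neq v$, together with nice-meeting of arbitrary pairs.

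To exploit the descending hypothesis I would first record its consequences for the edge-orientation defined just before the proposition. The Positive labels condition says $v_b$ is the only vertex all of whose incident edges point inward, i.e.\ the unique sink; the Half-edge condition says half-edges never point outward; and the Descending chain condition, together with the finite ($n$) out-degree and K\"onig's lemma, gives every vertex a finite descending depth $\mathrm{depth}(v)$, equal to the length of the longest directed path out of $v$, with $\mathrm{depth}(v_b)=0$ and every outgoing edge strictly decreasing depth. The base case of the separation is the fundamental domain: for $v\neq v_b$ the Positive labels condition provides a negative label $\beta=C(v,e)$, so $\beta\ck=C\ck(v,e)$ is a nonzero nonpositive combination of the simple co-roots; hence $\br{x,\beta\ck}<0$ for every $x\in\Int D$, and since $\Cone(v)\subseteq\set{x:\br{x,\beta\ck}\geq0}$ we get $\Int D\cap\Cone(v)=\emptyset$, i.e.\ $\Int\Cone(v_b)$ is disjoint from every other cone. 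I would then prove the general separation by building the cones up in order of increasing depth: a depth-$d$ cone is glued to the already-constructed subcomplex precisely along the facets corresponding to its outgoing edges (which lead to strictly smaller depth), and by Corollary~\ref{cor:BdyFacet} and Proposition~\ref{dual adjacent} it lies on the opposite side of each such facet-hyperplane from its lower-depth neighbor. No two depth-$d$ vertices are adjacent—an edge between them is oriented and would force unequal depths—so the cones added at a given stage do not interfere with one another.

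The crux, and the step I expect to be the main obstacle, is ruling out that a newly attached cone overlaps a far-away cone placed at an earlier stage; equivalently, ruling out monodromy as one travels around the $1$-skeleton. Here I would bring in the rank-two analysis: around any codimension-$2$ face the cones containing it form a rank-two cycle, and Corollary~\ref{cor:Periodic} (with the finite-type dichotomy recorded there) shows these cones wind exactly once around the face, so the local picture at every codimension-$2$ face is already that of a fan. Combining this codimension-$2$ correctness with the opposite-sides gluing of Corollary~\ref{cor:BdyFacet} and the depth stratification forced by the Descending chain condition, I would argue that an overlap of a depth-$d$ cone with an earlier cone would produce a point interior to both; following facets downward in depth and using the codimension-$2$ winding would then transport this overlap to one at strictly smaller total depth, contradicting the inductive separation. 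With disjoint interiors in hand, the intersection of any two cones is cut out by the separating hyperplanes and is a common face of both, so every pair meets nicely, and Lemma~\ref{MaxCheckFan} finishes the proof.
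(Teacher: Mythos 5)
Your preparation matches the paper's: your depth function is exactly the paper's length $\ell(v)$ (Lemma~\ref{finite}, proved via the same K\"onig-type argument), the base-case disjointness of $\Int D$ from every other cone is correct, and the reduction to pairwise nice meeting via Lemma~\ref{MaxCheckFan} is how the paper proceeds. But both load-bearing steps of your proposal are gapped. The step you yourself flag as the crux --- that a newly attached cone cannot overlap a far-away, previously placed cone --- is never actually proved, and the tool you propose for it does not work here. Corollary~\ref{cor:Periodic} presupposes that the rank-two walk $v_0,v_1,v_2,\ldots$ closes up into a finite cycle; in a (generally incomplete) descending framework a walk around a codimension-$2$ face can terminate at a half-edge, so there may be no cycle to apply it to. Even when a cycle exists, the corollary only gives that its length is divisible by $h+2$, so excluding winding more than once around the face requires injectivity of $v\mapsto\Cone(v)$ --- part of the very statement being proved. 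Indeed, in the paper the rank-two-cycle facts for descending frameworks (Lemma~\ref{descending down}) are deduced \emph{from} Proposition~\ref{descending polyhedral}: closing the cycle there uses polyhedrality to pass from $\Cone(v_{-2})=\Cone(v_h)$ to $v_{-2}=v_h$. So your route is circular. The paper sidesteps the global monodromy problem entirely with a minimal-counterexample argument: take distinct $u,v$ whose cones meet badly with $\ell(u)+\ell(v)$ minimal, take a suitable point $p$ in the bad intersection, and move along $p+\ep x$ with $x\in\Int\Cone(v_b)$. Such a ray can cross only hyperplanes $\beta^\perp$ with $\sgn(\beta)=-1$, so it exits through a negatively labeled facet; the Half-edge condition and Corollary~\ref{cor:BdyFacet} then transfer the bad intersection to a neighbor of strictly smaller length, contradicting minimality. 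Your instinct to ``follow facets downward in depth'' is right, but without the ray toward $D$ you have no mechanism that actually produces the lower-depth bad pair.

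Separately, your closing inference --- that disjoint interiors imply every pair meets nicely --- is false for simplicial cones in general: already in $\reals^3$, two full-dimensional simplicial cones (cones over triangles) can have disjoint interiors yet intersect in a proper subcone of a facet of one of them, which is a face of neither. This is exactly why the paper's proof has a separate Case 2, for cones whose intersection has dimension less than $n$: there one must show (again by shooting the ray toward $\Int D$ from a point in the relative interior of the intersection) that some facet of $\Cone(u)$ or $\Cone(v)$ containing the intersection carries a negative label, and then descend through that facet to a vertex of smaller length whose cone still meets the other badly. Nice meeting in low dimensions is an independent assertion that needs its own inductive argument; it is not a formal consequence of interior-disjointness, so even granting your separation statement the proof would not close.
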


We first need a lemma:

\begin{lemma}\label{finite}
Suppose $(G,C,C\ck)$ is a descending framework.
For each vertex $v$ of $G$, there are a finite, nonzero number of directed paths from $v$ to $v_b$.
\end{lemma}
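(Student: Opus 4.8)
The plan is to combine the edge orientation defined just above the lemma with the three defining conditions of a descending framework, reducing existence to a greedy argument and finiteness to K\"onig's lemma. First I would record how the orientation behaves. For a full edge $e$ joining $v$ and $v'$, the Transition condition gives $C(v,e)=-C(v',e)$, so the two labels have opposite signs and the edge is directed consistently, namely towards whichever endpoint carries the positive label. By the Half-edge condition every half-edge carries a positive label and so is directed \emph{inward}; thus a half-edge never provides a way to leave its vertex. Consequently, the only way to leave a vertex $w$ along a directed edge is through a full edge $e$ with $\sgn(C(w,e))=-1$.

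Next I would identify $v_b$ as the unique sink. A vertex $w$ has no outgoing edge precisely when every full edge at $w$ (and automatically every half-edge) carries a positive label, i.e.\ $\set{\sgn(\beta):\beta\in C(w)}=\set{1}$. By the Positive labels condition this forces $w=v_b$; conversely $C(v_b)=\Pi$ consists of simple roots, all positive, so $v_b$ genuinely has no outgoing edge. Hence $v_b$ is the unique vertex of $G$ of out-degree zero. For the nonzero assertion I would then argue greedily: starting at $v$, as long as the current vertex is not $v_b$, the contrapositive of the Positive labels condition produces a label of sign $-1$, hence an outgoing full edge, so the path extends. The Descending chain condition prevents this from continuing forever, so the process must halt; it can only halt at a sink, which is $v_b$. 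This yields at least one directed path from $v$ to $v_b$ (the trivial path when $v=v_b$).

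The main obstacle is finiteness, and this is where I would invoke K\"onig's lemma. Consider the tree whose nodes are the directed walks in $G$ starting at $v$, ordered by extension. Since $G$ is regular of degree $n$, each vertex has out-degree at most $n$, so this tree is finitely branching. An infinite branch would be exactly an infinite directed sequence $v=v_0\to v_1\to\cdots$, which the Descending chain condition forbids; hence the tree has no infinite branch. K\"onig's lemma then forces the tree to be finite, so there are only finitely many directed walks—in particular finitely many directed paths—from $v$, and a fortiori finitely many terminating at $v_b$. Combining this with the previous paragraph gives the claimed finite, nonzero count. (I note that the Descending chain condition also rules out directed cycles, so directed walks from $v$ coincide with directed paths, but this refinement is not needed.)
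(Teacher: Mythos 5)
Your proposal is correct and follows essentially the same route as the paper: existence comes from the greedy descent argument (the Positive labels and Half-edge conditions give every vertex other than $v_b$ an outgoing full edge, and the Descending chain condition forces termination, necessarily at $v_b$), and finiteness comes from a finitely-branching, no-infinite-branch argument. The only cosmetic difference is that you cite K\"onig's lemma, whereas the paper proves the relevant instance inline (if infinitely many directed paths leave $v$, then since the out-degree is at most $n$ some out-neighbor inherits infinitely many, and iterating yields an infinite chain contradicting the Descending chain condition).
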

\begin{proof}
The Full edge condition and the Unique minimum condition imply that every vertex $v\neq v_b$ has an edge directed from $v$ to another vertex $v'$.
This observation and the Descending chain condition imply that, for every vertex $v$, there is a finite directed path from $v$ to $v_b$.
On the other hand, suppose there are infinitely many directed paths from $v$ to $v_b$.
The vertex $v$ has out-degree at most $n$.
Thus some vertex $v'$ with $v\to v'$ has infinitely many directed paths from $v'$ to $v_b$.
For the same reason, there is some vertex $v''$ with $v'\to v''$ such that there are infinitely many directed paths from $v''$ to $v_b$.
Continuing in this way, we obtain a contradiction to the Descending chain condition.
\end{proof}

In light of Lemma~\ref{finite}, we can define the \newword{length} $\ell(v)$ of $v$ to be the length of a longest directed path from $v$ to $v_b$.

\begin{proof}[Proof of Proposition~\ref{descending polyhedral}]
We first check that, if $\Cone(u) = \Cone(v)$, then $u=v$. Our proof is by induction on $\ell(u) + \ell(v)$; the base case is trivial.
If $\ell(u) +\ell(v)>0$ then one of $u$ and $v$, say without loss of generality $u$, must not be the base vertex $v_b$. Take a path $u \to u_1 \to u_2 \to \cdots \to v_b$. 
Let $e$ be the edge $u\to u_1$. Since $\Cone(u) = \Cone(v)$, there is an edge $f$ incident to $v$ with $C(v,f) = C(u,e)$. By the Full edge condition, there is a vertex $v_1$ at the other end of $f$.
Then, $\Cone(u_1) = \Cone(v_1)$ as they are computed from $\Cone(u)$ and $\Cone(v)$ by the same recursion. 
By induction, $u_1=v_1$. Then $e$ and $f$ are two edges incident to $u_1$ with $C(u_1, e) = C(u_1, f)$, so $e=f$ and $u=v$, as desired.

So the vertices of $G$ are in bijection with the set of cones $\{ \Cone(v): v \in G \}$. We now must check that these cones are the maximal faces of a fan.
In light of Lemma~\ref{MaxCheckFan}, we need simply check that, for any two distinct vertices $u$ and $v$ of $G$, the cones $\Cone(u)$ and $\Cone(v)$ meet nicely.

We will say that two cones \newword{meet badly} if they do not meet nicely.
Suppose for the sake of contradiction that there exist (necessarily distinct) vertices $u$ and $v$ such that $\Cone(u)$ and $\Cone(v)$ meet badly and choose $u$ and $v$ so as to minimize $\ell(u)+\ell(v)$.

We consider two cases.
Throughout the argument, $x$ will be a point in the interior of $\Cone(v_b)$. 

\noindent
\textbf{Case 1:} 
The cones $\Cone(u)$ and $\Cone(v)$ intersect in dimension~$n$.
Since $u$ and $v$ are distinct, at least one of them has positive length, and therefore by the Unique minimum condition, there is at least one element of $C(u)\cup C(v)$ whose sign is $-1$.
Let $p$ be a point in the intersection of the interior of $\Cone(u)$ with the interior of $\Cone(v)$.
Consider points of the form $p+\ep x$ for $\ep>0$.
If $\beta\in C(u)\cup C(v)$, then the set $\set{p+\ep x:\ep>0}$ intersects the hyperplane $\beta^\perp\subset V^*$ if and only if $\sgn(\beta)=-1$.
By choosing $p$ generically, we can assume that $\set{p+\ep x:\ep>0}$ intersects each of the hyperplanes $\beta^\perp$ with $\beta\in C(u)\cup C(v)$ and $\sgn(\beta)=-1$ at a different point (except if two of the hyperplanes coincide).
Let $\ep_0$ be the smallest positive $\ep$ such that $p+\ep x$ is contained in a hyperplane $\beta^\perp$ with $\beta\in C(u)\cup C(v)$.
Then $p+\ep_0x$ is contained in the relative interior of a facet of $\Cone(u)$ or a facet of $\Cone(v)$ or both.

We first consider the case where $p+\ep_0x$ is contained both in the relative interior of a facet $F$ of $\Cone(u)$ and in the relative interior of a facet $G$ of $\Cone(v)$.
By the Full edge condition, the edge in $I(u)$ labeled $\beta$ is directed from $u$ to a vertex $u'$, and the edge in $I(v)$ labeled $\beta$ is directed from $v$ to a vertex $v'$.
By Corollary~\ref{cor:BdyFacet}, $\Cone(u)$ and $\Cone(u')$ share the facet $F$ and $\Cone(v)$ and $\Cone(v')$ share the facet~$G$.
The facets $F$ and $G$ are defined by the same hyperplane.
If $u'=v'$, then $u'$ is incident to two edges with the label $-\beta$, contradicting Proposition~\ref{basis}.
Otherwise, for small enough $\ep>\ep_0$, the point $p+\ep x$ is in the intersection of the interior of $\Cone(u')$ with the interior of $\Cone(v')$.
This contradicts our choice of $u$ and $v$ to minimize $\ell(u)+\ell(v)$.

Now we can assume, without loss of generality, that $p+\ep_0x$ is in the interior of $\Cone(v)$ and in the relative interior of a facet $F$ of $\Cone(u)$.
The Full edge condition says that the edge in $I(u)$ labeled $\beta$ is directed from $u$ to a vertex $u'$, and Corollary~\ref{cor:BdyFacet} says that $\Cone(u)$ and $\Cone(u')$ share the facet $F$.
Thus for small enough $\ep>\ep_0$, 
the point $p+\ep x$ is in the intersection of the interior of $\Cone(u')$ with the interior of $\Cone(v)$.
Again, this contradicts our choice of $u$ and $v$.

\noindent
\textbf{Case 2:} 
The cones $\Cone(u)$ and $\Cone(v)$ intersect in dimension less than~$n$.
Let $F_1,\ldots,F_k$ be the set of facets of $\Cone(u)$ containing $\Cone(u)\cap\Cone(v)$, and let $G_1,\ldots,G_l$ be the set of facets of $\Cone(v)$ containing $\Cone(u)\cap\Cone(v)$.
Each of the facets in $\set{F_1,\ldots,F_k,G_1,\ldots,G_l}$ is defined by a vector $\beta\in C(v)\cap C(v)$.
We claim that at least one of these vectors $\beta$ has $\sgn(\beta)=-1$.

Suppose for the sake of contradiction that the claim fails.
Let $p$ be a point in the relative interior of $\Cone(u)\cap\Cone(v)$.
Since each $F_i$ is defined by a vector $\beta$ with $\sgn(\beta)=1$, the vector $p+\ep x$ is in the interior of $\Cone(u)$ for small enough positive $\ep$.
For the same reason, $p+\ep x$ is in the interior of $\Cone(v)$ for small enough positive $\ep$.
This shows that the interior of $\Cone(u)$ intersects the interior of $\Cone(v)$, contradicting the hypothesis of Case 2, and thus proving the claim.

Without loss of generality, let $F$ be a facet of $\Cone(u)$ that contains $\Cone(u)\cap\Cone(v)$ and such that $F$ is defined by $\beta\in C(u)$ with $\sgn(\beta)=-1$.
Then $F\cap\Cone(v)=\Cone(u)\cap\Cone(v)$, and this intersection is either not a face of $F$ or not a face of $\Cone(v)$ or both.
In other words, $F$ and $\Cone(v)$ meet badly.
The Full edge condition says that the edge in $I(u)$ labeled $\beta$ is directed from $u$ to a vertex $u'$.
By Corollary~\ref{cor:BdyFacet}, $\Cone(u)$ and $\Cone(u')$ share the facet $F$.
Since $F$ and $\Cone(v)$ meet badly, $\Cone(u')$ and $\Cone(v)$ meet badly.
This contradicts our choice of $u$ and $v$ to minimize $\ell(u)+\ell(v)$, thus completing the proof.
\end{proof}

\begin{prop}\label{desc w-c}  
A descending framework is well-connected.
\end{prop}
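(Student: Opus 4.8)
The plan is to show that the set $S_F$ of all vertices $v$ with $F$ a face of $\Cone(v)$ induces a connected subgraph of $G$, by producing a canonical descending path from every vertex of $S_F$ to one distinguished vertex. The desired path from $u$ to $v$ is then obtained by concatenating the descent from $u$ with the reverse of the descent from $v$.

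First I would fix the bookkeeping. By Proposition~\ref{descending polyhedral} the framework is polyhedral, so the cones $\Cone(v)$ are the maximal cones of a fan and $v\mapsto\Cone(v)$ is injective; recall also the length $\ell$ from Lemma~\ref{finite} and the edge orientation (an edge $e$ at $v$ points toward $v$ exactly when $\sgn(C(v,e))=1$). For $v\in S_F$ let $E_v\subseteq I(v)$ be the set of edges whose facets of $\Cone(v)$ contain $F$, so that $F=\Cone(v)\cap\bigcap_{e\in E_v}C\ck(v,e)^\perp$. Call $v$ \emph{$F$-terminal} if every $e\in E_v$ points toward $v$, i.e.\ $\sgn(C(v,e))=1$ for all $e\in E_v$.

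The key descent step is this: if $v\in S_F$ is not $F$-terminal, pick $e\in E_v$ with $\sgn(C(v,e))=-1$. By the Half-edge condition $e$ is a full edge, so it leads to a vertex $v'$ with $v\to v'$, whence $\ell(v')<\ell(v)$. Since $e\in E_v$, the face $F$ lies in the facet $\Cone(v)\cap\Cone(v')$ shared by the two cones (Corollary~\ref{cor:BdyFacet}); a face of $\Cone(v)$ contained in one of its facets is a face of that facet, and the facet is itself a face of $\Cone(v')$, so by transitivity of the face relation $F$ is a face of $\Cone(v')$, giving $v'\in S_F$. Iterating and invoking the Descending chain condition (which bounds $\ell$), every vertex of $S_F$ is joined, within $S_F$, to some $F$-terminal vertex.

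Finally I would prove there is a \emph{unique} $F$-terminal vertex, which completes the argument. Fix a point $p$ in the relative interior of $F$ and a point $x$ in the interior of $\Cone(v_b)=D$, so that $\langle x,C\ck(v,e)\rangle$ has the same sign as $C(v,e)$. If $w$ is $F$-terminal, then for small $\ep>0$ the point $p+\ep x$ lies in the interior of $\Cone(w)$: the pairings against $e\in E_w$ equal $\ep\langle x,C\ck(w,e)\rangle>0$, while the pairings against the remaining facet normals are already positive at $p$ and stay positive. Two distinct $F$-terminal vertices would therefore both contain $p+\ep x$ in their interiors for all small $\ep$, contradicting the fact that distinct maximal cones of a fan have disjoint interiors. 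The main obstacle is precisely this last point—arranging that the descent terminates at a \emph{single} vertex rather than merely at \emph{some} $F$-terminal vertex—and the generic-ray argument resolves it using only the already-established polyhedral (fan) structure.
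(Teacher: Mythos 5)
Your proof is correct and is essentially the paper's own argument: both descend from each vertex along negatively-labeled edges whose hyperplanes contain $F$ (using the Half-edge condition, Corollary~\ref{cor:BdyFacet}, and the Descending chain condition), reach a vertex where all labels cutting out $F$ have sign $+1$, and then use the perturbation $p+\ep x$ toward the interior of $\Cone(v_b)$ together with Proposition~\ref{descending polyhedral} to identify the two endpoints. Your packaging via a unique ``$F$-terminal'' vertex is only a cosmetic reorganization of the same idea.
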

\begin{proof}  
Suppose $u$ and $v$ are vertices of a descending framework and suppose that $F$ is a face of $\Cone(u)$ and of $\Cone(v)$.
We need to show that there exists a path $u=v_0$, $v_1$, \dots, $v_k=v$ in $G$ such that $F$ is a face of $\Cone(v_i)$  for all $i$ from $0$ to $k$.

We first construct a path $u=u_0$, $u_1$, \dots, $u_l$ with two useful properties:
First, $F$ is a face of $\Cone(u_i)$ for all $i$ from $0$ to $l$.
Second, if $\beta_1,\ldots,\beta_p$ are the labels in $C(u_l)$ such that $F=\Cone(u_l)\cap\beta_1^\perp\cap\cdots\cap\beta_p^\perp$, then each of the labels $\beta_1,\ldots,\beta_p$ has sign $+1$.
If the singleton path $u$ does not have the second property, then there is a label $\beta\in C(u)$ with $\sgn(\beta)=-1$ such that $F\subset\beta^\perp$.
The Full edge condition implies that $\beta$ is $C(u,e)$ where $e$ is an edge connecting $u$ to a vertex $u_1$.
By Corollary~\ref{cor:BdyFacet}, $F$ is a face of $\Cone(u_1)$.
The path $u=u_0,u_1$ has the first property. 
If it does not have the second property, then we construct $u_2$, etc.
The Descending Chain condition implies that eventually we will construct a path with the desired properties.
In particular, if $y$ is any point in the relative interior of $F$ and $x$ is a point in the interior of $\Cone(v_b)$, then for small enough $\ep$ the point $y+\ep x$ is in the interior of $\Cone(u_l)$.

We can now perform the same construction to obtain a path $v=v_0,v_1,\ldots,v_m$ such that $F$ is a face of $\Cone(v_i)$ for all $i$ from $0$ to $m$, and $F=\Cone(v_m)\cap\gamma_1^\perp\cap\cdots\cap\gamma_q^\perp$ such that $\gamma_1,\ldots,\gamma_q$ are labels in $C(v_m)$, each with sign $+1$.
But then, for small enough $\ep$ the point $y+\ep x$ is in the interior of $\Cone(v_m)$.
By Proposition~\ref{descending polyhedral}, we conclude that $u_l=v_m$.
Now, concatenating the two paths, we obtain the desired path between $u$ and $v$.
\end{proof}

The following proposition combines with Proposition~\ref{simply ample} to show that a descending framework is ample.
\begin{prop}\label{descending simply}
A descending framework is simply connected.
\end{prop}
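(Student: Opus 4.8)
The plan is to prove that $\pi_1(\Sigma)=1$ directly, by filtering $\Sigma$ according to the length function $\ell$ of Lemma~\ref{finite}. For $k\ge 0$ let $\Sigma_{\le k}$ be the full subcomplex of $\Sigma$ on the vertices $v$ with $\ell(v)\le k$, a rank-two $2$-cell being included exactly when all of its vertices lie in this range. Since $\ell(v)=0$ forces $v=v_b$ by the Positive labels condition, the base case $\Sigma_{\le 0}=\set{v_b}$ is a point, hence simply connected; and each $\Sigma_{\le k}$ is connected because every vertex admits a directed path to $v_b$ through vertices of decreasing (hence $\le k$) length. As every loop in $\Sigma$ is compact and therefore lies in some $\Sigma_{\le k}$, it suffices to prove by induction that each inclusion $\Sigma_{\le k}\hookrightarrow\Sigma_{\le k+1}$ induces an isomorphism on $\pi_1$; then $\pi_1(\Sigma)=\varinjlim_k\pi_1(\Sigma_{\le k})=1$.

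The combinatorial heart is a structural claim about a vertex $v$ incident to two distinct \emph{downward} edges $e,f$ (edges with $\sgn(C(v,e))=\sgn(C(v,f))=-1$): the rank-two sequence they determine is a \emph{finite} cycle, and on it $v$ is the unique vertex of maximal length, every other vertex having strictly smaller length. To see finiteness, normalize as in Lemma~\ref{lem:rk2roots} so that $C(v,e)$ and $C(v,f)$ are the negative simple roots of $\tPhi$ while all other $\gamma_k$ are positive roots. A direct sign computation with the recursion~\eqref{GammaRecur} shows that both edges at $v$ point away from $v$ and that, following the sequence away from $v$, every edge is directed forward (each step strictly decreasing $\ell$) until the two negative simple roots recur. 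If $\tPhi$ were of infinite type the $\gamma_k$ would never recur, producing an infinite directed path $v=v_0\to v_{-1}\to v_{-2}\to\cdots$, i.e.\ an infinite $\ell$-decreasing chain, contradicting the Descending chain condition. Hence $\tPhi$ is of finite type and the sequence closes into a finite cycle (Corollary~\ref{cor:Periodic}); $v$ is its unique source, and the two directed arcs from $v$ to the opposite sink are $\ell$-decreasing, so all other vertices have length strictly below $\ell(v)$.

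Now for the inductive step. An edge between two vertices of equal length would be directed, forcing those lengths to differ, so the vertices of length $k+1$ are pairwise non-adjacent; moreover every $2$-cell not already in $\Sigma_{\le k}$ has its unique maximal (source) vertex of length $k+1$, so it is one of the cycles above, taken through a pair of downward edges at a single new vertex. Fix such a $v$, with downward edges $e_1,\dots,e_r$ ($r\ge 1$ by the Positive labels condition) to vertices $w_1,\dots,w_r$ of length $\le k$; these are the only edges of $v$ present in $\Sigma_{\le k+1}$ (an upward edge would reach a vertex of length $>k+1$), and the complementary arc of the cycle through $(e_1,e_i)$ lies entirely in $\Sigma_{\le k}$. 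I attach the data of $v$ one piece at a time: adjoining $v$ together with $e_1$ is the attachment of a whisker, a homotopy equivalence; for each $i\ge 2$, adjoining $e_i$ adds a single free generator to $\pi_1$ (both endpoints already lie in a connected subcomplex), and adjoining the $2$-cell of the cycle through $(e_1,e_i)$ imposes exactly the relation identifying that generator with the class of a loop in the simply connected $\Sigma_{\le k}$, hence with the identity. The remaining $2$-cells through pairs $(e_i,e_j)$ only impose consequences of these relations. Processing the (pairwise independent) new vertices in turn gives $\pi_1(\Sigma_{\le k+1})\cong\pi_1(\Sigma_{\le k})$, completing the induction.

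The main obstacle is the structural claim, and specifically extracting finiteness of the rank-two cycle from the Descending chain condition through the sign bookkeeping of Lemma~\ref{lem:rk2roots}; once the source/sink geometry of a finite rank-two cycle is in hand, the topological assembly is a routine van Kampen argument (equivalently, one could instead verify simple connectivity of the space $\Omega$ of Proposition~\ref{prop:GeomSimply} by the analogous flow toward $\Cone(v_b)$). Minor care is also needed for half-edges, which by the Half-edge condition are positively signed free $1$-cells bounding no $2$-cell, and so do not affect $\pi_1$.
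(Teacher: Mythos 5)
Your proposal is correct in substance, and its combinatorial heart coincides exactly with the paper's key step (Lemma~\ref{descending down}): at a vertex with two downward edges, the rank-two sequence closes into a finite cycle having that vertex as its unique source, because infinite type would produce an infinite directed path (contradicting the Descending chain condition), and all non-source vertices of the cycle then have strictly smaller length. Where you genuinely differ is in the topological assembly. The paper contracts an arbitrary loop at $v_b$ directly, inducting on the maximal length $m$ of a vertex on the loop and on the number of times $m$ is attained: at a maximal vertex it either cancels a backtrack or replaces the two-edge segment through that vertex by the complementary arc of the rank-two cycle, which is a homotopy across a single $2$-cell of $\Sigma$. Your length filtration $\Sigma_{\le k}$ with a cell-attachment argument is a legitimate alternative packaging; it uses more machinery but produces the cleaner intermediate statement that every $\Sigma_{\le k}$ is itself simply connected. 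Two of your side claims are overstated but harmless: you do not need (and have not justified) that the $2$-cells through pairs $(e_i,e_j)$ with $i,j\ge 2$ impose only consequences of the earlier relations --- since the base of your induction is a point, it suffices that $\pi_1(\Sigma_{\le k+1})$ is a quotient of the trivial group after the new generators are killed; and the assertion that ``every edge is directed forward'' along the cycle is false (the paper's own proof of Lemma~\ref{descending down} contains the same slip) --- the correct picture, which you also state, is that the cycle is the union of two directed arcs from the source to a sink.

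One step does need repair. You cite Corollary~\ref{cor:Periodic} for the claim that in finite type ``the sequence closes into a finite cycle,'' but that corollary runs in the opposite direction: it assumes the $v_k$ form a finite cycle and concludes finite type. Closure is genuinely nontrivial: what the recursion gives is only that the \emph{label sets} repeat, $C(v_{k+h+2})=C(v_k)$ (Lemma~\ref{lem:rank2variant}, which requires the negative-root normalization you set up), and one must then argue that equal label sets force equal \emph{vertices}. This is where polyhedrality enters: equal label sets give $\Cone(v_{k+h+2})=\Cone(v_k)$, and Proposition~\ref{descending polyhedral} (already proved for descending frameworks) gives $v_{k+h+2}=v_k$. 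One also needs the Half-edge condition along the descending arc to know the sequence keeps extending through full edges, so that the $h+2$ vertices exist at all. This is exactly how the paper closes the cycle in its proof of Lemma~\ref{descending down}, and your argument should be patched the same way; with that correction, your proof goes through.
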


The outline of the proof of Proposition~\ref{descending simply} is a simple inductive argument once we have the following lemma.

\begin{lemma}\label{descending down}
Suppose $(G,C,C\ck)$ is a descending framework and suppose $u$, $v$ and $w$ are vertices of $G$ with $v\to u$ and $v\to w$.
Then there exists a rank-two cycle in $G$ containing $u$, $v$ and $w$ that has $v$ as its unique source.
\end{lemma}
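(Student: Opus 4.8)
The plan is to realize the desired cycle as the rank-two cycle of Section~\ref{simply sec} determined by $v$ together with the two outgoing edges, and then to read off the ``unique source'' property from the signs of the labels.

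First I would set up the rank-two recursion at $v_0=v$, taking $e_1=e$ to be the edge to $u$ and $e_0=f$ the edge to $w$, so that $v_1=u$ and $v_{-1}=w$. Since $v\to u$ and $v\to w$, the orientation convention gives $\sgn(C(v,e))=\sgn(C(v,f))=-1$; that is, $\gamma_0=C(v,e)$ and $-\gamma_{-1}=C(v,f)$ are both negative, so $\gamma_0$ is negative and $\gamma_{-1}$ is positive. Switching $e$ and $f$ if necessary I may assume $b\ge 0$ and $c\le 0$ (this keeps both labels negative and keeps $u$ and $w$ on the sequence). The doubly infinite sequence $(\gamma_k)_{k\in\ZZ}$ is then determined \emph{formally} by the recursion~\eqref{GammaRecur}, independently of the graph, and Lemma~\ref{lem:rk2roots} identifies these vectors with the almost positive roots of $\tilde{\Phi}$, with $\gamma_0,\gamma_1$ the negative simple roots. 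In particular the sign of each $\gamma_k$ is pinned down purely formally.

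The heart of the argument is to show the sequence closes into a finite cycle with no half-edges, and here the key observation is that a label of sign $-1$ can never belong to a half-edge (by the \textbf{Half-edge condition}), so a negative formal label forces the corresponding edge to be full and lets the recursion continue. I would first rule out the case that $\tilde{\Phi}$ is of infinite type: then Lemma~\ref{lem:rk2roots} makes every $\gamma_k$ with $k\le -1$ positive, so each descending label $-\gamma_{-j-1}$ is negative; by induction the backward path $v_0\to v_{-1}\to v_{-2}\to\cdots$ is well defined through full edges (the graph labels matching the formal recursion because~\eqref{GammaRecur} is exactly the \textbf{Transition condition}), giving an infinite directed path and contradicting the \textbf{Descending chain condition}. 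Hence $\tilde{\Phi}$ is of finite type; let $h$ be its Coxeter number. With finiteness in hand I build the cycle explicitly: the forward steps $v_0\to v_1\to v_2$ use the negative labels $\gamma_0,\gamma_1$, and the backward steps $v_0\to v_{-1}\to\cdots\to v_{-h}$ use the negative labels $-\gamma_{-1},\dots,-\gamma_{-h}$ (the $\gamma_{-1},\dots,\gamma_{-h}$ being positive in finite type), so all of these edges are full by the same sign observation. The formal computation in the proof of Lemma~\ref{lem:rank2variant} shows the transition maps on labels have period $h+2$, so $C(v_2)=C(v_{-h})$; since a descending framework is polyhedral and hence injective (Proposition~\ref{descending polyhedral}), this forces $v_2=v_{-h}$. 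Thus the two directed paths out of $v_0$ meet at a common sink and form a simple rank-two cycle of length $h+2$ containing $v_0=v$, $v_1=u$, and $v_{-1}=w$ (consistent with Corollary~\ref{cor:Periodic}).

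Finally, to see that $v$ is the unique source I read off orientations from signs: a vertex $v_k$ of the cycle is a source exactly when both incident cycle-labels $\gamma_k$ and $-\gamma_{k-1}$ are negative, i.e.\ when $\gamma_k$ is negative and $\gamma_{k-1}$ is positive. Since $\gamma_k$ is negative precisely for $k\equiv 0,1\pmod{h+2}$, this occurs only for $k\equiv 0$, so within one period only $v_0=v$ qualifies. I expect the main obstacle to be precisely the well-definedness and closing-up step of the previous paragraph --- guaranteeing that no half-edge interrupts the recursion and that the forward and backward paths genuinely meet --- which is why I would lean on the principle that negative labels cannot sit on half-edges, together with the \textbf{Descending chain condition} and injectivity, rather than attempting to track the graph path and the formal recursion simultaneously from the outset.
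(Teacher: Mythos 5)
Your proposal is correct and takes essentially the same route as the paper's own proof: form the rank-two sequence at $v$, use the Half-edge condition (negative labels force full edges) together with the Descending chain condition to rule out infinite type, and then use the period-$(h+2)$ behavior of the labels (Lemma~\ref{lem:rank2variant}) plus polyhedrality/injectivity of a descending framework to close up the cycle. The only differences are cosmetic: with the normalization $b\ge 0$, $c\le 0$, your orientation of the cycle (short descending path $v_0\to v_1\to v_2$ and long descending path $v_0\to v_{-1}\to\cdots\to v_{-h}$) is the mirror image of the one written in the paper, and your explicit sign verification of the unique-source property spells out a step the paper leaves implicit.
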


\begin{proof}
In the notation of Section~\ref{simply sec}, let $e$ and $f$ be the edges $v \to u$ and $v \to w$.
Form the sequences $v_k$, $e_k$ and $\gamma_k$ as in that section; although we don't know yet that those sequences are bi-infinite, because \textit{a priori} some $e_k$ might be a half-edge.
Let $b$ and $b'$ be as before.

First, suppose for the sake of contradiction that $\begin{psmallmatrix} 2 & -|b'| \\ -|b| & 2 \end{psmallmatrix}$ is of infinite type. 
Then all of the $\gamma_k$ for $k>0$ are negative roots and, by the Full edge condition, our sequence cannot terminate in the direction of positive $k$.
But then $v_0 \to v_1 \to v_2 \to \cdots$ is an infinite descending chain, a contradiction.

So $\begin{psmallmatrix} 2 & -|b'| \\ -|b| & 2 \end{psmallmatrix}$ is of finite type.
Let $h$ be its Coxeter number. Switching $u$ and $w$ if necessary, we may assume that $b \geq 0$ and $b' \leq 0$.
Then the computations of Section~\ref{simply sec} show that we have $v_{-2} \from v_{-1} \from v_0 \to v_1 \to \cdots \to v_h$, and all of these edges exist by the Full edge condition.
Moreover, from Lemma~\ref{lem:rank2variant}, we have $C(v_{-2}) = C(v_h)$. So $\Cone(v_{-2}) = \Cone(v_h)$ and, as $(G, C, C^{\vee})$ is polyhedral, we have $v_{-2} = v_h$.
So $v_{-2} \from v_{-1} \from v_0 \to v_1 \to \cdots \to v_h=v_2$ is the desired cycle.
\end{proof}

\begin{proof}[Proof of Proposition~\ref{descending simply}]
We will show that any path $v_0,\ldots,v_k$ from $v_b$ to itself can be contracted in $\Sigma$.
Let $m$ be the maximum length of a vertex on the path.
We argue by induction on $m$ and, for fixed $m$, by induction on the number $r$ of times the length $m$ is achieved on the path.
If $m=0$ then the path is the singleton at $v_b$.
Otherwise, let $v_i$ be a vertex in the path with $\ell(v)=m>0$.
Since $m$ is the maximum length on the path, the neighbors of $v_i$ in the path must have lower length, so the edges to these neighbors are directed $v_i\to v_{i-1}$ and $v_i\to v_{i+1}$.
If $v_{i-1}=v_{i+1}$ then the path backtracks at $v_i$ and we may delete this backtracking and proceed by induction.

If $v_{i-1} \neq v_{i+1}$, Lemma~\ref{descending down} says that there exists a rank-two cycle in $G$, having $v_i$ as its unique source.
In particular, every vertex on that cycle has length strictly less than $\ell(v)$.
Now replace the segment $(v_{i-1}, v_i, v_{i+1})$ by the other $h$ edges in that cycle.
By construction, the new path is related to the old path by moving across a face of $\Sigma$.

If $r>1$, then the new path has maximum length $m$, realized $r-1$ times.
If $r=1$, then the new path has maximum length $m-1$.
In any case, by induction, the new path can be contracted in $\Sigma$.
\end{proof}

We conclude our discussion of descending frameworks with one more property.
Recall from Remark~\ref{gvec weight or int vec} that the $\g$-vectors, as defined in this paper, are certain vectors in the weight lattice.

\begin{prop} \label{descending g sign-coherent}  
Let $(G,C,C\ck)$ be a descending framework.
For each vertex $v$ of $G$, the fundamental-weight coordinate vectors of the $\g$-vectors of the cluster variables in $\Seed(v)$ form a sign-coherent set.
\end{prop}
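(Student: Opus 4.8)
The plan is to recast sign-coherence as a statement that the cones of the framework do not cross certain coordinate hyperplanes, and then to induct on length. By Theorem~\ref{framework principal}(\ref{g vec}) the $\g$-vector $\g^v_e$ equals the ray $R(v,e)$ of the simplicial cone $\Cone(v)$, and since $R(v)$ is dual to the co-label basis $C\ck(v)$, the $i\th$ fundamental-weight coordinate of $\g^v_e$ is $\br{\g^v_e,\alpha_i\ck}$. Hence the $\g$-vectors in $\Seed(v)$ are sign-coherent exactly when, for each $i\in I$, the functional $\alpha_i\ck$ keeps a constant sign on the rays of $\Cone(v)$, i.e.\ when $\Cone(v)$ lies in one of the two closed halfspaces bounded by $\alpha_i^\perp$. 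Since a descending framework is polyhedral (Proposition~\ref{descending polyhedral}), the $\Cone(v)$ are the maximal cones of a fan, and $\Cone(v_b)=D=\bigcap_{i\in I}\set{x:\br{x,\alpha_i\ck}\ge 0}$ lies in the all-nonnegative orthant. So the proposition is equivalent to the assertion that no maximal cone of this fan is cut by any coordinate hyperplane $\alpha_i^\perp$.

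I would prove this by induction on the length $\ell(v)$ (which is well defined by Lemma~\ref{finite}), the base case $v=v_b$ being immediate. Write $c_f(v,i)=\br{\g^v_f,\alpha_i\ck}$; equivalently, $c_f(v,i)$ is the coefficient of $C\ck(v,f)$ when $\alpha_i\ck$ is expanded in the basis $C\ck(v)$, so $\Cone(v)$ is cut by $\alpha_i^\perp$ precisely when the numbers $\set{c_f(v,i):f\in I(v)}$ are not all of one sign. If $v\neq v_b$ then, by the Positive labels and Half-edge conditions, $v$ has a full edge $e$ directed out of it, leading to a vertex $v'$ with $\ell(v')<\ell(v)$. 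Applying the strengthened Co-transition condition to expand $\alpha_i\ck$ in the basis $C\ck(v')$ gives
\[ c_{\mu_e f}(v',i)=c_f(v,i)\ (f\neq e),\qquad c_e(v',i)=-c_e(v,i)+\sum_{f\neq e}c_f(v,i)\,[\omega(C\ck(v,f),C(v,e))]_+.\]
Thus if the shared coefficients $\set{c_f(v,i):f\neq e}$ already contain both signs, then $\Cone(v')$ is also cut by $\alpha_i^\perp$, contradicting the inductive hypothesis. Ranging over all out-edges, this forces the only remaining bad configuration: $v$ has a unique negative label $C(v,e_0)$, and the unique ray of $\Cone(v)$ on the minority side of $\alpha_i^\perp$ is $\g^v_{e_0}$ itself.

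This last configuration is the crux, and I expect it to be the main obstacle, since crossing $e_0$ now carries $\Cone(v)$ to the sign-coherent cone $\Cone(v')$ and so the descent destroys the crossing instead of pushing it downward. What must be shown is that the single new $\g$-vector cannot spoil coherence on its own. Writing the $\g$-vector recursion of Theorem~\ref{framework principal}(\ref{g vec}) as $\g^v_{e_0}=-\g^{v'}_{e_0}+\sum_{q\neq e_0}[-\omega(C\ck(v',q),C(v',e_0))]_+\,\g^{v'}_q$ with nonnegative coefficients, the task is to prove that $\br{\g^v_{e_0},\alpha_i\ck}$ cannot be strictly opposite in sign to the (coherent) values $\br{\g^{v'}_q,\alpha_i\ck}$. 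I would reduce this to a two-dimensional question: all terms outside the plane spanned by $C(v,e_0)$ and the root it is exchanged with contribute with the correct sign, and on that plane the computations of Section~\ref{simply sec} (Lemmas~\ref{lem:rk2roots} and~\ref{lem:rank2variant}) describe the $\g$-vectors as running through the almost positive co-roots of a rank-two subsystem in cyclic order, whence a direct inspection shows $\g^v_{e_0}$ shares the sign of its neighbors along $\alpha_i\ck$. Failing a clean rank-two reduction, the alternative is to iterate the descent all the way to $v_b$ and verify that the relevant coordinate never changes sign, the obstruction never genuinely arising because each intervening extra positive ray meets $\alpha_i^\perp$ in a face of $\Cone(v)$ rather than crossing it.
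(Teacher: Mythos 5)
Your geometric reformulation (sign-coherence at $v$ is equivalent to the interior of $\Cone(v)$ being disjoint from every hyperplane $\alpha_i^\perp$) is exactly the paper's opening move, and your transition formulas for the coefficients $c_f(v,i)$ are correct. The problem is that the argument stops exactly where it needed to start: the configuration you yourself call the crux is never proved impossible, and neither of your sketches closes it. The structural reason the one-step induction cannot work is that the inductive hypothesis at $v'$ constrains only the $n-1$ rays that $\Cone(v)$ and $\Cone(v')$ share; it says nothing about the one new ray $\g^v_{e_0}$, which is precisely the ray at issue. Your first remedy (rank-two reduction) is not available in the form you invoke: Lemmas~\ref{lem:rk2roots} and~\ref{lem:rank2variant} describe the sequence of labels $\gamma_k$ along a walk determined by a \emph{pair} of edges at a vertex, whereas your configuration singles out only $e_0$ (there is no canonical ``root it is exchanged with''); moreover those lemmas control labels and co-labels, not the dual vectors $R(v,\cdot)$ whose pairings with $\alpha_i\ck$ you need, and the claim that all contributions outside the chosen plane ``have the correct sign'' is an unproved assertion of the same strength as the proposition itself. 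Your second remedy is circular: ``each intervening extra positive ray meets $\alpha_i^\perp$ in a face of $\Cone(v)$ rather than crossing it'' is a restatement of sign-coherence. There is also a smaller error in the reduction: ranging over out-edges does not force a unique negative label. The configuration with exactly two out-edges $e_1,e_2$, where $c_{e_1}(v,i)>0>c_{e_2}(v,i)$ and $c_f(v,i)=0$ for every other $f$, also survives your elimination step, so even the reduction to the crux case is incomplete as stated.

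The paper avoids the local induction entirely and runs a global polyhedral argument on a minimal-length counterexample $v$: choose $y$ in the intersection of $\alpha_i^\perp$ with the interior of $\Cone(v)$ and move from $y$ in the direction $x=\sum_{j\ne i}\rho_j$, which stays inside $\alpha_i^\perp$ and eventually reaches $D$; since the interior of $\Cone(v)$ is disjoint from $D$ (Proposition~\ref{descending polyhedral}), the segment exits $\Cone(v)$ through the relative interior of some face $F$. The wall-crossing argument from the proof of Proposition~\ref{desc w-c} --- the well-connectedness machinery --- then produces a \emph{directed} path $v=v_0\to v_1\to\cdots\to v_l$, so that $\ell(v_l)<\ell(v)$, with $F$ a face of $\Cone(v_l)$ cut out by positive labels only, and the segment continues past $F$ into the interior of $\Cone(v_l)$. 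This exhibits a counterexample of smaller length for the same $i$, a contradiction. The ingredient you are missing is exactly this ability to descend through the whole star of the face $F$ (possibly many cones) in one stroke, rather than through the single neighbor $v'$; with it, the crux case never has to be analyzed locally at all.
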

\begin{proof}
In light of Theorem~\ref{framework principal}(\ref{g vec}), the assertion of the proposition for each $v$ is the following:
For each $i\in I$, the interior of $\Cone(v)$ is disjoint from the hyperplane~$\alpha_i^\perp$.

Suppose the assertion fails for some $i$, and choose $v$ to minimize $\ell(v)$ among counterexamples.
We know that $v\neq v_b$ because $\Cone(v_b)=D$.
Let $y$ be a point in the intersection of $\alpha_i^\perp$ and the interior of $\Cone(v)$. 
Since $y\in\alpha_i^\perp$, expanding $y$ in the basis of fundamental weights, the coefficient of $\rho_i$ is $0$.
Let $x$ be a point of the form $\sum_{j\neq i} c_j \rho_j$, for some positive scalars $c_j$.
Then for a large enough scalar $a$, the vector $y+ax$ is in the boundary of $D$. 
Proposition~\ref{descending polyhedral} implies that the interior of $\Cone(v)$ is disjoint from $D=\Cone(v_b)$.
Thus the line segment connecting $y$ to $y+ax$ exits the interior of $\Cone(v)$ through the relative interior of some face $F$ of $\Cone(v)$. 

Let $\beta_1$, \dots, $\beta_k$ be the distinct vectors in $C(v)$ such that $F=\Cone(v) \cap {(\beta_1^{\perp} \cap \cdots \cap \beta_k^{\perp})}$.
Since $y$ is in the interior of $\Cone(v)$, we have $\br{y,\beta_j}>0$ and since ${y+ax}$ is on the other side of $\beta_i^\perp$, we have $\br{y+ax,\beta_j}<0$.
Thus $\br{ax,\beta_j}<0$ for each~$j$.
Since $ax$ is a positive combination of fundamental weights, we conclude that $\sgn(\beta_j)=-1$ for each~$j$.
If we choose the $c_j$ generically, then we can arrange that $\dim F \geq n-2$, so $k=1$ or $2$.

\textbf{Case 1:} $\dim F = n-1$, so $k=1$. Let $e_1$ be the edge incident on $v$ with label $\beta_1$. Since $\sgn(\beta_1) = -1$, by the Full edge condition there is a vertex $v'$ at the other end of $e_1$. The line segment from $y$ to $y+ax$ enters the interior of $\Cone(v')$, so $\alpha_i^{\perp}$ meets the interior of $\Cone(v')$. But $\ell(v') \leq \ell(v)-1$,  contradicting our choice of $v$ to minimize $\ell(v)$.

\textbf{Case 2:}  
$\dim F = n-2$, so $k=2$. 
Let $e$ be the edge with $C(v,e)=\beta_1$ and let $f$ be the edge with $C(v,f)=\beta_2$.
Without loss of generality, assume that $\omega(C\ck(v,e),C(v,f)) \geq 0$.  By the Full edge condition, there is a vertex $v'$ adjacent to $v$ through $e$, and the Transition condition says that $C(v',e)=-\beta_1$ and $C(v',\mu_e(f))=C(v,f)=\beta_2$.
Again, by the Full edge condition, there is a vertex $v''$ adjacent to $v'$ through $\mu_e(f)$, and the Transition condition says that $C(v'')$ contains the roots $-\beta_2$ and $\gamma=(-\beta_1)+[\sgn(\beta_2)\omega(C\ck(v',\mu_e(f)),-\beta_1)]_+$.
By the Co-label condition, $C\ck(v',\mu_e(f))$ is a positive scaling of $C(v',\mu_e(f))=\beta_2$, and we see that $\gamma=-\beta_1$.
Since $-\beta_1$ and $-\beta_2$ are in $C(v'')$ and since $\beta_1$ and $\beta_2$ define the face $F$ of $\Cone(v)$, we see that the line segment from $y$ to $y+ax$ passes from the interior of $\Cone(v)$, through $F$, and into the interior of $\Cone(v')$. 
Thus $\alpha_i^{\perp}$ meets the interior of $\Cone(v')$, but $\ell(v') \leq \ell(v)-2$, so we again have a contradiction.
\end{proof}

\section{Cambrian frameworks}\label{camb sec}
In this section, we apply Cambrian combinatorics to construct a descending reflection framework for a given acyclic exchange matrix $B$.
We have seen that $B$ determines a Cartan matrix $\Cart(B)$ and thus a Coxeter group $W$.
The matrix $B$ also determines a Coxeter element of $W$, in a way that we review below.
The graph $G$ in the framework is the Hasse diagram of the \newword{Cambrian semilattice} associated to the orientation.
Thus the vertices of $G$ are the \newword{sortable elements}.
The labels are certain roots that can be read off combinatorially from sorting words for the sortable elements.
The framework is not complete unless $\Cart(B)$ is of finite type, but in a future paper we will extend the construction to produce complete frameworks in the case where $\Cart(B)$ is of affine type.

More details on the combinatorial and polyhedral constructions in this section can be found in \cite{typefree}.

\subsection{Sortable elements and Cambrian lattices}\label{sort camb sec}
Let $W$ be the Coxeter group determined by $B$ as explained in Section~\ref{ref frame subsec}.
Recall that $S=\set{s_i:i\in I}$ is the set of simple reflections in $W$.
It will be convenient, in what follows, to sometimes suppress the indexing set $I$ and let $S$ serve as an indexing set.
Thus, for example, we may write $\alpha_s$ for $\alpha_i$ when $s=s_i$, and so forth.

When $B$ is acyclic, it encodes a \newword{Coxeter element} $c$ of $W$.
A Coxeter element is an element that arises by multiplying the generators $S$, in any order, with each generator appearing exactly once.
Since $B$ is acyclic, we can take $I$ to be $\set{1,\ldots,n}$ such that $b_{ij}>0$ implies $i<j$ and let $c$ be the Coxeter element $s_1\cdots s_n$.

\begin{example}\label{02-10 camb ex 1} 
To illustrate the constructions in this section, we consider the case where $B=\begin{psmallmatrix*}[r]0&2\\-1&0\end{psmallmatrix*}$.
The Cartan companion of $B$ is $\Cart(B)=\begin{psmallmatrix*}[r]2&-2\\-1&2\end{psmallmatrix*}$, and the associated Coxeter group has generators $s_1$ and $s_2$ with $m(1,2)=4$.  
The reflections in $W$ are $s_1$, $s_2$, $s_1s_2s_1$, and $s_2s_1s_2$.
The positive roots are $\alpha_1$, $\alpha_2$, ${\alpha_1+\alpha_2}$, and ${2\alpha_1+\alpha_2}$ and the positive co-roots are $\alpha\ck_1$, $\alpha\ck_2$, ${\alpha\ck_1+\alpha\ck_2}$, and ${\alpha\ck_1+2\alpha\ck_2}$.
The exchange matrix $B$ encodes the Coxeter element $c=s_1s_2$.
\end{example}

An expression for $w\in W$ as a word in the generators $S$ is called \newword{reduced} if it is of minimal length among words for $w$.
This minimal length is called the \newword{length} of $w$ and written $\ell(w)$. 
The \newword{(right) weak order} on $W$ is the transitive closure of the relations $w<ws$ for all $w\in W$ and $s\in S$ such that $\ell(w)<\ell(ws)$.  
In this paper, inequalities between elements of $W$ always mean this relation.

The weak order is a meet-semilattice, and furthermore, given any subset $U\subseteq W$, if $U$ has an upper bound in $W$, then it has a join $\Join U$.
The weak order is also characterized in terms of inversion sets.
An \newword{inversion} of $w\in W$ is a reflection $t$ such that $\ell(tw)<\ell(w)$.
Write $\inv(w)$ for the set of inversions of $w$.
We have $u\le w$ if and only if $\inv(u)\subseteq\inv(w)$.

Let $c^\infty$ be the half-infinite word $s_1\cdots s_ns_1\cdots s_ns_1\cdots s_n\ldots$ formed by infinitely repeating the word $s_1\cdots s_n$.
Given $w\in W$, the \newword{$c$-sorting word} for $w$ is the word obtained by choosing, among all subsequences of $c^\infty$ that form reduced words for $w$, the subsequence that is lexicographically leftmost in $c^\infty$.
Every element of $w$ has a unique $c$-sorting word.
This word is equivalent to a sequence of subsets of~$S$:  
Reading $c^\infty$ from left to right, in each repetition of $s_1\cdots s_n$, we take the set of letters that appear in the $c$-sorting word for $w$.
If this sequence of subsets is weakly decreasing in containment order, then we call $w$ a \newword{$c$-sortable element}.

A generator $s\in S$ is \newword{initial} in $c$ if there is a reduced word for $c$ having $s$ as its first letter.
Similarly, $s$ is \newword{final} in $c$ if it is the last letter of some reduced word for $c$.
In either case, the element $scs$ is another Coxeter element.

Given a subset $J\subseteq S$, the \newword{standard parabolic subgroup} $W_J$ is the subgroup of $W$ generated by $J$.
This subgroup forms an order ideal in the weak order on $W$.
The \newword{restriction} of $c$ to $W_J$ is the Coxeter element of $W_J$ obtained by deleting the letters in $S\setminus J$ from any reduced word for $c$.
If $w\in W$ then there exists a unique element, denoted $w_J$, such that $\inv(w_J)=\inv(w)\cap W_J$.
We will be most interested in the case where $J=S\setminus\set{s}$, and we define the special notation $\br{s}$ to stand for $S\setminus\set{s}$.

The next two lemmas are \cite[Lemmas~2.4, 2.5]{sortable}.
Since the identity element is $c$-sortable for any $c$, the lemmas are a recursive characterization of $c$-sortability, by induction on the length $\ell(w)$ and on the rank of $W$ (the cardinality of $S$).
\begin{lemma}
\label{sc}
Let~$s$ be initial in~$c$ and suppose $w\not\ge s$.
Then~$w$ is $c$-sortable if and only if it is an $sc$-sortable element of~$W_{\br{s}}$.
\end{lemma}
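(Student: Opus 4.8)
The plan is to reduce both implications to a single structural fact: for every $w\in W_{\br{s}}$, the $c$-sorting word and the $sc$-sorting word of $w$ are literally the same word; once this is known, the $c$-sortability condition (weak decrease of the associated sequence of subsets of $S$) is visibly the same condition as $sc$-sortability, and the lemma follows. First I would fix notation. Since $s$ is initial in $c$, choose a reduced word $c=sc'$; then $c'=sc$ is exactly the restriction of $c$ to $W_{\br{s}}$, a Coxeter element of $W_{\br{s}}$, and deleting every occurrence of $s$ from $c^\infty$ produces $(sc)^\infty=c'c'c'\cdots$. I would also record the greedy description of sorting words coming from the lexicographic definition: reading $c^\infty$ from left to right and maintaining the running prefix $u$, the next letter $x$ is taken precisely when $\ell(ux)=\ell(u)+1$ and the reflection $uxu^{-1}$ lies in $\inv(w)$ (equivalently $ux\le w$), in which case taking $x$ adjoins $uxu^{-1}$ to the inversion set. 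Finally I would note the two parabolic facts I will use: if $w\in W_{\br{s}}$ then every inversion of $w$ is a reflection of $W_{\br{s}}$, so in particular $s\notin\inv(w)$ and $w\not\ge s$ holds automatically; and for $u\in W_{\br{s}}$ the reflection $usu^{-1}$ never lies in $W_{\br{s}}$ (else $s=u^{-1}(usu^{-1})u\in W_{\br{s}}$).

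The key lemma is that for $w\in W_{\br{s}}$ the $c$-sorting word equals the $sc$-sorting word. I would prove this by induction along $c^\infty$, showing that the running prefix $u$ stays in $W_{\br{s}}$ and that every copy of $s$ is skipped. Indeed, when the greedy procedure reaches a copy of $s$ with current prefix $u\in W_{\br{s}}$, the candidate reflection $usu^{-1}$ lies outside $W_{\br{s}}$ and hence cannot belong to $\inv(w)$, so $s$ is not taken and $u$ is unchanged. For the letters different from $s$, the greedy test involves only $u$, $w$, and reflections of $W_{\br{s}}$, so it is identical whether performed inside $W$ against $c^\infty$ or inside $W_{\br{s}}$ against $(sc)^\infty$, which is $c^\infty$ with the copies of $s$ erased. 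Thus the two greedy runs select exactly the same letters, proving the key lemma.

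With the key lemma in hand I would finish both directions. For the ``if'' direction, if $w\in W_{\br{s}}$ is $sc$-sortable, then its $c$-sorting word coincides with its $sc$-sorting word, so the two sequences of subsets agree and weak decrease of one yields weak decrease of the other; hence $w$ is $c$-sortable. For the ``only if'' direction, suppose $w$ is $c$-sortable with $w\not\ge s$. Because $w\not\ge s$, no reduced word for $w$ begins with $s$, so the greedy procedure skips the very first letter $s$ of $c^\infty$ and the first subset omits $s$; $c$-sortability then forces every later subset to omit $s$ as well, so $s$ never occurs in the $c$-sorting word and therefore $w\in W_{\br{s}}$. Applying the key lemma again identifies the two words and transfers the weak-decrease condition, so $w$ is an $sc$-sortable element of $W_{\br{s}}$.

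I expect the main obstacle to be the key lemma, and within it the bookkeeping that justifies the greedy characterization of the lexicographically leftmost reduced subword together with the precise rule for how $\inv(\cdot)$ updates under right multiplication. The parabolic facts about inversions are routine, but stating the greedy rule exactly right---so that the induction cleanly forces each copy of $s$ to be skipped while leaving every other decision unchanged between $W$ and $W_{\br{s}}$---is where the care is needed.
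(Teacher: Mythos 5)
Your proof is correct, but note that this paper never proves Lemma~\ref{sc} at all: it is quoted from \cite[Lemma~2.4]{sortable}, so the only comparison available is with that cited argument, and yours reconstructs the same core idea --- every copy of $s$ in $c^\infty$ is skipped, so for $w\in W_{\br{s}}$ the $c$-sorting and $sc$-sorting words coincide and the nested-subsets condition transfers verbatim, while for a $c$-sortable $w\not\ge s$ the skip of $s$ in the first block together with nestedness forces $s$ to be absent from the entire sorting word, putting $w$ in $W_{\br{s}}$. One remark on the step you flagged as delicate: the greedy bookkeeping can be avoided entirely. By the standard parabolic fact that every reduced word (in $W$) for an element of $W_{\br{s}}$ uses only letters from $\br{s}$, the subwords of $c^\infty$ that are reduced words for such a $w$ are supported on the non-$s$ positions; erasing the $s$-positions is an order-preserving identification of these positions with the positions of $(sc)^\infty$, so the lexicographically leftmost reduced subword is literally the same in both words. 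This yields your key lemma immediately, with no need to establish the greedy characterization of the leftmost reduced subword --- which is nonetheless correct as you stated it: since the running prefix $u$ satisfies $u\le w$, the condition that $\ell(ux)=\ell(u)+1$ and $uxu^{-1}\in\inv(w)$ is equivalent to $ux\le w$, and greedy choices can always be completed because every letter occurs infinitely often in $c^\infty$.
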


\begin{lemma}
\label{scs}
Let~$s$ be initial in~$c$ and suppose $w\ge s$.
Then~$w$ is $c$-sortable if and only if $sw$ is $scs$-sortable.
\end{lemma}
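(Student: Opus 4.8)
The plan is to prove the equivalence by passing through sorting words, showing first that the $c$-sorting word of $w$ and the $scs$-sorting word of $sw$ differ by a single leading letter, and then tracking how the ``weakly decreasing support'' condition behaves under the resulting shift of block boundaries. Since $s$ is initial in $c$, fix a reduced word $c = s\, s_2 \cdots s_n$ exhibiting this, so that $scs = s_2 \cdots s_n\, s$. The elementary fact driving everything is that deleting the leading letter $s$ from $c^\infty=(s\,s_2\cdots s_n)(s\,s_2\cdots s_n)\cdots$ leaves exactly the sequence of letters $(s_2\cdots s_n\, s)^\infty=(scs)^\infty$.

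First I would show that the $c$-sorting word for $w$ begins with $s$. The hypothesis $w\ge s$ is equivalent to $\ell(sw)<\ell(w)$, i.e.\ $s$ is a left descent of $w$, so some reduced word for $w$ starts with $s$; since $s$ is the first letter of $c^\infty$, the lexicographically leftmost (greedy) selection defining the $c$-sorting word must take this first $s$. Writing the $c$-sorting word as $s\cdot u$, the word $u$ is reduced for $sw$, and because lexicographic leftmostness is decided position by position, $u$ is the lexicographically leftmost subsequence of the tail of $c^\infty$ — namely of $(scs)^\infty$ — spelling $sw$. Thus $u$ is precisely the $scs$-sorting word for $sw$. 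This gives the required bijection of sorting words and makes clear that $w\ge s$ is exactly the condition making this comparison possible.

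The heart of the argument is then the comparison of support sequences. Let $S_1,S_2,\ldots$ be the supports of the successive blocks of $c^\infty$ occurring in the $c$-sorting word of $w$, and let $S'_1,S'_2,\ldots$ be the supports of the blocks of $(scs)^\infty$ in the $scs$-sorting word of $sw$. Because each $scs$-block consists of the tail $s_2\cdots s_n$ of one $c$-block together with the leading $s$ of the next, a direct bookkeeping gives
\[
S'_k = (S_k\setminus\set{s})\cup(\set{s}\cap S_{k+1}).
\]
Writing $T_k=S_k\setminus\set{s}$ and letting $\sigma_k$ be $1$ if $s\in S_k$ and $0$ otherwise, and using that $s$ appears only once per $c$-block, one checks that $(S_k)$ is weakly decreasing iff $(T_k)$ is weakly decreasing and $\sigma_1\ge\sigma_2\ge\cdots$, whereas $(S'_k)$ is weakly decreasing iff $(T_k)$ is weakly decreasing and $\sigma_2\ge\sigma_3\ge\cdots$. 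The first step forces $\sigma_1=1$, so $\sigma_1\ge\sigma_2$ is automatic and the two $\sigma$-conditions coincide. Hence the two support sequences are simultaneously weakly decreasing, which is exactly the equivalence of $c$-sortability of $w$ and $scs$-sortability of $sw$, in both directions at once.

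I expect the main obstacle to be the block-realignment bookkeeping: since the block boundaries for $c$ and for $scs$ are offset by one letter, one must argue carefully that the displayed formula for $S'_k$ is correct, and that the single occurrence of $s$ per $c$-block is what lets the two weakly-decreasing conditions be disentangled into a common $(T_k)$ condition plus a tail condition on the $\sigma_k$. Everything else reduces to the definition of sorting words and the descent characterization $w\ge s\iff\ell(sw)<\ell(w)$.
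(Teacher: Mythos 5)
Your proof is correct. The paper does not prove this lemma itself---it quotes it as Lemma~2.5 of the cited paper on sortable elements---but your argument is essentially the one given there: the hypothesis $w\ge s$ (equivalently $\ell(sw)<\ell(w)$) forces the greedy choice defining the $c$-sorting word of $w$ to take the initial letter $s$, deleting that letter leaves exactly the $scs$-sorting word of $sw$ sitting inside the tail $(scs)^\infty$ of $c^\infty$, and the two weakly-decreasing-support conditions agree because the shift of block boundaries only moves the occurrences of $s$ back one block, which is harmless since $s$ lies in the first support set.
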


The following is \cite[Proposition~3.13]{typefree}.
\begin{prop}\label{sort para}
If $v$ is $c$-sortable and $J\subseteq S$, then $v_J$ is $c'$-sortable, where $c'$ is the restriction of $c$ to $W_J$.
\end{prop}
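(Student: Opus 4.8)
The plan is to induct on the pair $(|S|,\ell(v))$ ordered lexicographically, peeling off an \emph{initial} letter $s$ of $c$ at each step and using Lemmas~\ref{sc} and~\ref{scs} to push the recursion down either in rank or in length. Write $c'$ for the restriction of $c$ to $W_J$. Most of the bookkeeping is handled by two purely order-theoretic facts about the projection $w\mapsto w_J$, valid for all $w$ whether or not $v$ is sortable. First, if $J\subseteq\br{s}$ then $W_J\subseteq W_{\br{s}}$, so intersecting inversion sets gives $v_J=(v_{\br{s}})_J$. Second, if $s\in J$ and $v\ge s$, then from $\inv(v)=\set{s}\sqcup s\,\inv(sv)\,s$ together with the identity $sW_Js=W_J$ (valid because $s\in W_J$), one reads off that $v_J\ge s$ and $(sv)_J=s\cdot v_J$.

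With $s$ initial in $c$ there are four cases, three of which are routine. If $s\in J$ and $v\not\ge s$, then Lemma~\ref{sc} realizes $v$ as an $sc$-sortable element of $W_{\br{s}}$; since $\inv(v)\subseteq W_{\br{s}}$ we have $v_J=v_{J\cap\br{s}}$, which by the rank induction is $(sc)|_{J\cap\br{s}}$-sortable, and a reverse application of Lemma~\ref{sc} inside $W_J$ promotes this to $c'$-sortability. If $s\notin J$ and $v\not\ge s$, the same lemma places $v\in W_{\br{s}}$ with $J\subseteq\br{s}$, and the rank induction applies directly. If $s\in J$ and $v\ge s$, then by the second order-theoretic fact $v_J\ge s$ and $(sv)_J=s\cdot v_J$; Lemma~\ref{scs} makes $sv$ an $scs$-sortable element of strictly smaller length, so the length induction shows $(sv)_J=s\cdot v_J$ is $(scs)|_J$-sortable, and since $(scs)|_J=sc's$ a final application of Lemma~\ref{scs} inside $W_J$ shows $v_J$ is $c'$-sortable.

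This leaves the genuinely hard case, $s\notin J$ and $v\ge s$. Here $J\subseteq\br{s}$, so the first order-theoretic fact gives $v_J=(v_{\br{s}})_J$, and the rank induction reduces the entire statement to a single \emph{crux}: for $s$ initial in $c$ and $v$ any $c$-sortable element, the projection $v_{\br{s}}$ is $(sc)$-sortable. I expect this crux to be the main obstacle. The trouble is that the naive recursion fails: writing $v=s\,(sv)$ with $sv$ an $scs$-sortable element of smaller length, one has $\inv(v_{\br{s}})=s\,\inv(sv)\,s\cap W_{\br{s}}$, whereas the inductive hypothesis only controls $\inv((sv)_{\br{s}})=\inv(sv)\cap W_{\br{s}}$. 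Since conjugation by $s$ does \emph{not} preserve $W_{\br{s}}$ (the group $sW_{\br{s}}s$ is a non-standard parabolic), these two sets differ, and $v_{\br{s}}$ cannot simply be identified with $(sv)_{\br{s}}$; already a two-generator example exhibits $v_{\br{s}}\neq(sv)_{\br{s}}$.

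To resolve the crux I would induct on $\ell(v)$: the case $v\not\ge s$ is immediate, since then $v\in W_{\br{s}}$ and $v_{\br{s}}=v$ is $sc$-sortable by Lemma~\ref{sc}. For $v\ge s$ the self-contained route is to track the conjugate parabolic $sW_{\br{s}}s$ through the recursion $v=s\,(sv)$, rewriting $\inv(v_{\br{s}})=s\bigl(\inv(sv)\cap sW_{\br{s}}s\bigr)s$, and then verifying that the resulting element of $W_{\br{s}}$ satisfies the weakly-decreasing-subset condition defining $sc$-sortability. The same conclusion is geometrically transparent, since the $sc$-Cambrian structure on $W_{\br{s}}$ is the restriction of the $c$-Cambrian structure, so that $v_{\br{s}}$ is exactly the $sc$-sortable element whose cone receives the projection of the cone of $v$; one must, however, be careful not to invoke a fan-restriction result that itself presupposes this proposition. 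The technical heart is precisely this compatibility between the projection $w\mapsto w_{\br{s}}$ and the initial-letter recursion, and establishing it is where essentially all of the work lies, the remaining reductions being routine.
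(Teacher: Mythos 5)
Your reduction is sound as far as it goes: both order-theoretic facts about $w\mapsto w_J$ are correct, the three cases you call routine are indeed handled correctly by Lemmas~\ref{sc} and~\ref{scs} together with the identities $v_J=v_{J\cap\br{s}}$ and $(sv)_J=s\,v_J$, and you have correctly located the obstruction, including the accurate observation (with a correct rank-two counterexample, e.g.\ $v=s_1s_2$, $c=s_1s_2$ in type $A_2$, where $v_{\br{s_1}}=e$ but $(s_1v)_{\br{s_1}}=s_2$) that the naive length induction fails. However, the proposal is not a proof. The crux you isolate --- $v$ $c$-sortable and $s$ initial in $c$ imply $v_{\br{s}}$ is $sc$-sortable --- carries essentially the entire content of the proposition, since it is itself the special case $J=\br{s}$ and every other case reduces to it; and what you offer for it is a restatement of the goal rather than an argument: ``verifying that the resulting element of $W_{\br{s}}$ satisfies the weakly-decreasing-subset condition'' is precisely what must be proved. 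Moreover, the bookkeeping you propose cannot close an induction as described. After one application of the recursion $v=s\,(sv)$ the ambient Coxeter element becomes $scs$, in which $s$ is final rather than initial, so Lemmas~\ref{sc} and~\ref{scs} can no longer be used to peel $s$ off; and the set $\inv(sv)\cap sW_{\br{s}}s$ lives in the reflection subgroup $sW_{\br{s}}s$, which is not a standard parabolic, so neither the definition of sortability nor any lemma available here says anything about it. Iterating the recursion conjugates this subgroup further at every step, and you identify no invariant that terminates the process. The geometric escape route is, as you yourself note, unavailable: the fan and $\pidown^c$ results (Theorem~\ref{pidown fibers}, Corollary~\ref{camb fan}) lie downstream of this proposition in the logical development, so invoking them would be circular.

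For context, the paper does not prove Proposition~\ref{sort para} at all; it is imported from \cite[Proposition~3.13]{typefree}, and the proof there is a genuine piece of work precisely at the point you could not handle. Your proposal is therefore best read as a correct reduction of the proposition to its hard kernel, together with an accurate diagnosis of why that kernel is hard --- but the kernel is left unproven, and that is a genuine gap, not a loose end.
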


Let $v$ be a $c$-sortable element of $W$ and let $a_1\cdots a_k$ be its $c$-sorting word.
For $r\in S$, there is a leftmost instance of $r$ in $c^\infty$ that is not in the subword of $c^\infty$ corresponding to $a_1\cdots a_k$.
Let $a_1\cdots a_j$ be the initial segment of $a_1\cdots a_k$ consisting of those letters that occur in $c^\infty$ before the omission of $r$.
Say~$a_1\cdots a_k$ \newword{skips} $r$ after $a_1\cdots a_j$.
If $a_1\cdots a_jr$ is a reduced word, then this is an \newword{unforced skip}.
Otherwise it is a \newword{forced skip}.
Define $C_c^r(v)$ to be the root $a_1\cdots a_j \cdot \alpha_r$. 
This is a positive root if and only if the skip is unforced.
Although this definition refers to the position of $a_1\cdots a_k$ in $c^\infty$, the root $C_c^r(v)$ is read off easily from $a_1\cdots a_k$ itself.
Write $C_c(v)$ for $\set{C_c^r(v):r\in S}$.

This definition of $C_c(v)$ is shown in \cite[Proposition~5.1]{typefree} to be equivalent to the following recursive definition:  
For~$s$ initial in $c$,
\[C_c(v)=\left\lbrace\begin{array}{ll}
C_{sc}(v)\cup\set{\alpha_s}&\mbox{if } v\not\ge s\\
s C_{scs}(sv)&\mbox{if } v\ge s
\end{array}\right.\]
The sets $C_{sc}(v)$ and $C_{scs}(sv)$ are defined by induction on the rank of $W$ or on the length of $v$.

A \newword{cover reflection} of $w\in W$ is an inversion $t$ of~$w$ such that $tw=ws$ for some $s\in S$.
The name ``cover reflection'' refers to the fact that~$w$ covers $tw$ in the weak order.
Indeed, the cover reflections of $w$ are the elements $wsw^{-1}$ such that $s\in S$ and $ws\covered w$.
(Here and for the remainder of the paper, we use the symbol $\covered$ to denote cover relations.)  
The set of cover reflections of~$w$ is written $\cov(w)$.
If $t$ is a cover reflection of~$w$ then $\inv(tw)=\inv(w)\setminus\set{t}$.

The following is \cite[Proposition~5.2]{typefree}.
\begin{prop}\label{lower walls}
Let $v$ be a $c$-sortable element.
The set of negative roots in $C_c(v)$ is $\set{-\beta_t:t\in\cov(v)}$.
\end{prop}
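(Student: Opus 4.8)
The plan is to prove the statement by induction, mirroring the recursive definition of $C_c(v)$ together with the recursive characterization of $c$-sortability in Lemmas~\ref{sc} and~\ref{scs}. I would induct on the pair $(\mathrm{rank}\,W,\,\ell(v))$ ordered lexicographically. The base cases are $v=e$, where $C_c(e)=\Pi$ has no negative roots and $\cov(e)=\emptyset$, and $W$ of rank $1$, both of which are immediate. For the inductive step I fix $s$ initial in $c$ and split into the two cases of the recursion.

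\textbf{Case 1} ($v\not\ge s$). Here $C_c(v)=C_{sc}(v)\cup\set{\alpha_s}$ and, by Lemma~\ref{sc}, $v$ is an $sc$-sortable element of $W_{\br{s}}$. Since $\alpha_s$ is positive it contributes no negative root, so it suffices to analyze $C_{sc}(v)$ inside $W_{\br{s}}$, which has smaller rank. I would first record the purely Coxeter-theoretic fact that for $v\in W_{\br{s}}$ one has $\cov_W(v)=\cov_{W_{\br{s}}}(v)$ (the right descents of $v$ lie in $\br{s}$ because $\ell(vs)=\ell(v)+1$, and the resulting cover reflections are reflections of $W_{\br{s}}$), and that $\beta_t$ for $t\in W_{\br{s}}$ is the same positive root whether computed in $W$ or in $W_{\br{s}}$. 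The induction hypothesis applied in $W_{\br{s}}$ then finishes this case.

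\textbf{Case 2} ($v\ge s$). Here $C_c(v)=s\,C_{scs}(sv)$ and $sv$ is $scs$-sortable with $\ell(sv)=\ell(v)-1$. Since $s$ permutes the positive roots other than $\alpha_s$ (sending $\alpha_s\mapsto-\alpha_s$), a root $\gamma\in C_{scs}(sv)$ yields a negative root $s\gamma$ precisely when $\gamma$ is negative and $\gamma\neq-\alpha_s$, or when $\gamma=\alpha_s$. By the induction hypothesis at $sv$, the negative roots of $C_{scs}(sv)$ are $\set{-\beta_t:t\in\cov(sv)}$, and none of these equals $-\alpha_s$ because $s\notin\cov(sv)$ (indeed $s\in\cov(sv)$ would force $\ell(v)<\ell(sv)$). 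Using $s\beta_t=\beta_{sts}$ for $t\neq s$, the negative roots of $s\,C_{scs}(sv)$ arising from negative $\gamma$ are exactly $\set{-\beta_{sts}:t\in\cov(sv)}$. I would then invoke the elementary identity that $\cov(v)$ is the disjoint union of $\set{sts:t\in\cov(sv)}$ and $\set{s}\cap\cov(v)$, which follows from writing cover reflections as $vs'v^{-1}$ with $v\alpha_{s'}<0$ and noting that $v\ge s$ rules out $v\alpha_{s'}=\alpha_s$, while $v\alpha_{s'}=-\alpha_s$ is the unique case producing the reflection $s$. Thus the negative roots of $C_c(v)$ are $\set{-\beta_t:t\in\cov(v)\setminus\set{s}}$, together with $-\alpha_s$ exactly when $\alpha_s\in C_{scs}(sv)$.

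Comparing with the desired set $\set{-\beta_t:t\in\cov(v)}$, everything matches except the boundary term, so the whole proof reduces to the single equivalence $\alpha_s\in C_{scs}(sv)\iff s\in\cov(v)$ — equivalently, since $s\in\cov(v)$ amounts to $(sv)^{-1}\alpha_s\in\Pi$, to the assertion that $\alpha_s$ appears in $C_{scs}(sv)$ if and only if $(sv)^{-1}\alpha_s$ is a simple root. This is the main obstacle: the induction hypothesis controls only the \emph{negative} roots of $C_{scs}(sv)$ and says nothing directly about the positive root $\alpha_s$. The subtlety is genuine, since the naive guess ``$\alpha_r\in C_c(w)\iff w^{-1}\alpha_r\in\Pi$'' is false in general (it fails when $r$ is initial rather than final in the Coxeter element), so the argument must exploit that $s$ is \emph{final} in $scs$ and that $sv\not\ge s$. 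I would handle it by strengthening the induction to prove simultaneously that, for a $c'$-sortable $w$ with $s$ final in $c'$ and $w\not\ge s$, one has $\alpha_s\in C_{c'}(w)\iff w^{-1}\alpha_s\in\Pi$; peeling off the generator initial in $c'$ and tracking the vector $w^{-1}\alpha_s$ through the recursion (rather than the membership of $\alpha_s$ itself) keeps the statement stable, and the rank/length bookkeeping closes exactly as above. Establishing this auxiliary equivalence cleanly is where essentially all of the work lies; the remainder is the routine case analysis sketched here.
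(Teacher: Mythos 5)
First, a point of reference: the paper does not prove this proposition at all --- it is quoted from \cite[Proposition~5.2]{typefree} --- so your attempt has to be judged on its own terms rather than against an in-paper argument. On those terms, your scaffolding is correct: the lexicographic induction, Case 1 (a $c$-sortable $v\not\ge s$ lies in $W_{\br{s}}$, and cover reflections and their associated roots agree whether computed in $W_{\br{s}}$ or in $W$), and the Case 2 bookkeeping ($s\notin\cov(sv)$, the decomposition $\cov(v)=\set{sts:t\in\cov(sv)}\sqcup\left(\set{s}\cap\cov(v)\right)$, and the equivalence $s\in\cov(v)\iff (sv)^{-1}\alpha_s\in\Pi$) are all valid. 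But what you have at the end is a correct \emph{reduction} of the proposition to the single equivalence $\alpha_s\in C_{scs}(sv)\iff s\in\cov(v)$, together with your own acknowledgement that this equivalence is ``where essentially all of the work lies.'' A proof that defers its entire content to an unproven auxiliary claim is not a proof; this is a genuine gap, and it is exactly the point where the naive induction fails, since the induction hypothesis sees only the negative roots of $C_{scs}(sv)$ and says nothing about whether the positive root $\alpha_s$ occurs in that set.

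Moreover, the strategy you sketch for the auxiliary claim does not work as stated. You propose to prove, by the same recursion, that for $c'$-sortable $w$ with $s$ final in $c'$ and $w\not\ge s$ one has $\alpha_s\in C_{c'}(w)\iff w^{-1}\alpha_s\in\Pi$, asserting that peeling off the initial generator ``keeps the statement stable.'' It does not. Let $s'$ be initial in $c'$ and suppose $w\ge s'$. Then $C_{c'}(w)=s'\,C_{s'c's'}(s'w)$, so the question becomes whether $s'\alpha_s$ lies in $C_{s'c's'}(s'w)$; if $s'$ and $s$ do not commute, $s'\alpha_s$ is not a simple root, and $s$ is not final in $s'c's'$ (conjugating by the initial letter $s'$ reverses the edge between $s'$ and $s$ in the orientation of the Coxeter diagram, so finality of $s$ survives precisely when $ss'=s's$); one cannot even guarantee $s'w\not\ge s$. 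So in exactly the hard case, every hypothesis of your strengthened statement is destroyed, and ``tracking the vector $w^{-1}\alpha_s$'' really means formulating and proving a more general statement about membership of roots of the form $x\alpha_s$ in $C_{c''}(u)$, with the compatibility between $x$, $c''$, and $u$ made precise. That formulation, or some substitute such as an analysis of forced skips and the letters following them in the sorting word (compare the machinery quoted in this paper as Proposition~\ref{Cc initial}), is the actual mathematical content of the proposition, and it is missing from your proposal.
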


The \newword{$c$-Cambrian semilattice} $\Camb_c$ is the subposet of the weak order on $W$ induced by the $c$-sortable elements.
It is a sub-meet-semilattice of the weak order on $W$ by \cite[Theorem~7.1]{typefree}.
We will also use the symbol $\Camb_c$ to denote the undirected Hasse diagram of $\Camb_c$.

\begin{example}\label{02-10 camb ex 2}
This example continues Example~\ref{02-10 camb ex 1}.
The following table lists each $c$-sortable elements $v$, represented by its $c$-sorting word (except that the identity is given by the symbol $e$).
For each $v$, we also give $\cov(v)$, $C_c^1(v)$, and $C_c^2(v)$.
\[\begin{array}{|l|c|c|c|}
\hline
v&\cov(v)&C_c^1(v)&C_c^2(v)\\\hline&&&\\[-11pt]\hline
e&\emptyset&\alpha_1&\alpha_2\\\hline
s_1&\set{s_1}&-\alpha_1&2\alpha_1+\alpha_2\\\hline
s_1s_2&\set{s_1s_2s_1}&\alpha_1+\alpha_2&-2\alpha_1-\alpha_2\\\hline
s_1s_2s_1&\set{s_2s_1s_2}&-\alpha_1-\alpha_2&\alpha_2\\\hline
s_1s_2s_1s_2&\set{s_1,s_2}&-\alpha_1&-\alpha_2\\\hline
s_2&\set{s_2}&\alpha_1&-\alpha_2\\\hline
\end{array}\]
The Cambrian lattice, in this case, has cover relations 
\[e\covered s_1\covered s_1s_2\covered s_1s_2s_1\covered s_1s_2s_1s_2\quad\text{and}\quad e\covered s_2\covered s_1s_2s_1s_2.\] 
\end{example}

\subsection{The Cambrian framework}\label{camb frame sec}
In this section, we show that $(\Camb_c,C_c)$ is, in essence, a descending reflection framework.
But there is a little more work to do before we can make a precise statement.
Specifically, we need to add some half-edges to the Hasse diagram of $\Camb_c$ to get an $n$-regular quasi-graph.
Also, as it stands, the labels $C_c$ are not assigned to edges incident to a vertex $v$, but rather are indexed by $S$.
To fill in these pieces of the Cambrian framework, we will need more background on sortable elements and Cambrian lattices.

Suppose $w$ is any element of $W$.
Among the $c$-sortable elements that are $\leq w$, there is a maximal one (\cite[Corollary~6.2]{typefree}); we denote this maximal element by $\pidown^c(w)$.
One can also define $\pidown^c$ recursively; see \cite[Section~6]{typefree}.
The following is \cite[Theorem~7.3]{typefree}.
\begin{theorem} \label{semilattice hom}
For~$U$ any subset of~$W$, 
if~$U$ is nonempty then $\Meet \pidown^c(U)=\pidown^c(\Meet U)$ and if~$U$ has an upper bound then $\Join \pidown^c(U)=\pidown^c( \Join U)$.
\end{theorem}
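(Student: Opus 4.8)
The plan is to derive both halves of the theorem from three elementary features of the downward projection $\pidown^c$, reserving the recursive machinery of \cite{typefree} for the one inequality that genuinely needs it. First I would record that $\pidown^c$ is \emph{idempotent}, \emph{decreasing} (that is, $\pidown^c(w)\le w$), and \emph{order-preserving}. The first two are immediate from the defining property (\cite[Corollary~6.2]{typefree}) that $\pidown^c(w)$ is the maximal $c$-sortable element weakly below $w$. Order-preservation follows by the same maximality: if $u\le w$, then $\pidown^c(u)$ is a $c$-sortable element with $\pidown^c(u)\le u\le w$, hence $\pidown^c(u)\le\pidown^c(w)$.

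The meet identity is then formal. For nonempty $U$, order-preservation gives $\pidown^c(\Meet U)\le\pidown^c(u)$ for each $u\in U$, so $\pidown^c(\Meet U)\le\Meet\pidown^c(U)$. Conversely, $\Meet\pidown^c(U)\le\Meet U$ since each $\pidown^c(u)\le u$, and $\Meet\pidown^c(U)$ is $c$-sortable because $\Camb_c$ is a sub-meet-semilattice of the weak order (\cite[Theorem~7.1]{typefree}); a $c$-sortable element below $\Meet U$ is below $\pidown^c(\Meet U)$ by maximality, which gives the reverse inequality and hence equality.

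For the join identity, assume $U$ is bounded above, so that $\Join U$ exists and $\pidown^c(U)$, being bounded above by $\pidown^c(\Join U)$, also has a join. Order-preservation immediately yields the easy inequality $\Join\pidown^c(U)\le\pidown^c(\Join U)$. The reverse inequality $\pidown^c(\Join U)\le\Join\pidown^c(U)$ is the real content, and I would attack it by the double induction on the rank of $W$ and on the length $\ell(\Join U)$ that underlies Lemmas~\ref{sc} and~\ref{scs}. Fixing $s$ initial in $c$, I would use the recursive description of $\pidown^c$ from \cite[Section~6]{typefree}: one has $\pidown^c(w)=\pidown^{sc}(w_{\br s})$ when $w\not\ge s$, and $\pidown^c(w)=s\,\pidown^{scs}(sw)$ when $w\ge s$.

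Two supporting facts drive the induction: that the parabolic restriction $w\mapsto w_{\br s}$ is a homomorphism of the weak order onto that of $W_{\br s}$, preserving meets and bounded joins and compatible with $\pidown$ through Proposition~\ref{sort para}; and the standard weak-order fact that a simple reflection lies in $\inv(\Join U)$ only if it lies in some $\inv(u)$, so that $\Join U\ge s$ holds exactly when some $u\ge s$. In the case $\Join U\not\ge s$, every $u\not\ge s$ as well, and both sides are computed inside $W_{\br s}$ for the Coxeter element $sc$, so the rank induction closes the case. The case $\Join U\ge s$ is the main obstacle: here only some of the $u$ satisfy $u\ge s$, whereas $\pidown^c(\Join U)\ge s$ always holds (because the $c$-sortable element $s$ lies below $\Join U$). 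I would first deduce $\Join\pidown^c(U)\ge s$ from the supporting facts, then apply left multiplication by $s$ and pass to $scs$-sortability via Lemma~\ref{scs}, reducing to a join identity among the strictly shorter elements $sw$ so that the length induction applies. The delicate point, where I expect the bulk of the work to lie, is reconciling the elements with $u\not\ge s$ against those with $u\ge s$ after multiplying by $s$, namely verifying that left multiplication by $s$ on the up-set of $s$ transports the join correctly; this is exactly the step that the recursive structure of sortable elements is designed to control.
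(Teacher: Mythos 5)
A preliminary remark: the paper does not prove this statement at all --- it is quoted verbatim from \cite[Theorem~7.3]{typefree} --- so there is no in-paper argument to compare against, and your proposal has to be judged on its own terms. Up to the point you yourself flag, it is correct. The maximum-sortable-element-below characterization does make $\pidown^c$ decreasing, idempotent and order-preserving, and together with closure of the sortable elements under nonempty meets this gives the meet identity formally, exactly as you say. The easy inequality $\Join\pidown^c(U)\le\pidown^c(\Join U)$ is also fine, and you are right that the reverse inequality carries all the content: it is genuinely not formal, since an order-preserving, decreasing, idempotent map whose image is closed under meets and bounded joins need not commute with bounded joins (on the four-element lattice $0<a,b<1$ with image $\set{0,a,1}$ one has $\pi(a\join b)=1$ but $\pi(a)\join\pi(b)=a$).

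The gap is that your inductive scheme for that reverse inequality does not close, for a concrete reason. Left multiplication by $s$ interacts with the recursion only for elements $\ge s$, so in the main case $\Join U\ge s$ you must do something with the $u\not\ge s$, and the natural repairs reproduce the problem at the same inductive level. Replacing each such $u$ by $u\join s$ (so that everything lies in the up-set of $s$) requires, in order to come back down, the inequality $\pidown^c(u\join s)\le\pidown^c(u)\join s$, which is precisely the join identity for the pair $\set{u,s}$, and $u\join s$ can equal $\Join U$; splitting $U$ into $\set{u\ge s}$ and $\set{u\not\ge s}$ likewise requires the identity for a two-element set whose join is again $\Join U$. The degenerate case shows the circularity starkly: for $U=\set{u,s}$ with $u\not\ge s$, your reduction either returns the same instance or the vacuous statement $\pidown^c(u\join s)\le\pidown^c(u\join s)\join s$, so the induction on rank and $\ell(\Join U)$ never gets started. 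In addition, both supporting facts are doing real work and are proved nowhere: the claim that $w\mapsto w_{\br s}$ preserves bounded joins, and the claim that $\Join U\ge s$ forces some $u\ge s$, are true but are theorems about the weak order (their proofs need biconvexity of inversion sets, and note that $u\not\ge s$ does not put $u$ in $W_{\br s}$, so even your ``easy'' case $\Join U\not\ge s$ rests entirely on the first of them); neither is among the results quoted in this paper. In short, the meet half stands, but the join half is a plan whose central case --- the case where the actual argument of \cite{typefree} does its work --- is left unresolved.
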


The following is \cite[Theorem~6.1]{typefree}, or it can be obtained as an immediate corollary of Theorem~\ref{semilattice hom}. 
\begin{theorem} \label{order preserving}
$\pidown^c$ is order preserving.
\end{theorem}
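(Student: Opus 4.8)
The plan is to derive this immediately from the meet-preservation half of Theorem~\ref{semilattice hom}, so that essentially no new work is required beyond invoking that result. The key observation is that an order relation in a meet-semilattice can be detected by meets: for $x,y\in W$ we have $x\le y$ if and only if $x\meet y=x$. Since the weak order is a meet-semilattice, this reformulation is available to us throughout.

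Concretely, I would start by taking an arbitrary pair $x\le y$ in the weak order, and setting $U=\set{x,y}$, which is a nonempty subset of $W$. Because $x\le y$, the meet $\Meet U=x\meet y$ equals $x$. Applying the first assertion of Theorem~\ref{semilattice hom} to this $U$ then yields
\[\pidown^c(x)\meet\pidown^c(y)=\Meet\pidown^c(U)=\pidown^c(\Meet U)=\pidown^c(x).\]
Reading the equation $\pidown^c(x)\meet\pidown^c(y)=\pidown^c(x)$ back through the meet-characterization of the order gives exactly $\pidown^c(x)\le\pidown^c(y)$. Since $x\le y$ was arbitrary, this shows $\pidown^c$ is order preserving, completing the argument.

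Honestly, there is no real obstacle here: the entire mathematical content is packaged into Theorem~\ref{semilattice hom}, and the present statement is a formal consequence of the semilattice-homomorphism property specialized to two-element subsets. The only point that needs a word of care is verifying that the hypotheses of Theorem~\ref{semilattice hom} are met, namely that $U$ is nonempty (immediate) and that the relevant meet $\Meet U$ exists (guaranteed since the weak order is a meet-semilattice, so no upper-bound hypothesis is needed for the meet direction). Accordingly, I would keep the proof to just the two displayed lines above, noting explicitly that it is an immediate corollary of Theorem~\ref{semilattice hom} rather than reproving it from scratch via the recursive description of $\pidown^c$.
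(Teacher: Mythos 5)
Your proof is correct and is exactly the route the paper itself indicates: the paper states that Theorem~\ref{order preserving} ``can be obtained as an immediate corollary of Theorem~\ref{semilattice hom},'' and your argument via two-element sets $U=\set{x,y}$ and the meet-characterization of the order is the standard way to make that corollary explicit. Nothing is missing; the nonemptiness and existence-of-meet hypotheses are handled correctly.
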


We now describe the cover relations in $\Camb_c$.
\begin{lemma}\label{Cambrian covers}
Let $v$ be a $c$-sortable element.
Then $v'\covered v$ in the $c$-Cambrian semilattice if and only if $v'=\pidown^c(tv)$ for some $t\in\cov(v)$.
Furthermore if $t_1$ and $t_2$ are distinct cover reflections of $v$, then $\pidown^c(t_1v)\neq\pidown^c(t_2v)$.
\end{lemma}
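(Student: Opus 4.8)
The plan is to deduce everything from two facts established earlier: that $\pidown^c$ is order preserving (Theorem~\ref{order preserving}) and, crucially, that it is a homomorphism for bounded joins (Theorem~\ref{semilattice hom}). I will combine these with the elementary weak-order facts recalled above, namely that $v\covers tv$ in the weak order for each $t\in\cov(v)$, and that $\inv(tv)=\inv(v)\setminus\set{t}$.

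First I would prove the ``if'' direction. Fix $t\in\cov(v)$ and set $v'=\pidown^c(tv)$. Since $\pidown^c(tv)\le tv<v$, the element $v'$ is a $c$-sortable element strictly below $v$, so it suffices to rule out any $c$-sortable $v''$ with $v'\le v''\le v$ other than $v'$ and $v$. Because $v''\le v$ and $tv\le v$, the join $v''\join tv$ exists and lies in the weak-order interval $[tv,v]$, which equals $\set{tv,v}$ as $v\covers tv$. Applying the join homomorphism to $\set{v'',tv}$, and using $\pidown^c(v'')=v''$, $\pidown^c(tv)=v'$, and $v'\le v''$, gives $v''=\pidown^c(v''\join tv)$. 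If $v''\join tv=tv$ this reads $v''=v'$, and if $v''\join tv=v$ it reads $v''=v$. Hence $v'\covered v$ in $\Camb_c$. For the ``only if'' direction, suppose $v'\covered v$ in $\Camb_c$. Choosing a saturated chain from $v'$ to $v$ in the weak order, its top step is a weak-order cover $tv\covered v$ with $t\in\cov(v)$, and $v'\le tv$. Order preservation gives $v'=\pidown^c(v')\le\pidown^c(tv)$, while $\pidown^c(tv)\le tv<v$; thus $\pidown^c(tv)$ is a $c$-sortable element with $v'\le\pidown^c(tv)<v$, and since $v'$ is covered by $v$ in $\Camb_c$ this forces $v'=\pidown^c(tv)$.

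For the injectivity statement, let $t_1\ne t_2$ be cover reflections. The key observation is that $\inv(t_1v)\cup\inv(t_2v)=(\inv(v)\setminus\set{t_1})\cup(\inv(v)\setminus\set{t_2})=\inv(v)$, while $t_1v\join t_2v\le v$ gives the reverse containment; comparing inversion sets forces $t_1v\join t_2v=v$. Applying the join homomorphism to $\set{t_1v,t_2v}$ then yields $\pidown^c(t_1v)\join\pidown^c(t_2v)=\pidown^c(v)=v$. Were $\pidown^c(t_1v)=\pidown^c(t_2v)$ a single element $v'$, the left side would equal $v'\le t_1v<v$, a contradiction.

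The arguments are short; the one point that needs care is the repeated appeal to the existence of joins, which is legitimate only because in each case the elements involved share the explicit upper bound $v$, so that the join half of Theorem~\ref{semilattice hom} genuinely applies. I expect the main subtlety to be the bookkeeping of the inversion-set computations and confirming that the weak-order interval $[tv,v]$ is exactly $\set{tv,v}$; everything else is a direct application of the homomorphism property.
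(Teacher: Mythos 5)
Your proof is correct, and it rests on the same two pillars as the paper's argument---Theorem~\ref{order preserving} and the join half of Theorem~\ref{semilattice hom}---with your ``only if'' direction essentially identical to the paper's. The difference is in how the other two claims are organized. The paper first proves something stronger than the injectivity assertion: for distinct $t_1,t_2\in\cov(v)$, the elements $\pidown^c(t_1v)$ and $\pidown^c(t_2v)$ are \emph{incomparable} (each is $<v$ by order preservation, yet their join is $v$ by the join homomorphism). It then obtains the ``if'' direction indirectly: since $\pidown^c(tv)<v$, there is a $c$-sortable $v'$ with $\pidown^c(tv)\le v'\covered v$; the ``only if'' direction forces $v'=\pidown^c(t'v)$ for some $t'\in\cov(v)$, and incomparability forces $t'=t$. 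You instead prove the ``if'' direction directly: for any $c$-sortable $v''$ with $\pidown^c(tv)\le v''\le v$, applying the join homomorphism to $\set{v'',tv}$ and using that the weak-order interval $[tv,v]$ is exactly $\set{tv,v}$ pins $v''$ down to $\pidown^c(tv)$ or $v$. This makes the three claims logically independent of one another, whereas the paper's ``if'' direction leans on both of the others; you also supply the inversion-set verification that $t_1v\join t_2v=v$, which the paper asserts without comment. What you give up is the incomparability statement, which is stronger than the non-equality the lemma asks for (though the paper only uses it internally, so nothing downstream is affected). Both routes are rigorous; yours is a clean, slightly more self-contained alternative.
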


Lemma~\ref{Cambrian covers} follows from a general result (see e.g.\ \cite[Proposition~2.2]{con_app}) on lattice congruences on a finite lattice, because Theorem~\ref{semilattice hom} implies that the fibers of $\pidown^c$ are a lattice congruence on the interval $[1,v]$. 
We provide a direct proof as well.

\begin{proof}
We first verify the second assertion by proving the stronger statement that $\pidown^c(t_1v)$ and $\pidown^c(t_2v)$ are incomparable.
Let $t_1$ and $t_2$ be distinct cover reflections of $v$.
Then the join of $t_1v$ and $t_2v$ exists and equals $v$, so Theorem~\ref{semilattice hom} says that 
\[v=\pidown^c(v)=\pidown^c(t_1v\join t_2v)=\pidown^c(t_1v)\join\pidown^c(t_2v).\]
Since $t_1v<v$ and $t_2v<v$, Theorem~\ref{order preserving} says that $\pidown^c(t_1v)<v$ and $\pidown^c(t_2v)<v$.
The elements $\pidown^c(t_1v)$ and $\pidown^c(t_2v)$ must in particular be incomparable to join to $v$.

Suppose $v'\covered v$ in $\Camb_c$.
Then $v'<v$ in the weak order, so there exists an element $w\in W$ with $v'\le w\covered v$ in the weak order, and necessarily, $w=tv$ for some $t\in\cov(v)$.
The element $\pidown^c(tv)$ has $v'\le\pidown^c(tv)<v$ in $\Camb_c$.
Since $v'\covered v$ in $\Camb_c$, we conclude that $v'=\pidown^c(tv)$.

Finally, consider any $t \in \cov(v)$. By Theorem~\ref{order preserving}, $\pidown^c(tv)<v$.
Thus there exists a $c$-sortable element $v'$ with $\pidown^c(tv)\le v'\covered v$ in $\Camb_c$.
By the previous paragraph, $v'=\pidown^c(t'v)$ for some $t'\in\cov(v)$.
But we showed above that $\pidown^c(tv)$ and $\pidown^c(t'v)$ are incomparable if $t'\neq t$. 
We conclude that $t'=t$, so that $\pidown^c(tv)=v'$.
\end{proof}

For each $c$-sortable element $v$, define a cone  
\[\Cone_c(v)=\bigcap_{\beta \in C_c(v)}\set{x\in V^*:\br{x,\beta} \geq 0}.\]
Once we have completed the construction of a Cambrian framework, this $\Cone_c(v)$ will coincide with the cone $\Cone(v)$ defined in Section~\ref{frame sec}.
Recall the cone $D = \bigcap_{s \in S} \set{x\in V^*: \br{x,\alpha_s}\ge 0}$.
The cones $wD$, for $w\in W$, are distinct, and form a fan whose support is called the \newword{Tits cone}.
The following is \cite[Theorem~6.3]{typefree}.

\begin{theorem} \label{pidown fibers}
Let $v$ be $c$-sortable. Then $\pidown^c(w)=v$ if and only if $w D \subseteq \Cone_c(v)$.
\end{theorem}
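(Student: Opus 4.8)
The plan is to convert the geometric containment into a purely root-theoretic condition and then induct on the rank $|S|$ and the length $\ell(v)$, running in parallel the recursions for $c$-sortability (Lemmas~\ref{sc} and~\ref{scs}), for the label sets $C_c(v)$, and for $\pidown^c$ (the recursive description from~\cite[Section~6]{typefree}). Since $D$ is the cone dual to the nonnegative span of $\Pi$, a real root $\gamma$ satisfies $\br{y,\gamma}\ge 0$ for all $y\in D$ exactly when $\gamma$ is positive; writing $x\in wD$ as $x=wy$ with $y\in D$ and using $\br{wy,\beta}=\br{y,w^{-1}\beta}$ yields the reformulation
\[ wD\subseteq\Cone_c(v)\iff w^{-1}\beta\in\Phi^+\ \text{for every}\ \beta\in C_c(v). \]
Call the right-hand condition $P(w,v)$. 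I record two dictionary facts: $w\ge s\iff w^{-1}\alpha_s\in\Phi^-$, and (Proposition~\ref{lower walls}) the negative elements of $C_c(v)$ are exactly $\set{-\beta_t:t\in\cov(v)}$, so the negative part of $P(w,v)$ says precisely that $\cov(v)\subseteq\inv(w)$.

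With this in hand the whole argument is algebraic, and the inductive statement to prove is $P(w,v)\iff\pidown^c(w)=v$, for every Coxeter element $c$ of every standard parabolic subgroup, every $c$-sortable $v$, and every $w$ in that subgroup. The base case $v=e$ is immediate, since $C_c(e)=\Pi$ makes $P(w,e)$ equivalent to $w=e$, matching $\pidown^c(w)=e\iff w=e$. For the inductive step I fix $s$ initial in $c$ and compare the two sides across the hyperplane $\alpha_s^\perp$, using $C_c(v)=C_{sc}(v)\cup\set{\alpha_s}$ when $v\not\ge s$ and $C_c(v)=s\,C_{scs}(sv)$ when $v\ge s$, together with $\pidown^c(w)=\pidown^{sc}(w_{\br{s}})$ when $w\not\ge s$ and $\pidown^c(w)=s\,\pidown^{scs}(sw)$ when $w\ge s$. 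In the matching case $v\ge s$, $w\ge s$, the identity $w^{-1}(s\gamma)=(sw)^{-1}\gamma$ shows that $P(w,v)$ is literally the analogous condition $P(sw,sv)$ for $scs$; as $\ell(sv)<\ell(v)$, induction identifies it with $\pidown^{scs}(sw)=sv$, i.e. $\pidown^c(w)=v$. In the matching case $v\not\ge s$, $w\not\ge s$, the condition from $\alpha_s\in C_c(v)$ is automatic (since $w\not\ge s$), while the remaining conditions involve only roots $\beta\in C_{sc}(v)$ whose reflections lie in $W_{\br{s}}$; because $\inv(w_{\br{s}})=\inv(w)\cap W_{\br{s}}$, the sign of $w^{-1}\beta$ agrees with that of $w_{\br{s}}^{-1}\beta$ for such $\beta$, so these conditions match $P(w_{\br{s}},v)$ in the lower-rank group $W_{\br{s}}$, and induction on rank gives $\pidown^{sc}(w_{\br{s}})=v$, i.e. $\pidown^c(w)=v$.

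It remains to dispatch the two cross-cases, in which $v$ and $w$ sit on opposite sides of $\alpha_s^\perp$, and this is where I expect the real work. On the $\pidown$ side the cross-cases are painless: since $\pidown^c(w)\le w$, if $w\not\ge s$ then $\pidown^c(w)\not\ge s$, while if $w\ge s$ then $\pidown^c(w)=s\,\pidown^{scs}(sw)\ge s$; so $\pidown^c(w)$ lies on the same side of $\alpha_s^\perp$ as $w$ and cannot equal a $v$ on the other side. The corresponding statement for $P$ is the crux: I must show $P(w,v)$ already forces $v$ and $w$ onto the same side. When $v\not\ge s$ this is easy, as $\alpha_s\in C_c(v)$ forces $w^{-1}\alpha_s\in\Phi^+$, hence $w\not\ge s$. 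When $v\ge s$ I need the auxiliary fact that $-\alpha_s$ lies in the nonnegative span of $C_c(v)$ (equivalently $\Cone_c(v)\subseteq\set{x:\br{x,\alpha_s}\le 0}$); granting it, $P(w,v)$ forces $w^{-1}(-\alpha_s)$ into the nonnegative span of $\Pi$, whence $w^{-1}\alpha_s\in\Phi^-$ and $w\ge s$.

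This halfspace fact is the main obstacle. It does not fall out of a single application of the recursion, because $C_c(v)=s\,C_{scs}(sv)$ conjugates $s$ into a \emph{final} generator of $scs$, where the clean membership statement $\alpha_s\in C_c(v)\iff v\not\ge s$ (available for initial generators) is no longer directly applicable. I would therefore strengthen the inductive hypothesis to carry the auxiliary statement ``$\Cone_c(v)$ lies on the negative side of $\alpha_{s'}^\perp$ whenever $v\ge s'$ for $s'$ initial in $c$'' through both recursive branches, peeling off a different initial generator $s'\neq s$ when needed and tracking how $s'\alpha_s$ and the cones transform. Once this halfspace fact is established, all four cases close and the induction completes.
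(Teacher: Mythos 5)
You should first be aware that this paper contains no proof of Theorem~\ref{pidown fibers} to compare against: the theorem is imported verbatim from \cite[Theorem~6.3]{typefree}, so the real comparison is with the proof there, which (like yours) runs a parallel induction on rank and length through the recursions for sortability (Lemmas~\ref{sc} and~\ref{scs}), for $C_c$, and for $\pidown^c$. Within that architecture, your translation of $wD\subseteq\Cone_c(v)$ into the condition that $w^{-1}\beta$ is a positive root for every $\beta\in C_c(v)$ is correct, your base case is correct, both matching cases are handled correctly, and the cross-case $v\not\ge s$, $w\ge s$ is correctly dispatched using $\alpha_s\in C_c(v)$.

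The remaining cross-case, however, is a genuine gap, not a loose end. You correctly isolate the ``halfspace fact'' --- if $s$ is initial in $c$ and $v\ge s$, then $-\alpha_s$ lies in the nonnegative linear span of $C_c(v)$ --- but you do not prove it, and the strengthening you sketch does not close. Applying the recursion $C_c(v)=s\,C_{scs}(sv)$ converts the claim into ``$\alpha_s$ lies in the nonnegative span of $C_{scs}(sv)$,'' where $s$ is now \emph{final} in $scs$ and $sv\not\ge s$; if one then peels an initial generator $s'$ of $scs$ not commuting with $s$, in the branch $sv\ge s'$ the claim becomes a statement about the non-simple positive root $s'\alpha_s$ and a Coxeter element in which $s$ is neither initial nor final. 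So neither your auxiliary statement (phrased for initial generators) nor its final-generator analogue is stable under the two recursive branches, and identifying an inductive statement that is both true and stable is precisely the hard content of \cite[Theorem~6.3]{typefree}. The needle is narrow: the naive generalization ``$\beta_t$ lies in the nonnegative span of $C_c(v)$ for every reflection $t\notin\inv(v)$'' is already false in type $A_2$ with $c=s_1s_2$ and $v=s_2$, where $C_c(v)=\set{\alpha_1,-\alpha_2}$ does not nonnegatively span $\alpha_1+\alpha_2$. Softer arguments also fail: since a simple root is never a positive combination of other positive roots, one cannot force $w\ge s$ from $\cov(v)\subseteq\inv(w)$ and biconvexity of inversion sets alone; the negative labels of $C_c(v)$ must enter essentially, as in $-\alpha_1=\alpha_2+(-\alpha_1-\alpha_2)$ for $v=s_1s_2$. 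As written, your proposal establishes the theorem only modulo this fact, which is essentially the hard direction of the theorem itself.
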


The following lemma is the last step needed to let us make a precise statement about Cambrian frameworks.
\begin{lemma}\label{cov beta}
If $v'\covered v$ in the $c$-Cambrian semilattice, then there exists a unique root $\beta$ such that $\beta\in C_c(v')$ and $-\beta\in C_c(v)$.
The root $\beta$ is the positive root $\beta_t$ associated to the cover reflection $t$ of $v$ such that $v'=\pidown^c(tv)$.
\end{lemma}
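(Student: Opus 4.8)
The plan is to identify the cover reflection governing the edge $v'\covered v$ and then read off the claimed root from the two descriptions of $C_c$ supplied by Proposition~\ref{lower walls} and by the fiber description of $\pidown^c$ in Theorem~\ref{pidown fibers}. First I would invoke Lemma~\ref{Cambrian covers} to obtain the unique cover reflection $t\in\cov(v)$ with $v'=\pidown^c(tv)$. Proposition~\ref{lower walls} then gives immediately that $-\beta_t\in C_c(v)$, since $t\in\cov(v)$; thus the candidate is $\beta=\beta_t$, and the only remaining tasks are to show $\beta_t\in C_c(v')$ and that no other root has the stated property.

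For the membership $\beta_t\in C_c(v')$ I would argue geometrically inside $V^*$. Writing $tv=vs$ with $s\in S$, the cover reflection is $t=vsv^{-1}$ and $\beta_t=-v\alpha_s$, so the adjacent chambers $vD$ and $tvD=vsD$ are separated by the hyperplane $\beta_t^\perp$, with $vD$ on the side $\br{x,\beta_t}\le 0$ and $tvD$ on the side $\br{x,\beta_t}\ge 0$. By Theorem~\ref{pidown fibers}, $vD\subseteq\Cone_c(v)$ and $tvD\subseteq\Cone_c(v')$; moreover $-\beta_t\in C_c(v)$ forces $\beta_t^\perp$ to support a facet of $\Cone_c(v)$. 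Since the neighboring cone $\Cone_c(v')$ contains $tvD$, which lies on the opposite side of $\beta_t^\perp$ and abuts this same wall, $\beta_t^\perp$ must support a facet of $\Cone_c(v')$ as well, with $\Cone_c(v')$ on the side $\br{x,\beta_t}\ge 0$. Because the defining set $C_c(v')$ is a basis of facet normals of the simplicial cone $\Cone_c(v')$ (from the Cambrian fan theory of \cite{typefree}), the element of $C_c(v')$ cutting out this facet is a positive multiple of $\beta_t$; being itself a root, it equals $\beta_t$.

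For uniqueness, suppose $\beta$ is any root with $\beta\in C_c(v')$ and $-\beta\in C_c(v)$. As $-\beta$ is a negative root of $C_c(v)$, Proposition~\ref{lower walls} gives $\beta=\beta_{t''}$ for some $t''\in\cov(v)$, and the argument of the previous paragraph shows that $\beta_{t''}^\perp$ is the supporting hyperplane of a common facet of $\Cone_c(v)$ and $\Cone_c(v')$. The main obstacle is precisely to pin down that this supporting hyperplane is forced to be $\beta_t^\perp$: since $v\ne v'$, the cones $\Cone_c(v)$ and $\Cone_c(v')$ are distinct maximal cones of the Cambrian fan, so they meet in a single common face, which here is $(n-1)$-dimensional and hence determines a unique hyperplane. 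Thus $\beta_{t''}^\perp=\beta_t^\perp$, and since $\beta_t$ and $\beta_{t''}$ are both positive roots we conclude $\beta_{t''}=\beta_t$, i.e. $t''=t$ and $\beta=\beta_t$. I expect the delicate point throughout to be the clean use of the fan and simplicial-cone structure of $\set{\Cone_c(v)}$ from \cite{typefree} together with Theorem~\ref{pidown fibers}, since the framework axioms (and hence Proposition~\ref{basis} and Corollary~\ref{cor:BdyFacet}) are not yet available at this stage of the construction.
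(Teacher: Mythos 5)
Your existence argument---Lemma~\ref{Cambrian covers} to get $t$, Proposition~\ref{lower walls} for $-\beta_t\in C_c(v)$, and then Theorem~\ref{pidown fibers} together with the common wall of $vD$ and $tvD$ to force $\beta_t\in C_c(v')$---is correct and is essentially the paper's argument. The genuine gap is in your uniqueness step: you pin down the supporting hyperplane by asserting that $\Cone_c(v)$ and $\Cone_c(v')$ are distinct maximal cones of the Cambrian fan and therefore meet in a single common face. That fan property is not available at this point. It is Corollary~\ref{camb fan} of this paper; in \cite{typefree} it was only \emph{conjectured} (see \cite[Section~9.1]{typefree}), and its proof here goes through Theorems~\ref{camb frame} and~\ref{descending good}, i.e., through the Cambrian framework whose very construction (the assignment of labels to edges of $\Camb_c$) rests on Lemma~\ref{cov beta}. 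So, as written, the uniqueness argument is circular---you correctly flagged this danger for Proposition~\ref{basis} and Corollary~\ref{cor:BdyFacet}, but the fan structure is off-limits for exactly the same reason. A second, smaller slip: for an arbitrary root $\beta$ with $\beta\in C_c(v')$ and $-\beta\in C_c(v)$, you assert that $-\beta$ is a \emph{negative} root and apply Proposition~\ref{lower walls} to $v$; nothing rules out a priori that $\beta$ is negative, in which case Proposition~\ref{lower walls} would have to be applied to $v'$ instead.

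Both problems disappear if you argue as the paper does, using only chamber geometry and Theorem~\ref{pidown fibers}. For any root $\beta$ with $\beta\in C_c(v')$ and $-\beta\in C_c(v)$, the definitions of the cones give $\Cone_c(v)\subseteq\set{x:\br{x,\beta}\le 0}$ and $\Cone_c(v')\subseteq\set{x:\br{x,\beta}\ge 0}$, so $\beta^\perp$ contains $\Cone_c(v)\cap\Cone_c(v')$ and weakly separates $vD$ from $tvD$. But $vD$ and $tvD$ are adjacent chambers: their common wall is $(n-1)$-dimensional, spans $\beta_t^\perp$, and lies in $\Cone_c(v)\cap\Cone_c(v')\subseteq\beta^\perp$. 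Hence $\beta^\perp=\beta_t^\perp$, so $\beta=\pm\beta_t$, and $\beta=\beta_t$ because $tvD\subseteq\Cone_c(v')$ lies on the nonnegative side of $\beta$. This handles $\beta$ of either sign and needs no fan structure, only the fact that the unique reflection hyperplane separating the adjacent chambers $vD$ and $tvD$ is $\beta_t^\perp$.
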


\begin{proof}
By Lemma~\ref{Cambrian covers}, $v=\pidown^c(tv)$ for some $t\in\cov(v)$.
By Theorem~\ref{pidown fibers}, $vD\subseteq\Cone_c(v)$ and $tvD\subseteq\Cone_c(v')$, and furthermore, $vD\not\subseteq\Cone_c(v')$ and $tvD\not\subseteq\Cone_c(v)$.
We know that $vD$ and $tvD$ share a codimension-1 facet contained in $(\beta_t)^\perp$, so the cones $\Cone_c(v)$ and $\Cone_c(v')$ each have a facet contained in $(\beta_t)^\perp$. 
So one of $\{ \beta_t, - \beta_t \}$ is in $C_c(v')$ and the other is in $C_c(v)$.
Furthermore, $\Cone_c(v)$ is separated from $D$ by $(\beta_t)^\perp$,  so $\beta_t\in C_c(v')$ and $-\beta_t\in C_c(v)$.

Since $vD$ and $tvD$ are not separated by any hyperplane besides $(\beta_t)^\perp$, there is no other hyperplane separating $\Cone_c(v)$ from $\Cone_c(v')$ and thus $\beta_t$ is the unique root $\beta$, as desired.
\end{proof}

We describe how to use Lemma~\ref{cov beta} to associate a root to each incident pair in $\Camb_c$.
Suppose $v'\covered v$ in $\Camb_c$ with $v'=\pidown^c(tv)$ and write $e$ for the edge $(v, v')$.
We label the incident pair $(v',e)$ by the root $\beta_t$ and label $(v,e)$ by the root $-\beta_t$.
Since the root $\beta$ in Lemma~\ref{cov beta} is unique, no edge is assigned more than one root. We have a function from full edges incident on $v$ to $C_c(v)$.

We now argue that this function is injective.
First, Lemmas~\ref{Cambrian covers} and~\ref{cov beta} combine to imply that no two edges get the same negative root.
Next, suppose $v\covered v'$ and $v\covered v''$, so that each of these edges gets a positive root.
Suppose that both get the same positive root $\beta$.
As shown in the proof of Lemma~\ref{cov beta}, the cones $\Cone_c(v)$ and $\Cone_c(v')$ intersect in a common facet.
This facet is contained in the hyperplane $\beta^\perp$ and contains the intersection of $tv'D$ and $v'D$.
The same holds with $v'$ replaced by $v''$.
We conclude that $\Cone_c(v)\cap\Cone_c(v')$ equals $\Cone_c(v)\cap\Cone_c(v'')$ and thus that $v''D\subseteq \Cone_c(v')$.
Now Theorem~\ref{pidown fibers} says that $\pidown^c(v'')=v'$, but $v''$ is $c$-sortable, so $v''=v'$.

Since no two distinct edges get the same root, the degree of each vertex $v$ in $\Camb_c$ is at most $n$.
If the degree is less than $n$, then we affix half-edges to $v$ to adjust the degree of $v$ to be $n$.
These half-edges are labeled with the remaining roots from $C_c(v)$.
We again re-use the symbol $\Camb_c$ for the quasi-graph thus obtained.
We also re-use the symbol $C_c$ to denote the labeling of incident pairs of $\Camb_c$ by roots.
We can now make a precise statement about Cambrian frameworks.  \label{end Cambrian framework}

\begin{theorem}\label{camb frame}
The pair $(\Camb_c,C_c)$ is a descending reflection framework for the exchange matrix $B$.
\end{theorem}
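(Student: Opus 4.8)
The plan is to verify, one condition at a time, each requirement of a descending reflection framework: the Base, Root, Reflection, and Euler conditions (E1)--(E3), together with the Positive labels, Half-edge, and Descending chain conditions. Connectedness and $n$-regularity of $\Camb_c$ hold by construction, since half-edges were affixed precisely to bring every vertex to degree $n$. The Base condition is immediate: the identity element $1$ serves as $v_b$, and its empty $c$-sorting word gives $C_c(1)=\Pi$. The Root condition holds because each $C_c^r(v)=a_1\cdots a_j\cdot\alpha_r$ is a $W$-translate of a simple root.

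For the orientation-based conditions I would first read off the directions of the full edges. For a cover $v'\covered v$ with $v'=\pidown^c(tv)$, Lemma~\ref{cov beta} places the negative root $-\beta_t$ at $(v,e)$ and the positive root $\beta_t$ at $(v',e)$, so $e$ is directed $v\to v'$. By Proposition~\ref{lower walls} the negative roots in $C_c(v)$ are exactly $\set{-\beta_t:t\in\cov(v)}$, and these are precisely the full-edge labels; hence the remaining (positive) roots are exactly the half-edge labels, giving the Half-edge condition. The Positive labels condition follows because $C_c(v)$ having no negative root forces $\cov(v)=\emptyset$, i.e.\ $v=1$. Finally, each directed edge $v\to v'$ has $v'=\pidown^c(tv)\le tv\covered v$, so $\ell(v')<\ell(v)$; thus every directed path strictly decreases length and the Descending chain condition holds.

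The substance lies in the Reflection and Euler conditions, which I would prove together by induction on the rank of $W$ and, for fixed rank, on $\ell(v)$, using the recursive descriptions of $C_c$ and of $c$-sortability. Fix $s$ initial in $c$. When $v\not\ge s$, Lemma~\ref{sc} identifies $v$ with an $sc$-sortable element of $W_{\br s}$ and the recursion gives $C_c(v)=C_{sc}(v)\cup\set{\alpha_s}$; the conditions among the roots from $C_{sc}(v)$ follow by induction in $W_{\br s}$, while the interaction of $\alpha_s$ with the other labels is controlled by acyclicity of $B$ (the base case of (E3) at $v_b$) and by the Euler-form identities of Proposition~\ref{sym antisym}. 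When $v\ge s$, Lemma~\ref{scs} gives that $sv$ is $scs$-sortable with $C_c(v)=sC_{scs}(sv)$, so the statements transport from $sv$ to $v$ under the reflection $s$, using that the $W$-action commutes with passage to co-roots and preserves $K$. Here the acyclicity of $\Gamma(v)$ in (E3) is the delicate point: one must check that appending $\alpha_s$ or conjugating by $s$ creates no directed cycle in the Euler digraph. Once (E3) is in hand, (E0) follows from it, and (E1)--(E2) follow from the sign pattern of $C_c(v)$ recorded in Proposition~\ref{lower walls}.

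The main obstacle I anticipate is matching the change of label set across a single cover to the precise reflection rule. Geometrically, Lemma~\ref{cov beta} and Theorem~\ref{pidown fibers} show that $\Cone_c(v)$ and $\Cone_c(v')$ meet in a common facet inside $(\beta_t)^\perp$; the content of the Reflection condition is that each remaining wall $\gamma^\perp$ of $\Cone_c(v)$ is either retained in $C_c(v')$ or replaced by $(t\gamma)^\perp$, according to the sign of $\omega(\beta_t\ck,\gamma)$. I expect to establish this dichotomy by combining the local geometry of the two adjacent simplicial cones with the recursive computation of $C_c(v')=C_c(\pidown^c(tv))$, and then to invoke the remark following Proposition~\ref{weak ref implies frame}: once the Root and Euler conditions hold, the Reflection, Transition, and Co-transition conditions are equivalent, which lets the sign bookkeeping be carried out on whichever side of the edge is most convenient.
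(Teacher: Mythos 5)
Your treatment of the Base, Root, Positive labels, Half-edge, and Descending chain conditions matches the paper's proof, and your inductive skeleton (rank of $W$, then $\ell(v)$, with $s$ initial in $c$) is the right one. But there are two genuine gaps. First, the claim that once (E3) is in hand, (E1)--(E2) ``follow from the sign pattern of $C_c(v)$ recorded in Proposition~\ref{lower walls}'' is not correct. Proposition~\ref{lower walls} identifies \emph{which} labels are negative; it says nothing about the values of $E$ on pairs of labels. The only case that follows this way is the negative-negative case of (E2): there both labels have the form $v\alpha_p$, $v\alpha_q$, and $W$-invariance of $K$ gives $K(\beta,\gamma)\le 0$, which (E0) together with Proposition~\ref{sym antisym} converts into $E(\beta,\gamma)\le 0$. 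The mixed-sign statement (E1) and the positive-positive case of (E2) each need their own induction (the paper's Propositions~\ref{Cc E1} and~\ref{Cc E2}), resting on the Euler-form lemmas~\ref{Ec initial}, \ref{Ec final}, and~\ref{Ec invariant}. Note also that your transport step $C_c(v)=sC_{scs}(sv)$ is not sign-preserving: $s$ flips the sign of $\pm\alpha_s$, so the inductive hypothesis at $sv$ does not cover pairs at $v$ involving $-\alpha_s$; the paper needs Proposition~\ref{Cc initial} and Lemma~\ref{Ec final} at exactly that point.

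Second, your induction for the Reflection condition splits only on whether $v\ge s$ or $v\not\ge s$, and that misses the crux, because the Reflection condition is a statement about an \emph{edge}, so both endpoints matter. Transport via $s$ requires both $v\ge s$ and $v'\ge s$; parabolic restriction requires both $v\not\ge s$ and $v'\not\ge s$. The crucial remaining case is $v\ge s$ but $v'\not\ge s$ --- equivalently, the cover reflection $t$ equals $s$ --- where $C_c(v)=sC_{scs}(sv)$ while $C_c(v')=C_{sc}(v')\cup\set{\alpha_s}$: the two ends of the edge sit in different branches of the recursion, so neither transport nor restriction applies. The paper's Case 3 of Proposition~\ref{Cc adjacent} resolves this by proving $v'=\pidown^c(sv)=v_{\br{s}}$ (via inversion sets and Lemma~\ref{sc}) and then combining Proposition~\ref{lower walls}, Proposition~\ref{Cc initial}, and Lemma~\ref{OmegaNegativity} to obtain the dichotomy. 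Your fallback --- reading the dichotomy off the ``local geometry of the two adjacent simplicial cones'' --- does not substitute for this: sharing a facet (Lemma~\ref{cov beta} and Theorem~\ref{pidown fibers}) pins down only the labels $\pm\beta_t$ on the shared wall, and gives no control over whether $\gamma$ or $t\gamma$ appears among the remaining labels of $C_c(v')$, which is precisely what the Reflection condition asserts.
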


\begin{example}\label{02-10 camb ex 3}
This example continues Examples~\ref{02-10 camb ex 1} and~\ref{02-10 camb ex 2}.
Lemma~\ref{cov beta} lets us associate a label from  $C_c(v)$ to each incident pair $(v_e)$.
The resulting reflection framework $(\Camb_c,C_c)$ is shown in the left picture of Figure~\ref{B2 frame fig}.  
(In light of Proposition~\ref{ref implies frame}, the triple $(\Camb_c,C_c,C\ck_c)$, shown in Figure~\ref{B2 frame fig}, is a framework.)
\end{example}

In light of Theorem~\ref{descending good}, we have the following corollary to Theorem~\ref{camb frame}.
\begin{cor}\label{camb good}
The pair $(\Camb_c,C_c)$ is an exact, polyhedral, well-connected, simply connected reflection framework for $B$.
\end{cor}

Combining Corollary~\ref{camb good} with Theorem~\ref{well-connected g}, we obtain the following result, which was conjectured in \cite[Section~9.1]{typefree}.
Let $\F_c$ be the collection  consisting of all of the cones $\Cone_c(v)$ together with their faces, where $v$ ranges over all $c$-sortable elements. 
We call $\F_c$ the \newword{$c$-Cambrian fan}.

\begin{cor}\label{camb fan}
The Cambrian fan $\F_c$ is a simplicial fan, and distinct vertices of $\Camb_c$ label distinct maximal cones of $\F_c$.
Each maximal cone in $\F_c$ is the $\g$-vector cone for a unique seed for the cluster algebra $\A_\bullet(B)$.
\end{cor}

When $\Cart(B)$ is of finite type, $W$ is finite.
In this case $\Camb_c$ is the \newword{Cambrian lattice}, rather than \emph{semilattice}, and $\F_c$ coincides, via Theorem~\ref{pidown fibers}, to the fan defined by the \newword{Cambrian congruence}.  See \cite{cambrian,sort_camb} for details on the Cambrian congruence, and see \cite{con_app} for details on the construction of a fan from a lattice congruence on the weak order.
Also, when $\Cart(B)$ is of finite type, the Hasse diagram of the Cambrian lattice is an $n$-regular graph \cite[Corollary~8.1]{camb_fan}, so the framework $(\Camb_c,C_c)$ is complete.
Thus Corollary~\ref{camb fan} becomes the following statement in finite type.

\begin{cor}\label{camb fan finite}
If $\Cart(B)$ is of finite type, then the Cambrian fan $\F_c$ is a complete fan and coincides with the fan of $\g$-vector cones for the cluster algebra $\A_\bullet(B)$.
\end{cor}

\begin{remark}\label{gvec caveat}
When interpreting Corollaries~\ref{camb fan} and~\ref{camb fan finite}, it is important to keep in mind the definition of $\g$-vectors as vectors in the weight lattice.
(Recall from Remark~\ref{gvec weight or int vec} that the $\g$-vectors are more typically defined as integer vectors.)
It is also essential to recall the difference between fundamental weights (dual to the simple \emph{co-root} basis) and fundamental co-weights (dual to the simple \emph{root} basis).
If we identify $V$ with $\reals^n$ by identifying each simple co-root $\alpha_i\ck$ with the standard unit basis vector $\e_i$ and identify $\reals^n$ with $(\reals^n)^*$ in the usual way, then Corollary~\ref{camb fan finite} says that the Cambrian fan $\F_c$ coincides with the $\g$-vector fan defined in terms of integer vectors.
On the other hand, if we identify each simple \emph{root} $\alpha_i$ with $\e_i$, then the Cambrian fan coincides with the $\g$-vector fan (in terms of integer vectors) for $\A_\bullet(-B^T)$.
(Compare Proposition~\ref{-B^T} and Remark~\ref{coincide caveat}.)
\end{remark}

Since the exchange graph is dual to the $\g$-vector fan, we also have the following corollary.
\begin{cor}\label{exch finite}
If $\Cart(B)$ is of finite type, then the exchange graph of the cluster algebra $\A_\bullet(B)$ is isomorphic, as a graph, to $\Camb_c$ (the Hasse diagram of the $c$-Cambrian lattice).
\end{cor}

Since the Cambrian framework is complete when $B$ is of finite type, we can also combine Theorems~\ref{camb frame} with Theorems~\ref{complete conj} and~\ref{descending good}, Corollaries~\ref{exact conj} and~\ref{polyhedral conj} and Proposition~\ref{descending g sign-coherent} to give the following result.

\begin{cor}\label{finite type conj}
If $\Cart(B)$ is of finite type, then Conjectures~\ref{vertex conj}, \ref{face conj}, \ref{tildeB equiv}, \ref{H equiv}, \ref{g lattice conj}, \ref{g sign-coherent}, \ref{mon indep}, \ref{g fan conj}, \ref{strong g fan conj}, \ref{F 1 conj}, \ref{F max conj}, and~\ref{H coherent} all hold for $B$.
(Those conjectures that admit a choice of coefficients hold in the case of principal coefficients.)
\end{cor}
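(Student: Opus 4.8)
The plan is to assemble the global-condition results of Section~\ref{global sec}, the main work being to verify that the Cambrian framework of Theorem~\ref{camb frame} is \emph{complete} in finite Cartan type. First I would observe that the hypothesis of finite Cartan type is exactly what makes the construction of Section~\ref{camb sec} available: when the Cartan companion $A$ is of finite type, its Dynkin diagram is a forest, and a forest has no cycles as an undirected graph, so every orientation of it is acyclic. Hence $B$ is automatically acyclic and the Cambrian framework $(\Camb_c,C_c)$ is defined. By Theorem~\ref{camb frame} it is a descending reflection framework for $B$, so by Theorem~\ref{descending good} it is exact, polyhedral, and well-connected.

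Next I would use that $A$ of finite type forces $W$ to be finite, so by \cite[Corollary~8.1]{camb_fan} the Hasse diagram of the Cambrian lattice is $n$-regular; thus $\Camb_c$ has no half-edges and the framework is complete. Now $(\Camb_c,C_c)$ is a complete, exact, well-connected polyhedral framework for $B$, and the conjectures can be read off directly: Corollary~\ref{polyhedral conj} yields Conjectures~\ref{vertex conj}, \ref{face conj}, \ref{mon indep}, \ref{g fan conj}, and~\ref{strong g fan conj}; Theorem~\ref{complete conj} yields Conjectures~\ref{g lattice conj}, \ref{F 1 conj}, and~\ref{H coherent}; and Corollary~\ref{exact conj} yields Conjecture~\ref{H equiv}. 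Conjecture~\ref{tildeB equiv} then follows for free, since $\tB$-equivalence of two vertices entails equivalence of the bottom halves $H^v$ under the same bijection, so Conjecture~\ref{H equiv} (its stated strengthening) implies it. For Conjecture~\ref{g sign-coherent} I would invoke Corollary~\ref{camb g sign-coherent} (equivalently Proposition~\ref{descending g sign-coherent}), using completeness to note that every vertex of $\Ex_0(B)$ arises as some $\Seed(v)$.

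The one conjecture needing an extra move is Conjecture~\ref{F max conj}, whose derivation in the second sentence of Theorem~\ref{complete conj} requires a complete framework for $-B$ as well. Here I would record the $B \leftrightarrow -B$ symmetry of the Cartan companion: since $a_{ij}=-|b_{ij}|=-\lvert -b_{ij}\rvert$, the matrix $-B$ has the same companion $A$, hence is again of finite Cartan type, again acyclic, and its Cambrian framework is again complete by the argument above. This supplies the needed complete framework for $-B$ and hence Conjecture~\ref{F max conj} for $B$.

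I do not anticipate a genuine obstacle, as the corollary is an orchestration of machinery already established; the real content lives in Theorems~\ref{camb frame}, \ref{descending good}, and~\ref{complete conj} and in \cite[Corollary~8.1]{camb_fan}. The only points demanding care are bookkeeping: confirming that finite Cartan type delivers both acyclicity (via the forest structure of finite-type Dynkin diagrams) and finiteness of $W$ (for completeness), and matching each cited result's hypotheses, in particular exploiting the $B \leftrightarrow -B$ symmetry so that the single asymmetric hypothesis governing Conjecture~\ref{F max conj} is met.
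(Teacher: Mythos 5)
Your proposal is correct and follows essentially the same route as the paper: establish completeness of the Cambrian framework in finite type via \cite[Corollary~8.1]{camb_fan}, then combine Theorem~\ref{camb frame} and Theorem~\ref{descending good} with Theorem~\ref{complete conj}, Corollaries~\ref{exact conj} and~\ref{polyhedral conj}, and Proposition~\ref{descending g sign-coherent}. The points you spell out explicitly (acyclicity of $B$ from the forest structure of finite-type diagrams, the fact that $-B$ shares the Cartan companion $A$ so Conjecture~\ref{F max conj} is covered, and that Conjecture~\ref{H equiv} implies Conjecture~\ref{tildeB equiv}) are exactly the details the paper leaves implicit, and your treatment of them is accurate.
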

In Section~\ref{denom sec}, we prove Conjectures~\ref{nu conj}, \ref{denom lattice conj}, \ref{denom fan conj} and~\ref{strong denom fan conj} as well for $\Cart(B)$ of finite type.
For details on the status of these conjectures before this paper, see the end of Section~\ref{conj sec}.

\begin{remark}\label{somewhat similar manner}  
This proof of Conjecture~\ref{strong g fan conj} for $\Cart(B)$ of finite type is not entirely new.
The combination of \cite[Theorem~1.1]{con_app} with \cite[Theorem~1.1]{sort_camb} amounts to a proof that the $c$-Cambrian fan is indeed a fan for $\Cart(B)$ of finite type.
It was conjectured in \cite[Section~10]{camb_fan} that the $c$-Cambrian fan coincides with the collection of $\g$-vector cones, and proven for a special choice of $c$.
The conjecture was proved for all $c$ by Yang and Zelevinsky in \cite{YZ}.
\end{remark}

When $\Cart(B)$ is of infinite type, the framework $(\Camb_c,C_c)$ is not complete.
Indeed, as mentioned in Section~\ref{sec descend}, a descending framework cannot be complete unless $B$ is of finite type.
But there is a deeper reason for the incompleteness.
Theorem~\ref{pidown fibers} implies in particular that each cone $\Cone_c(v)$ intersects the Tits cone.
Typically, there are $\g$-vector cones that don't intersect the Tits cone.

Since the Cambrian framework is not always complete, we can in general only use the Cambrian fan to make partial statements about the cluster algebra.
For example, combining Theorem~\ref{camb frame} with Proposition~\ref{descending g sign-coherent}, we obtain the following result.
\begin{cor}\label{camb g sign-coherent}
Given any vertex $v$ in the Cambrian framework $(\Camb_c,C_c)$, the fundamental-weight coordinate vectors of the $\g$-vectors of the cluster variables in $\Seed(v)$ form a sign-coherent set.
\end{cor}

We now proceed to prove Theorem~\ref{camb frame}.
To begin with, the pair $(\Camb_c,C_c)$ satisfies the Base condition because, when $v$ is the identity element, $C_c(v)=\set{\alpha_i:i\in I}$.
The pair satisfies the Root condition by construction.

The form $\omega$ agrees with the form $\omega_c$ defined in \cite[Section~3]{typefree}.
Thus, the following proposition establishes the Reflection condition for $(\Camb_c,C_c)$.

\begin{prop}\label{Cc adjacent}
Suppose $v'\covered v$ in the $c$-Cambrian semilattice and let $e$ be the edge connecting them.
Let $t$ be the reflection in the statement of Lemma~\ref{cov beta}, so that $C_c(v,e)=-\beta_t$ and $C_c(v',e)=\beta_t$.
Let $\gamma$ be any other root in $C_c(v)$.
Then 
\begin{enumerate}
\item If $\omega_c(\beta_t\ck,\gamma)\ge 0$ then $t\gamma\in C_c(v')$. 
\item If $\omega_c(\beta_t\ck,\gamma)< 0$ then $\gamma\in C_c(v')$. 
\end{enumerate}
\end{prop}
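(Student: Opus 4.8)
The plan is to prove the two cases together by induction on the rank of $W$ and, for fixed rank, on the length $\ell(v)$, using the recursive description of $C_c$ from \cite[Proposition~5.1]{typefree}. Fix $s$ initial in $c$. Since $-\beta_t\in C_c(v)$ is negative, Proposition~\ref{lower walls} and Lemma~\ref{Cambrian covers} give $t\in\cov(v)$ with $v'=\pidown^c(tv)$, and I organize the argument around the positions of $v,v'$ relative to $s$. Note first that $\beta_t=\alpha_s$ forces $s\in\cov(v)\subseteq\inv(v)$, hence $v\ge s$; and since $s$ is itself $c$-sortable while $\pidown^c(w)$ is the maximal $c$-sortable element $\le w$, we have $w\ge s\Rightarrow\pidown^c(w)\ge s$. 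Thus if $v\ge s$ and $t\ne s$ then $s\in\inv(v)\setminus\set{t}=\inv(tv)$, so $tv\ge s$ and $v'=\pidown^c(tv)\ge s$. This yields exactly three cases: (I) $v\not\ge s$; (II) $v\ge s$ and $\beta_t\ne\alpha_s$ (equivalently $v'\ge s$); and (III) $\beta_t=\alpha_s$ (equivalently $v\ge s$, $t=s$, and $v'\not\ge s$, the last because $\alpha_s\in C_c(v')$ is incompatible with $C_c(v')=sC_{scs}(sv')$).

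The computational backbone is the identity $\omega(sx,sy)=\omega_{scs}(x,y)$ for $s$ initial in $c$, where $\omega_{scs}$ is the form of the reoriented matrix $B^{scs}$. Checking it on simple (co-)roots is short: as $s$ is a source, $a_{sj}=-b_{sj}$ and $a_{js}=b_{js}$, and the cross terms from expanding $s\alpha_i\ck$ and $s\alpha_j$ cancel, leaving $b^{scs}_{ij}$ in all cases. In Case~(I) both $v$ and $v'$ lie in $W_{\br{s}}$ and are $sc$-sortable (Lemma~\ref{sc}); since everything $\le v$ avoids $s$, the cover $v'\covered v$ is a Cambrian cover in $\Camb_{sc}$ with the same $\beta_t$ (a positive root of the subsystem, so $\ne\alpha_s$). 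The inductive hypothesis in lower rank settles the claim for every $\gamma\in C_{sc}(v)=C_c(v)\setminus\set{\alpha_s}$, the restriction of $\omega$ to the subsystem being $\omega_{sc}$. For the remaining root $\gamma=\alpha_s$ we have $\alpha_s\in C_c(v')$ because $v'\not\ge s$; expanding $\beta_t\ck$ in the simple co-roots of $W_{\br{s}}$ shows $\omega(\beta_t\ck,\alpha_s)\le0$ and, crucially, $K(\beta_t\ck,\alpha_s)=\omega(\beta_t\ck,\alpha_s)$ (since $b_{js}\le0$ forces $a_{js}=b_{js}$). Hence if the pairing is negative we are in case~(2), and if it is $0$ then $t\alpha_s=\alpha_s\in C_c(v')$, matching case~(1).

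In Case~(II), Lemma~\ref{scs} and the order isomorphism $w\mapsto sw$ on the elements above $s$ let me pass to $sv,sv'$, which are $scs$-sortable with $sv'\covered sv$ a Cambrian cover of $\Camb_{scs}$, edge label $-\beta_{t'}$ where $t=st's$ and $\beta_t=s\beta_{t'}$ is positive (as $\beta_{t'}\ne\alpha_s$ because $s\notin\cov(sv)$). Writing each $\gamma\in C_c(v)\setminus\set{-\beta_t}$ as $s\gamma'$ with $\gamma'\in C_{scs}(sv)$, the form identity gives $\omega(\beta_t\ck,\gamma)=\omega_{scs}(\beta_{t'}\ck,\gamma')$ and $t\gamma=s(t'\gamma')$. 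Since $\ell(sv)<\ell(v)$, the inductive hypothesis for $sv'\covered sv$ transports through $C_c(v)=sC_{scs}(sv)$ and $C_c(v')=sC_{scs}(sv')$ to the desired statement.

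The main obstacle is Case~(III), where $\beta_t=\alpha_s$ and the Cambrian cover crosses the wall $\alpha_s^\perp$: now the two label sets are produced by different recursions, $C_c(v)=sC_{scs}(sv)$ versus $C_c(v')=C_{sc}(v')\cup\set{\alpha_s}$ with $v'=\pidown^c(sv)$ and $sv\not\ge s$, so no single application of the inductive hypothesis applies. I plan to analyze $v'=\pidown^c(sv)$ directly via the recursive description of $\pidown^c$ in \cite[Section~6]{typefree} together with the fiber characterization of Theorem~\ref{pidown fibers}, comparing $C_{sc}(v')$ with $C_{scs}(sv)$ root by root. For $\gamma=s\xi\in C_c(v)$ the identity $\omega(\alpha_s\ck,s\xi)=-\omega_{scs}(\alpha_s\ck,\xi)$, combined with the fact that $s$ is \emph{final} in $scs$ (so $\alpha_s$ is a sink and $\omega_{scs}(\alpha_s\ck,\cdot)$ has a definite sign on positive roots), should determine which roots reflect under $s$ and which persist, completing the check by an argument parallel to the treatment of $\alpha_s$ in Case~(I). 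A cleaner alternative I would attempt is a geometric one: by Theorem~\ref{pidown fibers} the cones $\Cone_c(v)$ and $\Cone_c(v')$ are precisely the two chambers adjacent across the facet in $\alpha_s^\perp$, from which $C_c(v')$ might be read off directly, although making this rigorous still needs the combinatorial control supplied by $C_c$.
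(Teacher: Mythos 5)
Your case decomposition agrees with the paper's, and your Cases (I) and (II) are correct and essentially identical to the paper's Cases 2 and 1: the paper handles your (II) by applying the inductive hypothesis to the cover $sv'\covered sv$ in $\Camb_{scs}$ and citing Lemma~\ref{OmegaInvariance}, and handles your (I) by induction on rank via Lemma~\ref{OmegaRestriction}, disposing of $\gamma=\alpha_s$ with Lemma~\ref{OmegaNegativity} (whose equality case gives exactly your ``$t$ and $s$ commute'' step). Your direct computations of the invariance identity $\omega_c(s\beta,s\gamma)=\omega_{scs}(\beta,\gamma)$ and of the inequality $\omega_c(\beta_t\ck,\alpha_s)\le 0$ together with $K(\beta_t\ck,\alpha_s)=\omega_c(\beta_t\ck,\alpha_s)$ are valid substitutes for those lemmas.

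The genuine gap is Case (III), which you leave as a plan (``should determine,'' ``might be read off'') rather than a proof, and the plan as stated does not close. The sign information you invoke (that $s$ is final in $scs$, equivalently Lemma~\ref{OmegaNegativity}) only determines \emph{which} of the two conclusions must be verified for each $\gamma$; the substance of the proposition is a membership statement --- that $s\gamma\in C_c(v')$ for each positive $\gamma\in C_c(v)$, and $\gamma\in C_c(v')$ for each negative $\gamma\neq-\alpha_s$ --- and nothing in your sketch produces it. The paper closes this case with two ingredients your outline lacks. First, it identifies $v'=\pidown^c(sv)=v_{\br{s}}$, by checking $\inv((sv)_{\br{s}})=\inv(v_{\br{s}})$ and using Proposition~\ref{sort para} and Lemma~\ref{sc} to see that $v_{\br{s}}$ is the maximal $c$-sortable element below $sv$. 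Second, it invokes Proposition~\ref{Cc initial}: when $s\in\cov(v)$, the positive part of $C_c(v)$ is exactly $s$ applied to the positive part of $C_{sc}(v_{\br{s}})$, and $\cov(v)=\set{s}\cup\cov(v_{\br{s}})$, so that by Proposition~\ref{lower walls} the negative labels of $v$ other than $-\alpha_s$ are precisely the negative labels of $v'$. These two facts are what relate label sets computed by different branches of the recursion ($C_c(v)=sC_{scs}(sv)$ versus $C_c(v')=C_{sc}(v')\cup\set{\alpha_s}$); reproducing them from the recursive description of $\pidown^c$ amounts to reproving results of \cite{typefree}, not a routine check. Your geometric alternative is moreover circular at this stage: reading $C_c(v')$ off from the adjacency of $\Cone_c(v)$ and $\Cone_c(v')$ across $\alpha_s^\perp$ presupposes fan-like behavior of the Cambrian cones, which in this paper is a \emph{consequence} (Corollary~\ref{camb fan}) of the framework property being proved.
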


The three lemmas below are \cite[Lemmas~3.7--3.9]{typefree}.
Proposition~\ref{Cc initial} is a combination of parts of \cite[Proposition~5.1]{typefree} and \cite[Proposition~5.4]{typefree}.
The subspace $V_J$ of $V$ is the real linear span of the simple roots $\set{\alpha_s:s\in J}$.

\begin{lemma} \label{OmegaRestriction}
Let $J\subseteq S$ and let $c'$ be the restriction of $c$ to $W_J$.
Then $\omega_c$ restricted to $V_J$ is $\omega_{c'}$. 
\end{lemma}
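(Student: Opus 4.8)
The plan is to reduce the claimed equality of forms to a computation on a spanning set, using the paper's identification of $\omega_c$ with the matrix form $\omega$. First I would record that the restriction $c'$ of $c$ to $W_J$ is precisely the Coxeter element attached to the principal submatrix $B_J=[b_{ij}]_{i,j\in J}$ of $B$: deleting the letters $S\setminus J$ from a reduced word for $c$ removes exactly the generators indexed outside $J$, and the acyclic orientation on $I$ encoded by $c$ restricts to the acyclic orientation on $J$ encoded by $c'$. Correspondingly, the Cartan companion of $B_J$ is the principal submatrix $A_J=[a_{ij}]_{i,j\in J}$ of $A$, and the Coxeter system $(W_J,\set{s_i:i\in J})$ with Coxeter element $c'$ is exactly the data attached to $A_J$. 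Moreover, the symmetrizer $\delta$ restricts to a symmetrizer $\delta|_J$ of $B_J$, since $\delta(i)b_{ij}=-\delta(j)b_{ji}$ already holds for all $i,j\in J$; hence the simple co-roots $\alpha_i\ck=\delta(i)^{-1}\alpha_i$ for $i\in J$ attached to $B_J$ are literally the same vectors as the ambient simple co-roots, and they lie in $V_J$.

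Next I would invoke the identification, recalled just before Proposition~\ref{Cc adjacent}, that $\omega_c$ equals the form $\omega$ determined by $\omega(\alpha_i\ck,\alpha_j)=b_{ij}$. Applying this same identification to the parabolic data $(W_J,c',B_J)$ shows that $\omega_{c'}$ is the form on $V_J$ determined by $\omega_{c'}(\alpha_i\ck,\alpha_j)=(B_J)_{ij}=b_{ij}$ for all $i,j\in J$. Equivalently, and without passing through the matrix form, the defining recipe for $\omega_c$ in \cite{typefree} depends only on the relative left-to-right order of the generators in the $c$-word; this relative order among the generators indexed by $J$ is preserved under restriction to $c'$, so the two recipes return the same value on each pair $(\alpha_i\ck,\alpha_j)$ with $i,j\in J$.

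Finally I would conclude by bilinearity. The sets $\set{\alpha_i\ck:i\in J}$ and $\set{\alpha_j:j\in J}$ are each bases of $V_J$, so a bilinear form on $V_J$ is determined by its values on the pairs $(\alpha_i\ck,\alpha_j)$ with $i,j\in J$. Since $\omega_c$ restricted to $V_J$ and $\omega_{c'}$ agree on all such pairs, both taking the value $b_{ij}$, they coincide as forms on $V_J$, which is the assertion. The only step requiring genuine care is the first one: the bookkeeping that restricting the Coxeter element matches passing to the principal submatrix $B_J$, and that the ambient simple co-roots serve as the simple co-roots of the restricted system. Once that compatibility is established, the equality of forms follows immediately from bilinearity, so I expect the verification that the restricted data is exactly the parabolic data to be the main (though elementary) obstacle.
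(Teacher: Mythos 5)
Your proof is correct. Note that this paper contains no proof of Lemma~\ref{OmegaRestriction} to compare against: it is quoted, along with Lemmas~\ref{OmegaInvariance} and~\ref{OmegaNegativity}, directly from \cite[Lemmas~3.7--3.9]{typefree}. Your argument---checking that the restricted Coxeter element $c'$ is exactly the Coxeter element attached to the principal submatrix $B_J$ (with the same symmetrizer and hence the same simple co-roots), so that both $\omega_c|_{V_J}$ and $\omega_{c'}$ take the value $b_{ij}$ on the pairs $(\alpha_i\ck,\alpha_j)$ with $i,j\in J$, and then concluding by bilinearity---is precisely the immediate-from-definitions verification one finds in the cited source, since the defining data of $\omega_c$ (the Cartan entries and the relative order of letters in $c$) manifestly restrict to the corresponding data for $c'$.
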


\begin{lemma} \label{OmegaInvariance}
If~$s$ is initial or final in $c$, then $\omega_c(\beta,\gamma)=\omega_{scs}(s\beta,s\gamma)$ for all roots $\beta$ and $\gamma$.
\end{lemma}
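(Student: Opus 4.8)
The plan is to verify the identity on a basis and then reduce the whole lemma to one short sign computation. Since both $\omega_c$ and $\omega_{scs}$ are bilinear and the map $\beta\mapsto s\beta$ is linear, it suffices to check that $\omega_c(\alpha_i,\alpha_j)=\omega_{scs}(s\alpha_i,s\alpha_j)$ for all pairs of simple roots $\alpha_i,\alpha_j$. Recall $s\alpha_i=\alpha_i-a_{si}\alpha_s$ for $i\neq s$ and $s\alpha_s=-\alpha_s$, and that $\omega_c(\alpha_i,\alpha_j)=\delta(i)b_{ij}$ since $\alpha_i=\delta(i)\alpha_i\ck$ and $\omega_c(\alpha_i\ck,\alpha_j)=b_{ij}$. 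The first step is to record how $\omega_{scs}$ differs from $\omega_c$: when $s$ is initial (resp.\ final) in $c$, the vertex $s$ is a source (resp.\ sink) of the orientation attached to $c$, and $scs$ is the Coxeter element whose orientation reverses exactly the edges incident to $s$. Thus the exchange matrix $B'$ for $scs$ satisfies $b'_{sj}=-b_{sj}$ and $b'_{js}=-b_{js}$ while $b'_{ij}=b_{ij}$ for $i,j\neq s$ (for a source or sink this reorientation is precisely mutation at $s$). With the common symmetrizer $\delta$, this gives the two structural facts
\[\omega_{scs}(\alpha_i,\alpha_j)=\omega_c(\alpha_i,\alpha_j)\ \ (i,j\neq s),\qquad \omega_{scs}(\alpha_s,\alpha_j)=-\omega_c(\alpha_s,\alpha_j)\ \ (\text{all }j).\]

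With these in hand I would dispatch the cases. When $i=s$ and $j\neq s$, expanding $\omega_{scs}(s\alpha_s,s\alpha_j)=\omega_{scs}(-\alpha_s,\alpha_j-a_{sj}\alpha_s)=-\omega_{scs}(\alpha_s,\alpha_j)$, and the sign flip in the second structural fact cancels the $-1$ coming from $s\alpha_s=-\alpha_s$, yielding $\omega_c(\alpha_s,\alpha_j)$; the case $j=s$ follows by skew-symmetry, and the diagonal cases are trivial. The only substantive case is $i,j\neq s$, where
\[\omega_{scs}(s\alpha_i,s\alpha_j)=\omega_{scs}(\alpha_i,\alpha_j)-a_{sj}\,\omega_{scs}(\alpha_i,\alpha_s)-a_{si}\,\omega_{scs}(\alpha_s,\alpha_j)+a_{si}a_{sj}\,\omega_{scs}(\alpha_s,\alpha_s).\]
Applying the two structural facts (and $\omega_{scs}(\alpha_s,\alpha_s)=0$) turns the right-hand side into $\omega_c(\alpha_i,\alpha_j)+a_{sj}\,\omega_c(\alpha_i,\alpha_s)+a_{si}\,\omega_c(\alpha_s,\alpha_j)$, so the identity reduces to showing the cross terms cancel, i.e.\ $a_{sj}\,\omega_c(\alpha_i,\alpha_s)+a_{si}\,\omega_c(\alpha_s,\alpha_j)=0$.

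The crux is this last identity, and it is the unique place the hypothesis enters. Using $\omega_c(\alpha_i,\alpha_s)=-\omega_c(\alpha_s,\alpha_i)=-\delta(s)b_{si}$ and $\omega_c(\alpha_s,\alpha_j)=\delta(s)b_{sj}$, the required cancellation is $\delta(s)\bigl(a_{si}b_{sj}-a_{sj}b_{si}\bigr)=0$, that is $a_{si}b_{sj}=a_{sj}b_{si}$. Since $a_{si}=-|b_{si}|$ and $a_{sj}=-|b_{sj}|$, this reads $|b_{si}|\,b_{sj}=|b_{sj}|\,b_{si}$, which holds exactly when $b_{si}$ and $b_{sj}$ share a sign (or one vanishes). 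This is guaranteed precisely because $s$ is initial or final, so that all $b_{sj}$ over neighbors $j$ of $s$ have a common sign; were $s$ neither a source nor a sink, the mixed signs would break the identity. I expect the only real obstacle to be conceptual rather than computational: correctly matching ``initial or final in $c$'' with the source/sink condition (hence common sign of the $b_{sj}$), after which the verification is the routine bilinear bookkeeping above.
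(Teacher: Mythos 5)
Your proof is correct, but note that there is nothing in this paper to compare it against: the paper states Lemmas~\ref{OmegaRestriction}--\ref{OmegaNegativity} as direct quotations of \cite[Lemmas~3.7--3.9]{typefree} and gives no argument, so your computation serves as a self-contained substitute. The reduction to pairs of simple roots is legitimate, since both $(\beta,\gamma)\mapsto\omega_c(\beta,\gamma)$ and $(\beta,\gamma)\mapsto\omega_{scs}(s\beta,s\gamma)$ are bilinear; and your two ``structural facts'' are exactly the statement that the exchange matrix attached to $scs$ is obtained from $B$ by negating row and column $s$. In this paper's conventions that statement is immediate: with $c=s_1\cdots s_n$ and $b_{ij}>0\Rightarrow i<j$, the sign of $b_{ij}$ for noncommuting $i,j$ records which of $s_i,s_j$ occurs first in $c$, and conjugation by an initial or final letter changes only the relative position of $s$ versus the other letters. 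The same observation yields the common-sign fact your final cancellation needs: $s$ initial forces $b_{sj}\ge 0$ for all $j$, and $s$ final forces $b_{sj}\le 0$ (this holds for any initial or final $s$, not just $s_1$ or $s_n$, since such an $s$ commutes with every letter preceding, respectively following, it). With that in place, your case analysis and the identity $|b_{si}|\,b_{sj}=|b_{sj}|\,b_{si}$ check out, and you correctly isolate this as the only point where the hypothesis enters; for $s$ neither initial nor final the mixed signs in row $s$ would indeed break the identity, consistent with the lemma failing there.
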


\begin{lemma} \label{OmegaNegativity}
If~$s$ is initial in~$c$ and~$t$ is a reflection in~$W$, then \mbox{$\omega_c(\alpha_s, \beta_t) \geq 0$}, with equality only if~$s$ and~$t$ commute.
\end{lemma}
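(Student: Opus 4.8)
The plan is to work with $\omega$ itself (using the identification $\omega=\omega_c$ recorded above) and to translate the hypothesis ``$s$ initial in $c$'' into a positivity statement about simple roots. Since $B$ is acyclic, $s=s_i$ is initial exactly when $i$ is a source of the orientation encoded by $B$, i.e.\ when $b_{sr}\ge 0$ for every $r\in I$. As $\omega(\alpha_s,\alpha_r)=\delta(s)\,b_{sr}$ and $\delta(s)>0$, this gives $\omega(\alpha_s,\alpha_r)\ge 0$ for every simple root $\alpha_r$; moreover $\omega(\alpha_s,\alpha_r)=0$ when $r=s$ (by antisymmetry) or when $s_r$ commutes with $s$ (then $a_{sr}=b_{sr}=0$), while $\omega(\alpha_s,\alpha_r)>0$ precisely when $s_r$ fails to commute with $s$ (then $a_{sr}\neq0$, so $b_{sr}\neq0$, and source-ness forces $b_{sr}>0$).

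Granting this, the inequality is immediate. First I would write the positive root as $\beta_t=\sum_{r}c_r\alpha_r$ with all $c_r\ge 0$; then bilinearity gives $\omega(\alpha_s,\beta_t)=\sum_r c_r\,\omega(\alpha_s,\alpha_r)$, a finite sum of nonnegative terms, so $\omega(\alpha_s,\beta_t)\ge 0$.

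For the equality clause, suppose $\omega(\alpha_s,\beta_t)=0$. Each summand $c_r\,\omega(\alpha_s,\alpha_r)$ is nonnegative, so all of them vanish; in particular $c_r=0$ whenever $\omega(\alpha_s,\alpha_r)>0$, i.e.\ whenever $s_r$ does not commute with $s$. Hence $\beta_t$ is supported on $K:=\set{s}\cup\set{r\in I:s_r\text{ commutes with }s}$, so $\beta_t\in V_K$. A real root lying in $V_K$ is a root of the parabolic subsystem $\Phi\cap V_K$, whose reflections all lie in the standard parabolic subgroup $W_K$; thus $t\in W_K$. Every generator of $W_K$ is either $s$ or a generator commuting with $s$, so $s$ is central in $W_K$ and therefore commutes with $t$, as required.

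The inequality is a one-line positivity computation, so the main obstacle is the equality case. The two points there that need care are the strictness claim in the first paragraph (that $s$ initial forces $b_{sr}>0$ exactly at the noncommuting neighbors, so that $\omega(\alpha_s,\alpha_r)$ vanishes only for equal or commuting generators) and the invocation of the standard fact that a real root contained in $V_K$ belongs to $\Phi\cap V_K$ with reflection in $W_K$. Once these are in hand, the centrality of $s$ in $W_K$ closes the argument cleanly, and the same computation incidentally yields the converse implication.
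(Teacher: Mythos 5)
Your proof of the lemma as stated is correct. Note that the paper does not actually prove this statement --- it is quoted from \cite[Lemmas~3.7--3.9]{typefree} --- so there is no internal proof to compare against; your argument is the natural direct computation (essentially what the source does). The two points you flag do hold: since each letter of $c$ occurs exactly once, all reduced words for $c$ are related by commutations alone, which gives the equivalence of ``$s$ initial'' with $b_{sr}\ge 0$ for all $r$; and the fact that a real root lying in $V_K$ belongs to $\Phi\cap V_K$ with reflection in $W_K$ is standard (if $\beta=w\alpha_r$ with $w\in W_K$ and $r\in K$, then $t_\beta=ws_rw^{-1}\in W_K$). Your handling of the equality case via centrality of $s$ in $W_K$ is a nice touch: by including $s$ itself in $K$ you sidestep the usual appeal to connectedness of the support of a root. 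One terminological quibble: with the paper's edge convention ($i\to j$ iff $b_{ij}<0$), initial letters of $c$ are \emph{sinks} rather than sources; this is harmless since you state the inequality $b_{sr}\ge 0$ explicitly.

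Your closing sentence, however, is false: the ``converse implication'' does not hold, and the lemma says ``only if'' for good reason. If $s$ is initial and $t\neq s$ commutes with $s$, then $s\beta_t=\beta_t$ gives $K(\alpha_s^\vee,\beta_t)=0$, i.e.\ $2c_s=\sum_{r\neq s}c_r|a_{sr}|$, and since $b_{sr}=|a_{sr}|$ for $r\neq s$ your own expansion yields $\omega_c(\alpha_s,\beta_t)=2\delta(s)\,c_s$, which is strictly positive whenever $\alpha_s$ occurs in the support of $\beta_t$. Concretely, in type $A_3$ with $c=s_2s_1s_3$ and $s=s_2$, let $t$ be the reflection in the highest root $\beta_t=\alpha_1+\alpha_2+\alpha_3$: then $K(\alpha_2^\vee,\beta_t)=-1+2-1=0$, so $s\beta_t=\beta_t$ and $s,t$ commute, yet $\omega_c(\alpha_2,\beta_t)=\delta(2)(b_{21}+b_{23})=2\delta(2)>0$. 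Equality in the lemma in fact forces more than commutation (it forces $c_s=0$, hence $t\in W_{\br{s}}$, by connectedness of support), so the implication is genuinely one-directional; deleting that final claim leaves your proof intact.
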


\begin{prop}\label{Cc initial}
Let~$s$ be initial in~$c$ and let $v$ be a $c$-sortable element of~$W$ such that~$s$ is a cover reflection of $v$.
Then $\cov(v)=\set{s}\cup\cov(v_{\br{s}})$, and the set of positive roots in $C_c(v)$ is obtained by applying the reflection $s$ to each positive root in $C_{sc}(v_{\br{s}})$.
\end{prop}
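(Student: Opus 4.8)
The plan is to reduce both assertions to the recursive description of $C_c$ through the initial generator $s$, using the hypothesis $s\in\cov(v)$ exactly at the two points where it is genuinely needed. Since $s\in\cov(v)$ we have $v\ge s$, so the recursion gives $C_c(v)=s\,C_{scs}(sv)$, where now $s$ is \emph{final} in $scs$ and $sc$ is the restriction of $c$ (equivalently of $scs$) to $W_{\br s}$. The first step is to show that stripping off the initial $s$ does not change the parabolic projection, i.e.\ $(sv)_{\br s}=v_{\br s}$, so that $C_{sc}(v_{\br s})=C_{sc}\big((sv)_{\br s}\big)$. Writing the cover condition as $v^{-1}sv=s''\in S$ and using the standard identity $\inv(sv)=s(\inv(v)\setminus\set s)s$, this reduces to the equivalence $t\in\inv(v)\iff sts\in\inv(v)$ for reflections $t\in W_{\br s}$; since $\beta_t$ is a positive root in $V_{\br s}$ and hence $\neq\pm\alpha_s$, the identity $v^{-1}s\beta_t=s''\,v^{-1}\beta_t$ and the fact that $s''$ flips the sign only of $\pm\alpha_{s''}=\pm v^{-1}\alpha_s$ give exactly this equivalence. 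This is the first place the cover hypothesis is used: without $v^{-1}sv\in S$ the equality $(sv)_{\br s}=v_{\br s}$ already fails, e.g.\ for $v=s_1s_2$ in $A_2$.

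With that identification, the positive-root assertion becomes a statement about $w:=sv$, which is $scs$-sortable with $s$ final. Applying $s^{-1}=s$ to the desired equality, I must show that $\set{\gamma\in C_{scs}(w):s\gamma>0}$ equals the set of positive roots of $C_{sc}(w_{\br s})$. Here $s\gamma>0$ precisely when $\gamma>0$ and $\gamma\neq\alpha_s$, or when $\gamma=-\alpha_s$. Two points, both relying on the cover hypothesis, make this work. First, $-\alpha_s\notin C_{scs}(w)$: because $\ell(sw)=\ell(v)>\ell(w)$, the generator $s$ is a left ascent of $w$, so $s\notin\cov(w)$, and by Proposition~\ref{lower walls} $-\alpha_s=-\beta_s$ is therefore not a label; this kills the stray $-\alpha_s$ term. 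Second, every positive root of $C_{scs}(w)$ other than $\alpha_s$ lies in $V_{\br s}$, and these coincide with the positive roots of $C_{sc}(w_{\br s})$. I would prove this compatibility from the combinatorial description of the labels $C^r$ read off the $scs$-sorting word of $w$ (using that deleting every $s$ from that word yields the $sc$-sorting word of $w_{\br s}$), together with Lemma~\ref{sc}, Lemma~\ref{scs} and Proposition~\ref{sort para}; the condition $w^{-1}sw=s''\in S$ is exactly what prevents an unforced skip at some $r\neq s$ from producing a positive label with nonzero $\alpha_s$-coefficient. Combining the two points gives $\set{\gamma\in C_{scs}(w):s\gamma>0}=\set{\gamma\in C_{scs}(w):\gamma>0}\cap V_{\br s}$, which equals the positive roots of $C_{sc}(w_{\br s})$; applying $s$ yields the positive-root assertion for $C_c(v)$.

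For the cover-reflection identity $\cov(v)=\set s\cup\cov(v_{\br s})$ I would argue as follows. The union is disjoint since $s\notin W_{\br s}$, and $s\in\cov(v)$ by hypothesis. The inclusion $\cov(v_{\br s})\subseteq\cov(v)$ follows by tracking inversions: a cover reflection $t$ of $v_{\br s}$ lies in $\inv(v_{\br s})\subseteq\inv(v)$, and its right-descent witness in $W_{\br s}$ remains a right descent of $v$. Equality is then forced by a count: by Proposition~\ref{lower walls}, $\lvert\cov(v)\rvert$ and $\lvert\cov(v_{\br s})\rvert$ are the numbers of negative labels in $C_c(v)$ and $C_{sc}(v_{\br s})$, which have $n$ and $n-1$ labels respectively and, by the positive-root statement just proved, equally many positive labels; hence $\lvert\cov(v)\rvert=\lvert\cov(v_{\br s})\rvert+1$. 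Alternatively, both assertions can be proved together by induction on $\ell(v)$, feeding a dual ``final $s$'' version (obtained via the symmetry $c\leftrightarrow c^{-1}$) for $w=sv$ back into the ``initial $s$'' version for $v$.

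The main obstacle is the positive-root bookkeeping in the second paragraph. The proposition is genuinely a statement about positive roots only --- the negative labels do \emph{not} transform by $s$ (already in $A_2$ with $v=w_0$ the negative labels of $C_c(v)$ are not the $s$-images of those of $C_{sc}(v_{\br s})$) --- and it is precisely the hypothesis $s\in\cov(v)$ that simultaneously removes the spurious $-\alpha_s$ and confines the remaining positive labels to the parabolic subspace $V_{\br s}$. Establishing that confinement cleanly from the sorting-word description is the real content; everything else is formal.
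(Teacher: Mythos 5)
The paper does not actually prove this proposition; it is quoted as a combination of parts of \cite[Proposition~5.1]{typefree} and \cite[Proposition~5.4]{typefree}, so your attempt has to stand on its own, and it has two genuine gaps. The scaffolding is fine: $s\in\cov(v)$ gives $v\ge s$ and $C_c(v)=s\,C_{scs}(sv)$; the equality $(sv)_{\br{s}}=v_{\br{s}}$ is correct (it follows even faster from $\inv(sv)=\inv(v)\setminus\set{s}$, valid for any cover reflection, which is how the paper argues in Case~3 of the proof of Proposition~\ref{Cc adjacent}); and the exclusion of $-\alpha_s$ from $C_{scs}(sv)$ via Proposition~\ref{lower walls} is a nice observation. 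But after these reductions what remains is exactly your claim (b): that the positive roots of $C_{scs}(w)$, $w=sv$, other than possibly $\alpha_s$, lie in $V_{\br{s}}$ and coincide with the positive roots of $C_{sc}(w_{\br{s}})$. For this---which, as you yourself say, is the entire content---you give only a plan, and the plan rests on the assertion that deleting every occurrence of $s$ from the $scs$-sorting word of $w$ yields the $sc$-sorting word of $w_{\br{s}}$. That assertion is not available anywhere in the paper and is false in general, even in the presence of the condition $w^{-1}sw\in S$ that you claim ``is exactly what prevents'' failure: in $A_2$ take $c'=s_1s_2$ playing the role of $scs$ (so $s=s_2$ is final in $c'$) and $w=s_1s_2s_1$; then $w^{-1}s_2w=s_1\in S$, yet deleting $s_2$ from the sorting word $s_1s_2s_1$ produces $s_1s_1$, which is not even reduced, while $w_{\br{s_2}}=s_1$. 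What saves your setting is the \emph{other} hypothesis, $\ell(sw)>\ell(w)$, and under both hypotheses the confinement of labels to $V_{\br{s}}$ still requires a genuine inductive argument---this is essentially what \cite[Proposition~5.4]{typefree} proves.

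Your argument for the cover-reflection identity is also flawed, independently of the above. The inclusion $\cov(v_{\br{s}})\subseteq\cov(v)$ does not follow by ``tracking inversions.'' The projection defined by $\inv(v_{\br{s}})=\inv(v)\cap W_{\br{s}}$ corresponds to a length-additive factorization $v=v_{\br{s}}\cdot m$ with the minimal coset representative $m$ on the \emph{right}, so right descents of $v_{\br{s}}$ are not automatically right descents of $v$ (for $v=s_1s_2$ and $J=\set{s_1}$ one has $v_J=s_1$, whose right descent $s_1$ is not a descent of $v$). More fundamentally, even when the witness $r$ of $t\in\cov(v_{\br{s}})$ is a right descent of $v$, the cover reflection of $v$ it witnesses is $vrv^{-1}$, which need not equal $t=v_{\br{s}}rv_{\br{s}}^{-1}$: for $v=s_1s_2s_1$, $c=s_1s_2$, $s=s_1$ (so the hypothesis of the proposition holds), we have $v_{\br{s}}=s_2$ and $t=s_2$ with witness $r=s_2$, but $vs_2v^{-1}=s_1\neq s_2$; the element $s_2$ does lie in $\cov(v)$, but only via the other witness $s_1$, which your argument does not produce. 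Since your counting argument for $\cov(v)=\set{s}\cup\cov(v_{\br{s}})$ relies both on this inclusion and on the unproven positive-root statement, neither assertion of the proposition is actually established by the proposal.
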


Using the above lemmas and proposition, we now prove Proposition~\ref{Cc adjacent}.

\begin{proof}[Proof of Proposition~\ref{Cc adjacent}]
Our proof is by induction on the rank of $W$ and the length of $v$. Let $s$ be initial in $c$.  

\noindent
\textbf{Case 1:} $v' \geq s$. Then $v \geq s$. 
Since $s \in \inv(v')$, and $t \not \in \inv(v')$, we have $s \neq t$.
The recursive definition of $C_c$ says that $s(-\beta_t)$ and $s\gamma$ are in $C_{scs}(sv)$ and that $s\beta_t\in C_{scs}(sv')$.
As $s \neq t$, we have $s(-\beta_t) = -\beta_{sts}$ and $s\beta_t = \beta_{sts}$.
By induction on $\ell(v)$, we conclude that 
\begin{enumerate}
\item If $\omega_{scs}(s\beta\ck,s\gamma)\ge 0$ then $(sts)s\gamma\in C_{scs}(sv')$. 
\item If $\omega_{scs}(s\beta\ck,s\gamma)< 0$ then $s\gamma\in C_{scs}(sv')$. 
\end{enumerate}
Lemma~\ref{OmegaInvariance} says that $\omega_{scs}(s\beta\ck,s\gamma)=\omega_c(\beta_t\ck,\gamma)$, and we apply the recursive definition of $C_c$ to obtain the desired conclusion in this case.

\noindent
\textbf{Case 2:} $v \not \geq s$. Then $v' \not \geq s$.
Since $s \not \in \inv(v)$, and $t \in \inv(v)$, we have $s\neq t$.  
Both $v$ and $v'$ are $sc$-sortable elements of $W_\br{s}$ by Lemma~\ref{sc}.
Also, $C_c(v)=C_{sc}(v)\cup\set{\alpha_s}$ and $C_c(v')=C_{sc}(v')\cup\set{\alpha_s}$.
Since $t \neq s$, we have $t \in W_J$.

If $\gamma=\alpha_s$, then Lemma~\ref{OmegaNegativity} and the antisymmetry of $\omega_c$ say that $\omega_c(\beta_t\ck,\gamma) \le 0$, with equality if and only if $s$ and $t$ commute.
If the inequality is strict, then the conclusion of the proposition holds because $\gamma=\alpha_s\in C_c(v')$.
If equality holds, then $s$ and $t$ commute, so $\gamma=t\gamma=t\alpha_s=\alpha_s\in C_c(v')$.

If $\gamma\neq\alpha_s$, then $\gamma$ is associated to a reflection in $W_{\br{s}}$, so by induction on the rank of $W$, we see that 
\begin{enumerate}
\item If $\omega_{sc}(\beta_t\ck,\gamma)\ge 0$ then $t\gamma\in C_{sc}(v')$.
\item If $\omega_{sc}(\beta_t\ck,\gamma)< 0$ then $\gamma\in C_{sc}(v')$. 
\end{enumerate}
By Lemma~\ref{OmegaRestriction}, we obtain the conclusion of the proposition.

\noindent
\textbf{Case 3:} $v\ge s$ and $v'\not\ge s$.
Then the hyperplane separating $vD$ from $tvD$ is $(\alpha_s)^\perp$, so $t=s$.
We claim that $v'=v_{\br{s}}$.
Indeed, $\inv(sv)=\inv(v)\setminus\set{s}$, so $\inv((sv)_{\br{s}})=\inv(sv)\cap W_{\br{s}}=\inv(v)\cap W_{\br{s}}=\inv(v_{\br{s}})$.
Thus $(sv)_{\br{s}}=v_{\br{s}}$.
This is the maximal element of $W_{\br{s}}$ below $sv$, and it is also $sc$-sortable by Proposition~\ref{sort para}.
By Lemma~\ref{sc}, every $c$-sortable element below $sv$ is in $W_{\br{s}}$, so $v_{\br{s}}$ is the maximal such, i.e.\ $v'=\pidown^c(sv)=v_{\br{s}}$.

Now $\beta_t=\alpha_s$, so Lemma~\ref{OmegaNegativity} implies that $\omega_c(\beta_t\ck,\gamma)$ agrees weakly in sign with $\sgn(\gamma)$.
In this case, we obtain the desired conclusion by combining Proposition~\ref{lower walls} with Proposition~\ref{Cc initial}.
\end{proof}

We now complete the proof that $(\Camb_c,C_c)$ is a reflection framework by verifying the Euler conditions (E1), (E2), and (E3).
The form $E$ agrees with the form $E_c$ defined in \cite[Section~3]{typefree}.
Condition (E3) is immediate from \cite[Lemma 5.9]{typefree}, which defines a total order $\beta_1,\ldots,\beta_n$ on the roots in $C_c(v)$ such that $E_c(\beta_i,\beta_j)=0$ for $i<j$.
We will quote three lemmas from~\cite{typefree} to establish conditions (E1) and (E2).
They are parts of \cite[Lemmas~3.1--3.3]{typefree}.

\begin{lemma}\label{Ec initial}
If $s$ is initial in $c$ and $\beta_t$ is any positive root, then $E_c(\alpha_s,\beta_t)\ge0$, with equality if and only if $t\in W_{\br{s}}$.  
\end{lemma}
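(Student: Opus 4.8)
The plan is to show that, when $s$ is initial in $c$, the functional $E_c(\alpha_s,-)$ is nothing but a positive multiple of the coordinate functional extracting the $\alpha_s$-coefficient, after which both assertions become transparent. First I would translate the hypothesis that $s=s_i$ is initial in $c$ into sign information about the $i$-th row of $B$. With the indexing chosen so that $b_{pq}>0$ implies $p<q$ and $c=s_1\cdots s_n$, the generator $s_i$ is initial exactly when it commutes with all earlier generators, i.e. $b_{ij}=0$ for $j<i$; since skew-symmetrizability forces $b_{ij}$ and $b_{ji}$ to have opposite signs, it follows that $b_{ij}\ge 0$ for every $j\neq i$ (the entries with $j<i$ vanish, and $b_{ij}<0$ for some $j>i$ would give $b_{ji}>0$ with $j>i$, contradicting the indexing). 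Hence $[b_{ij}]_-=0$ for all $j\neq i$, and the defining formula for the Euler form gives $E(\alpha_s\ck,\alpha_r)=\delta_{sr}$ (Kronecker delta) for every $r\in I$.

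Using bilinearity of $E_c$ and $\alpha_s=\delta(s)\,\alpha_s\ck$, I would then expand an arbitrary positive root $\beta_t$ in the simple-root basis to obtain
\[E_c(\alpha_s,\beta_t)=\delta(s)\sum_{r\in I}[\alpha_r:\beta_t]\,E(\alpha_s\ck,\alpha_r)=\delta(s)\,[\alpha_s:\beta_t],\]
where $[\alpha_r:\beta_t]$ denotes the coefficient of $\alpha_r$. Because $\beta_t$ is positive, every coefficient $[\alpha_r:\beta_t]$ is nonnegative and $\delta(s)>0$, so $E_c(\alpha_s,\beta_t)\ge0$, which is the asserted inequality. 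Moreover equality holds if and only if $[\alpha_s:\beta_t]=0$, that is, if and only if $\beta_t$ lies in the span $V_{\br{s}}$ of the simple roots indexed by $S\setminus\set{s}$.

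To finish the equality clause I would invoke the standard description of the root subsystem of a standard parabolic subgroup: a positive root lies in $V_{\br{s}}$ precisely when it is a (positive) root of $W_{\br{s}}$, equivalently when its associated reflection $t$ lies in $W_{\br{s}}$. This yields equality iff $t\in W_{\br{s}}$, as claimed, matching the statement quoted from \cite{typefree}. I do not expect a serious obstacle: the only points needing care are the passage from ``$s$ initial'' to ``the $s$-row of $B$ is nonnegative'' (a short sign analysis using skew-symmetrizability) and the invocation of the parabolic root-system fact, while the scalar $\delta(s)$ is harmless since only signs and vanishing matter. An alternative, more self-contained route would mirror the recursive proofs used elsewhere in this section, peeling off the initial $s$ and inducting on the rank of $W$ via Lemmas~\ref{sc} and~\ref{scs}, but the direct coordinate computation above seems cleanest.
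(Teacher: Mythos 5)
This lemma is never proved in the paper at all: it is one of the three results quoted from \cite{typefree} (as parts of Lemmas~3.1--3.3 there), so there is no in-paper argument to compare yours against. Judged on its own, your proof is correct. Its two load-bearing steps are both sound: (i) since every reduced word for a Coxeter element uses each generator exactly once, any two such words differ only by swaps of adjacent commuting letters, so $s_i$ being initial indeed forces $b_{ij}=0$ for $j<i$; combined with the paper's indexing convention and skew-symmetrizability this gives $b_{ij}\ge 0$ for all $j\neq i$, hence $E(\alpha_s\ck,\alpha_r)=\delta_{sr}$ and therefore $E_c(\alpha_s,\beta_t)=\delta(s)\,[\alpha_s:\beta_t]$ by bilinearity; (ii) the equality clause then reduces to the standard fact that a positive root lies in $V_{\br{s}}$ precisely when its associated reflection lies in $W_{\br{s}}$, which is valid for arbitrary (possibly infinite) Coxeter groups and their standard parabolic subgroups. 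The resulting observation --- that for $s$ initial, $E_c(\alpha_s,\cdot)$ is a positive multiple of the $\alpha_s$-coordinate functional --- is in substance how the cited reference establishes the lemma as well, so your computation is both valid and faithful to the source; the recursive alternative you sketch at the end is not needed.
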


\begin{lemma}\label{Ec final}
If~$s$ is final in~$c$ and $\beta_t$ is any positive root, then $E_c(\beta_t, \alpha_s) \ge 0$, with equality if and only if $t \in W_{\br{s}}$.
\end{lemma}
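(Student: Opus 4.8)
The plan is to prove the lemma directly from the explicit formula for $E_c=E$ recorded above, together with the combinatorial meaning of ``final.'' Writing $s=s_a$, the first step is to record what finality buys us: $s$ is final in $c$ exactly when $a$ is a sink of the acyclic orientation encoded by $B$, i.e. $b_{aj}\le 0$ for every $j$, equivalently (by skew-symmetrizability, since $\delta(a)b_{aj}=-\delta(j)b_{ja}$) $b_{ja}\ge 0$ for every $j\neq a$. Hence $E(\alpha_j\ck,\alpha_s)=[b_{ja}]_-=0$ for every $j\neq a$, while $E(\alpha_s\ck,\alpha_s)=1$. This is the key structural input, and cleanly connecting the word-combinatorial notion of a final letter to the sign pattern of the column of $B$ indexed by $s$ is the one step that is not a pure computation; I expect it to be the main (though modest) obstacle, and I would settle it using that the reduced words for $c$ are exactly the linear extensions of the orientation, so that $s$ can occur last if and only if it is a sink.

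With that in hand the inequality is a one-line computation. I would expand an arbitrary positive root as $\beta_t=\sum_{j\in I}c_j\alpha_j$ with all $c_j\ge 0$, and use bilinearity of $E$ together with $\alpha_j=\delta(j)\alpha_j\ck$ to obtain
\[
E(\beta_t,\alpha_s)=\sum_{j\in I}c_j\,E(\alpha_j,\alpha_s)=\sum_{j\in I}c_j\,\delta(j)\,E(\alpha_j\ck,\alpha_s)=\delta(a)\,c_a,
\]
since every term with $j\neq a$ vanishes by the first paragraph. As $\delta(a)>0$ and $c_a\ge 0$, this is nonnegative, proving $E_c(\beta_t,\alpha_s)\ge 0$.

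Finally I would identify the equality case. The display shows $E(\beta_t,\alpha_s)=0$ if and only if $c_a=0$, that is, if and only if $\alpha_s$ does not occur in the support of $\beta_t$. Here I would invoke the standard fact that for a reflection $t$ and a subset $J\subseteq S$ one has $t\in W_J$ if and only if $\beta_t$ is supported on $\set{\alpha_j:j\in J}$. Taking $J=\br{s}=S\setminus\set{s}$, the condition $\alpha_s\notin\supp(\beta_t)$ is exactly $t\in W_{\br{s}}$, which yields the ``equality if and only if'' clause and completes the proof.

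I would note that this argument is the mirror image of the proof of Lemma~\ref{Ec initial}: there $s$ initial forces $E(\alpha_s\ck,\alpha_j)=0$ for $j\neq s$, whence $E(\alpha_s,\beta_t)=\delta(s)\,[\alpha_s:\beta_t]$, while here $s$ final forces the vanishing of the other triangular half of $E$, so that $E_c$ reads off the same coefficient $[\alpha_s:\beta_t]$ but from the second argument.
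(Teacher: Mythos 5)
Your proof is correct; the main thing to note is that the paper contains no argument for Lemma~\ref{Ec final} to compare against — it is quoted, along with Lemmas~\ref{Ec initial} and~\ref{Ec invariant}, from \cite[Lemmas~3.1--3.3]{typefree}. Your computation is a valid self-contained replacement, and each step checks out against the conventions of this paper. Finality of $s=s_a$ is indeed equivalent to $b_{aj}\le 0$ for all $j\neq a$: for $j<a$ this sign is forced by the ordering convention ($b_{ij}>0$ implies $i<j$), while for $j>a$ finality forces $b_{aj}=0$, since $s_a$ can reach the end of a reduced word for $c$ only by commuting past the letters after it; conversely those signs let one commute $s_a$ to the end. (One terminological caution: under the paper's arrow convention, $i\to j$ iff $b_{ij}<0$, a final letter is a \emph{source} rather than a sink; since you spell out the inequalities, this does not affect the argument.) Given that, the second-argument column of $E$ at $\alpha_s$ collapses to the diagonal entry, bilinearity gives $E_c(\beta_t,\alpha_s)=\delta(a)\,[\alpha_a:\beta_t]\ge 0$, and the equality case rests on the standard fact that a reflection $t$ lies in $W_{\br{s}}$ if and only if $\beta_t$ is supported on $\set{\alpha_j:j\neq a}$ — a fact valid for arbitrary, possibly infinite, $W$, and the same one needed for the equality case of Lemma~\ref{Ec initial}. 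What your route buys is self-containedness: everything follows from the formula $E(\alpha_i\ck,\alpha_j)=[b_{ij}]_-$ as defined in this paper. What the citation buys the authors is that they never need to unwind the relation between finality and the sign pattern of $B$: in \cite{typefree} the form $E_c$ is defined directly in terms of the Coxeter element, and these lemmas are established there once, in the setting where the rest of the sortable-element machinery is developed.
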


\begin{lemma}\label{Ec invariant}
If~$s$ is initial or final in $c$, then $E_c(\beta,\beta')=E_{scs}(s \beta, s \beta')$ for all $\beta$ and~$\beta'$ in $V$.
\end{lemma}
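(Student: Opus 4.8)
The plan is to sidestep the combinatorics of $E_c$ entirely and instead recover the identity from the corresponding transformation laws for the symmetric form $K$ and the antisymmetric form $\omega_c$, both of which are already available. The starting point is Proposition~\ref{sym antisym}: adding the two identities $\omega_c(\beta,\gamma)=E_c(\beta,\gamma)-E_c(\gamma,\beta)$ and $K(\beta,\gamma)=E_c(\beta,\gamma)+E_c(\gamma,\beta)$ gives, for all $\beta,\gamma\in V$,
\[2\,E_c(\beta,\gamma)=K(\beta,\gamma)+\omega_c(\beta,\gamma).\]
Since $E_c$, $\omega_c$, and $K$ are all bilinear and $\{\alpha_i\ck\}$ and $\{\alpha_j\}$ are bases of $V$, this is a genuine identity of bilinear forms, so there is no need to restrict to simple roots. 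The same decomposition holds verbatim with $c$ replaced by $scs$, namely $2\,E_{scs}=K'+\omega_{scs}$, where $K'$ is the symmetric form attached to the Cartan matrix of $scs$.

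The crux of the argument is the observation that $K'=K$. Passing from $c$ to $scs$ reverses the arrows incident to the relevant generator; this flips signs of entries of $B$ but leaves the Cartan companion $A=[-|b_{ij}|]$ unchanged, and hence leaves $K$ unchanged. Moreover the $W$-action on $V$ is defined from $A$ alone, so the reflection $s$ acts in exactly the same way in both the $c$ and the $scs$ settings. Thus the only $c$-dependent ingredient in the two decompositions is the form $\omega_c$.

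With this in hand the proof is a two-line computation. Fixing $\beta,\beta'\in V$ and evaluating the $scs$-decomposition at $(s\beta,s\beta')$, I would write
\[2\,E_{scs}(s\beta,s\beta')=K(s\beta,s\beta')+\omega_{scs}(s\beta,s\beta').\]
Because the $W$-action preserves $K$, the first term on the right equals $K(\beta,\beta')$, and by Lemma~\ref{OmegaInvariance} the second term equals $\omega_c(\beta,\beta')$. Combining with the $c$-decomposition gives $2\,E_{scs}(s\beta,s\beta')=K(\beta,\beta')+\omega_c(\beta,\beta')=2\,E_c(\beta,\beta')$, and dividing by $2$ finishes the proof.

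The only real content beyond bookkeeping is the claim $K'=K$, i.e.\ that $K$, unlike $E_c$ and $\omega_c$, is insensitive to the choice of Coxeter element; this is where I expect the argument could look too quick if stated carelessly. I would therefore justify it explicitly, stressing that $K$ is determined by $A$, which records only the absolute values $|b_{ij}|$ and is invariant under the arrow-reversal $c\mapsto scs$, so that the identical form $K$ appears in both decompositions and the same reflection $s$ preserves it. Everything else then reduces to bilinearity together with the already-established $W$-invariance of $K$ and the transformation law for $\omega$ in Lemma~\ref{OmegaInvariance}.
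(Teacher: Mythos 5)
Your argument is correct, but note that there is no internal proof in the paper to compare against: Lemma~\ref{Ec invariant} is quoted from \cite[Lemmas~3.1--3.3]{typefree} without proof, so what you have really done is derive one quoted lemma from another using machinery the paper does establish. The skeleton is sound: the decomposition $2E_c(\beta,\gamma)=K(\beta,\gamma)+\omega_c(\beta,\gamma)$ is immediate from Proposition~\ref{sym antisym} plus bilinearity; your key claim that $K$ is insensitive to $c\mapsto scs$ is right, since for $s$ initial or final the exchange matrix realizing $scs$ is obtained from $B$ by negating the row and column indexed by $s$, which leaves the Cartan companion $A$ --- and hence both $K$ and the $W$-action --- unchanged; and the rest is the $W$-invariance of $K$ together with Lemma~\ref{OmegaInvariance}. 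Two caveats are worth recording. First, Lemma~\ref{OmegaInvariance} is stated only for roots $\beta,\gamma$, whereas you apply it to arbitrary vectors of $V$; this is harmless because both sides are bilinear and the simple roots form a basis of $V$, but it should be said. Second, and more substantively, your proof does not eliminate the reliance on \cite{typefree}; it shifts it from the $E_c$-invariance lemma to the $\omega_c$-invariance lemma, which the paper likewise quotes without proof. Since $\omega_c$ is exactly the antisymmetrization of $E_c$, the source would most naturally prove the $\omega$-statement \emph{from} the $E$-statement, so as a free-standing proof your argument risks circularity; as a deduction inside this paper, where both lemmas are taken as known, it is valid, and it has the genuine merit of exposing that, once one knows $K$ is $c$-independent and $W$-invariant, the invariance of $E_c$ and the invariance of $\omega_c$ under $c\mapsto scs$ are equivalent statements.
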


\begin{prop}\label{Cc E1}
Condition (E1) holds for $(\Camb_c,C_c)$.
\end{prop}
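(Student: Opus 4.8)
The plan is to prove Condition (E1) by induction on the rank of $W$ together with the length $\ell(v)$, exactly paralleling the proof of Proposition~\ref{Cc adjacent}. Fix a vertex $v$, a positive label $\beta\in C_+(v)$ and a negative label $\gamma\in C_-(v)$; by Proposition~\ref{lower walls} we may write $\gamma=-\beta_t$ with $t\in\cov(v)$, and the goal is $E(\beta,\gamma)=0$. I would choose $s$ initial in $c$ and split according to whether $v\ge s$. The base cases (where $v$ is the identity, so $C_c(v)=\Pi$ contains no negative root, or where $W$ has rank $\le 1$) are vacuous.

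When $v\not\ge s$, the recursion gives $C_c(v)=C_{sc}(v)\cup\set{\alpha_s}$ with $v$ an $sc$-sortable element of $W_{\br{s}}$, and $\alpha_s$ is the only positive label not lying in $V_{\br{s}}$. If $\beta=\alpha_s$, then since $\gamma\in C_{sc}(v)\subseteq V_{\br{s}}$ we have $\gamma=-\beta_t$ with $t\in W_{\br{s}}$, so Lemma~\ref{Ec initial} gives $E(\alpha_s,\beta_t)=0$ and hence $E(\beta,\gamma)=0$. If instead $\beta\in C_{sc}(v)$, then both $\beta$ and $\gamma$ are roots of $W_{\br{s}}$, and the inductive hypothesis in rank $n-1$ applied to $(W_{\br{s}},sc)$ yields $E_{sc}(\beta,\gamma)=0$; since the restriction of $E$ to $V_{\br{s}}$ is $E_{sc}$ (immediate from the definition, just as for $\omega$ in Lemma~\ref{OmegaRestriction}), we conclude $E(\beta,\gamma)=0$.

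When $v\ge s$, the recursion gives $C_c(v)=s\,C_{scs}(sv)$ with $sv$ being $scs$-sortable and $\ell(sv)<\ell(v)$; recall that $s$ is then final in $scs$. Write $\beta=s\delta_1$ and $\gamma=s\delta_2$ with $\delta_1,\delta_2\in C_{scs}(sv)$, so that $E(\beta,\gamma)=E_{scs}(\delta_1,\delta_2)$ by Lemma~\ref{Ec invariant}. Because $s$ permutes the positive roots other than $\alpha_s$ and sends $\alpha_s\mapsto-\alpha_s$, the positivity of $\beta$ forces $\delta_1$ to be a positive root with $\delta_1\ne\alpha_s$, the only other option being $\delta_1=-\alpha_s$; but $\delta_1=-\alpha_s$ would require $s\in\cov(sv)$, which is impossible since $s\notin\inv(sv)$ (as $\ell(sv)=\ell(v)-1$). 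Hence $\delta_1$ is always a positive root different from $\alpha_s$, while the negativity of $\gamma$ forces either $\delta_2$ negative (necessarily $\ne-\alpha_s$) or $\delta_2=\alpha_s$.

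In the first subcase, $\delta_1$ positive and $\delta_2$ negative, the inductive hypothesis at the shorter element $sv$ gives $E_{scs}(\delta_1,\delta_2)=0$. In the second, $\delta_2=\alpha_s$ means $-\alpha_s=\gamma\in C_c(v)$, so $s\in\cov(v)$ by Proposition~\ref{lower walls}, placing us in the hypotheses of Proposition~\ref{Cc initial}. That proposition describes every positive label of $C_c(v)$ as $s$ applied to a positive root of $C_{sc}(v_{\br{s}})\subseteq V_{\br{s}}$; in particular $\beta=s\delta_1$ with $\delta_1\in V_{\br{s}}$, so $\delta_1=\beta_{t''}$ for some $t''\in W_{\br{s}}$, and Lemma~\ref{Ec final} (with $s$ final in $scs$) gives $E_{scs}(\beta_{t''},\alpha_s)=0$. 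The main obstacle is precisely this last subcase: ruling out $\delta_1=-\alpha_s$ by the inversion count, and then, when $\delta_2=\alpha_s$, extracting from Proposition~\ref{Cc initial} that the surviving positive label $\delta_1$ lies in the parabolic $V_{\br{s}}$, so that the equality clause of Lemma~\ref{Ec final} applies.
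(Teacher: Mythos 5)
Your proof is correct and follows essentially the same route as the paper's: the same double induction on rank and $\ell(v)$, the same case split on $v\ge s$ versus $v\not\ge s$, and the same deployment of Lemma~\ref{Ec initial} (Case 1), induction plus Lemma~\ref{Ec invariant} (Case 2, $\gamma\neq-\alpha_s$), and Proposition~\ref{lower walls}, Proposition~\ref{Cc initial} and Lemma~\ref{Ec final} (Case 2, $\gamma=-\alpha_s$). The only divergence is the sub-step ruling out $\beta=\alpha_s$ when $v\ge s$: the paper argues geometrically, via Theorem~\ref{pidown fibers} ($vD\subseteq\Cone_c(v)$ would be violated), whereas you argue combinatorially ($-\alpha_s\in C_{scs}(sv)$ would force $s\in\cov(sv)\subseteq\inv(sv)$, impossible since $\ell(s\cdot sv)>\ell(sv)$) --- both arguments are valid, and yours has the minor advantage of staying entirely within the combinatorics of inversions.
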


\begin{proof}
Our proof is by induction on the rank of $W$ and the length of $v$. 
The statement is vacuously true when $\mathrm{Rank}(W) = \ell(v)=0$, as $C_c(v)$ is empty.

Let $\beta$ and $\gamma$ be in $C_c(v)$ with $\sgn(\beta)=1$ and $\sgn(\gamma)=-1$.
Let $s$ be initial in $c$.

\noindent
\textbf{Case 1:} $v\not\ge s$. 
In this case $v\in W_{\br{s}}$ and $C_c(v)=C_{sc}(v)\cup\set{\alpha_s}$,
so $\gamma$ is in $C_{sc}(v)$ and thus equals $-\beta_t$ for some $t\in W_{\br{s}}$.
If $\beta$ is $\alpha_s$, then Lemma~\ref{Ec initial} says that $E_c(\beta,\gamma)=0$.
Otherwise, $\beta$ is in $C_{sc}(v)$, and $E_{sc}(\beta,\gamma)=0$ by induction on rank.
It is immediate from the definition that $E_{sc}$ is the restriction of $E_c$, so $E_c(\beta,\gamma)=0$.

\noindent
\textbf{Case 2:}  $v\ge s$.
The roots $\pm\alpha_s$ switch signs when acted on by $s$, but no other roots change sign when acted on by $s$.
It is impossible to have $\beta=\alpha_s$, because if so, $\Cone_c(v)$ and $vD$ are on opposite sides of the hyperplane $\alpha_s^\perp$ in $V^*$.
This contradicts Theorem~\ref{pidown fibers}.
Thus $s\beta$ is a positive root.

If $\gamma\neq-\alpha_s$, then $s\gamma$ is a negative root.
The roots $s\beta$ and $s\gamma$ are in $C_{scs}(sv)$, so by induction on $\ell(v)$, we see that $E_{scs}(s\beta,s\gamma)=0$.
Lemma~\ref{Ec invariant} now says that $E_c(\beta,\gamma)=0$.
If $\gamma=-\alpha_s$ then $s$ is a cover reflection of $v$ by Proposition~\ref{lower walls}.
Proposition~\ref{Cc initial} says that $\beta=s\beta'$ for some positive root $\beta'\in C_{sc}(v_{\br{s}})$.
Lemma~\ref{Ec invariant} says that $E_c(\beta,\gamma)=E_{scs}(s\beta,s\gamma)$, which can be rewritten as $E_{scs}(\beta',\alpha_s)$, which is zero by Lemma~\ref{Ec final}.
\end{proof}

\begin{prop}\label{Cc E2}
Condition (E2) holds for $(\Camb_c,C_c)$.
\end{prop}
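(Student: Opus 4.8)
The plan is to mimic the proof of Proposition~\ref{Cc E1} almost verbatim, arguing by induction on the rank of $W$ and on $\ell(v)$, with $s$ chosen initial in $c$. Fix distinct roots $\beta,\gamma\in C_c(v)$ with $\sgn(\beta)=\sgn(\gamma)$; the goal is $E_c(\beta,\gamma)\le 0$. Since condition~(E2) quantifies over arbitrary distinct incident edges $e,f$, and $(e,f)$ may be swapped, I must establish this inequality for \emph{both} orderings of the pair. The base case (rank or length zero) is vacuous since $C_c(v)$ is then empty or a singleton. Throughout I would freely use the identity $\omega_c(\mu,\lambda)=E_c(\mu,\lambda)-E_c(\lambda,\mu)$ from Proposition~\ref{sym antisym} (valid for $\omega_c$ and $E_c$, which agree with $\omega$ and $E$), since it converts a bound on one ordering into a bound on the other.

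First I would treat the case $v\not\ge s$, where $C_c(v)=C_{sc}(v)\cup\set{\alpha_s}$ and $v$ is an $sc$-sortable element of $W_{\br{s}}$. If neither $\beta$ nor $\gamma$ equals $\alpha_s$, both lie in $C_{sc}(v)$ and, as $E_{sc}$ is the restriction of $E_c$, the desired inequality is exactly the inductive hypothesis for $W_{\br{s}}$. Otherwise one of them is $\alpha_s$; since $\alpha_s$ is positive this forces both roots positive, and the other is a positive root $\beta_t$ lying in $V_{\br{s}}$, so $t\in W_{\br{s}}$. Then Lemma~\ref{Ec initial} gives $E_c(\alpha_s,\beta_t)=0$, settling the ordering $(\alpha_s,\beta_t)$; for $(\beta_t,\alpha_s)$ I would write $E_c(\beta_t,\alpha_s)=E_c(\alpha_s,\beta_t)-\omega_c(\alpha_s,\beta_t)=-\omega_c(\alpha_s,\beta_t)$ and invoke Lemma~\ref{OmegaNegativity} to conclude that it is $\le 0$.

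Next I would handle $v\ge s$, where $C_c(v)=sC_{scs}(sv)$, the element $sv$ is $scs$-sortable with $\ell(sv)<\ell(v)$, and $\alpha_s\notin C_c(v)$ (exactly as argued in the proof of Proposition~\ref{Cc E1} via Theorem~\ref{pidown fibers}). If neither $\beta$ nor $\gamma$ is $-\alpha_s$, then neither is $\pm\alpha_s$, so $s\beta$ and $s\gamma$ keep the signs of $\beta$ and $\gamma$ and form a same-sign pair in $C_{scs}(sv)$; the inductive hypothesis in length gives $E_{scs}(s\beta,s\gamma)\le 0$, and Lemma~\ref{Ec invariant} turns this into $E_c(\beta,\gamma)\le 0$. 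The remaining subcase is that one of the two roots equals $-\alpha_s$; since $-\alpha_s$ is negative this forces both roots negative and puts $s$ into $\cov(v)$ by Proposition~\ref{lower walls}. Writing the partner as $-\beta_u$, Proposition~\ref{Cc initial} gives $\cov(v)=\set{s}\cup\cov(v_{\br{s}})$, so $u\in\cov(v_{\br{s}})\subseteq W_{\br{s}}$. Then $E_c(-\alpha_s,-\beta_u)=E_c(\alpha_s,\beta_u)=0$ by Lemma~\ref{Ec initial}, while $E_c(-\beta_u,-\alpha_s)=E_c(\beta_u,\alpha_s)=-\omega_c(\alpha_s,\beta_u)\le 0$ by Proposition~\ref{sym antisym} and Lemma~\ref{OmegaNegativity}.

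The one genuinely delicate point is this last subcase. Applying $s$ sends $-\alpha_s$ to the positive root $\alpha_s$, so the images $s\beta,s\gamma$ no longer share a sign and the inductive hypothesis becomes unavailable; this is precisely the spot where the proof of Proposition~\ref{Cc E1} also had to argue by hand rather than recurse. The resolution is to exploit that the reflection $u$ accompanying the cover reflection $s$ lies in the parabolic $W_{\br{s}}$, which forces the relevant value of $E_c$ to vanish via the equality clause of Lemma~\ref{Ec initial}, after which the antisymmetry relation disposes of the opposite ordering. I expect everything else to be a routine transcription of the argument for condition~(E1).
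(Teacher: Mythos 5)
Your proof is correct, and its skeleton (induction on rank and length with $s$ initial in $c$, the split $v\not\ge s$ versus $v\ge s$, the impossibility of $\alpha_s\in C_c(v)$ when $v\ge s$ via Theorem~\ref{pidown fibers}) matches the paper's; but you diverge in two substantive ways. First, the paper splits by sign before anything else and disposes of the both-negative case in one non-inductive stroke: by Proposition~\ref{lower walls} the negative labels are $v\alpha_p$ and $v\alpha_q$ for $p,q\in S$, so $W$-invariance of $K$ gives $K(\beta,\gamma)=K(\alpha_p,\alpha_q)\le 0$. You instead push negative pairs through the induction and isolate the subcase where one root is $-\alpha_s$, resolving it with Proposition~\ref{Cc initial}, the equality clause of Lemma~\ref{Ec initial}, and Lemma~\ref{OmegaNegativity} — exactly the pattern of the paper's proof of Proposition~\ref{Cc E1}, as you intended. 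Second, the paper's device for handling both orderings of the pair is to invoke condition (E3): since one of $E_c(\beta,\gamma)$, $E_c(\gamma,\beta)$ vanishes, Proposition~\ref{sym antisym} reduces everything to checking $K(\beta,\gamma)\le 0$. You never use (E3), instead converting one ordering into the other via $E_c(\gamma,\beta)=E_c(\beta,\gamma)-\omega_c(\beta,\gamma)$ together with the sign information in Lemma~\ref{OmegaNegativity}. Your route is a bit longer but makes (E2) logically independent of (E3), which is mildly pleasant given that weak reflection frameworks require (E0)--(E2) but not (E3); the paper's route buys brevity in the negative case and avoids Proposition~\ref{Cc initial} entirely. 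One small misstatement to fix: the base case is not ``rank or length zero.'' At rank $\ge 2$ and length zero, $C_c(v)=\Pi$ is the full set of simple roots, so (E2) there is true but not vacuous (it holds because $E$ evaluated on distinct simple roots is nonpositive by definition). This is harmless for you, since the identity element satisfies $v\not\ge s$ and is therefore absorbed by your rank recursion; the only genuine base case your induction needs is rank $<2$.
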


\begin{proof}
Our statement is by the usual induction on length and rank; it is vacuously true for rank $<2$.

Let $\beta$ and $\gamma$ be distinct roots in $C_c(v)$ with $\sgn(\beta)=\sgn(\gamma)$.
Since Condition~(E3) holds, we know that either $E_c(\beta,\gamma)$ or $E_c(\gamma,\beta)=0$.
Thus, in light of Proposition~\ref{sym antisym}, when it is convenient, we can verify that $K(\beta,\gamma)\le 0$ to show that $E_c(\beta,\gamma)\le0$.

\noindent
\textbf{Case 1:} $\beta$ and $\gamma$ are both negative roots.
This case can be handled without induction.
Proposition~\ref{lower walls} says that $\beta$ and $\gamma$ are both associated to cover reflections of $v$.
Thus $\beta=v\alpha_p$ and $\gamma=v\alpha_q$, for $p$ and $q \in S$.
But $K$ is invariant under the action of $W$, so $K(\beta,\gamma)=K(\alpha_p,\alpha_q)\le 0$.

\noindent
\textbf{Case 2:} $\beta$ and $\gamma$ are both positive roots. 

Let $s$ be initial in $c$.

\noindent
\textbf{Case 2a:} $v\not\ge s$. 
In this case $v\in W_{\br{s}}$ and $C_c(v)=C_{sc}(v)\cup\set{\alpha_s}$.
If neither $\beta$ nor $\gamma$ equals $\alpha_s$, then both are in $C_{sc}(v)$, and $E_c(\beta,\gamma)=E_{sc}(\beta,\gamma)$, which is nonpositive by induction on rank.
If $\beta=\alpha_s$ then $\gamma$ is in $C_{sc}(v)$ and so Lemma~\ref{Ec initial} says that $E_c(\beta,\gamma)=0$.
If $\gamma=\alpha_s$, then $\beta$ is a positive root in $C_{sc}(v)$.
Thus $\beta$ is a positive combination of simple roots $\alpha_r$ with $r\neq s$, and $K(\alpha_r,\alpha_s)\le 0$ for each such $r$, so $K(\beta,\alpha_s)\le 0$.

\noindent
\textbf{Case 2b:}  $v\ge s$.
It is impossible to have $\beta=\alpha_s$ or $\gamma=\alpha_s$, because if so, we reach a contradiction to Theorem~\ref{pidown fibers} as in Case 2 of the proof of Proposition~\ref{Cc E1}.
Thus $s\beta$ and $s\gamma$ are also positive.
By induction on $\ell(v)$, we see that $E_{scs}(sv)(s\beta,s\gamma)\le0$, so Lemma~\ref{Ec invariant} says that $E_c(\beta,\gamma)\le 0$.
\end{proof}

\begin{remark} 
If $W$ is finite, then we can approach Case~2 in a manner analogous to Case~1. 
Let $u = \piup_c(v)$, as defined in~\cite{sort_camb}. Then $\beta$ and $\gamma$ are of the form $u \alpha_p$ and $u \alpha_q$ for $p$, $q \in S$. But $\piup_c$ cannot be defined for infinite Coxeter groups. 
\end{remark}

We have shown that $(\Camb_c,C_c)$ is a reflection framework.
The fact that this framework is descending is a consequence of Proposition~\ref{lower walls}, as we now explain.
The Unique minimum condition follows because the identity element is the unique minimal element of the weak order, and thus every non-identity element has at least one cover reflection.
The Full edge condition follows because, if $\sgn(C(v,e))=-1$ then according to Proposition~\ref{lower walls}, the root $C(v,e)$ is associated to a cover $v'\covered v$, and by construction, $(v', v)$ is the edge $e$.
The Descending chain condition follows for the same reason:  every arrow $v\to v'$ corresponds to a cover $v'\covered v$.
In particular, $\ell(v')<\ell(v)$.
We have now completed the proof of Theorem~\ref{camb frame}.

\subsection{Denominator vectors}\label{denom sec}
We now comment on the problem of determining denominator vectors within a Cambrian framework.
As of now, we only have a direct way of determining denominators in the case where $\Cart(B)$ is of finite type, where we rely on results of \cite{sortable} and \cite{camb_fan}.
However, we conjecture that the same method works in arbitrary Cambrian frameworks.

In \cite[Section~8]{sortable}, a map $\cl_c$ was defined, taking a $c$-sortable element to an $n$-tuple of roots.
Here we give the same definition, modifying the notation slightly to allow us to reference individual roots in the $n$-tuple.
Suppose $v\in W$ is $c$-sortable and let $a_1\cdots a_k$ be its $c$-sorting word.
Let $r\in S$.
If $r$ does not occur as a letter in $a_1\cdots a_k$, then define $\cl_c^r(v)=-\alpha_r$.
If $r$ occurs in $a_1 \cdots a_k$, then let $i$ be the largest index such that $a_i=r$. The \newword{last reflection} for $r$ in $v$ is $a_1\cdots a_i\cdots a_1$.
In this case, we define $\cl_c^r(v)$ to be the positive root associated to the last reflection for $r$, that is to say, $\cl_c^r(v)=a_1\cdots a_{i-1}\alpha_r$.
Write $\cl_c(v)$ for $\set{\cl_c^r(v):r\in S}$.

As a consequence of \cite[Theorem~1.9]{ca2}, when $\Cart(B)$ is of finite type, the denominator vectors of cluster variables are all distinct.  
Thus in particular, the seeds in the exchange graph can be specified by the $n$-tuple of denominator vectors of the cluster variables in the seed.
This $n$-tuple of denominator vectors, realized as roots as in Section~\ref{ca background sec}, form a \newword{combinatorial cluster}.
The roots in the combinatorial cluster are all almost positive (see Section~\ref{ref frame subsec}). The map from cluster variables to almost positive roots is a bijection.
When $W$ is finite, the map $\cl_c$ is a bijection from $c$-sortable elements to combinatorial clusters \cite[Theorem~8.1]{sortable}.  
The Cambrian fan $\F_c$, in the finite case, is a complete fan.
On the other hand, the nonnegative linear span of each combinatorial cluster is a distinct $n$-dimensional simplicial cone, and these cones are the maximal cones of a complete simplicial fan \cite[Theorem~1.10]{ga}.
The map taking $\Cone_c(v)$ to the nonnegative span of $\cl_c(v)$ is a combinatorial isomorphism of fans \cite[Theorem~1.1]{camb_fan}.
We now prove a more precise statement about denominator vectors.

\begin{theorem}\label{camb denom}
Suppose $\Cart(B)$ is of finite type.
Let $(\Camb_c,C_c)$ be the Cambrian framework.
If $(v,e)$ is an incident pair in the graph $\Camb_c$ and $x^v_e$ is the cluster variable assigned to $(v,e)$, then $\d(x^v_e)$ is the root $\cl_c^r(v)$, where $r$ is the element of $S$ such that $C(v,e)=C_c^r(v)$.
\end{theorem}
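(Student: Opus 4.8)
The plan is to pin down the finite-type combinatorics first and then match it to the framework by an induction on length anchored at the base vertex. By \cite[Theorem~1.9]{ca2} (together with \cite[Theorem~1.10]{ga}), when $B$ is of finite Cartan type the assignment $x\mapsto\d(x)$ is a bijection from cluster variables to almost positive roots, and the denominator vectors of a cluster form a combinatorial cluster, i.e.\ a maximal cone of the cluster fan. Since the Cambrian framework is complete and exact in this case (Theorem~\ref{camb frame} and Theorem~\ref{descending good}, summarized in Corollary~\ref{camb fan}), the map $\Seed$ identifies $\Camb_c$ with $\Ex_0(B)$, so the cluster of $\Seed(v)$ is exactly $\set{x^v_e:e\in I(v)}$ and its denominators form a combinatorial cluster. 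On the other side, \cite[Theorem~8.1]{sortable} makes $v\mapsto\cl_c(v)$ a bijection from $c$-sortable elements to combinatorial clusters, and by \cite[Theorem~1.1]{camb_fan} the map sending $\Cone_c(v)$ to the nonnegative span of $\cl_c(v)$ is a combinatorial isomorphism from the Cambrian fan $\F_c$ (which, via Theorem~\ref{framework principal}, is the $\g$-vector fan, with ray $R(v,e)=\g(x^v_e)$) to the cluster fan. Thus the theorem reduces to showing that the map sending $x^v_e$ to $\cl_c^r(v)$, where $r$ is determined by $C(v,e)=C_c^r(v)$, is precisely the denominator bijection.

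To prove this I would induct on $\ell(v)$, with claim $P(v)$: for every incident pair $(v,e)$, one has $\d(x^v_e)=\cl_c^r(v)$ where $C(v,e)=C_c^r(v)$. The base case $v=v_b$ is direct: the identity has empty $c$-sorting word, so $\cl_c^r(v_b)=-\alpha_r$, while $C_c^r(v_b)=\alpha_r$ labels a half-edge carrying the initial cluster variable $x_r$, whose denominator vector is $-\alpha_r$. For $\ell(v)>0$, the Positive labels condition provides a cover reflection, hence a descending edge $e_0$ to a vertex $v'\covered v$ with $\ell(v')<\ell(v)$, and I may assume $P(v')$. Mutation at $e_0$ fixes every cluster variable except the one in direction $e_0$: one has $x^v_f=x^{v'}_{\mu_{e_0}(f)}$ for $f\neq e_0$, while $x^v_{e_0}\neq x^{v'}_{e_0}$.

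For the shared variables $f\neq e_0$, the inductive step reduces to the purely combinatorial compatibility statement $\cl_c^{r}(v)=\cl_c^{r'}(v')$ whenever $C(v,f)=C_c^{r}(v)$ and $C(v',\mu_{e_0}(f))=C_c^{r'}(v')$; that is, the labelings $\cl_c$ and $C_c$ transform compatibly across a Cambrian cover. I would extract this from the recursive descriptions of $\cl_c$ and $C_c$ relative to an initial generator $s$ (Lemmas~\ref{sc} and~\ref{scs}, together with the recursion for $C_c$) and the definition of $\cl_c$ from $c$-sorting words, so that the unchanged rays of the two fans visibly correspond under the isomorphism of \cite[Theorem~1.1]{camb_fan}.

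The main obstacle is the exchanged variable $x^v_{e_0}$: I must show $\d(x^v_{e_0})=\cl_c^{r_0}(v)$, where $C(v,e_0)=C_c^{r_0}(v)=-\beta_t$ is the unique negative root distinguishing $\cl_c(v)$ from $\cl_c(v')$ (Lemma~\ref{cov beta}). The idea is that a single mutation replaces exactly one root of the combinatorial cluster, and that crossing the corresponding wall of $\Cone_c(v)$ corresponds, under the adjacency-preserving fan isomorphism together with the identification $\Seed\colon\Camb_c\cong\Ex_0(B)$, to crossing the wall of the cone spanned by $\cl_c(v)$; since all other rays have already been matched, the two remaining rays $\d(x^v_{e_0})$ and $\cl_c^{r_0}(v)$ are forced to coincide. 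Making this last step airtight—guaranteeing that the two identifications of maximal cones with seeds, one through $\g$-vectors and $\Seed$ and the other through the $\cl_c$ bijection, genuinely agree rather than merely being two bijections onto the same set of combinatorial clusters—is where the real work lies, and it is exactly what the inductive propagation from $v_b$ across the connected exchange graph is designed to supply.
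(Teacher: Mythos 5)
Your high-level architecture coincides with the paper's: both arguments reduce to comparing two bijections from rays of the Cambrian fan to almost positive roots---one sending the ray of $R(v,e)=\g(x^v_e)$ to $\d(x^v_e)$ (using \cite[Theorem~1.9]{ca2} together with completeness and exactness of the Cambrian framework), the other coming from \cite[Theorem~8.1]{sortable} and the fan isomorphism of \cite[Theorem~1.1]{camb_fan}---anchored at the base vertex and propagated along the connected graph $\Camb_c$. Your forcing argument for the exchanged variable is also sound as far as it goes: the $n-1$ matched roots span a facet of the cluster fan lying in exactly two maximal cones, namely those spanned by $\cl_c(v)$ and $\cl_c(v')$, and injectivity of $\Seed$ excludes the latter. (A small slip: at $v_b$ the edge labeled $\alpha_r$ is a full edge to the atom $s_r$, not a half-edge; this is harmless.)

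The genuine gap is the step you call a ``purely combinatorial compatibility statement'' and propose to ``extract from the recursive descriptions'': if $f\neq e_0$, $C(v,f)=C_c^{r}(v)$ and $C(v',\mu_{e_0}(f))=C_c^{r'}(v')$, then $\cl_c^{r}(v)=\cl_c^{r'}(v')$. This is not a routine extraction; it is the real content of the theorem. Everything else in your outline is either quoted from the literature or follows formally from this claim, and nothing in your sketch indicates how the recursion would actually close. In particular, in the case where the cover reflection is an initial letter $s$ itself (so $v\ge s$ and $v'=v_{\br{s}}\not\ge s$), the two labelings recurse differently: $\cl_c$ involves the twist $\sigma_s$ of Lemma~\ref{cl s}, while $C_c$ passes to the parabolic subgroup $W_{\br{s}}$, and a naive matching of ``unchanged rays'' breaks down exactly there. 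The paper supplies the missing content as Proposition~\ref{cl linear}: there is a \emph{fixed linear} isomorphism $\nu_c$ with $\nu_c(\cl_c^r(v))=R_c^r(v)$ whenever $v\notin W_{\br{r}}$, proved by a nontrivial induction on rank and length using the forms $E_c$ and $K$ and the $W$-action on fundamental weights. Granting that proposition, your compatibility claim follows at once: $R(v,f)=R(v',\mu_{e_0}(f))$ by Proposition~\ref{dual adjacent}, so injectivity of $\nu_c$ gives $\cl_c^r(v)=\cl_c^{r'}(v')$ (the excluded case $v\in W_{\br{r}}$ corresponds to initial cluster variables and is checked directly). Your proposal never introduces $\nu_c$ or any substitute for it, so the induction you describe cannot be completed as written.
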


We now prepare to prove Theorem~\ref{camb denom}.
First, we will need a lemma, which is immediate from the definitions, and which is a slightly more detailed version of \cite[Lemma~8.5]{sortable}.
The lemma refers to a map $\sigma_s$, for $s\in S$.  
This is an involution on almost positive roots defined by
\[\sigma_s(\alpha)=\left\lbrace\begin{array}{ll}
\alpha&\mbox{if }\alpha\in(-\Pi)\mbox{ and }\alpha\neq-\alpha_s,\mbox{ or}\\
s(\alpha)&\mbox{otherwise.}
\end{array}\right.\]

\begin{lemma}\label{cl s}
Let~$s$ be initial in~$c$, let~$v$ be $c$-sortable and let $r\in S$.
If $v\not\ge s$ then 
\[\cl_c^r(v)=\left\lbrace\begin{array}{ll}
-\alpha_s&\mbox{if }r=s,\mbox{ or}\\
\cl_{sc}^r(v)&\mbox{if }r\neq s
\end{array}\right.\]
If $v\ge s$ then $\cl_c^r(v)=\sigma_s(\cl^r_{scs}(sv))$.
\end{lemma}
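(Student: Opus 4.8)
The plan is to prove the lemma by directly comparing the $c$-sorting word of $v$ with the $sc$- or $scs$-sorting word produced by the standard recursion, and then unwinding the definition of $\cl_c$ and of $\sigma_s$. The only input needed beyond the definitions is the recursion on sorting words that underlies the recursive description of $C_c$ recalled above: if $s$ is initial in $c$ and $v\not\ge s$, then $v\in W_{\br{s}}$, the letter $s$ does not occur in the $c$-sorting word of $v$, and that word coincides with the $sc$-sorting word of $v$; whereas if $v\ge s$, then the $c$-sorting word of $v$ begins with $s$ and, after deleting this first letter, is exactly the $scs$-sorting word of $sv$.

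First I would dispose of the case $v\not\ge s$. If $r=s$, then $s$ does not appear in the $c$-sorting word, so $\cl_c^s(v)=-\alpha_s$ directly from the definition. If $r\neq s$, the data defining $\cl_c^r(v)$—the largest index $i$ with $a_i=r$ together with the prefix $a_1\cdots a_{i-1}$—depend only on the sorting word, which agrees with the $sc$-sorting word of $v$; hence $\cl_c^r(v)=\cl_{sc}^r(v)$.

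The substantive case is $v\ge s$. Writing the $c$-sorting word as $a_1\cdots a_k$ with $a_1=s$, the tail $a_2\cdots a_k$ is the $scs$-sorting word of $sv$, so every occurrence of a letter $r$ in a position $\ge 2$ corresponds to an occurrence in the $scs$-sorting word of $sv$ with index shifted by one. I would then split into subcases according to whether $r=s$ and whether $r$ occurs among $a_2,\ldots,a_k$. In each subcase where $r$ occurs in the tail, factoring the leading $s$ out of the prefix gives $\cl_c^r(v)=s\cdot\cl_{scs}^r(sv)$; since $\cl_{scs}^r(sv)$ is then a positive root (not a negative simple root), $\sigma_s$ acts on it by $s$, so $\cl_c^r(v)=\sigma_s(\cl_{scs}^r(sv))$. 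If $r\neq s$ and $r$ does not occur at all, then both $\cl_c^r(v)$ and $\cl_{scs}^r(sv)$ equal $-\alpha_r$, which $\sigma_s$ fixes because $-\alpha_r\in-\Pi$ and $-\alpha_r\neq-\alpha_s$. The delicate subcase is $r=s$ occurring only in position $1$: here $\cl_c^s(v)=\alpha_s$ from the empty prefix, while $s$ is absent from the $scs$-sorting word of $sv$, so $\cl_{scs}^s(sv)=-\alpha_s$; this is precisely the exceptional value on which $\sigma_s$ acts by $s$, and $\sigma_s(-\alpha_s)=s(-\alpha_s)=\alpha_s$ matches.

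The main obstacle is not any single computation but keeping the bookkeeping of the sorting-word recursion aligned with the two clauses of $\sigma_s$: one must track both the positivity that forces $\sigma_s=s$ in the generic subcases and the single exceptional value $-\alpha_s$ arising from the first letter, which is exactly the case $\sigma_s$ is designed to handle. Once these are matched, the lemma is immediate from the definitions.
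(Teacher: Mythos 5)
Your proof is correct and takes essentially the same route as the paper: the paper offers no written argument, asserting the lemma is ``immediate from the definitions'' (as a more detailed version of \cite[Lemma~8.5]{sortable}), and your unwinding---the sorting-word recursion for $v\not\ge s$ versus $v\ge s$, the index shift giving $\cl_c^r(v)=s\cdot\cl_{scs}^r(sv)$, and the exceptional subcase $r=s$ occurring only in position $1$, where $\sigma_s(-\alpha_s)=\alpha_s$ is exactly what is needed---is precisely the verification that assertion presupposes.
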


Write $\nu_c$ for the linear map $\nu$ defined in Section~\ref{conj sec}.
That is, for a simple root $\alpha_r$, let 
\[\nu_c(\alpha_r)=-\sum_{s\in S}E_c(\alpha_s\ck,\alpha_r)\rho_s.\]

Let $R_c^r(v)$ denote the element dual to $(C_c^r)\ck(v)$ in the dual basis to $C_c\ck(v)$.
\begin{prop}\label{cl linear}
Let $v$ be a $c$-sortable element and let $r\in S$.
If $v\not\in W_\br{r}$ then $R_c^r(v)=\nu_c(\cl_c^r(v))$.
\end{prop}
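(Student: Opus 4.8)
The plan is to recast the identity as a statement about the Euler form $E_c$ and prove that by induction. By the definition of $\nu_c$ and of the fundamental weights, for any $\beta\in V$ and any co-root $\gamma\ck$ one has $\br{\nu_c(\beta),\gamma\ck}=-E_c(\gamma\ck,\beta)$. Since $R_c^r(v)$ is by construction the element of $V^*$ dual to the basis $\set{(C_c^{r'})\ck(v):r'\in S}$ (a basis by Proposition~\ref{basis}), the desired identity $R_c^r(v)=\nu_c(\cl_c^r(v))$ is equivalent to
\[E_c\big((C_c^{r'})\ck(v),\,\cl_c^r(v)\big)=-\delta_{rr'}\qquad\text{for all }r'\in S.\]
This reformulation is what I would actually prove, since it replaces dual-basis bookkeeping by sign/vanishing statements that are exactly the content of Lemmas~\ref{Ec initial}, \ref{Ec final} and~\ref{Ec invariant}. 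I would induct on the rank of $W$ and, for fixed rank, on $\ell(v)$; the statement is vacuous when $v=e$, as then $v\in W_{\br{r}}$ for every $r$. Throughout, let $s$ be initial in $c$ and use the index-wise recursions for $C_c^{r'}$ from \cite[Proposition~5.1]{typefree} together with Lemma~\ref{cl s}.

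First I would treat the case $v\not\ge s$, where $v\in W_{\br{s}}$ and hence $r\neq s$ (as $v\notin W_{\br{r}}$), and $v$ is $sc$-sortable in $W_{\br{s}}$ by Lemma~\ref{sc}. Here $\cl_c^r(v)=\cl_{sc}^r(v)$, while $C_c^{r'}(v)=C_{sc}^{r'}(v)$ for $r'\neq s$ and $C_c^s(v)=\alpha_s$; all of $\cl_{sc}^r(v)$ and the $C_{sc}^{r'}(v)$ lie in $V_{\br{s}}$. For $r'=s$ the pairing is $E_c(\alpha_s\ck,\cl_{sc}^r(v))$, which vanishes by the equality case of Lemma~\ref{Ec initial} (the positive root $\cl_{sc}^r(v)$ is associated to a reflection of $W_{\br{s}}$), matching $-\delta_{rs}=0$. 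For $r'\neq s$, the restriction of $E_c$ to $V_{\br{s}}$ is $E_{sc}$, so the pairing equals $E_{sc}((C_{sc}^{r'})\ck(v),\cl_{sc}^r(v))=-\delta_{rr'}$ by the rank induction.

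Next comes the case $v\ge s$, where $C_c^{r'}(v)=s\,C_{scs}^{r'}(sv)$ and $\cl_c^r(v)=\sigma_s(\cl_{scs}^r(sv))$, with $sv$ being $scs$-sortable by Lemma~\ref{scs}. The key issue is to reconcile the piecewise-linear $\sigma_s$ with the linear reflection $s$: they agree except on the negative simple roots $-\alpha_p$ with $p\neq s$. When $\cl_{scs}^r(sv)$ is a positive root $\mu$ (equivalently $sv\notin W_{\br{r}}$), we have $\sigma_s(\mu)=s\mu$, and Lemma~\ref{Ec invariant} turns the target pairing into $E_{scs}((C_{scs}^{r'})\ck(sv),\mu)=-\delta_{rr'}$ by the length induction, since $\ell(sv)<\ell(v)$. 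Now $\cl_{scs}^r(sv)$ is negative only when $sv\in W_{\br{r}}$; for $r\neq s$ one has $sv\in W_{\br{r}}\Leftrightarrow v\in W_{\br{r}}$, which is excluded, so the sole remaining configuration is $r=s$ with $sv\in W_{\br{s}}$, giving $\cl_{scs}^s(sv)=-\alpha_s$ and $\cl_c^s(v)=\sigma_s(-\alpha_s)=\alpha_s$.

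This boundary configuration is the crux and the main obstacle: the inductive hypothesis does not apply because $sv\in W_{\br{s}}$, and indeed the Proposition's identity can fail for a general initial index in the excluded regime. It works here precisely because $s$ becomes \emph{final} in $scs$. Concretely, using Lemma~\ref{Ec invariant} and $s\alpha_s=-\alpha_s$, the target identity becomes $E_{scs}((C_{scs}^{r'})\ck(sv),\alpha_s)=\delta_{sr'}$. For $r'\neq s$ the root $C_{scs}^{r'}(sv)$ lies in $V_{\br{s}}$, hence equals $\pm\beta_t$ for some $t\in W_{\br{s}}$, and the pairing vanishes by the equality case of Lemma~\ref{Ec final}. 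For $r'=s$, the root $C_{scs}^s(sv)$ has the form $p\,\alpha_s$ with $p\in W_{\br{s}}$, so $(C_{scs}^s)\ck(sv)=p\,\alpha_s\ck\in\alpha_s\ck+V_{\br{s}}$; since $s$ is a sink of $scs$, the functional $E_{scs}(\,\cdot\,,\alpha_s)$ annihilates $V_{\br{s}}$ and yields $E_{scs}(\alpha_s\ck,\alpha_s)=1$, so the pairing is $1$. This closes the induction; the only genuinely delicate point throughout is this final-index boundary case, which the $E_c$-reformulation renders tractable.
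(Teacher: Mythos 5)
Your proof is correct, and it keeps the paper's overall skeleton: the same double induction on rank and length, driven by Lemmas~\ref{sc}, \ref{scs} and~\ref{cl s} and the index-wise recursion for $C_c$, and the same case division ($v\not\ge s$; $v\ge s$ with $sv\notin W_{\br{r}}$; the boundary case $sv\in W_{\br{r}}$, which forces $r=s$). Where you genuinely depart from the paper is in what gets computed at each step. The paper proves the identity by tracking both sides explicitly in weight coordinates: it computes $R_c^r(v)$ via $R_c^r(v)=s\cdot R_{scs}^r(sv)$, verifies in the main inductive step the intertwining relation $\nu_{scs}(s\cdot\cl_c^r(v))=s\cdot\nu_c(\cl_c^r(v))$ by a lengthy displayed calculation, and settles the boundary case by evaluating both $\nu_c(\alpha_s)$ and $s\rho_s$ as explicit sums of fundamental weights using Proposition~\ref{sym antisym} and Lemma~\ref{Ec initial}. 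You instead dualize at the outset: pairing against the basis $C_c\ck(v)$ (legitimate, since $\br{\nu_c(\beta),\gamma\ck}=-E_c(\gamma\ck,\beta)$ holds by bilinearity) converts the statement into $E_c\bigl((C_c^{r'})\ck(v),\cl_c^r(v)\bigr)=-\delta_{rr'}$, so the dual basis never has to be computed; the main inductive step then collapses to a one-line application of the equivariance Lemma~\ref{Ec invariant}, and the boundary case is handled with Lemma~\ref{Ec final} applied to $scs$ (where $s$ is final), together with the observation that $E_{scs}(\,\cdot\,,\alpha_s)$ kills $V_{\br{s}}$ and that $C_{scs}^s(sv)=p\,\alpha_s$ with $p\in W_{\br{s}}$ --- a mirror image of the paper's use of Lemma~\ref{Ec initial} on $c$. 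The two routes are of comparable depth, but your reformulation buys real economy: it eliminates the paper's longest computation, isolates all the genuine difficulty in the single configuration $r=s$, $sv\in W_{\br{s}}$ (and your observation that the identity fails outside the hypothesis $v\notin W_{\br{r}}$ correctly explains why that case cannot be absorbed by induction), whereas the paper's version has the side benefit of recording the intertwining of $\nu_c$ with the dual $s$-action, which is reusable information.
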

We prove this result for arbitrary acyclic $B$, not only in the special case where $\Cart(B)$ is of finite type.
\begin{proof}
Let $s$ be initial in $c$.
If $v\not\ge s$ then $v\in W_\br{s}$.
Since $v\not\in W_\br{r}$, we conclude that $r\neq s$.
By induction on the rank of $W$, the vector $R^r_{sc}(v)$ in $V_\br{s}$ equals $\nu_{sc}(\cl_{sc}^r(v))$, which equals $\nu_{sc}(\cl_c^r(v))$ by Lemma~\ref{cl s}. 
The coefficient of $\rho_s$ in $\nu_c(\cl_c^r(v))$ is zero by Lemma~\ref{Ec initial}, since $\cl_c^r(v)$ is a root associated to a reflection in $W_\br{s}$.

Recall that $V_{\br{s}}$ is the subspace of $V$ spanned by the simple roots indexed by $\br{s}=S\setminus\set{s}$.
We identify the dual space $V_{\br{s}}^*$ naturally with the subspace of $V^*$ spanned by the fundamental weights indexed by $\br{s}$.
This subspace is the hyperplane $\alpha_s^\perp$.
Under this identification, the natural pairing $V_{\br{s}}^*\times  V_{\br{s}}\to\reals$ is the restriction of the natural pairing on $V^*$ and $V$.
Thus, $R^r_c(v)$ equals $\nu_{sc}(\cl_c^r(v))=\nu_{c}(\cl_{c}^r(v))$, and we are done in this case.

If $v\ge s$ then, by the recursive definition of $C_c\ck$, the vector $R_c^r(v)$ is $s\cdot R_{scs}^r(sv)$, where $s$ acts on $V^*$ by the action dual to its action on $V$.
This action preserves $\rho_r$ for $r\neq s$ and sends $\rho_s$ to $-\rho_s-\sum_{q\in\br{s}}K(\alpha\ck_q,\alpha_s)\rho_q$.

We must treat separately the case where $sv\in W_\br{r}$.
Since $v\not\in W_\br{r}$, we have $r=s$ in this case.
Furthermore, $\cl_c^s(v)=\alpha_s$, so $\nu_c(\cl_c^s(v))$ is $-\sum_{q\in S}E_c(\alpha_q\ck,\alpha_s)\rho_q$.
But applying Lemmas~\ref{sym antisym} and~\ref{Ec initial}, $\nu_c(\cl_c^s(v))$ is $-\rho_s-\sum_{q\in\br{s}}K(\alpha_q\ck,\alpha_s)\rho_q$.
On the other hand, $R_{scs}^s(sv)=\rho_s$, so 
\[R_c^s(v)=s\rho_s=-\rho_s-\sum_{q\in\br{s}}K(\alpha_q\ck,\alpha_s)\rho_q.\]

Finally, if $sv\not\in W_\br{r}$, then in particular $s$ is not the last reflection for $r$ in $v$, so $\cl_{scs}^r(sv)$ is a positive root and $\cl_c^r(v)=s\cdot\cl_{scs}^r(sv)$ by Lemma~\ref{cl s}.
By induction on $\ell(v)$, the vector $R_{scs}^r(sv)$ is $\nu_{scs}(\cl_{scs}^r(sv))$, which, by Lemmas~\ref{Ec invariant} and~\ref{cl s} equals 
\[\nu_{scs}(s\cdot\cl_c^r(v))=-\sum_{q\in S}E_{scs}(\alpha_q\ck,s\cdot\cl_c^r(v))\rho_q=-\sum_{q\in S}E_c(s\alpha_q\ck,\cl_c^r(v))\rho_q.\]
This can be rewritten as $-\sum_{q\in S}E_c(\alpha\ck_q-K(\alpha_q\ck,\alpha_s)\alpha_s\ck,\cl_c^r(v))\rho_q$ and then as
\begin{eqnarray*}
&&=E_c(\alpha_s\ck,\cl_c^r(v))\rho_s-\sum_{q\in \br{s}}E_c(\alpha\ck_q-K(\alpha_q\ck,\alpha_s)\alpha_s\ck,\cl_c^r(v))\rho_q\\
&&=-E_c(\alpha_s\ck,\cl_c^r(v))\left(-\rho_s-\sum_{q\in \br{s}}K(\alpha_q\ck,\alpha_s)\rho_q\right)-\sum_{q\in \br{s}}E_c(\alpha\ck_q,\cl_c^r(v))\rho_q.
\end{eqnarray*}
This is $-E_c(\alpha_s\ck,\cl_c^r(v))(s\cdot\rho_s)-\sum_{q\in \br{s}}E_c(\alpha\ck_q,\cl_c^r(v))(s\cdot\rho_q)$, which simplifies to $s\cdot\left(-\sum_{q\in S}E_c(\alpha\ck_q,\alpha_s)\rho_q\right)=s\cdot\nu_c(\cl_c^r(v))$.
Thus $R_c^r(v)=\nu_c(\cl_c^r(v))$ in this case as well.
\end{proof}

We can now prove the theorem.
\begin{proof}[Proof of Theorem~\ref{camb denom}]
The fact that the map taking $\Cone_c(v)$ to the nonnegative span of $\cl_c(v)$ is a combinatorial isomorphism of fans can be rephrased as follows:  There exists a bijection from rays in the $c$-Cambrian fan to almost positive roots such that a set of rays spans a cone in the $c$-Cambrian fan if and only if the corresponding set of almost positive roots spans a cone in the fan of combinatorial clusters.
(Indeed, this rephrasing is closer to the way that \cite[Theorem~1.1]{camb_fan} was proved.)
The results of this paper give another such bijection: each ray in the Cambrian fan contains a unique vector $R(v,e)$ that is the $\g$-vector of $x^v_e$.
This ray is mapped to the denominator vector of $x_v^e$.

We first verify that the two bijections are the same.
The identity is a $c$-sortable element, and the rays of its $c$-Cambrian cone are spanned by the vectors $\rho_i:i\in I$, which are the $\g$-vectors of $x_i:i\in I$.
The map $\cl_c$, applied to the identity element, returns the set of negative simple roots, which are the denominator vectors of $x_i:i\in I$.
For each $i\in I$, the element $s_i$ is a $c$-sortable element adjacent to the identity in $\Camb_c$.
The cluster variable $x_i$ is the one removed in the mutation from the initial seed to $\Seed(s_i)$, and the negative simple root $-\alpha_i$ is the root removed from $\cl_c$ when moving from the identity element to $s_i$.
Thus both bijections map the ray spanned by $\rho_i$ to the negative simple root $-\alpha_i$.
It is now easy to see that the two bijections coincide.
(For example, one can show by induction on $\ell(v)$ that the rays of $\Cone_c(v)$ are treated the same by both bijections, using the fact that each new $v$ adds at most one new ray.)

It now remains to check the statement about the element $r$.
Suppose $v$ and $v'$ are connected by an edge in $\Camb_c$ and suppose that $\cl_c^r(v)$ is the denominator vector of the cluster variable that is removed in passing from the combinatorial cluster $\cl_c(v)$ to the combinatorial cluster $\cl_c(v')$.
Proposition~\ref{cl linear} implies that every other element $\beta$ of $\cl_c(v)$ is mapped by $\nu_c$ into $C_c^r(v)$ and thus that $\beta\in\cl_c(v')$.
We conclude that $\cl_c^r(v)$ is the denominator vector of the cluster variable that is removed in passing from $\Seed(v)$ to $\Seed(v')$.
This cluster variable is $x^v_e$.
\end{proof}

The proof of Theorem~\ref{camb denom} via Proposition~\ref{cl linear} is the motivation for Conjecture~\ref{nu conj}.
The proof also establishes the following result:
\begin{theorem}\label{nu conj camb}
Conjecture~\ref{nu conj} holds when $\Cart(B)$ is of finite type.
\end{theorem}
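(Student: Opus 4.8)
The plan is to obtain the theorem as an immediate consequence of the three facts already assembled in the proofs of Theorem~\ref{camb denom} and Proposition~\ref{cl linear}, specialized to finite Cartan type. First I would fix a cluster variable $x$ not contained in the initial seed. Since $B$ is of finite Cartan type, the Cambrian framework $(\Camb_c,C_c)$ is complete (\cite[Corollary~8.1]{camb_fan}) and exact (Theorems~\ref{camb frame} and~\ref{descending good}), so by Theorem~\ref{exact framework} the map $\Seed$ is a graph isomorphism from $\Camb_c$ onto $\Ex_0(B)$; in particular there is an incident pair $(v,e)$ in $\Camb_c$ with $x=x^v_e$. Let $r\in S$ be the index with $C(v,e)=C_c^r(v)$, so that $C\ck(v,e)=(C_c^r)\ck(v)$ and hence $R(v,e)=R_c^r(v)$.

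Next I would record the three ingredients and chain them together. Theorem~\ref{framework principal}(\ref{g vec}) gives $\g(x)=\g^v_e=R(v,e)=R_c^r(v)$, expressing the $\g$-vector as the dual-basis vector $R_c^r(v)$. Theorem~\ref{camb denom} gives $\d(x)=\cl_c^r(v)$, expressing the denominator vector as the corresponding root of $\cl_c(v)$. The bridge between them is Proposition~\ref{cl linear}, which asserts $R_c^r(v)=\nu_c(\cl_c^r(v))$ provided $v\notin W_{\br{r}}$. Since $\nu_c$ is by definition the map $\nu$ of Section~\ref{conj sec}, combining these three yields $\g(x)=R_c^r(v)=\nu_c(\cl_c^r(v))=\nu(\d(x))$, and the equivalent form $\d(x)=\eta(\g(x))$ follows because $\eta$ and $\nu$ are inverse.

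The one point requiring verification—and the only genuine obstacle—is the hypothesis $v\notin W_{\br{r}}$ of Proposition~\ref{cl linear}, which I would check by translating ``$x$ is not an initial cluster variable'' into it. By the definition of $\cl_c^r$, one has $\cl_c^r(v)=-\alpha_r$ exactly when $r$ does not occur in the $c$-sorting word of $v$, i.e.\ exactly when $v\in W_{\br{r}}$; otherwise $\cl_c^r(v)$ is a positive root. In finite Cartan type the denominator vectors of cluster variables are all distinct (via \cite[Theorem~1.9]{ca2}), and the initial cluster variables are precisely those whose denominator vector is a negative simple root. Hence if $x=x^v_e$ is non-initial, then $\d(x)=\cl_c^r(v)$ is not a negative simple root, forcing $v\notin W_{\br{r}}$, so Proposition~\ref{cl linear} applies to exactly the pairs we need (and the argument is independent of the chosen incident pair representing $x$). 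Because Proposition~\ref{cl linear} was already established for arbitrary acyclic $B$, no further linear-algebra computation is needed: the theorem is genuinely a corollary, which is why the surrounding text notes that ``the proof also establishes'' it.
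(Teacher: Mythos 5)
Your proposal is correct and follows essentially the same route as the paper: the paper derives Theorem~\ref{nu conj camb} directly from the proof of Theorem~\ref{camb denom}, whose content is exactly the chain $\g(x)=R_c^r(v)=\nu_c(\cl_c^r(v))=\nu(\d(x))$ via Theorem~\ref{framework principal}(\ref{g vec}), Proposition~\ref{cl linear}, and Theorem~\ref{camb denom}. Your write-up merely makes explicit two points the paper leaves tacit --- that completeness and exactness of the Cambrian framework in finite type let every cluster variable be realized as some $x^v_e$, and that non-initial $x$ forces $v\notin W_{\br{r}}$ so Proposition~\ref{cl linear} applies --- both of which you verify correctly.
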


Several conjectures follow from Conjecture~\ref{nu conj} and Conjectures~\ref{g lattice conj}, \ref{g fan conj}, and~\ref{strong g fan conj}, as explained in Section~\ref{conj sec}.
\begin{cor}\label{camb denom cor}
Conjectures~\ref{denom lattice conj}, \ref{denom fan conj} and~\ref{strong denom fan conj} hold for $\Cart(B)$ of finite type and principal coefficients.
\end{cor}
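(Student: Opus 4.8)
The plan is to deduce all three denominator conjectures formally from facts already in hand, using the map $\nu$ as a dictionary between $\g$-vectors and denominator vectors. By Theorem~\ref{nu conj camb}, Conjecture~\ref{nu conj} holds for $B$ of finite Cartan type, and by Corollary~\ref{finite type conj}, Conjectures~\ref{g lattice conj}, \ref{g fan conj}, and~\ref{strong g fan conj} hold for such $B$. It therefore suffices to establish the implication asserted in Section~\ref{conj sec}: for acyclic $B$, the truth of Conjecture~\ref{nu conj} together with Conjectures~\ref{g lattice conj}, \ref{g fan conj}, and~\ref{strong g fan conj} forces Conjectures~\ref{denom lattice conj}, \ref{denom fan conj}, and~\ref{strong denom fan conj}. (The acyclicity hypothesis is harmless: the Cambrian framework underlying Theorem~\ref{nu conj camb} is built only for acyclic $B$, so that is the setting in play.)

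The engine is the observation that, for acyclic $B$, the matrix of $\nu$ in the bases $\Pi$ and $\set{\rho_i}$ is, after ordering $I$ compatibly with $B$, the negative of a uni-triangular integer matrix; hence $\nu\colon V\to V^*$ restricts to an isomorphism of lattices from the root lattice onto the weight lattice, with integral inverse $\eta$. First I would extend $\g(x)=\nu(\d(x))$ from single (non-initial) cluster variables to cluster monomials whose support avoids the initial cluster, by linearity of $\nu$ and additivity of $\g$- and $\d$-vectors on monomials. Transporting each hypothesis through this unimodular dictionary then yields its denominator analogue: Conjecture~\ref{g lattice conj} gives Conjecture~\ref{denom lattice conj} (a $\ZZ$-basis maps to a $\ZZ$-basis under a lattice isomorphism), while Conjectures~\ref{g fan conj} and~\ref{strong g fan conj} give Conjectures~\ref{denom fan conj} and~\ref{strong denom fan conj} (a bijective linear change of coordinates can neither identify two previously distinct vectors nor merge two cones that met nicely).

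The main obstacle is the initial seed, where the dictionary breaks: for an initial cluster variable $x_i$ one has $\g(x_i)=\rho_i$ but $\nu(\d(x_i))=\nu(-\alpha_i)=\sum_k E(\alpha_k\ck,\alpha_i)\rho_k$, which does not equal $\rho_i$ in general, so $\nu$ fails to intertwine $\g$- and $\d$-vectors on any monomial whose support meets the initial cluster. I expect the clean way around this is to route the argument through the Cambrian framework rather than through $\nu$ abstractly. Indeed, Proposition~\ref{cl linear} supplies exactly the corrected statement $R_c^r(v)=\nu_c(\cl_c^r(v))$ precisely in the range $v\notin W_{\br{r}}$ that excludes the problematic initial directions, while the complementary case is handled explicitly inside the proof of Theorem~\ref{camb denom}. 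Combining Theorem~\ref{camb denom}, which identifies each $\d(x^v_e)$ with the almost positive root $\cl_c^r(v)$, with the fan isomorphism of~\cite{camb_fan} between $\F_c$ and the fan of combinatorial clusters of~\cite{ga}, the negative simple roots $-\alpha_i$ enter on the same footing as every other almost positive root, so the initial-seed exception disappears. The three denominator conjectures then read off from the established properties of that complete simplicial fan: each combinatorial cluster is a $\ZZ$-basis of the root lattice (Conjecture~\ref{denom lattice conj}), and distinct clusters span distinct maximal cones of a genuine fan (Conjectures~\ref{denom fan conj} and~\ref{strong denom fan conj}), the last using well-connectedness transported from Corollary~\ref{polyhedral conj}.
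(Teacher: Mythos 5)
Your proposal is correct, but its substance follows a different route than the paper's. The paper's entire proof of this corollary is the single sentence preceding it: combine Theorem~\ref{nu conj camb} and Corollary~\ref{finite type conj} with the implication \emph{asserted without proof} in Section~\ref{conj sec} (that Conjecture~\ref{nu conj}, together with Conjectures~\ref{g lattice conj}, \ref{g fan conj} and~\ref{strong g fan conj}, yields the three denominator conjectures for acyclic $B$); this is exactly your first paragraph. You then go further: you observe, correctly, that the naive proof of that implication by transporting everything through the unimodular map $\nu$ breaks on clusters and cluster monomials whose support meets the initial cluster, since $\nu(\d(x_i))=\nu(-\alpha_i)\neq\rho_i=\g(x_i)$ in general --- a subtlety the paper never addresses, as Conjecture~\ref{nu conj} explicitly excludes initial variables. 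Your repair is to abandon the dictionary and argue directly in finite type: Theorem~\ref{camb denom}, together with completeness and exactness of the Cambrian framework (so every seed is $\Seed(v)$ for a unique sortable $v$), identifies the denominator vectors of every cluster with the combinatorial cluster $\cl_c(v)$; then the $\ZZ$-basis and complete simplicial fan properties of combinatorial clusters from \cite{ga}, the fan isomorphism of \cite{camb_fan}, and Conjecture~\ref{face conj} (available from Corollary~\ref{polyhedral conj}) deliver the three conjectures. This is sound, and it is essentially a second pass through the machinery the paper already used to prove Theorem~\ref{camb denom}; what the paper's route buys is brevity and a general principle intended to apply beyond finite type, while your route buys an actual finite-type proof that does not rest on the unproven Section~\ref{conj sec} assertion. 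Two details to make explicit: ``weight lattice'' in Conjecture~\ref{denom lattice conj} must be read as ``root lattice'' (as you do), and in the fan argument you recover the support \emph{variables} (not merely the support roots) of a cluster monomial from its denominator vector via the bijection between cluster variables and almost positive roots (\cite[Theorem~1.9]{ca2}, invoked at the start of Section~\ref{denom sec}).
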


As a consequence of Theorem~\ref{nu conj camb}, the combinatorial clusters model contains complete information about $\g$-vectors, and therefore, in light of Theorem~\ref{framework principal} and Corollary~\ref{identity model}, about exchange matrices and principal coefficients as well. 
(Earlier results of Yang and Zelevinsky~\cite{YZ} already show that the combinatorial clusters model contains complete information about $\g$-vectors.  
These results combined with the result of~\cite{NZ} stated here as Corollary~\ref{NZ cor} also show that the combinatorial clusters model contains complete information about principal coefficients.)

Conjecture~\ref{nu conj} implies the following conjecture about Cambrian frameworks.
\begin{conj}\label{camb denom conj}
Theorem~\ref{camb denom} holds even when $\Cart(B)$ is not of finite type.
\end{conj}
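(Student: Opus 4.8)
The plan is to establish the conditional implication flagged in the sentence just before the conjecture: that Conjecture~\ref{camb denom conj} follows from Conjecture~\ref{nu conj} once one feeds in the parts of the theory that are already available for \emph{arbitrary} acyclic $B$, not merely in finite Cartan type. First I would collect three type-independent ingredients. Theorem~\ref{framework principal}(\ref{g vec}), applied to the Cambrian framework, identifies $\g(x^v_e)$ with $R(v,e)=R_c^r(v)$, where $r\in S$ is determined by $C(v,e)=C_c^r(v)$. Proposition~\ref{cl linear}, which was proved above with no finiteness hypothesis, gives $R_c^r(v)=\nu_c(\cl_c^r(v))$ whenever $v\not\in W_\br{r}$. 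And the maps $\nu=\nu_c$ and $\eta$ of Section~\ref{conj sec} are mutually inverse.

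Granting Conjecture~\ref{nu conj} for the acyclic matrix $B$, one has $\d(x)=\eta(\g(x))$ for every cluster variable $x$ outside the initial seed. Chaining the ingredients then gives
\[\d(x^v_e)=\eta(\g(x^v_e))=\eta\bigl(R_c^r(v)\bigr)=\eta\bigl(\nu_c(\cl_c^r(v))\bigr)=\cl_c^r(v),\]
which is precisely the assertion of Theorem~\ref{camb denom} for the incident pair $(v,e)$. So, modulo the bookkeeping below, the conjecture reduces cleanly to Conjecture~\ref{nu conj}.

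The bookkeeping is to match the hypothesis $v\not\in W_\br{r}$ of Proposition~\ref{cl linear} against the exclusion of initial cluster variables in Conjecture~\ref{nu conj}. Here there is a clean type-independent dichotomy: $s_r$ appears in the $c$-sorting word of $v$ if and only if $v\not\in W_\br{r}$, and in that case $\cl_c^r(v)$ is a positive root, while if $v\in W_\br{r}$ then $\cl_c^r(v)=-\alpha_r$ by definition. For the pairs with $v\in W_\br{r}$ I would argue directly, as in the base-vertex computation opening the proof of Theorem~\ref{camb denom}, that $x^v_e$ is an initial cluster variable with denominator vector $-\alpha_r$; that computation uses no finiteness, and it matches exactly the excluded case of Conjecture~\ref{nu conj}. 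The one point genuinely needing confirmation in infinite type is that the negative-simple labels really do mark precisely the initial variables, independently of the finite-type fan bijection.

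The genuine obstacle is that the argument above is only conditional: it proves the conjecture precisely for those acyclic $B$ where Conjecture~\ref{nu conj} is known, and outside finite Cartan type Conjecture~\ref{nu conj} is itself open. An unconditional proof would need a new route to denominator vectors in infinite type. The natural attempt is to mimic the proof of Theorem~\ref{camb denom} intrinsically, transporting the recursion of Lemma~\ref{cl s} for $\cl_c$ alongside the cluster-mutation recursion for $\d$ and inducting on $\ell(v)$ and on the rank of $W$. The difficulty is that the finite-type proof is fundamentally \emph{bijective}: it invokes \cite[Theorem~1.9]{ca2} to know that distinct cluster variables have distinct denominator vectors, which is what lets one identify a ray of the Cambrian fan with a single cluster variable. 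That distinctness is unavailable in infinite type, so the clean fan-isomorphism bookkeeping breaks down, and producing a mutation-based verification of $\d(x^v_e)=\cl_c^r(v)$ that avoids both Conjecture~\ref{nu conj} and denominator distinctness is where the real work lies.
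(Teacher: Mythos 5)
Your reduction is exactly the paper's own justification: immediately after stating the conjecture, the paper observes that it follows from Conjecture~\ref{nu conj} because Proposition~\ref{cl linear} identifies $\cl_c^r(v)$ as $\eta_c(R_c^r(v))$ and Theorem~\ref{framework principal}(\ref{g vec}) identifies $R_c^r(v)$ as the $\g$-vector of the appropriate cluster variable, which is precisely your chain $\d(x^v_e)=\eta(\g(x^v_e))=\eta(\nu_c(\cl_c^r(v)))=\cl_c^r(v)$. Your extra bookkeeping for the case $v\in W_\br{r}$ (matching the initial-variable exclusion in Conjecture~\ref{nu conj}) is a point the paper glosses over, and your closing assessment is exactly the paper's position: no unconditional argument is given there, and the statement remains a conjecture.
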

Indeed, Conjecture~\ref{camb denom conj} follows from Conjecture~\ref{nu conj} because Proposition~\ref{cl linear} identifies $\cl_c^r(v)$ as $\eta_c(R_c^r(v))$ and Theorem~\ref{framework principal}(\ref{g vec}) identifies $R_c^r(v)$ as the $\g$-vector of the appropriate cluster variable.

With Conjecture~\ref{camb denom conj} unproven, we have no direct way, when $\Cart(B)$ is not of finite type, of reading off denominator vectors from $c$-sortable elements.
However, Proposition~\ref{cl linear} is still useful in general, in that it provides a way to read off $\g$-vectors from $c$-sortable elements without computing a dual basis.
Combining Proposition~\ref{cl linear} with Theorem~\ref{framework principal}(\ref{g vec}), we obtain the following theorem.
\begin{theorem}\label{camb nu g}
Let $(\Camb_c,C_c)$ be a Cambrian reflection framework. 
Suppose $(v,e)$ is an incident pair in the graph $\Camb_c$ and suppose that the cluster variable $x^v_e$ assigned to $(v,e)$ is not contained in the initial cluster.
Then $\g(x^v_e)$ is the weight $\nu_c\cl_c^r(v)$, where $r$ is the element of $S$ such that $C(v,e)=C_c^r(v)$.
\end{theorem}

\newpage

\section{Summary of framework terminology}  \label{sec:defns}
In the following table, we describe the key terms related to frameworks,  and indicate the page where their definitions can be found in the paper.  

\begin{longtable}{|p{105pt}|p{210pt}|c|}\hline
Term&Definition&Page\\\hline&&\\[-11pt]\hline
\raggedright\newword{complete}&a framework is complete if its underlying quasi-graph has no half-edges&\pageref{complete}\\\hline
\raggedright\newword{Cartan companion}, denoted $\boldsymbol{\Cart(B)}$ & the matrix whose diagonal elements are $2$ and whose off diagonal elements are $-|B_{ij}|$&\pageref{defn:Cart}\\\hline
\raggedright\newword{quasi-graph}&a graph $G$ with edges and half-edges&\pageref{quasi-graph}\\\hline
\raggedright\newword{incident pair}&$(v,e)$ for $v$ a vertex and $e$ an edge incident to~$v$&\pageref{incident pair}\\\hline
\raggedright\newword{label}, \raggedright\newword{co-label}&vectors $C(v,e)$ and $C\ck(v,e)$ assigned to $(v,e)$&\pageref{label}\\\hline
\raggedright\newword{Co-label condition}&$C\ck(v,e)$ is a positive multiple of $C(v,e)$&\pageref{Co-label condition}\\\hline
\raggedright\newword{Sign condition}&labels have a well-defined sign&\pageref{Sign condition}\\\hline
\raggedright\newword{Base condition}&$G$ has a vertex labeled by simple roots and co-labeled by simple co-roots&\pageref{Base condition}\\\hline
\raggedright\newword{(Co)-transition condition}&relates (co)-labels on two adjacent vertices&\pageref{Transition condition}, \pageref{Transition condition, strengthened}\\\hline
\raggedright\newword{Framework}&a labeled connected quasi-graph satisfying the Co-label, Sign, Base, Transition, and (Co)-transition conditions&\pageref{framework}\\\hline
\raggedright\newword{Euler form}&a bilinear form associated to an exchange matrix&\pageref{Euler form}\\\hline
\raggedright\newword{Root condition}& the requirement that labels be roots&\pageref{Root condition}\\\hline
\raggedright\newword{Euler conditions}& conditions on the Euler form applied to labels at a given vertex&\pageref{Euler conditions}\\\hline
\raggedright\newword{Reflection condition}& a version of the Transition condition where some transitions are given by reflections&\pageref{Reflection condition}\\\hline
\raggedright\newword{injective}&a framework is injective if each vertex is determined uniquely by its label set &\pageref{injective}\\\hline
\raggedright\newword{ample}&a framework is ample if there is a natural map from $G$ to the exchange graph.&\pageref{ample}\\\hline
\raggedright\newword{exact}&injective and ample&\pageref{exact}\\\hline
\raggedright\newword{polyhedral}&a framework is polyhedral if its cones are the maximal faces of a fan&\pageref{polyhedral}\\\hline
\raggedright\newword{well-connected}&a polyhedral framework is well-connected if it satisfies a certain local connectivity condition&\pageref{well-connected}\\\hline
\raggedright\newword{rank-two cycle}& a certain type of cycle in $G$ & \pageref{rank two cycle}\\\hline
\raggedright\newword{simply connected}&simple connectivity is a topological condition that implies ampleness&\pageref{simply connected}\\\hline
\raggedright\newword{Unique minimum}  &the only vertex with all labels positive is $v_b$&\pageref{Unique minimum condition}\\\hline
\raggedright\newword{Full edge condition}&negative labels may only occur on full edges
&\pageref{Full edge condition}\\\hline
\raggedright\newword{Descending chain condition}&there is no infinite path in the graph that ``descends'' in the sense of signs on labels&\pageref{Descending chain condition}\\\hline
\raggedright\newword{descending}&satisfying the Unique minimum, Full edge, and Descending chain conditions &\pageref{descending}\\\hline
\raggedright\newword{Cambrian framework}&a framework defined in terms of sortable elements and Cambrian semilattices&\pageref{camb frame sec}--\pageref{end Cambrian framework}\\\hline
\end{longtable}

\end{document}